\numberwithin{table}{section}
\numberwithin{figure}{section}
\numberwithin{equation}{section}
\newtheorem{definition}{Definition}[section]
\newtheorem{lemma}[definition]{Lemma}
\newtheorem{question}[definition]{Question}
\newtheorem{theorem}[definition]{Theorem}
\newtheorem{corollary}[definition]{Corollary}
\newtheorem{proposition}[definition]{Proposition}
\newtheorem{fact}[definition]{Fact}
\newcommand{\nquestionname}{}
\newenvironment{questionn}[1]
 {\renewcommand{\nquestionname}{Question \ref{#1}}\nquestion}
 {\endnquestion}
\newcommand{\nquestionbname}{}
\newenvironment{defin}[1]
 {\renewcommand{\nquestionbname}{Definition \ref{#1}}\nquestionb}
 {\endnquestionb}
 \theoremstyle{definition}
\newtheorem{example}[definition]{Example}
\newtheorem{remark}[definition]{Remark}
\newcommand{\sbs}{\smallsetminus}
\DeclareMathOperator{\dv}{div}
\DeclareMathOperator{\mult}{mult}
\DeclareMathOperator{\id}{id}
\newcounter{row}
\newcounter{col}
\newcommand\setrow[6]{
  \setcounter{col}{1}
  \foreach \n in {#1, #2, #3, #4, #5, #6} {
    \edef\x{\value{col} - 0.5}
    \edef\y{6.5 - \value{row}}
    \node[anchor=center] at (\x, \y) {\n};
    \stepcounter{col}
  }
  \stepcounter{row}
}
\newcommand\setrowm[7]{
  \setcounter{col}{#7}
  \foreach \n in {#1, #2, #3, #4, #5, #6} {
    \edef\x{\value{col} - 0.5}
    \edef\y{6.5 - \value{row}}
    \node[anchor=center] at (\x, \y) {\n};
    \stepcounter{col}
  }
  \stepcounter{row}
}
\newcounter{rowf}
\newcounter{colf}
\newcommand\setrowf[8]{
  \setcounter{colf}{1}
  \foreach \n in {#1, #2, #3, #4, #5, #6, #7, #8} {
    \edef\x{\value{colf} - 0.5}
    \edef\y{8.5 - \value{rowf}}
    \node[anchor=center] at (\x, \y) {\n};
    \stepcounter{colf}
  }
  \stepcounter{rowf}
}
\begin{document}

\begin{abstract}
For which values of $n$ is it possible to color the positive integers using precisely $n$ colors in such a way that for any $a$, the numbers $a,2a,\dots,na$ all receive different 
colors? The third-named author posed the question around 2008-2009. Particular cases appeared in the Hungarian high school journal K\"oMaL in April 2010, and the general 
version appeared in May 2010 on MathOverflow, posted by D. P\'alv\"olgyi. The question remains open. We discuss the known partial results and investigate a series of related 
matters attempting to understand the structure of these \emph{$n$-satisfactory} colorings.

Specifically, we show that there is an $n$-satisfactory coloring whenever there is an abelian group operation $\oplus$ on the set $\{1,2,\dots,n\}$ that is compatible with 
multiplication in the sense that whenever $i$, $j$ and $ij$ are in $\{1,\dots,n\}$, then $ij=i\oplus j$. This includes in particular the cases where $n+1$ is prime, or $2n+1$ is prime, 
or $n=p^2-p$ for some prime $p$, or there is  a $k$ such that $q=nk+1$ is prime and $1^k,\dots,n^k$ are all distinct modulo $q$ (in which case we call $q$ a \emph{strong 
representative of order $n$}). The colorings obtained by this process we call multiplicative. We also show that nonmultiplicative colorings exist for some values of $n$. 

There is an $n$-satisfactory coloring of $\mathbb Z^+$ if and only if there is such a coloring of the set $K_n$ of $n$-smooth numbers. We identify all $n$-satisfactory colorings 
for $n\le 5$ and all multiplicative colorings for $n\le 8$, and show that there are as many nonmultiplicative colorings of $K_n$ as there are real numbers for $n=6$ and 8. We show 
that if $n$ admits a strong representative $q$ then it admits infinitely many and in fact the set of such $q$ has positive natural density in the set of all primes. 

We also show that the question of whether there is an $n$-satisfactory coloring is equivalent to a problem about tilings, and use this to give a geometric characterization of 
multiplicative colorings.
\end{abstract}

\author{Andr\'es Eduardo Caicedo}
\address{
Mathematical Reviews \\
  416 Fourth Street \\ 
  Ann Arbor, MI 48103-4820 \\ 
  USA
}
\email{aec@ams.org}
\urladdr{\href{http://www-personal.umich.edu/~caicedo/}{http://www-personal.umich.edu/~caicedo/}}

\author{Thomas A. C. Chartier}
\address{
777 W. Main St suite 900 \\
Boise, ID, 83702 \\
USA
}
\email{tommychartier@gmail.com}

\author{P\'eter P\'al Pach}
\address{
MTA-BME Lend\"ulet Arithmetic Combinatorics Research Group\\
Department of Computer Science and Information Theory\\ 
Budapest University of Technology and Economics\\ 
1117 Budapest, Magyar tud\'osok  k\"or\'utja 2.\\ 
Hungary
}
\email{ppp@cs.bme.hu}
\urladdr{\href{http://www.cs.bme.hu/~ppp/}{http://www.cs.bme.hu/~ppp/}}

\keywords{Smooth numbers, core, satisfactory coloring, multiplicative coloring, partial isomorphism}

\subjclass[2010]{Primary 11B75; Secondary 05B45, 20D60.}

\date{\today}

\title{Coloring the $n$-smooth numbers with $n$ colors}

\maketitle

\tableofcontents

\section{Introduction} \label{sec:introduction}

\subsection{A problem from K\"oMaL} \label{subsec:komal}

The following was posed by the third-named author as problem A.506 in the April 2010 issue of the Hungarian journal K\"oMaL (K\"oz\'episkolai Matematikai \'es Fizikai Lapok), a 
mathematics and physics journal primarily aimed at high school 
students\footnote{See \href{https://www.komal.hu/feladat?a=honap&h=201004&t=mat&l=en}{https://www.komal.hu/feladat?a=honap\&h=201004\&t=mat\&l=en}.}:
\begin{quote}
Prove that for every prime $p$, there exists a colouring of the positive integers with $p-1$ colours such that the colours of the numbers $\{a,2a,3a,\dots,(p-1)a\}$ are pairwise 
different for every positive integer $a$.
\end{quote}

We say that a coloring as required is \emph{$(p-1)$-satisfactory}. To get a feel for the problem, consider for instance the case $p=5$. Suppose $c$ is a 4-satisfactory coloring. 
In particular, $1,2,3,4$ have different colors. Note that 6 must have the same color as 1, since $c(6)\ne c(2),c(4)$ because $2,4,6,8$ all have different colors and $c(6)\ne c(3)$ 
since $3,6,9,12$ all have different colors. It follows that $c(8)=c(3)$. Also, since $c(12)\ne c(3),c(4),c(6)$, we must have $c(12)=c(2)$. It follows that $c(9)=c(4)$. Similar 
reasoning allows us to determine the color of many more numbers; the following table shows some of these findings. Here, the coloring is represented by means of 4 rows of 
integers, with each row representing one of the colors.

\begin{center}
 \begin{tabular}{ccccccc}
  1&6&16&36&81&$\cdots$\\
   \hline
   2&12&27&32&72&$\cdots$\\
   \hline
   3&8&18&48&108&$\cdots$\\
   \hline
   4&9&24&54&64&$\cdots$
  \end{tabular}
\end{center}

Nothing so far uses that 5 is a prime number, but the relevance of this fact comes into play once we note that the numbers in the $i^{\mathrm{th}}$ row are all congruent to $i$ modulo 
5, for $i=1,\dots,4$. This suggests how to define a 4-satisfactory coloring compatible with our observations. Indeed, as long as $a$ is not a multiple of 5, we can assign to $a$ the 
color $(a\bmod 5)$ and readily observe that if $1\le i<j\le 4$, then $(ai\bmod 5)\ne(aj\bmod 5)$. We are not quite done yet, as we still need to deal with the multiples of 5. For this, 
we can begin by noting that $5,10,15,20$ have different colors and impose some restrictions as above. For instance, $c(30)=c(5)$, $c(40)=c(15)$, $c(60)=c(10)$, $c(45)=c(20)$, 
etc., as illustrated in the table below.

\begin{center}
 \begin{tabular}{ccccccc}
  5&30&80&$\dots$\\
   \hline
   10&60&135&$\dots$\\
   \hline
   15&40&90&$\dots$\\
   \hline
   20&45&120&$\dots$
  \end{tabular}
\end{center}

The reader should promptly realize that this is the same table as before, with each entry multiplied by 5. This suggests that we can define the color of a positive integer $n$ by 
considering its prime factorization and ignoring powers of 5: letting $n=5^ab$ where $a\ge0$ and $5\nmid b$, we can assign to $n$ the color $c(n)=(b\bmod 5)$. It is 
straightforward to verify that this is indeed a 4-satisfactory coloring, and we are done in this case. 

The argument suggests an obvious generalization from which the K\"oMaL problem follows: 

\begin{theorem} \label{thm:caseprime}
If $p$ is prime, then there is a $(p-1)$-satisfactory coloring.
\end{theorem}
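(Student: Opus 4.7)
The plan is to generalize directly the example worked out for $p=5$. Given any prime $p$, I would define the coloring $c\colon\mathbb Z^+\to\{1,2,\dots,p-1\}$ by writing each positive integer $n$ uniquely as $n=p^a b$ with $a\ge 0$ and $p\nmid b$, and setting $c(n)=(b\bmod p)$. Observe that since $p\nmid b$, the residue $b\bmod p$ lies in $\{1,2,\dots,p-1\}$, so the coloring is well-defined and uses exactly $p-1$ colors (it is surjective because $c(i)=i$ for $1\le i\le p-1$).

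Next I would verify the satisfactory condition. Fix any $a\in\mathbb Z^+$ and write $a=p^k m$ with $p\nmid m$. For each $i\in\{1,\dots,p-1\}$ we have $ia=p^k(im)$, and since $p\nmid i$ and $p\nmid m$, also $p\nmid im$. Therefore $c(ia)=(im\bmod p)$. The key step is then to check that the values $im\bmod p$ for $i=1,\dots,p-1$ are pairwise distinct: if $im\equiv jm\pmod p$ then $p\mid (i-j)m$, and since $p\nmid m$ we get $p\mid i-j$, forcing $i=j$ because $|i-j|<p$. This uses crucially the fact that $\mathbb Z/p\mathbb Z$ is a field (equivalently, that $p$ is prime), which is precisely where the primality hypothesis enters.

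There is no real obstacle here; the only thing to be careful about is that the definition of $c(n)$ ignores the exact power of $p$ appearing in $n$, so multiplication by $a=p^k m$ simply shifts the exponent of $p$ by $k$ and multiplies the $p$-free part by $m$. The pigeonhole/bijectivity step reduces to the multiplicativity of $(\mathbb Z/p\mathbb Z)^\times$. Once this is in place the theorem is immediate, and the argument also makes transparent why the same idea does not work verbatim for composite $n+1$: one needs $m\mapsto im$ to be injective modulo $n+1$ for every unit $m$ and every $i\in\{1,\dots,n\}$, which fails as soon as some $i\le n$ shares a factor with $n+1$.
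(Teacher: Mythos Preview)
Your proof is correct and takes essentially the same approach as the paper: define $c(n)=(b\bmod p)$ where $n=p^a b$ with $p\nmid b$, and use that multiplication by a unit is injective in $\mathbb Z/p\mathbb Z$. You have simply included more detail in the verification than the paper does.
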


\begin{proof}
Define a coloring $c$ by writing $n=p^ab$ where $a\ge0$ and $p\nmid b$, and letting $c(n)=(b\bmod p)$, so that $c$ uses $p-1$ colors and if $1\le i<j<p$ and $n$ is as 
indicated, then $c(in)=(ib\bmod p)\ne(jb\bmod p)=c(jn)$.
\end{proof}

Although the solution just described makes essential use of the fact that $p$ is prime, it is natural to wonder whether such colorings are possible without this restriction. It is this 
version of the problem that we discuss in this paper. 

\subsection{The general question} \label{subsec:mathoverflow}

In May 29, 2010, D\"om\"ot\"or P\'alv\"olgyi posted on MathOverflow precisely the version just indicated.

\begin{question} \label{q:problem}
Given any positive integer $n$, is there a coloring of the positive integers using $n$ colors such that for any positive integer $a$, the numbers $a,2a,\dots,na$ all have 
different colors?\footnote{See \href{https://mathoverflow.net/q/26358/}{https://mathoverflow.net/q/26358/}}
\end{question}

It was through P\'alv\"olgyi's post that the first-named author became acquainted with the problem. He suggested it to the second-named author, and their partial results became 
the main content of the latter's master's thesis\footnote{See \href{http://scholarworks.boisestate.edu/td/231/}{http://scholarworks.boisestate.edu/td/231/}}.

Question \ref{q:problem} was originally formulated by the third-named author around 2008--2009, motivated by a question of G\"unter Pilz, see \S\,\ref{subs:pilz}. After working on 
it for a while, he posed several related questions in K\"oMaL. For instance, besides problem A.506, he also posed problem B.4265 in the April 2010 
issue,\footnote{See \href{https://www.komal.hu/feladat?a=honap&h=201004&t=mat&l=en}{https://www.komal.hu/feladat?a=honap\&h=201004\&t=mat\&l=en}.} asking about 
the case $n=7$. P\'alv\"olgyi first saw problem A.506 and became interested in the general version. He contacted the editor of K\"oMaL in charge of the ``A problems'' and asked
whether they knew the answer for general $n$. It was not until years later that P\'alv\"olgyi found out that the K\"oMaL questions and the general version of the problem were
originally posed by the third-named author.

Although the general problem remains open, there are enough partial results that we feel it is appropriate to publish this paper now, to further expose the mathematical community 
at large to question \ref{q:problem}, and to indicate the current state of affairs and the many additional questions that come out of this exploration. Question \ref{q:problem} has 
connections with number theory and group theory as well as a clearly combinatorial core. Some of the ideas we describe benefit from this interaction.

Several results we present are due to others, either from previous research on related topics or through suggestions posted on MathOverflow. We make every attempt to give 
credit as appropriate. 

As suggested above, we call \emph{$n$-satisfactory} a coloring as in the statement of question \ref{q:problem}. The analysis of the case $n=4$ in \S\,\ref{subsec:komal} reveals 
that we can in general restrict our attention to seeking $n$-satisfactory colorings of the set of \emph{$n$-smooth} numbers, that is, the set $K_n$ of positive integers whose 
prime factorization only includes primes less than or equal to $n$. We call this set the \emph{$n$-core}, see definition \ref{def:ncore}, and elaborate on this issue in section 
\ref{sec:preliminaries}; briefly, if there is an $n$-satisfactory coloring of $K_n$, we can color all positive integers by assigning to the number $km$, where $k\in K_n$ and 
$\gcd(m,n!)=1$ the color of $k$, and one can quickly check that this is an $n$-satisfactory coloring of $\mathbb Z^+$.

We note that, given $n$, even if question \ref{q:problem} has a negative answer for $n$, strictly fewer than $2n$ colors suffice to ensure that for any $a\in K_n$ all numbers 
$ia$, $1\le i\le n$, receive different colors: indeed, letting $p$ be the smallest prime larger than $n$, we can color $K_n$ with $p$ colors as in theorem \ref{thm:caseprime}, by 
assigning to $m\in K_n$ the color $(m\bmod p)$. If question \ref{q:problem} turns out to have a negative answer, it seems worth studying the following natural variant:

\begin{question} \label{q:prime}
Assuming that question \ref{q:problem} has a negative answer for $n$, can we find a better bound than the smallest prime larger than $n$ on the number of colors required to 
ensure a positive answer?
\end{question}

We close this introduction by discussing an application and the original motivation for question \ref{q:problem}.

\subsection{The Balasubramanian--Soundararajan theorem} \label{subsec:graham}

In 1970, Ronald Graham \cite{Graham70} conjectured the following:
\begin{quote}
If $n\ge 1$, and $0<a_1 <a_2 <\cdots<a_n$ are integers, then 
 $$ \max_{i,j}\frac{a_i}{\gcd(a_i,a_j)}\ge n. $$
\end{quote}

Graham's conjecture was finally verified in 1996 by Balasubramanian and Sound\-a\-ra\-ra\-jan via careful analytic estimates of average values of number-theoretic functions 
associated with the distribution of primes, see \cite{BalasubramanianSoundararajan96}. Assuming the existence of satisfactory colorings, we obtain a significantly simpler proof.

\begin{theorem} \label{thm:graham}
If there is an $(m-1)$-satisfactory coloring, then Graham's conjecture holds for $n=m$. 
\end{theorem}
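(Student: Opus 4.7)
The plan is a short pigeonhole argument using the hypothesized $(m-1)$-satisfactory coloring $c$ of $\mathbb Z^+$. I would argue by contradiction: assume that $0 < a_1 < a_2 < \cdots < a_m$ are integers with
\[
\frac{a_i}{\gcd(a_i,a_j)} \le m-1 \quad \text{for all } i,j,
\]
and aim to derive a clash with the defining property of $c$.

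First I would apply pigeonhole: the $m$ values $c(a_1),\ldots,c(a_m)$ take at most $m-1$ colors, so there exist indices $i<j$ with $c(a_i)=c(a_j)$. Next I would set $g=\gcd(a_i,a_j)$ and write $a_i = k g$, $a_j = \ell g$ with $k\neq \ell$ (since $a_i<a_j$). The assumed bound gives $k = a_i/g \le m-1$ and $\ell = a_j/g \le m-1$, so both $k$ and $\ell$ lie in $\{1,2,\dots,m-1\}$.

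Finally I would invoke the $(m-1)$-satisfactory property of $c$ applied to the multiplier $g$: the numbers $g, 2g, \dots, (m-1)g$ must all receive distinct colors. In particular $c(kg)\neq c(\ell g)$, i.e.\ $c(a_i)\neq c(a_j)$, contradicting the pigeonhole conclusion.

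There is essentially no obstacle here: the argument is two lines once one spots the right reformulation of Graham's inequality as saying ``$a_i$ and $a_j$ are among the first $m-1$ multiples of their gcd.'' The only thing worth noting is that one does not even need $c$ to be defined only on $K_n$ or anywhere special; any $(m-1)$-satisfactory coloring of $\mathbb Z^+$ works, applied at the single multiplier $a = g$.
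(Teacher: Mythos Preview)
Your proof is correct and takes essentially the same approach as the paper: both argue by contradiction and observe that any two of the $m$ numbers must receive distinct colors because each pair $a_i,a_j$ lies among the first $m-1$ multiples of their gcd. The only cosmetic difference is that the paper shows directly that all $m$ numbers are pairwise differently colored (impossible with $m-1$ colors), whereas you apply pigeonhole first to locate a monochromatic pair and then derive the contradiction.
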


\begin{proof}
Argue by contradiction. Accordingly, suppose that there are $(m-1)$-sat\-is\-fac\-to\-ry colorings and that $0 < b_1 < \cdots < b_m$ are integers such that 
 $$ \max_{i,j} b_i/\gcd(b_i,b_j)<m. $$ 
Suppose $i\ne j$ and let $M =\gcd(b_i,b_j)$. Let $a_i=b_i/M$ and $a_j=b_j/M$, so $a_i,a_j$ are both less than $m$, and $a_i\ne a_j$. Since $b_i =a_iM$ and $b_j=a_jM$, in 
any $(m-1)$-satisfactory coloring of $\mathbb Z^+$ we must have that $b_i$ is colored differently from $b_j$. This is impossible, since it would mean the coloring uses at least 
$m$ colors.
\end{proof}

The relationship highlighted in theorem \ref{thm:graham} between our question \ref{q:problem} and the Balasubramanian--Soundararajan theorem admits a nice graph-theoretic 
interpretation, that we now proceed to discuss. This connection was first mentioned by P\'eter Csikv\'ari to the third-named author, and was also noticed independently by Fedor 
Petrov in MathOverflow\footnote{See \href{https://mathoverflow.net/q/26358/}{https://mathoverflow.net/q/26358/}} and by Bosek, D\k ebski, Grytczuk, Sok\'o\l, 
\'Sleszy\'nska-Nowak and \.Zelazny, who also arrived independently of us at some of the observations below in their recent paper \cite{Boseketal18} (particularly, see their \S\,4).

Given a graph $G$, write $\chi(G)$ for its \emph{chromatic number}, that is, the least cardinal $\kappa$ such that the set of vertices of $G$ can be colored with $\kappa$ 
colors in such a way that adjacent vertices receive different colors. Note that if $G$ admits a clique (complete subgraph) on $r$ vertices, then $\chi(G)\ge r$, and 
therefore $\chi(G)\ge\omega(G)$, where $\omega(G)$ is the \emph{clique number} of $G$, that is, the supremum of the cardinalities of the cliques of $G$.

Given $n$, consider now the graph $\mathcal G_n$ with the positive integers as vertices and where any two $i\ne j$ in $\mathbb Z^+$ are connected if and only if 
$\max(i,j)/\gcd(i,j)\le n$ (that is, if and only if $i,j\in\{a,2a,\dots,na\}$ for some $a$); in \cite{Boseketal18}, this graph is denoted $B_n$. Note that $\mathcal G_n$ has many 
cliques of size $n$, namely each set $\{a,2a,\dots,na\}$ (and possibly others), so that $\omega(\mathcal G_n)\ge n$; see figure \ref{fig:g4}. 

\begin{figure}[ht] 
\centering
\begin{tikzpicture}[scale=.8,%
  every node/.style={draw, text=black,circle,minimum size=1pt},node distance=.8cm]
  \node[color=red]  (one) at (2,0) {\scriptsize$1_1$};
  \node[color=blue]  (two)  at (2*0.707,2*0.707) {\scriptsize$2_2$};
  \node[color=black]  (three) at (0,2) {\scriptsize$3_3$};
  \node[color=purple]  (four) at (2*-0.707,2*0.707) {\scriptsize$4_4$};
  \node[color=red]  (six)  at (-2,0) {\scriptsize$6_1$};
  \node[color=black]  (eight) at (2*-0.707,2*-0.707) {\scriptsize$8_3$};
  \node[color=purple]  (nine) at (0,-2) {\scriptsize$9_4$};
  \node[color=blue]  (twelve) at (2*0.707,2*-0.707) {\scriptsize$12_2$};
  \draw [violet] (one) -- (two) -- (three) -- (four) -- (one) -- (three) -- (six) -- (four) -- (two) -- (eight) -- (six) -- (nine) -- (twelve) -- (six) -- (two);
  \draw [violet] (nine) -- (three) -- (twelve) -- (eight) -- (four) -- (twelve);
\end{tikzpicture}
\caption{A portion of $\mathcal G_4$. Subindices indicate a coloring witnessing that $\chi(\mathcal G_4)=4$. Note the clique $\{2,3,4,6\}$. }
\label{fig:g4}
\end{figure}
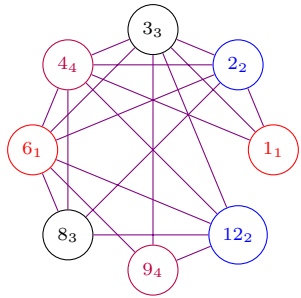

By definition, an $n$-satisfactory coloring $c$ of $\mathbb Z^+$ is a coloring of $\mathcal G_n$ with $n$ colors, that is, the existence of such a map $c$ is precisely the 
claim that $\chi(\mathcal G_n)$ is (at most, and therefore equal to) $n$. By the remark above, this implies that $\omega(\mathcal G_n)$ is (at most, and therefore equal to) 
$n$, but this is precisely Graham's conjecture for $n+1$, and we have reproved theorem \ref{thm:graham}. 

\subsection{Pilz's conjecture} \label{subs:pilz}

Recall that the symmetric difference $C\triangle D$ of two sets $C,D$, is the set of elements that belong to exactly one of $C,D$, that is, 
 $$ C\triangle D=(C\cup D)\sbs (C\cap D)=(C\sbs D)\cup (D\sbs C). $$ 
Note that $\triangle$ is associative, and so, given sets $C_1,\dots, C_m$, their symmetric difference $\bigtriangleup_{l=1}^m C_l$ is simply the set of elements that 
belong to precisely an odd number of sets $C_l$. 

If $X\subseteq \mathbb Z^+$ and $k\in\mathbb R$, we denote by $k\cdot X$ the \emph{dilation} of $X$ by a factor of $k$: 
 $$ k \cdot X = \{kx : x \in X\}. $$

Pilz's conjecture, the original motivation for question \ref{q:problem}, first appeared in 1992 \cite{Pilz92}. For our purposes, it is convenient to phrase it as follows:
\begin{quote}
If $n\ge 1$ and $A$ is a finite set of positive integers, then the size of the symmetric difference of the sets $A, 2\cdot A, \dots, n\cdot A$ is at least $n$.
\end{quote}
For $m$ a positive integer, it will be convenient in what follows to write $[m]$ for the set $\{1,2,\dots,m\}$. The particular case of Pilz's conjecture where $A=[k]$ for some 
$k\in\mathbb Z^+$ was eventually established independently during the academic year 2008--2009 by P.-Y. Huang, W.-F. Ke and G. F. Pilz \cite{HuangKePilz10} and by Pach 
and C. Szab\'o \cite{PachSzabo11}. The general case remains open.

\begin{theorem}
If there is an $n$-satisfactory coloring, then Pilz's conjecture holds for $n$ under the further assumption that $|A|$ is odd.
\end{theorem}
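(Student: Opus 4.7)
The plan is to use a double-counting argument on the multiset union of the dilates $A, 2\cdot A, \ldots, n\cdot A$, organized by color. Let $c$ be an $n$-satisfactory coloring of $\mathbb{Z}^+$ with color set $[n]$, let $A$ be a finite set of positive integers with $|A|$ odd, and write $B=A\triangle 2\cdot A\triangle\cdots\triangle n\cdot A$. For each $x\in\mathbb{Z}^+$ put
$$m(x)=|\{i\in[n]:x\in i\cdot A\}|,$$
so that by definition $x\in B$ if and only if $m(x)$ is odd. The goal is to show that for every color $k\in[n]$, there is at least one $x\in B$ with $c(x)=k$.

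The key observation is that for each $a\in A$, the tuple $(a,2a,\ldots,na)$ uses every color of $c$ exactly once, because $c$ is $n$-satisfactory. Consequently, if we count the color-$k$ elements in the disjoint union $\bigsqcup_{i=1}^n i\cdot A$ by first indexing over $a$ and then by $i$, we get exactly $|A|$: each $a$ contributes precisely one color-$k$ element among $a,2a,\ldots,na$. On the other hand, counting this disjoint union by first indexing on the value $x$ gives
$$\sum_{\substack{x\in\mathbb{Z}^+\\ c(x)=k}} m(x)=|A|.$$
These two evaluations yield the crucial identity.

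Now I would invoke the hypothesis that $|A|$ is odd. The displayed sum is odd, so the number of indices $x$ with $c(x)=k$ for which $m(x)$ is odd must itself be odd, and in particular strictly positive. Since $m(x)$ odd is exactly the condition $x\in B$, this produces at least one element of $B$ of each color $k\in[n]$. Since the $n$ colors are distinct, $|B|\ge n$, which is precisely Pilz's conjecture for $A$.

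I do not anticipate a real obstacle here: the argument is essentially a parity version of the trivial identity $\sum_{i=1}^n|i\cdot A|=n|A|$, with the $n$-satisfactory coloring used to split the multi-union by color and the oddness of $|A|$ used to convert a sum parity into the existence of an odd summand. The most delicate point to be careful about in writing it up is simply that each color appears exactly once on $(a,2a,\ldots,na)$ (not just pairwise distinctly), which is what makes the per-color count equal to $|A|$ rather than a smaller quantity; this is exactly the content of the $n$-satisfactoriness together with the pigeonhole on $n$ colors.
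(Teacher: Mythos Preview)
Your proof is correct and follows essentially the same approach as the paper. The paper phrases the argument by first transposing the symmetric difference $\bigtriangleup_{i=1}^n i\cdot A$ into $\bigtriangleup_{j=1}^{|A|} B_j$ where $B_j=\{ia_j:i\in[n]\}$, and then observes that each $B_j$ meets every color class exactly once; your multiplicity function $m(x)$ and the identity $\sum_{c(x)=k}m(x)=|A|$ encode precisely the same double count, and both arguments finish with the identical parity step.
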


\begin{proof}
Say that $|A|=k$, let $A=\{a_j: j\in[k]\}$ and, for $j\in[k]$, set 
 $$ B_j=\{i\cdot a_j: i\in[n]\}. $$
Note first that the symmetric difference of the sets $i\cdot A$, $i\in[n]$, equals the symmetric difference of the sets $B_j$, $j\in[k]$. The point is that, denoting by 
$\chi_C(\cdot)$ the characteristic function of a set $C$, we have for any element $x$ that 
 $$ \chi_{\bigtriangleup_{i=1}^n i\cdot A}(x)=\biggl(\sum_{i=1}^n\chi_{i\cdot A}(x)\biggr)\bmod 2=|\{i\in[n]: \exists j\in[k]\,(x=i\cdot a_j)\}|\bmod 2 $$ 
and 
 $$ \chi_{\bigtriangleup_{j=1}^k B_j}(x)=\biggl(\sum_{j=1}^k\chi_{B_j}(x)\biggr)\bmod 2=|\{j\in[k]:\exists i\in[n]\,(x=i\cdot a_j)\}|\bmod 2, $$
and both expressions coincide since both equal 
 $$ |\{(i,j)\in[n]\times[k]: x=i\cdot a_j\}|\bmod 2. $$

For any $n$-satisfactory coloring, in every $B_j$ each color appears exactly once. That is, the sets $B_1,B_2,\dots,B_k$ contain $k$ numbers from each color class (counted 
with multiplicity). If $k$ is odd, then this means that their symmetric difference must contain an odd number of elements from each color class (and therefore at least one). But 
there are $n$ colors. 
\end{proof}

It was precisely this observation that motivated the third-named author to formulate question \ref{q:problem}. Sadly, when $|A|$ is even, the trick above does not apply and we do 
not see a way of establishing the conjecture in full generality. 

Pilz formulated in \cite{Pilz92} both the general case and the special case of his conjecture where $A=[k]$ for some $k$ (the latter is sometimes called the 1-2-3 conjecture). The 
paper \cite{PachSzabo11} is based on the third-named author's master's thesis\footnote{See \href{http://web.cs.elte.hu/blobs/diplomamunkak/mat/2009/pach_peter_pal.pdf}%
{http://web.cs.elte.hu/blobs/diplomamunkak/mat/2009/pach$\underline\ $peter$\underline\ $pal.pdf}}. 

For $A=[k]$, if there is a $k$-satisfactory coloring and $n$ is odd, then the same argument gives us that the size of the symmetric difference $\bigtriangleup_{i=1}^n i\cdot A$ is 
at least $n$. 

\subsection{Organization of this paper} \label{subsec:organization}

We begin section \ref{sec:preliminaries} with some preliminaries, emphasizing the role of what we call the $n$-core $K_n$. We also include some easy observations on the 
structure of the set $C_{K_n}$ of $n$-satisfactory colorings of the $n$-core. We reformulate question \ref{q:problem} as a problem about tilings, and close the section by giving 
an explicit description of all $n$-satisfactory colorings for $n\le 5$. In section~\ref{sec:pk+1} we explore an idea that directly generalizes the approach used to solve the 
original K\"oMaL problem (the case where $n+1$ is a prime number). This suggestion leads to several interesting number-theoretic questions that we also discuss. In section 
\ref{sec:multiplicative} we discuss a group-theoretic approach to question \ref{q:problem} that encompasses the suggestion from section \ref{sec:pk+1}. The colorings to which 
this suggestion applies we call multiplicative; we also characterize these colorings geometrically through the notion of translation invariance, and close the section by listing all 
multiplicative $n$-satisfactory colorings for $n\le 8$. We conclude in section \ref{sec:groupless} by indicating cases where the approach from section \ref{sec:multiplicative} fails. 
This includes a brief review of prior work by Forcade and Pollington \cite{ForcadePollington90}, and also a discussion of nonmultiplicative 6- and 8-satisfactory colorings. The final 
section \ref{sec:questions} lists several of the remaining open problems. We proceed to list some additional details.

The theorem below summarizes the values of $n$ for which a positive answer to question \ref{q:problem} is known, see also \S\,\ref{subsec:table}.

\begin{theorem} \label{thm:positive}
Question \ref{q:problem} has a positive answer for $n$, that is, there is an $n$-satisfactory coloring, in any of the following cases:
\begin{enumerate}
\item
$n+1$ is prime.
\item
$2n+1$ is prime.
\item
More generally, there is a strong representative of order $n$, i.e., a prime $p$ of the form $nk+1$ for some $k$ such that $1^k,\dots,n^k$ are pairwise distinct modulo $p$.
\item
Yet more generally, $n$ admits a partial $G$-isomorphism for some abelian group $(G,\oplus)$ of order $n$, i.e., there is a bijection $h\!:[n]\to G$ such that whenever 
$a,b\in[n]$, if $ab\in[n]$, then $h(ab)=h(a)\oplus h(b)$. In particular:
\item
For all $n<195$, and 
\item 
For all $n$ of the form $p^2-p$ for some prime $p$.
\end{enumerate}
\end{theorem}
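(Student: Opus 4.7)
The plan is to prove (4) first, as items (1), (2), (3), and (6) all follow by exhibiting appropriate groups $G$ and bijections $h$, while (5) reduces to a computer-assisted search. For (4), given a partial $G$-isomorphism $h\colon [n]\to G$, I would extend $h$ multiplicatively to a coloring $c$ of the $n$-core $K_n$: for $m \in K_n$ with prime factorization $m = p_1^{a_1}\cdots p_r^{a_r}$ (each $p_i \in [n]$), set
\[
c(m) = \bigoplus_{i=1}^{r} a_i\, h(p_i),
\]
where $a\cdot g$ denotes the $a$-fold sum $g \oplus \cdots \oplus g$ in $G$; this is well-defined since $G$ is abelian. The key claim is that $c(i) = h(i)$ for every $i \in [n]$, which follows by induction on the number of prime factors of $i$ by repeatedly applying $h(ab) = h(a) \oplus h(b)$ to collapse partial products (each of which remains at most $i$ and hence lies in $[n]$). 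One then reads off $c(ij) = h(i) \oplus c(j)$ for $i \in [n]$ and $j \in K_n$ directly from the definition, so for distinct $i, i' \in [n]$, injectivity of $h$ yields $c(ij) \neq c(i'j)$. Thus $c$ is $n$-satisfactory on $K_n$, and by the reduction noted in the introduction extends to an $n$-satisfactory coloring of $\mathbb Z^+$.

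For (1)--(3), I would observe that (1) and (2) are the cases $k = 1$ and $k = 2$ of (3). Given a strong representative $q = nk + 1$ of order $n$, the $k$-th power residues $G = \{x^k \bmod q : x \in (\mathbb Z/q)^*\}$ form a subgroup of order $(q-1)/\gcd(k, q-1) = n$, and the distinctness hypothesis makes $h(i) = i^k \bmod q$ a bijection $[n] \to G$ that trivially satisfies $h(ab) = h(a)h(b)$, placing us in the setting of (4). The case $n+1$ prime corresponds to $k = 1$; the case $2n + 1$ prime corresponds to $k = 2$, where distinctness of $1^2, \dots, n^2$ modulo $q$ follows because $i^2 \equiv j^2 \pmod{q}$ forces $q \mid (i - j)(i + j)$, impossible for distinct $i, j \in [n]$ since $0 < i + j \le 2n - 1 < q$.

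For (6), with $n = p^2 - p$, I would take $G = (\mathbb Z/p^2)^*$, abelian of order $p(p-1) = n$, and define
\[
h(i) = \begin{cases} i \bmod p^2 & \text{if } \gcd(i, p) = 1, \\ -j \bmod p^2 & \text{if } i = jp \text{ with } j \in [p-1]. \end{cases}
\]
Bijectivity holds because the coprime-to-$p$ part of $[n]$ accounts for the units lying in $\{1, \dots, n\}$, while $\{-j \bmod p^2 : j \in [p-1]\} = \{n+1, \dots, p^2 - 1\}$ covers the remaining units of $G$. Multiplicativity reduces to three case checks: both factors coprime to $p$ (immediate, as $ab \le n < p^2$); exactly one factor of the form $jp$ (the identity $a \cdot (-j) \equiv -(aj) \pmod{p^2}$); and both factors multiples of $p$ (vacuous, since then $ab \ge p^2 > n$). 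Finally, (5) follows by combining (1)--(4) and (6) with finite computation: for each $n < 195$ one exhibits either a strong representative of order $n$ or directly a partial $G$-isomorphism for some abelian group $G$ of order $n$. I expect the main obstacle to be (6): the prescription $h(p) = -1 \bmod p^2$ is in fact forced by requiring the images of $p, 2p, \dots, (p-1)p$ to exactly fill out $\{n+1, \dots, p^2 - 1\}$, and without spotting this there is no evident way to extend the natural coprime-to-$p$ part of $h$ compatibly to the multiples of $p$.
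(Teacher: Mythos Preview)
Your proposal is correct and follows essentially the same route as the paper: you prove (4) by multiplicatively extending $h$ to $K_n$ (the paper's Theorem~\ref{thm:partialisomorphism}), derive (1)--(3) by exhibiting the $k$th-power map into the subgroup of $k$th powers modulo a strong representative (the paper's Theorem~\ref{thm:representative} and Corollary~\ref{cor:2n+1}), and for (6) give exactly the paper's construction of a partial $(\mathbb Z/p^2\mathbb Z)^*$-isomorphism (Theorem~\ref{thm:p2-p}). Your inductive verification that $c(i)=h(i)$ for $i\in[n]$ via partial products and your bijectivity count in (6) are in fact slightly more explicit than the paper's treatment, but the underlying ideas are identical.
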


\begin{proof}
(1) This is theorem \ref{thm:caseprime}.

\vspace{1mm}
(2) See corollary \ref{cor:2n+1}.

\vspace{1mm}
(3) See theorem \ref{thm:representative}.

\vspace{1mm}
(4) See theorem \ref{thm:partialisomorphism}.

\vspace{1mm}
(5) See theorem \ref{thm:195} or \cite{ForcadePollington90}.

\vspace{1mm}
(6) See theorem \ref{thm:p2-p}.

\vspace{1mm}
That (3) generalizes (1) and (2) is explained in \S\,\ref{subsec:strong}. That (4) generalizes (3) is explained in \S\,\ref{subs:multiplicative}. That (5) follows from (4) is 
explained in \S\,\ref{subsec:groupless}. That (6) follows from (4) is shown in the proof of theorem \ref{thm:p2-p}. 
\end{proof}

We feel that although question \ref{q:problem} was the guiding influence for much of the research reported in this paper, the topic will not be concluded even when the question is 
settled completely. Indeed, much of the paper is devoted to exploring how many $n$-satisfactory colorings there are for a given $n$ and, more generally, to studying the structure 
of these colorings. 

\begin{theorem} \label{thm:howmany}
If there is an $n$-satisfactory coloring, then there are as many such colorings as there are real numbers. Any $n$-satisfactory coloring of $\mathbb Z^+$ is determined by a sequence 
of $n$-satisfactory colorings of the core $K_n$ and a sequence of permutations of $[n]$. 

On the other hand, there are values of $n$ for which there are only finitely many $n$-satisfactory colorings of the core, and others for which there are again as many such colorings 
as there are real numbers.
\end{theorem}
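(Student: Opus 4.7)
The plan is to separate the theorem into its four assertions and address them in turn. The upper bound of $2^{\aleph_0}$ on the number of $n$-colorings of either $\mathbb Z^+$ or $K_n$ is immediate from the fact that both sets are countable and one is using only $n$ colors.

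For the first assertion, together with the structural description, I would exploit the fact that every positive integer $a$ factors uniquely as $a=km$ with $k\in K_n$ and $\gcd(m,n!)=1$, yielding the partition $\mathbb Z^+=\bigsqcup_{m} m\cdot K_n$ indexed by the positive integers $m$ whose prime factors all exceed $n$. For $j\in[n]$ and $a=km$ in a fiber $m\cdot K_n$, the product $ja=(jk)m$ stays in the same fiber (since $K_n$ is closed under multiplication by $[n]$), so the $n$-satisfactory constraint decouples completely across fibers. Consequently, any $n$-satisfactory coloring $c$ of $\mathbb Z^+$ is the same data as the sequence of its fiberwise restrictions $c_m(k):=c(km)$, each of which is an $n$-satisfactory coloring of $K_n$; if one prefers, one can fix a reference coloring $c_0$ of $K_n$ and record $c$ as a sequence of $n$-satisfactory colorings $d_m$ of $K_n$ together with a sequence of permutations $\pi_m$ of $[n]$ such that $c_m=\pi_m\circ d_m$, which is the claimed description. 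Since the fiber index set is countably infinite, independently choosing a permutation $\pi_m$ for each $m$ and setting $c(km)=\pi_m(c_0(k))$ produces $(n!)^{\aleph_0}=2^{\aleph_0}$ distinct $n$-satisfactory colorings of $\mathbb Z^+$ from any single $n$-satisfactory coloring $c_0$ of $K_n$ (which exists by hypothesis), establishing the first assertion.

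For the existence of an $n$ for which $K_n$ admits only finitely many $n$-satisfactory colorings, I would carry out the explicit case analysis for $n\le 5$ promised in section \ref{sec:preliminaries}. The template is the $n=4$ analysis of \S\,\ref{subsec:komal}: once the colors of $1,2,\dots,n$ are chosen (one of $n!$ permutations), every other element of $K_n$ lies in enough overlapping rainbow sets $\{a,2a,\dots,na\}$ that its color is iteratively forced. A finite case check in each of $n=2,3,4,5$ then yields only finitely many colorings in total, matching the promise in the introduction.

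For the existence of an $n$ with continuum-many $n$-satisfactory colorings of $K_n$ alone, which is the delicate part, I would appeal to the constructions for $n=6$ and $n=8$ to be developed in section \ref{sec:groupless}. The idea is to locate, inside $K_n$, an infinite family of pairwise independent local modifications, by which I mean pairs (or cycles) of elements of $K_n$ whose colors may be simultaneously swapped without breaking any rainbow constraint, and whose supports share no rainbow constraint with one another; making $2^{\aleph_0}$ independent binary choices then produces $2^{\aleph_0}$ distinct colorings of $K_n$. I expect this to be the main obstacle: the rigidity observed for $n\le 5$ shows that the existence of such an infinite independent family is a nontrivial structural feature of $K_n$, and its verification relies on the specific nonmultiplicative constructions, in the spirit of Forcade and Pollington, developed later in the paper.
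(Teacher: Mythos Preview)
Your proposal is correct and follows the same decomposition as the paper: the fiberwise argument over the $n$-rough cosets $m\cdot K_n$ is exactly Proposition~\ref{prop:appropriate}(4) and Corollary~\ref{cor:many}, and the finiteness for small $n$ is handled by the explicit case analysis of \S\ref{subs:nle5}, just as you outline.

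The one place where your speculation diverges from the paper is the mechanism for producing continuum-many colorings of $K_6$ and $K_8$. You guess at an infinite family of independent local \emph{swaps}, but the paper's construction in \S\ref{subs:n6} and \S\ref{subs:n8} is of a different flavor: it builds colorings $d^x$ indexed by $x\in[3]^{\mathbb Z^+}$ (respectively $[4]^{\mathbb Z^+}$) by independently choosing, for each $k>0$, the \emph{type} of the horizontal strip $H_{2k}(0)$ in an essential tiling of $\mathbb O_n$ by $T_n$, and then propagating these choices to the other coordinate hyperplanes by a fixed recursive rule. The independence lives at the level of strips in the tiling picture, not at the level of color-swaps on individual elements. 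Also, the attribution ``in the spirit of Forcade and Pollington'' is misplaced: their work \cite{ForcadePollington90} concerns the \emph{nonexistence} of multiplicative colorings for certain $n$ (the groupless numbers of \S\ref{subsec:groupless}), not the construction of many nonmultiplicative colorings for $n=6,8$, which is original to this paper.
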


\begin{proof}
See item (4) of proposition \ref{prop:appropriate}, where the precise way in which $n$-satisfactory colorings correspond to a sequence of colorings of the core and a sequence of 
permutations is described. From this, the number of $n$-satisfactory colorings is easily obtained, see corollary \ref{cor:many}. 

In \S\,\ref{subs:nle5} we show that for $n\le 5$ there are only finitely many $n$-satisfactory colorings of the core. In \S\,\ref{subs:n6} and \S\,\ref{subs:n8} we show that there are as
many $n$-satisfactory colorings of the core as there are real numbers for $n=6,8$.
\end{proof}

This result shows that the study of the structure of $n$-satisfactory colorings should really focus on the core, and we orient our efforts accordingly. Particular attention is paid to 
colorings with special structure. 

\begin{defin}{def:multiplicative}
An $n$-satisfactory coloring $c$ of $\mathbb Z^+$ or $K_n$ is multiplicative if and only if there is an abelian group structure $([n],\oplus)$ such 
that $c(ab)=c(a)\oplus c(b)$ for all $a,b$. 
\end{defin}

Note that for any given $n$ there are only finitely many such group structures. Nevertheless, a version of theorem \ref{thm:howmany} 
holds for this case as well.

\begin{theorem}
There is a multiplicative coloring of $K_n$ if and only if there are as many such colorings of $\mathbb Z^+$ as there are real numbers.

For any $n$, there are only finitely many multiplicative colorings of $K_n$.
\end{theorem}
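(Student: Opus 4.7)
The plan is to split the biconditional into its two halves and handle the finiteness claim separately. The easy direction of the equivalence is free: if continuum many multiplicative colorings of $\mathbb Z^+$ exist, pick one and restrict it to $K_n$; both $n$-satisfactoriness and the identity $c(ab)=c(a)\oplus c(b)$ are preserved under restriction, so the result is a multiplicative coloring of $K_n$. The real content lies in the converse.

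For the converse, I would start from a multiplicative coloring $c_0\colon K_n\to[n]$ with abelian group structure $([n],\oplus)$ and, for each prime $p>n$, choose a value $\chi_p\in[n]$ freely. Using the unique factorization $a=k\cdot m$ of every $a\in\mathbb Z^+$ with $k\in K_n$ and $\gcd(m,n!)=1$, I would extend $c_0$ to $c\colon\mathbb Z^+\to[n]$ by setting $c(a)=c_0(k)\oplus c(m)$, where $c(m)$ is computed multiplicatively from the $\chi_p$. Two routine verifications then remain. First, $c$ is multiplicative on all of $\mathbb Z^+$ because both $c_0$ and the assignment $m\mapsto c(m)$ are multiplicative and the coprime factorization respects multiplication. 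Second, $c$ is $n$-satisfactory because for any $a=km$ and any $i\in[n]$ we have $c(ia)=c_0(ik)\oplus c(m)$, whose distinctness in $i$ reduces to distinctness of $c_0(ik)$, guaranteed by $n$-satisfactoriness of $c_0$ on $K_n$. Finally, distinct sequences $(\chi_p)_{p>n}$ produce distinct colorings, since they disagree on some prime $p$ and $c(p)=\chi_p$ (note that $c_0(1)$ is forced to be the identity of $\oplus$ by $c_0(1)=c_0(1)\oplus c_0(1)$); there are $n^{\aleph_0}=2^{\aleph_0}$ such sequences when $n\ge 2$, and the case $n=1$ is trivial.

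For the finiteness claim, I would argue that a multiplicative coloring of $K_n$ is determined by the choice of abelian group operation $\oplus$ on $[n]$, of which there are only finitely many, together with the values of $c$ on the primes $p_1,\dots,p_r\le n$, of which there are at most $n^r$ choices. This is because every element of $K_n$ is a product of these primes and multiplicativity then fixes $c$ on all of $K_n$ via $c(p_1^{a_1}\cdots p_r^{a_r})=a_1c(p_1)\oplus\cdots\oplus a_rc(p_r)$. This immediately gives a finite bound on the number of multiplicative colorings of $K_n$.

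The main (and very mild) obstacle will be the verification in the converse direction that the freely extended $c$ remains $n$-satisfactory on all of $\mathbb Z^+$; as indicated, this reduces cleanly to $n$-satisfactoriness of $c_0$ on $K_n$, so no substantial difficulty arises. The rest is counting.
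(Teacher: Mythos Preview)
Your proposal is correct and follows essentially the same approach as the paper: the paper's Theorem~\ref{thm:manymultiplicative} extends a multiplicative coloring of $K_n$ to $\mathbb Z^+$ by freely assigning a value in $[n]$ to each prime larger than $n$ and extending multiplicatively, exactly as you do, and Corollary~\ref{cor:finitelymultiplicative} obtains finiteness from the formula $c(p_1^{\alpha_1}\cdots p_s^{\alpha_s})=\alpha_1c(p_1)\oplus\cdots\oplus\alpha_sc(p_s)$, again as you do. The only cosmetic difference is that the paper uses the convention $c(i)=i$ for $i\in[n]$, which fixes the values on small primes and yields the slightly sharper observation that $\oplus$ alone determines the coloring; your bound with the extra factor $n^r$ is of course still finite.
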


\begin{proof}
See theorem \ref{thm:manymultiplicative} and corollary \ref{cor:finitelymultiplicative}.
\end{proof}

We explore $n$-satisfactory multiplicative colorings throughout the paper, and in particular in section \ref{sec:multiplicative}. We list all of them for $n\le 5$ in \S\,\ref{subs:nle5} and 
for $6\le n\le 8$ in \S\,\ref{sub:nle8}. In theorem \ref{thm:p2-p} we show that there are such colorings if $n=p^2-p$ for some prime $p$. The class of multiplicative colorings to which 
we devote most attention is the following, already encountered in theorem \ref{thm:positive}.

\begin{defin}{def:strongrepresentatives}
A strong representative of order $n$ is a prime $p$ of the form $kn+1$ for some $k$ such that $1^k,\dots,n^k$ are pairwise distinct modulo $p$. If there is such a prime $p$, we say 
that $n$ admits a strong representative.
\end{defin}

\begin{theorem}
If $p=kn+1$ is a strong representative of order $n$, then, up to renaming of the colors, the map $c(a)=(a^k\bmod p)$ is a multiplicative $n$-satisfactory coloring.
\end{theorem}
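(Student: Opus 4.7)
The plan is to exploit the structure of the multiplicative group modulo $p$. Since $p=kn+1$ is prime, $(\mathbb Z/p\mathbb Z)^\ast$ is cyclic of order $kn$, and hence contains a unique subgroup $H$ of order $n$, consisting precisely of the $k$-th power residues modulo $p$. Because $p>n$, every element of $K_n$ is coprime to $p$, so the map $c(a)=(a^k\bmod p)$ sends $K_n$ into $H$.

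First I would verify that the restriction of $c$ to $[n]$ is a bijection onto $H$. By the strong representative hypothesis, $c(1),\dots,c(n)$ are pairwise distinct; they all lie in $H$ since they are $k$-th powers; and $|H|=n$ then forces the image to be all of $H$. Denote this bijection $\phi\colon [n]\to H$.

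Next I would check the $n$-satisfactory property. For any $a\in K_n$ and $1\le i<j\le n$, we have $c(ia)\equiv i^ka^k\pmod p$ and $c(ja)\equiv j^ka^k\pmod p$. Since $\gcd(a,p)=1$ the class $a^k$ is invertible modulo $p$, so $c(ia)=c(ja)$ would force $i^k\equiv j^k\pmod p$, contradicting the strong representative hypothesis.

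Finally, to match Definition \ref{def:multiplicative} verbatim, I would transport the multiplicative structure of $H$ back to $[n]$ along $\phi$: set $i\oplus j:=\phi^{-1}(\phi(i)\phi(j))$, which makes $([n],\oplus)$ an abelian group, and rename by $c'(a):=\phi^{-1}(c(a))$. The congruence $(ab)^k\equiv a^kb^k\pmod p$ then translates into $c'(ab)=c'(a)\oplus c'(b)$, which is exactly multiplicativity. No substantive obstacle arises: the entire content of the argument is pushed into the definition of a strong representative, which is tailored precisely so that the $n$-satisfactory verification goes through, and into the standard fact that the unique subgroup of $(\mathbb Z/p\mathbb Z)^\ast$ of order $n$ coincides with the image of the $k$-th power map.
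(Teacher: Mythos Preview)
Your proof is correct and follows essentially the same approach as the paper: the paper likewise identifies the group $G$ of $k$-th power residues modulo $p$ (your $H$), observes that $h(a)=(a^k\bmod p)$ satisfies $h(ab)=h(a)\cdot h(b)$ and hence is multiplicative in the sense of Definition~\ref{def:multiplicative}, and verifies the $n$-satisfactory property via the same cancellation of $a^k$. You are slightly more explicit about why $K_n$ maps into the units modulo $p$ and about transporting the group structure back to $[n]$, but the substance is identical.
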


\begin{proof}
See the beginning of \S\,\ref{subs:multiplicative}.
\end{proof}

As indicated above, this gives us infinitely many examples of values of $n$ admitting $n$-satisfactory colorings. since it applies in particular when $n+1$ is prime (so $k=1$) and 
when $2n+1$ is prime (so $k=2$). On the other hand, if $k>2$, examples are harder to come by.

\begin{theorem} \label{thm:finite}
If $k>2$, then there are only finitely many $n$ such that $p=kn+1$ is a strong representative of order $n$.
\end{theorem}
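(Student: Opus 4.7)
The plan is to reformulate the problem in terms of short vectors in a two-dimensional integer lattice, and then combine a standard lattice-point count with a lower bound on the shortest vector coming from the arithmetic of the cyclotomic ring $\mathbb Z[\zeta_k]$. Fix $k>2$ and, for each $n$ with $p=kn+1$ prime, let $\zeta\in\mathbb F_p^*$ be a primitive $k$-th root of unity (available because $k\mid p-1$) and consider the lattice
\[
L_\zeta:=\{(x,y)\in\mathbb Z^2:x\equiv\zeta y\pmod p\}
\]
of covolume $p$. The goal is to show that, for $n$ large enough in terms of $k$, $L_\zeta$ contains a point $(a,b)$ with $a,b\in[1,n]$: any such point has $a\ne b$ (because $\zeta\ne 1$) and satisfies $a^k\equiv \zeta^kb^k\equiv b^k\pmod p$, contradicting strong representativity. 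Since only the bounded number of $n\le n_0(k)$ can then escape, this will prove the theorem.

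The crucial ingredient will be the lower bound $\lambda_1(L_\zeta)\ge\tfrac12 p^{1/\phi(k)}$. To establish it, let $\zeta_k\in\mathbb C$ be a complex primitive $k$-th root of unity, fix the reduction map $\mathbb Z[\zeta_k]\twoheadrightarrow\mathbb F_p$ sending $\zeta_k\mapsto\zeta$ (it exists because $p$ splits completely in $\mathbb Z[\zeta_k]$), and let $\mathfrak p$ be its kernel, a prime ideal of norm $p$. To any nonzero $(a,b)\in L_\zeta$ I would attach $\alpha:=a-b\zeta_k\in\mathbb Z[\zeta_k]$: since $\alpha$ reduces to $a-b\zeta\equiv 0$ in $\mathbb F_p$, the ideal $\mathfrak p$ divides $(\alpha)$, so $p\mid\mathrm{N}(\alpha)$, while each of the $\phi(k)$ Galois conjugates of $\zeta_k$ sits on the unit circle and therefore each conjugate of $\alpha$ has absolute value at most $|a|+|b|\le 2\max(|a|,|b|)$. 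This yields $p\le|\mathrm{N}(\alpha)|\le(2\max(|a|,|b|))^{\phi(k)}$, i.e.\ $\max(|a|,|b|)\ge p^{1/\phi(k)}/2$, as claimed.

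Equipped with this, the standard Davenport-type count
\[
\bigl|L_\zeta\cap[1,n]^2\bigr|=\frac{n^2}{p}+O\!\left(\frac{n}{\lambda_1(L_\zeta)}+1\right)
\]
should suffice. Using $p\le 2kn$, the main term is $\asymp n/k$ and the error is $O_k\bigl(n^{1-1/\phi(k)}+1\bigr)$. For $k\ge 3$, $\phi(k)\ge 2$, so $1-1/\phi(k)<1$; the main term dominates once $n\ge n_0(k)$ for some $n_0(k)$, at which point $L_\zeta\cap[1,n]^2$ is guaranteed to contain a nonzero point, producing the required collision and finishing the argument.

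The hard part of the plan, and the reason the hypothesis $k>2$ is genuinely necessary, is the lower bound on $\lambda_1(L_\zeta)$: for $k=2$ the lattice $L_{-1}$ contains the very short vector $(1,-1)$ of length $\sqrt 2$, and its shortest vector in the open positive quadrant has length $\asymp p$ (coming from $x+y=p$), so the analogous Davenport count gives an error of the same order as the main term and collapses---matching the fact that every prime $p=2n+1$ actually is a strong representative. Once the cyclotomic bound on $\lambda_1$ is in hand, everything else is a routine application of lattice-point counting that should present no real difficulty.
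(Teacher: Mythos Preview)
Your argument is correct and is essentially the lattice approach of Elkies that the paper records separately as theorem \ref{thm:elkies}; the paper itself proves theorem \ref{thm:finite} by a quite different route due to Grinberg and Harcos. Their proof observes that if $p=kn+1$ is a $k$-representative then $\sum_{i=1}^n i^{2k}\equiv 0\pmod p$, rewrites this sum as $B_{2k+1}(n+1)/(2k+1)$ for the Bernoulli polynomial $B_{2k+1}$, and then invokes Inkeri's theorem that the only rational roots of $B_m$ for odd $m>1$ are $0,\tfrac12,1$. Since $k>2$, the linear form $kx+1$ is therefore coprime to $B_{2k+1}(x+1)$ in $\mathbb Q[x]$, and clearing denominators in a B\'ezout identity produces a fixed nonzero integer $L=L(k)$ divisible by every $k$-representative $p$.

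Each approach buys something the other does not. The Bernoulli-polynomial argument is effective in a very concrete way: it outputs a single integer $L(k)$ whose prime divisors are the only candidates for $k$-representatives, and the paper exploits this to enumerate them explicitly for small $k$ in \S\ref{subsec:k-representatives}. Your lattice argument, by contrast, does not isolate a finite candidate list, but it immediately upgrades to the quantitative statement of theorem \ref{thm:elkies}: the same Davenport count, summed over the $k-1$ (or $k-2$) nontrivial choices of $\zeta$, shows that the number of collisions is $C_kp+O_k(p^{1-1/\phi(k)})$. So your proof is not merely an alternative to the paper's---it is the paper's second proof, presented there as a refinement rather than as the primary argument.
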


\begin{proof}
There are no such primes $p$ when $k=3$, by theorem \ref{thm:3-representative}.

If $k$ is a multiple of 4, then any such prime $p$ must satisfy $p<k^2$, so there are only finitely many such $n$, by theorem \ref{thm:4m-representative}.

For the general case, see theorem \ref{thm:k-representative}. The argument uses the theory of Bernoulli polynomials and is due to Grinberg and Harcos.
\end{proof}

The proof of theorem \ref{thm:finite} provides us with an algorithm to find, for each $k>2$, all primes $p=kn+1$ that are strong representatives of order $n$. This is illustrated in 
\S\,\ref{subsec:k-representatives} with some examples. 

For fixed $k>2$, in the brief \S\,\ref{subs:asymptotics} we include two results by Elkies on the asymptotic number  of coincidences $a^k\equiv b^k\pmod p$ with $1\le a<b\le n$ as 
the prime $p=nk+1$ increases. That such coincidences occur is a consequence of theorem \ref{thm:finite}, and we feel that the inclusion of these observations rounds up the picture, 
as it provides a quantitative measure of how badly large values of $p$ of the form $kn+1$ fail to be strong representatives of order $n$. In particular, we have the following.

\begin{theorem}[Elkies]
For $k>2$, the number of coincidences $a^k\equiv b^k\pmod p$ for $p$ of the form $kn+1$ and sufficiently large, and distinct $a,b\in[n]$ is 
 $$ C_k p + O_k(p^{1-\epsilon(k)}), $$ 
where 
 $$ C_k = \begin{cases}
 \vspace{1mm}\displaystyle \frac{k - 1}{2k^2} & \mbox{ if $k$ is odd, and}\\ 
 \displaystyle \frac{k - 2}{2k^2} & \mbox{ if $k$ is even,}
 \end{cases}$$ 
and $\epsilon(k) = 1/\phi(k)$, where $\phi$ is Euler's totient function. 
\end{theorem}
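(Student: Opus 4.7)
The plan is to write $a^k \equiv b^k \pmod p$ as $a \equiv \zeta b \pmod p$ for a $k$-th root of unity $\zeta$ modulo $p$, extract the main term for each $\zeta$ by orthogonality of characters, and then bound the error via Weyl-type estimates that exploit the cyclotomic structure of $\zeta$.

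Since $p=kn+1$, the cyclic group $\mathbb F_p^\times$ has order $kn$ and the $k$-th power map has kernel $\mu_k\subseteq\mathbb F_p^\times$ of size $k$. Hence $a^k\equiv b^k\pmod p$ iff $a\equiv\zeta b\pmod p$ for some $\zeta\in\mu_k$, with $\zeta\ne 1$ corresponding to $a\ne b$. Writing $T_\zeta:=\{(a,b)\in[n]^2:a\equiv\zeta b\pmod p\}$, the number of unordered coincidences is
$$N(p)=\tfrac12\sum_{\zeta\in\mu_k\setminus\{1\}}|T_\zeta|.$$
When $k$ is even, $\zeta=-1$ would force $a+b\equiv 0\pmod p$; but $p=kn+1>2n$ for $k\geq 3$ (and equality $p=2n+1$ holds for $k=2$), ruling out $a+b=p$, so $|T_{-1}|=0$. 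The number of contributing roots is thus $k-1$ for odd $k$ and $k-2$ for even $k$. For each remaining $\zeta\in\mu_k\setminus\{\pm 1\}$, orthogonality of additive characters on $\mathbb F_p$ gives
$$|T_\zeta|=\frac{n^2}{p}+E_\zeta,\qquad E_\zeta:=\frac{1}{p}\sum_{t=1}^{p-1}S_n(t)\,\overline{S_n(t\zeta)},\qquad S_n(t):=\sum_{b=1}^{n}e^{2\pi i t b/p}.$$
Summing the principal terms $n^2/p$ over contributing $\zeta$, dividing by $2$, and using $n=(p-1)/k$ produces exactly $C_k\cdot p+O_k(1)$, with the claimed constants.

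The substantive work is in showing $|E_\zeta|=O_k(p^{1-1/\phi(k)})$. The trivial bound $|S_n(t)|\le\min(n,1/(2\|t/p\|))$ together with Cauchy--Schwarz and Parseval yields only $|E_\zeta|\ll n\approx p/k$, which is far too weak. The saving with exponent $1/\phi(k)$ must come from the fact that $\zeta$ is not an arbitrary residue but the reduction modulo $p$ of a primitive $d$-th root of unity for some $d\mid k$, hence satisfies the cyclotomic relation $\Phi_d(\zeta)\equiv 0\pmod p$ whose degree is $\phi(d)\le\phi(k)$. I would use this relation to control the Diophantine approximation properties of $\zeta/p$: near-resonances $\|h\zeta/p\|<\eta$ for $h\le H$ cannot cluster, because they would force simultaneous smallness modulo $p$ of $\Phi_d$ evaluated at linearly shifted arguments, yielding a bound roughly of the form $\eta H^{\phi(k)}\gtrsim 1/p$. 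Inserting such an improved discrepancy estimate into an Erd\H{o}s--Tur\'an--Koksma bound converts the Weyl sum into the required power saving.

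The main obstacle is calibrating this Diophantine input to obtain precisely the exponent $1/\phi(k)$, rather than the weaker $1/(2\phi(k))$ or $1/\phi(k)^2$ that a naive application of the large sieve or standard Weyl-differencing would yield. The sharpness of $1/\phi(k)$ strongly suggests that Elkies' argument is tailored to the specific lattice geometry of the cyclotomic ring $\mathbb Z[\zeta_k]$, for instance via Minkowski-type estimates on successive minima of the lattices arising from the reduction $\mathbb Z[\zeta_k]/\mathfrak p\cong\mathbb F_p$, rather than from a black-box equidistribution input. Making this lattice-theoretic or algebraic-number-theoretic step quantitative in exactly the right way is the crux of the proof.
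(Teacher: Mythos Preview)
Your setup is correct and matches the paper: the reduction to counting pairs $(a,b)\in[n]^2$ with $b\equiv\zeta a\pmod p$ for each $\zeta\in\mu_k\setminus\{\pm1\}$, and the main term $n^2/p\approx p/k^2$ per root, summed and halved to give $C_kp$, is exactly Elkies' starting point. The gap is the error term, and here your approach diverges substantially from the paper's and remains incomplete.

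The paper does not use exponential sums, discrepancy, or Erd\H os--Tur\'an at all. Instead, for each fixed $\zeta$, the set $L_\zeta=\{(a,b)\in\mathbb Z^2:b\equiv\zeta a\pmod p\}$ is a sublattice of $\mathbb Z^2$ of index $p$, and $|T_\zeta|$ is the number of points of $L_\zeta$ in the square $[1,n]^2$. The standard lattice point count gives $|T_\zeta|=n^2/p+O(n/\lambda_1)$, where $\lambda_1$ is the length of the shortest nonzero vector of $L_\zeta$. The key step, and the one you were circling around, is to bound $\lambda_1$ from below using the cyclotomic factorization: any nonzero $(a,b)\in L_\zeta$ satisfies $p\mid a^k-b^k=\prod_{d\mid k}\Phi_d(a,b)$, where each homogenized cyclotomic factor has degree $\phi(d)\le\phi(k)$. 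Since $\zeta\ne\pm1$ one checks these factors are nonzero integers, so $p$ divides some $\Phi_d(a,b)$ with $|\Phi_d(a,b)|\ll\max(|a|,|b|)^{\phi(k)}$, forcing $\max(|a|,|b|)\gg p^{1/\phi(k)}$. Hence $\lambda_1\gg p^{1/\phi(k)}$ and the error per $\zeta$ is $O(n/p^{1/\phi(k)})=O_k(p^{1-1/\phi(k)})$.

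So the precise exponent $1/\phi(k)$ falls out immediately from the degree of the cyclotomic factors, with no Weyl differencing or lattice theory in $\mathbb Z[\zeta_k]$ needed. Your instinct that the cyclotomic relation $\Phi_d(\zeta)\equiv0$ is the source of the saving was right, but it enters as a lower bound on a two-dimensional lattice minimum rather than as a Diophantine input to an equidistribution estimate.
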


\begin{proof}
See theorem \ref{thm:elkies}.
\end{proof}

The study of strong representatives is interesting in its own right. We devote section \ref{sec:pk+1} to it. In particular, using Chebotar\"ev's theorem and tools of algebraic number 
theory, we prove the following.

\begin{theorem}
If $n$ admits a strong representative $p$, then it admits infinitely many, and in fact the set of such primes is of positive natural density among all primes.
\end{theorem}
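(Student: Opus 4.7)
The plan is to translate the strong representative condition into a Chebotar\"ev problem on a suitable Kummer extension of $\mathbb{Q}$. First, note that if $p=kn+1$ is prime then $k=(p-1)/n$, and $a^k\equiv b^k\pmod p$ iff $(b/a)^{(p-1)/n}\equiv 1\pmod p$. By Euler's criterion for $n$-th power residues, this last condition is equivalent to $b/a$ being an $n$-th power in $\mathbb{F}_p^*$. So $p$ is a strong representative of order $n$ precisely when $p\equiv 1\pmod n$ and, for every pair $1\le a<b\le n$, the rational $b/a$ is \emph{not} an $n$-th power in $\mathbb{F}_p^*$.

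Next, the plan is to set up a Kummer tower. Let $K=\mathbb{Q}(\zeta_n)$, let $L_{a,b}=K((b/a)^{1/n})$ for each $1\le a<b\le n$, and let $L$ be the compositum of the $L_{a,b}$. Each $L_{a,b}$, and hence $L$, is Galois over $\mathbb{Q}$, because each contains the full set of $n$-th roots of unity. Writing $G=\mathrm{Gal}(L/\mathbb{Q})$, $H_0=\mathrm{Gal}(L/K)$, and $H_{a,b}=\mathrm{Gal}(L/L_{a,b})$---all normal subgroups of $G$---one checks via standard ramification theory and Kummer theory that for any prime $p$ unramified in $L$, the condition $p\equiv 1\pmod n$ is equivalent to $\mathrm{Frob}_p\subseteq H_0$, and the condition ``$b/a$ is an $n$-th power in $\mathbb{F}_p^*$'' (equivalently, ``$p$ splits completely in $L_{a,b}$'') is equivalent to $\mathrm{Frob}_p\subseteq H_{a,b}$.

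Combining these equivalences, an unramified prime $p$ is a strong representative of order $n$ exactly when $\mathrm{Frob}_p$ lies in
\[
S \;:=\; H_0 \setminus \bigcup_{1\le a<b\le n} H_{a,b}.
\]
The hypothesized strong representative $p_0=k_0n+1$ satisfies $p_0>n$, so $p_0$ is coprime to $n$ and to every numerator and denominator of the ratios $b/a$ with $1\le a<b\le n$; hence $p_0$ is unramified in $L$. Consequently $\mathrm{Frob}_{p_0}\subseteq S$, so $S$ is nonempty. Chebotar\"ev's density theorem then yields that the natural density among all primes of those $p$ with $\mathrm{Frob}_p\subseteq S$ equals $|S|/|G|>0$, the desired positive density of strong representatives of order $n$.

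I expect the main obstacle to be the Kummer-theoretic translation in the second paragraph: carefully verifying that the splitting of $p$ in each $L_{a,b}$ really is captured by the normal subgroup $H_{a,b}$, via the standard dictionary relating completely split primes, trivial Frobenius, and $n$-th power residue behaviour. Once these dictionaries are in place, Chebotar\"ev finishes the argument immediately; the existence of the single prime $p_0$ forces $S\ne\emptyset$, and one need not separately verify that each $H_{a,b}$ is a proper subgroup of $H_0$, since the nonemptiness of $S$ is witnessed directly by the Frobenius class of $p_0$.
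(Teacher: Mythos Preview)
Your proposal is correct and follows essentially the same route as the paper: translate ``strong representative'' into a splitting condition in the Kummer extension $L=\mathbb{Q}(\zeta_n,(b/a)^{1/n}:1\le a<b\le n)$ (which equals the paper's $\mathbb{Q}(\zeta_n,\sqrt[n]{j}:j\in[n]\cap\mathbb{P})$), observe that the given prime $p_0$ witnesses that the relevant union of conjugacy classes is nonempty, and invoke Chebotar\"ev. Your packaging via the normal subgroups $H_0,H_{a,b}$ and the set $S=H_0\setminus\bigcup H_{a,b}$ is slightly more direct than the paper's detour through Frobenius's factorization-pattern theorem, but the underlying argument is the same.
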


\begin{proof}
See theorem \ref{thm:positivenatural}.
\end{proof}

Besides this result, we also collect some related numerical data in \S\,\ref{subsec:density}. Part of the interest in this result is that early numerical explorations (while the 
second-named author was working on his master's thesis) suggested that the collection of strong representatives of order $n$ is very sparse, see for instance table 
\ref{table:representative}, and this result indicates that the opposite is indeed true. 

So far, our description of the results listed above emphasizes the number- and group-theoretic aspects of our work. We also bring to bear some geometric and combinatorial ideas,
by showing that the existence of $n$-satisfactory colorings is equivalent to the existence of certain tilings. To state the equivalence, recall that $\pi(n)$ is the number of prime 
numbers less than or equal to $n$. 

Say that a set $A\subseteq \mathbb Z^{\pi(n)}$ tiles another such set $C$ if and only if there is a $B$ such that $C$ is the direct sum of $A$ and $B$. Let $2=p_1<\dots<p_{\pi(n)}$ 
list the primes in $[n]$ in increasing order. Define $T_n$ be the image of $[n]$ under the map that sends $p_1^{\alpha_1}\cdots p_{\pi(n)}^{\alpha_{\pi(n)}}$ in $K_n$ to 
$(\alpha_1,\dots,\alpha_{\pi(n)})$ in the nonnegative orthant $\mathbb O_n$ of $\mathbb Z^{\pi(n)}$.

\begin{theorem} \label{thm:tile}
There is an $n$-satisfactory coloring of $K_n$ if and only if $T_n$ tiles a superset of $\mathbb O_n$.
\end{theorem}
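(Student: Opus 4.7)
Under the identification of $K_n$ with $\mathbb{O}_n$ via prime factorization, multiplication by $i\in[n]$ translates the exponent vector by $t_i\in T_n$, so an $n$-satisfactory coloring of $K_n$ corresponds to a map $c\colon\mathbb{O}_n\to[n]$ such that $c|_{v+T_n}$ is a bijection for every $v\in\mathbb{O}_n$. For the backward direction, suppose $T_n\oplus B=C$ with $C\supseteq\mathbb{O}_n$, and define $\tilde c(u)$ to be the $T_n$-component in the unique decomposition $u=t+b$. For $v\in\mathbb{O}_n$ one has $v+T_n\subseteq\mathbb{O}_n\subseteq C$, and if $\tilde c(v+s_1)=\tilde c(v+s_2)=t$ then $s_1-s_2$ lies in $(T_n-T_n)\cap(B-B)=\{0\}$ by the direct-sum condition, so $\tilde c|_{v+T_n}$ is a bijection and restricting to $\mathbb{O}_n$ yields the desired $n$-satisfactory coloring.

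For the forward direction, the plan is to extend $c$ to a coloring $\tilde c\colon\mathbb{Z}^{\pi(n)}\to[n]$ with the bijection property at \emph{every} $v\in\mathbb{Z}^{\pi(n)}$, and then read off a tiling of $\mathbb{Z}^{\pi(n)}$ (which is a superset of $\mathbb{O}_n$) from $\tilde c$. The existence of $\tilde c$ will follow by a compactness argument on the product space $[n]^{\mathbb{Z}^{\pi(n)}}$: each constraint cuts out a closed set $X_v$, so by Tychonoff's theorem it suffices to realize any finite collection $V$ of such constraints. For this, choose $k$ large enough that $(V+T_n)+k\mathbf{1}\subseteq\mathbb{O}_n$, where $\mathbf{1}$ is the all-ones vector, and define the partial coloring $u\mapsto c(u+k\mathbf{1})$ on $V+T_n$; the $n$-satisfactoriness of $c$ at $v+k\mathbf{1}$ for $v\in V$ supplies the required bijection, and the assignment is then extended arbitrarily.

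Once $\tilde c$ is in hand, relabel colors so that $\tilde c(t)=t$ for every $t\in T_n$ (possible since $\tilde c|_{T_n}$ is already a bijection by the constraint at $v=0$), and set $B=\tilde c^{-1}(0)$. I claim $\mathbb{Z}^{\pi(n)}=T_n\oplus(-B)$. For coverage, the bijection $\tilde c|_{-u+T_n}$ yields a unique $t^*\in T_n$ with $\tilde c(-u+t^*)=0$, so $b:=t^*-u\in B$ and $u=t^*-b$. For uniqueness, if $u=t_1-b_1=t_2-b_2$ with $t_i\in T_n$ and $b_i\in B$, then $t_1-u=b_1$ and $t_2-u=b_2$ are two elements of $-u+T_n$ both sent to $0$ by $\tilde c$, so the bijectivity of $\tilde c|_{-u+T_n}$ forces $t_1=t_2$ and hence $b_1=b_2$. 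The main obstacle is the compactness step: for non-multiplicative colorings there is no canonical direct construction of $B$ from $c$ on $\mathbb{O}_n$ alone (whereas for a multiplicative coloring one simply extends $c$ to a group homomorphism on $\mathbb{Z}^{\pi(n)}$ and takes $B$ to be its kernel, in which case $T_n$ is automatically a transversal), and it is the compactness argument that bridges this gap in general.
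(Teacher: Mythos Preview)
Your proof is correct, but the forward direction takes a different route from the paper's. The paper avoids compactness entirely: given an $n$-satisfactory coloring $c$ of $K_n$, it lets $B'$ be the image under $t$ of any fixed color class, verifies directly that the sum $T_n+B'$ is direct, and then observes that for any $x\in\mathbb O_n$ far enough from the boundary (specifically, $x-t(i)\in\mathbb O_n$ for all $i\in[n]$), a pigeonhole argument forces $x-t(i)\in B'$ for some $i$, so $x\in T_n+B'$. Thus $T_n+B'$ already contains the translate $x_0+\mathbb O_n$ with $x_0=t(\operatorname{lcm}([n]))$, and shifting by $-x_0$ yields the desired essential tiling. This is elementary and constructive: the set $B$ is read off directly from the given coloring with no appeal to Tychonoff.

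Your approach instead first extends $c$ to a globally satisfactory coloring of $\mathbb Z^{\pi(n)}$ via compactness and then extracts a tiling of all of $\mathbb Z^{\pi(n)}$. This is exactly what the paper does \emph{afterwards}, in a separate proposition establishing the equivalence with condition (iii) (that $T_n$ tiles $\mathbb Z^{\pi(n)}$). So your argument conflates two steps the paper keeps separate, and in doing so proves a slightly stronger conclusion than the theorem asks for. That is a legitimate trade-off, but your closing remark that ``there is no canonical direct construction of $B$ from $c$ on $\mathbb O_n$ alone'' and that ``it is the compactness argument that bridges this gap'' is not accurate: the paper's translate-of-a-color-class argument does precisely this, directly and without compactness.
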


\begin{proof}
See proposition \ref{proposition:tiling}.
\end{proof}

A compactness argument shows that we can replace $\mathbb O_n$ with $\mathbb Z^{\pi(n)}$ itself, and $K_n$ with the quotient field $\hat K_n=\{a/b:a,b\in K_n\}$, see proposition
\ref{prop:tilingz} and the remarks immediately preceding it.

Theorem \ref{thm:tile} transforms the problem of finding satisfactory colorings into a geometric question. The approach is fruitful, as it was essential to the results in 
\S\,\ref{subs:n6} and \S\,\ref{subs:n8}. 

Using tilings we also obtain an elegant characterization of multiplicative colorings. If $c$ is a coloring of $K_n$ and $k\in K_n$, let $c_k$ be the coloring where two numbers 
$m, m'\in K_n$ receive the same color if and only if $c(km) = c(km')$.

\begin{defin}{def:trans}
A coloring $c$ of $K_n$ is translation invariant if and only if $c_k = c$ for all $k\in K_n$.
\end{defin}

This admits a natural geometric description, see \S\,\ref{subs:translation}.

\begin{theorem}
An $n$-satisfactory coloring is translation invariant if and only if it is multiplicative.
\end{theorem}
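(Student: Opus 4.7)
The plan is to prove both directions separately; the ``multiplicative $\Rightarrow$ translation invariant'' direction is essentially immediate, while the converse requires manufacturing a group structure out of the coloring.

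For the forward direction, suppose $c$ is multiplicative via an abelian group $([n],\oplus)$. For any $k,m,m'\in K_n$, multiplicativity gives $c(km)=c(k)\oplus c(m)$ and $c(km')=c(k)\oplus c(m')$, so by cancellation in the group, $c(km)=c(km')$ iff $c(m)=c(m')$. This is exactly the condition $c_k=c$.

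For the converse, suppose $c$ is translation invariant, i.e., $c(km)=c(km')$ iff $c(m)=c(m')$ for all $k,m,m'\in K_n$. Since $c$ is $n$-satisfactory, $c(1),\dots,c(n)$ are distinct, so $c$ restricted to $[n]$ is a bijection onto the set of $n$ colors, which I use to identify the colors with $[n]$. I then define an operation $\oplus$ on colors by $\alpha\oplus\beta:=c(ab)$ for any $a,b$ with $c(a)=\alpha$ and $c(b)=\beta$. Well-definedness follows from translation invariance: if $c(a)=c(a')$ and $c(b)=c(b')$, then translation by $b$ yields $c(ab)=c(a'b)$, and translation by $a'$ yields $c(a'b)=c(a'b')$, so $c(ab)=c(a'b')$.

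Finally I would verify the group axioms. Commutativity follows from $ab=ba$; associativity from $(c(a)\oplus c(b))\oplus c(d)=c(abd)=c(a)\oplus(c(b)\oplus c(d))$; and $c(1)$ is an identity since $c(1\cdot a)=c(a)$. For inverses, translation invariance immediately yields cancellation ($c(a)\oplus c(b)=c(a)\oplus c(b')$ forces $c(ab)=c(ab')$, hence $c(b)=c(b')$), and a finite commutative cancellative monoid is a group, so every element has an inverse. The identity $c(ab)=c(a)\oplus c(b)$ then holds by construction, witnessing that $c$ is multiplicative. The main conceptual point, and the only one that is not routine, is recognizing that translation invariance is precisely the condition needed for the partial operation ``$c(a), c(b) \mapsto c(ab)$'' to descend to a well-defined total operation on the set of colors; once this is in hand, the group axioms follow from the associative commutative multiplicative structure of $\mathbb Z^+$.
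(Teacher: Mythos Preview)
Your proof is correct and follows essentially the same approach as the paper's. The only organizational difference is that you define $\oplus$ via arbitrary representatives in $K_n$ and prove well-definedness first (so associativity and the identity $c(ab)=c(a)\oplus c(b)$ fall out immediately), whereas the paper defines $\oplus$ on $[n]$ via $i\oplus j=c(ij)$, proves associativity by the same translation-invariance trick $c(ijk)=c(c(ij)k)$, and then separately extends to all of $K_n$; for inverses the paper observes directly from $n$-satisfactoriness that each row of the Cayley table is a permutation, while you deduce cancellation from translation invariance and invoke the finite-cancellative-monoid argument---both routes are fine and amount to the same thing.
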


\begin{proof}
See theorem \ref{thm:inv}.
\end{proof}

We admit we understand very little those colorings that are not multiplicative. We show examples in \S\,\ref{subs:n6} and \S\,\ref{subs:n8}, but more is needed. In particular, whether 
question \ref{q:problem} admits a positive answer depends essentially on whether there are nonmultiplicative $n$-satisfactory colorings for various $n$, such as $n=195$. The 
point is that there are various $n$ which do not admit multiplicative colorings. This is briefly discussed in \S\,\ref{subsec:groupless}, which reviews the work of 
Forcade and Pollington \cite{ForcadePollington90}. These numbers $n$ we call groupless. Several questions we ask suggest ways of trying to understand some of the structure 
of nonmultiplicative colorings, see in particular question \ref{q:closed}, which refers to the topology of the collection of $n$-satisfactory colorings, described in item (1) of 
\S\,\ref{subsec:structure}.

\section{The core} \label{sec:preliminaries}

An \emph{$n$-coloring} of a set $X$ is a coloring of $X$ using exactly $n$ colors. An \emph{$n$-satisfactory} coloring is an $n$-coloring witnessing a positive answer to the 
$n^{\mathrm{th}}$ instance of question \ref{q:problem}. The nature of these colors is of course irrelevant, but we need some convention since we want to address questions about the 
number of $n$-colorings satisfying some property (such as, primarily, being $n$-satisfactory). There are two natural ways of thinking about $n$-colorings, and we adopt both 
in what follows. We will typically consider only colorings of the $n$-core $K_n$ rather than of all of $\mathbb Z^+$, but what follows applies in either case. 

In the first approach, we think of an $n$-coloring as a map $c$ with range $[n]$, and we further adopt the convention that $c(i)=i$ for $i\in[n]$. The point of this convention is 
to avoid overcounting when looking at the number of $n$-satisfactory colorings for  fixed $n$. For instance, as we will see in \S\,\ref{subs:nle5}, there is precisely one 
3-satisfactory coloring of $K_3$, but without the convention it would seem as if there are six.

The second approach is perhaps more natural: rather than thinking of a coloring as a map, we think of it as an equivalence relation, whose classes are precisely the colors. We 
still adopt functional notation, so we write, for example, $c(a)=c(b)$ to indicate that $c$ assigns the same color to the numbers $a$ and $b$. 

Still, on occasion we may stray from these conventions for ease of exposition.

\subsection{The core $K_n$ and $n$-appropriate sets}

\begin{definition} \label{def:ncore}
The \emph{$n$-core}, or simply the \emph{core} if $n$ is understood, is the set $K_n$ of all positive integers whose prime decomposition only involves primes less than or equal 
to $n$. This is the set of numbers usually called \emph{$n$-smooth}.
\end{definition}

In the literature the notion of $n$-smooth numbers is typically reserved for the case where $n$ itself is a prime number. We do not impose this requirement so, for instance, 
$K_7=K_8=K_9=K_{10}$. In the notation of \cite{Boseketal18}, $K_n$ is denoted $\mathbb N_n$.

The key reason for considering cores is that there is an $n$-satisfactory coloring (of $\mathbb Z^+$) if and only if there is an $n$-satisfactory coloring of the $n$-core. In fact 
we prove a bit more, indicating that in order to understand the structure of the set of $n$-satisfactory colorings, attention can be restricted to those of the $n$-core. Once we 
establish this fact, we proceed accordingly, which explains the title of this paper.

In particular, restricting attention to colorings of the $n$-core allows us to address the following question.

\begin{question} \label{q:number}
Given $n>1$, how many $n$-satisfactory colorings are there, if any at all?
\end{question}

As we will see, the answer to question \ref{q:number} is $\mathfrak c=|\mathbb R|$ for colorings of $\mathbb Z^+$ even in cases where there are only finitely many 
$n$-satisfactory colorings of $K_n$, see corollary \ref{cor:many} and theorem \ref{thm:manymultiplicative}.

\begin{definition} \label{def:appropriate} 
Say that $X\subseteq\mathbb Z^+$ is \emph{$n$-appropriate} if and only if $X$ is nonempty and contains $ix$ and $x/ j$ whenever $x \in X$, $i,j \le n$, and $j$ divides $x$.

If $X$ is $n$-appropriate, say that an $n$-coloring $c$ of $X$ is $n$-satisfactory if and only if $c(ix) \ne c(jx)$ whenever $x \in X$ and $i < j \le n$. Note that this coincides 
with the previous notion of $n$-satisfactory when $X = \mathbb Z^+$ (or $X=K_n$). Considering colorings as maps, if $1\in X$ we add the restriction mentioned earlier that 
$n$-satisfactory colorings must be the identity on $[n]$. 
\end{definition}

Note that we are insisting that if $X$ is $n$-appropriate, $1\in X$, and $c$ is $n$-satisfactory on $X$, then $c$ is the identity on $[n]$, while we impose no such restrictions on
the satisfactory colorings of other appropriate sets; for instance, one could wonder why we do not ask that if $a$ is the minimum of $X$, then $c(ai)=i$ for all $i\in[n]$. The reason 
for this convention is that we want that if $X,Y$ are disjoint and $n$-appropriate, then the union of $n$-satisfactory colorings of $X$ and $Y$ is an $n$-satisfactory coloring of 
$X\cup Y$, and any $n$-satisfactory coloring of $X\cup Y$ is obtained this way. 

We denote by $P_n$ the set of numbers relatively prime to $n!$, i.e., those positive integers whose prime decomposition only involves prime numbers strictly larger than $n$. In the 
literature, these numbers are referred to as \emph{$n$-rough}. Note that 1 is $n$-rough for any $n$.

The notation $X = \dot\bigcup_{a\in A} X_a$ means both that $X$ is the union of the sets $X_a$ for $a\in A$, and that the sets $X_a$ are pairwise disjoint.

\begin{lemma} \label{lem:nappropriate}
A set $X\subseteq \mathbb Z^+$ is $n$-appropriate if and only if there is a nonempty set $A \subseteq P_n$ such that
 $$ X = \dot{\bigcup_{a \in A}}a\cdot K_n. $$
Moreover, if this is the case, then we have $A = P_n \cap X$.
\end{lemma}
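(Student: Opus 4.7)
The plan hinges on the unique factorization fact that every positive integer $m$ can be written in exactly one way as $m=ak$ with $a\in P_n$ and $k\in K_n$, namely by separating the prime factors of $m$ into those larger than $n$ (forming $a$) and those at most $n$ (forming $k$). From this, the disjointness of the union $\dot\bigcup_{a\in A}a\cdot K_n$ when $A\subseteq P_n$ is automatic: if $a,a'\in P_n$, $k,k'\in K_n$ and $ak=a'k'$, then $a=a'$ and $k=k'$.

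For the easy direction $(\Leftarrow)$, I would take $X$ of the given form and verify the two closure conditions by direct computation. If $x=ak\in a\cdot K_n$ and $i\le n$, then $ix=a(ik)\in a\cdot K_n\subseteq X$ since $ik\in K_n$. If $j\le n$ and $j\mid x=ak$, then because $\gcd(a,j)=1$ we get $j\mid k$, so $x/j=a(k/j)\in a\cdot K_n\subseteq X$. Nonemptiness of $X$ is immediate.

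For $(\Rightarrow)$, assuming $X$ is $n$-appropriate I would set $A:=P_n\cap X$ and prove the two containments separately. To show $a\cdot K_n\subseteq X$ for each $a\in A$, I factor an arbitrary $k\in K_n$ as a product $p_1p_2\cdots p_r$ of primes all $\le n$ and apply the multiplication closure $r$ times starting from $a\in X$. To show $X\subseteq\bigcup_{a\in A}a\cdot K_n$, I take $x\in X$, write it uniquely as $x=ak$ with $a\in P_n$, $k\in K_n$, and peel off the prime factors of $k$ one at a time using the division closure: at each step the remaining factor of $k$ still divides the current element of $X$ and its primes are $\le n$, so after $r$ divisions I land on $a\in X\cap P_n=A$, which exhibits $x\in a\cdot K_n$. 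This also shows $A$ is nonempty whenever $X$ is. The ``moreover'' clause, $A=P_n\cap X$, follows from the same unique factorization: any $A'\subseteq P_n$ for which $X=\bigcup_{a'\in A'}a'\cdot K_n$ must satisfy $A'\subseteq X\cap P_n$ trivially, and conversely any $b\in P_n\cap X$ is of the form $b=a'k$ forcing $k=1$ and $b=a'\in A'$.

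The argument is essentially a bookkeeping exercise built on unique factorization and the two closure operations, so there is no real obstacle; the only point requiring a touch of care is making sure that at each stage of the ``peeling'' in the $(\Rightarrow)$ direction the divisor one is removing is indeed $\le n$ and indeed divides the current element, both of which are immediate from $k\in K_n$.
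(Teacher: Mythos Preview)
Your proposal is correct and follows essentially the same approach as the paper: both arguments rest on the unique decomposition $m=ak$ with $a\in P_n$ and $k\in K_n$, and both verify the two inclusions by iterating the closure operations along the prime factors of $k$. The only cosmetic difference is that the paper defines $A=\{a_x:x\in X\}$ first and then identifies it with $P_n\cap X$, whereas you take $A=P_n\cap X$ from the outset; the underlying reasoning is the same.
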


\begin{proof}
Note that $K_n$ is $n$-appropriate and therefore so is $a \cdot K_n$ for any $a \in P_n$. It follows that any $X$ of the form $\dot\bigcup_{a\in A} a\cdot K_n$ for 
$A \subseteq P_n$ and nonempty is $n$-appropriate as well.

Towards the converse, suppose now that $X$ is $n$-appropriate. Each $m \in\mathbb Z^+$ can be uniquely written in the form $m=a_mk_m$ where $a_m \in P_n$ and 
$k_m \in K_n$. Let $A=\{a_x : x\in X\}$. We claim that $X = \dot\bigcup_{a\in A} a \cdot K_n$.

First, note that if $a \ne b$ are in $P_n$, then $a\cdot K_n$ and $b\cdot K_n$ are pairwise disjoint. Now, if $a \in A$, then there is some $x \in X$ such that $a = a_x$. 
Since $X$ is $n$-appropriate, $h /j \in X$ whenever $h \in X$ and $j \in K_n$ divides $h$. In particular, $a = a_x = x /k_x \in X$. Similarly, $hi \in X$ whenever $h \in X$ and  
$i \in K_n$. Therefore, $a \cdot K_n \subseteq X$. This means that
 $$  \dot{\bigcup_{a\in A}}a\cdot K_n \subseteq X. $$
But, if $x\in X$, then $x\in a_x \cdot K_n$, and we have that
  $$ \dot{\bigcup_{a\in A}}a\cdot K_n \supseteq X. $$
This proves the equality and establishes the equivalence.

Second, if $a\in P_n$, then the only member of $P_n$ in $a\cdot K_n$ is $a$ itself. It follows that if $X=\dot\bigcup_{a\in A}a\cdot K_n$ for some $A\subseteq P_n$, then 
in fact $A=P_n\cap X$, and we are done.
\end{proof}

For $X$ $n$-appropriate, let $C_{X,n}$ be the set of $n$-satisfactory colorings of $X$, and denote by $C_n$ the set $C_{\mathbb Z^+,n}$. We also write $C_X,C$ if $n$ is clear
from context.

The following proposition shows that $C \ne\emptyset$ if and only if $C_X \ne\emptyset$ for some $n$-appropriate set $X$ if and only if $C_X\ne\emptyset$ for all $n$-appropriate 
sets $X$.

In particular, as emphasized earlier, it follows that the question of whether there are any $n$-satisfactory colorings is really a question about whether there are $n$-satisfactory 
colorings of $K_n$. In fact, the proposition shows how the satisfactory colorings of the core completely determine all satisfactory colorings.

\begin{proposition} \label{prop:appropriate}
Let $n\in\mathbb Z^+$.
\begin{enumerate}
\item
Suppose $X \subseteq Y$ are $n$-appropriate. If $C_Y \ne\emptyset$, then $C_X \ne\emptyset$. In fact, thinking of colorings as maps, the restriction $c\upharpoonright X$ 
is in $C_X$ for any $c\in C_Y$.
\item
Given $a\in P_n$ and $c\in C_{a\cdot K_n}$, if $\dv(c,a)$ denotes the $n$-coloring of $K_n$ such that 
 $$ \dv(c,a)(k)=\dv(c,a)(l)\quad \mbox{ if and only if }\quad c(ak)=c(al) $$ 
for all $k,l\in K_n$, then $\dv(c,a)\in C_{K_n}$. Considering colorings as maps with range $[n]$, 
 $$ \dv(c,a)(k)=\pi\circ c(ak) $$ 
for all $k\in K_n$, where $\pi$ is the permutation of $n$ such that $\pi(c(ai))=i$ for all $i\in [n]$.
\item
Given $a\in P_n$ and $c\in C_{K_n}$, if $\mult(c,a)$ denotes the $n$-coloring of $a\cdot K_n$ such that 
 $$ \mult(c,a)(ak)=\mult(c,a)(al)\quad \mbox{ if and only if }\quad c(k)=c(l) $$
for all $k,l\in K_n$, then $\mult(c,a)\in C_{a\cdot K_n}$. Considering colorings as maps, $\mult(c,a)(a\cdot k)=c(k)$ for all $k\in K_n$. However, note that if $a\ne 1$, then for any 
permutation $\pi$ of $[n]$, $\pi\circ\mult(c,a)$ is also in $C_{a\cdot K_n}$.
\item
If $X$ is $n$-appropriate, then a map $c$ is in $C_X$ if and only if for each number $a\in P_n \cap X$ there is a map $c_a \in C_{K_n}$ and a permutation $\pi_a$ of $[n]$ such 
that $\pi_1$ is the identity and 
 $$ c= \dot{\bigcup_{a\in P_n\cap X}} \pi_a\circ \mult(c_a,a). $$
\item 
$C\ne\emptyset$ if and only if $C_X\ne\emptyset$ for any $n$-appropriate $X$ if and only if $C_X\ne\emptyset$ for some $n$-appropriate set $X$. Moreover if 
$X \subseteq Y$ and both are $n$-appropriate, then $d\in C_X$ if and only if $d=c \upharpoonright X$ for some $c \in C_Y$.
\end{enumerate}
\end{proposition}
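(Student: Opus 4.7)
The proof is essentially a packaging of Lemma \ref{lem:nappropriate}: once we know that every $n$-appropriate set $X$ decomposes as $X=\dot\bigcup_{a\in P_n\cap X}a\cdot K_n$, each of the five parts reduces to bookkeeping, with the structural content concentrated in (4). The plan is to prove (1)--(3) first as building blocks, then derive (4), and read off (5) as a corollary.

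For (1), the key observation is that the $n$-satisfactory condition is \emph{local}: at each $x\in X$ it only mentions $x,2x,\dots,nx$, all of which lie in $X$ by $n$-appropriateness. Hence any restriction of a coloring in $C_Y$ to $X$ trivially inherits the condition, and uses all $n$ colors because, picking any $x\in X$, the numbers $x,2x,\dots,nx\in X$ must receive $n$ distinct colors. Parts (2) and (3) both express the same idea: the map $k\mapsto ak$ is a bijection between $K_n$ and $a\cdot K_n$, and transports the $n$-satisfactory condition intact, because $c(i\cdot(ak))\ne c(j\cdot(ak))$ for $1\le i<j\le n$ is literally the same statement as $\dv(c,a)(ik)\ne \dv(c,a)(jk)$, and symmetrically for $\mult$. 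The extra freedom to post-compose $\mult(c,a)$ by an arbitrary permutation $\pi$ when $a\ne 1$ is available because any $a\in P_n$ with $a>1$ must exceed $n$ (since $a$ has no prime factor $\le n$), so $a\cdot K_n\cap[n]=\emptyset$ and the identity-on-$[n]$ convention imposes no constraint on this piece.

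For (4), Lemma \ref{lem:nappropriate} lets us partition $X$ into the $n$-appropriate pieces $a\cdot K_n$ for $a\in P_n\cap X$. Since the $n$-satisfactory condition at $x=ak$ only involves points in the same piece $a\cdot K_n$, we have $c\in C_X$ if and only if each restriction $c\upharpoonright a\cdot K_n$ lies in $C_{a\cdot K_n}$. Apply (2) to each restriction to obtain $c_a=\dv(c\upharpoonright a\cdot K_n,a)\in C_{K_n}$, and let $\pi_a$ be the permutation of $[n]$ that inverts the normalizing permutation from (2), so that $c(ak)=\pi_a\circ\mult(c_a,a)(ak)$ for all $k\in K_n$. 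When $a=1$, since both $c$ and $c_1$ are the identity on $[n]$, the inversion forces $\pi_1=\id$. Conversely, given any tuple $(c_a,\pi_a)_{a\in P_n\cap X}$ with $\pi_1=\id$, the coloring $c$ obtained by pasting together the pieces $\pi_a\circ\mult(c_a,a)$ lies in $C_X$ by (3), together with the trivial observation that post-composition by a permutation preserves the $n$-satisfactory condition.

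Finally, (5) follows by assembling what we have. If $C\ne\emptyset$, then $C_X\ne\emptyset$ for every $n$-appropriate $X$ by (1). Conversely, if some $C_X\ne\emptyset$, pick $c\in C_X$ and any $a\in P_n\cap X$; then $\dv(c\upharpoonright a\cdot K_n,a)\in C_{K_n}$ by (2), and (4) then lets us construct an element of $C_Y$ for any prescribed $n$-appropriate $Y$ by choosing $c_a$ to be this common coloring of $K_n$ and $\pi_a$ freely (with $\pi_1=\id$ when $1\in Y$). The extension claim is proved the same way: given $d\in C_X$ with $X\subseteq Y$ both $n$-appropriate, decompose $d$ via (4) to obtain data $(d_a,\pi_a)$ for $a\in P_n\cap X$, and for each $a\in(P_n\cap Y)\sbs(P_n\cap X)$ extend by choosing any $c_a\in C_{K_n}$ and any permutation, with $\pi_a=\id$ if $a=1$; the resulting $c\in C_Y$ restricts to $d$ on $X$. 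There is no real obstacle here; the only point requiring care is keeping the identity-on-$[n]$ convention consistent across the pieces, and this is precisely what the condition $a>n$ for $1\ne a\in P_n$ makes automatic.
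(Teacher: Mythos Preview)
Your proposal is correct and follows essentially the same approach as the paper: both arguments rest on Lemma~\ref{lem:nappropriate} to decompose $X$ into the disjoint pieces $a\cdot K_n$, transport colorings back and forth via the bijection $k\mapsto ak$ in (2) and (3), assemble these into the structural characterization (4), and then read off (5). Your explicit remark that $a>n$ for $1\ne a\in P_n$, forcing $a\cdot K_n\cap[n]=\emptyset$, is a nice way of isolating exactly why the identity-on-$[n]$ convention constrains only the $a=1$ piece; the paper handles this more implicitly through its Definition~\ref{def:appropriate}.
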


\begin{proof}
(1) This is clear.

\vspace{1mm}
(2) Given $a\in P_n$ and $c\in C_{a\cdot K_n}$, if $\dv(c,a)$ is defined as in item (2), then 
 $$ \dv(c,a)(ib)\ne \dv(c,a)(jb) $$
for any $b\in K_n$ and any $i<j\le n$ because $c$ is $n$-satisfactory and therefore $c(aib)\ne c(ajb)$. But this means that $\dv(c,a)$ is $n$-satisfactory as well.  

Considering colorings as functions, $c$ is a map with range $[n]$ and the only obstacle for $k\mapsto c(ak)$ to be an $n$-satisfactory coloring is the additional restriction we have 
imposed that such a map must be the identity on $[n]$, which explains why we may need to precompose it with a permutation to achieve this.

\vspace{1mm}
(3) Conversely, if $c \in C_{K_n}$, $a \in P_n$, and $\mult(c,a)$ is defined as in item (3), then 
 $$ \mult(c,a)(im) \ne \mult(c,a)(jm) $$ 
for any $m \in a\cdot K_n$ and any $i < j \le n$ since $c$ is satisfactory and therefore $c(i(m /a)) \ne c(j(m/ a))$. But this means that $\mult(c,a)$ is satisfactory as well. Considering 
colorings as maps, the inequalities just indicated are maintained under any permutation $\pi$ of $[n]$, so if $a\ne 1$, then $\pi\circ\mult(c,a)$ is also satisfactory.

\vspace{1mm}
(4) Suppose that $X$ is $n$-appropriate. First, $X=\bigcup_{a\in P_n\cap X}a\cdot K_n$, by lemma \ref{lem:nappropriate}. If $c\in C_X$, then, by item (1), 
$d=c \upharpoonright a\cdot K_n \in C_{a\cdot K_n}$ for any $a\in P_n\cap X$, and $c_a \coloneqq \dv(d,a)\in C_{K_n}$ by item (2). Writing $\pi_a^{-1}$ for the permutation as 
in item (2), we have that $d = \pi_a\circ\mult(c_a,a)$, and therefore
$$ c =  \bigcup_{a\in P_n\cap X} \pi_a\circ \mult(c_a,a). $$

Conversely, suppose that $C_{K_n}\ne\emptyset$. For each $a\in P_n \cap X$ let $c_a \in C_{K_n}$ and $\pi_a$ be a permutation of $[n]$, with $\pi_1$ being the identity if 
$1\in X$. Define 
 $$ c=\bigcup_{a\in P_n\cap X}\pi_a\circ\mult(c_a,a). $$ 
As mentioned in lemma \ref{lem:nappropriate}, $a\cdot K_n\cap b\cdot K_n = \emptyset$ whenever $a\ne b$ are in $P_n$. From this, and item (3), $c$ is well defined and has 
domain $\bigcup_{a\in P_n\cap X} a \cdot K_n$, which equals $X$, again by lemma \ref{lem:nappropriate}. If $m \in X$ and $i < j \le n$, then there is a unique $a \in P_n \cap X$ 
such that $mi$ and $mj$ belong to $a \cdot K_n$, and by item (3) it follows that $c(mi) \ne c(mj)$. This proves that $c$ is satisfactory, and completes the proof of item (4).

\vspace{1mm}
(5) Now, if $X$ is $n$-appropriate, and $C_X\ne\emptyset$, then $C_{a\cdot K_n}\ne\emptyset$ for any $a$ in the nonempty set $P_n \cap X$, by item (1). But this implies
that $C_{K_n}\ne\emptyset$, by item (2). It follows from item (4) that $C =C_{\mathbb Z^+}\ne \emptyset$. Thus, $C_Y\ne\emptyset$ for any $n$-appropriate $Y$, again by 
item (1).

Finally, if $X\subseteq Y$ are $n$-appropriate and $c\in C_Y$, then $d=c \upharpoonright X\in C_X$, by item (1). Conversely, if $d\in C_X$, let $e \in C_{K_n}$, which exists as 
shown above. Let $c_a = e$ and $\pi=\id$ for $a\in P_n\cap (Y\sbs X)$. For $a\in P_n\cap X$, let $c_a =\dv(d\upharpoonright a\cdot K_n,a)$ and $\pi_a$ be the permutation 
such that $d\upharpoonright a\cdot K_n=\pi_a\circ \mult(c_a,a)$ .  As in item (4), we have that $c = \bigcup_{a\in P_n\cap Y} \pi_a\circ \mult(c_a,a) \in C_Y$. And, by construction, 
$d = c\upharpoonright X$. This completes the proof of item (5).
\end{proof}

\begin{remark} \label{rmk:quotient}
The notion of $n$-appropriate can be extended in a natural way, allowing us to verify that, for instance, there is an $n$-satisfactory coloring of $K_n$ if and only if there is 
one of $\mathbb Z\sbs\{0\}$. More interesting is whether this is also equivalent to the existence of an $n$-satisfactory coloring of $\mathbb Q\sbs\{0\}$ or, what is the same, 
of $\hat K_n\coloneqq\{a/b:a,b\in K_n\}$. We show below that this is indeed the case, see proposition \ref{prop:tilingz}. We also suggest a subtler problem in question 
\ref{q:equation}. In \cite{Boseketal18}, $\hat K_n$ is denoted $\mathbb Q_n$.
\end{remark}

When discussing $n$-satisfactory colorings, proposition \ref{prop:appropriate} provides us with the ability to restrict our attention from all of $\mathbb Z^+$ to $K_n$. The 
relation the proposition details between arbitrary satisfactory colorings and colorings of the core has the following corollary.

\begin{corollary} \label{cor:many}
For $n > 1$, if $C_{K_n}\ne\emptyset$, then $|C|  = \mathfrak c$.
\end{corollary}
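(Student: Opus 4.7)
The plan is to sandwich $|C|$ between $\mathfrak c$ from above and $\mathfrak c$ from below. The upper bound $|C|\le\mathfrak c$ is immediate, since every element of $C$ is a function from the countably infinite set $\mathbb Z^+$ into the finite set $[n]$, and there are only $n^{\aleph_0}=\mathfrak c$ such functions. So the content of the corollary lies in the lower bound.

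For the lower bound I would appeal directly to item (4) of proposition \ref{prop:appropriate}, which parametrizes $C$ explicitly: each $c\in C$ is determined by a family $(c_a)_{a\in P_n}$ with $c_a\in C_{K_n}$ together with a family $(\pi_a)_{a\in P_n}$ of permutations of $[n]$ subject only to $\pi_1=\id$. Two facts drive the counting: first, $P_n$ is countably infinite, since it contains every prime larger than $n$ and there are infinitely many such primes by Euclid; second, $C_{K_n}\ne\emptyset$ by hypothesis, so we may fix some $c^\ast\in C_{K_n}$ once and for all. Setting $c_a=c^\ast$ for every $a\in P_n$, each choice of a family $(\pi_a)_{a\in P_n\sbs\{1\}}$ of permutations of $[n]$ produces, via item (4), an element of $C$.

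The key remaining step is to check that distinct families $(\pi_a)$ give rise to distinct colorings of $\mathbb Z^+$. Suppose $(\pi_a)\ne(\pi'_a)$, differing at some $a\in P_n\sbs\{1\}$. On the set $a\cdot K_n$ the two resulting colorings restrict to $\pi_a\circ\mult(c^\ast,a)$ and $\pi'_a\circ\mult(c^\ast,a)$ respectively. Since $c^\ast$ is $n$-satisfactory it uses all $n$ colors, so $\mult(c^\ast,a)$ is surjective onto $[n]$, and therefore $\pi_a\ne\pi'_a$ forces these two compositions to differ at some point of $a\cdot K_n$. Thus the assignment from families of permutations to colorings is injective. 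Because $n>1$ gives $|S_n|=n!\ge 2$, the number of such families is at least $2^{|P_n\sbs\{1\}|}=2^{\aleph_0}=\mathfrak c$, yielding $|C|\ge\mathfrak c$ and completing the proof.

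There is no serious obstacle here; the whole force of the argument has already been packaged into item (4) of the preceding proposition, and the corollary is essentially a cardinal-arithmetic observation. The only mild subtlety worth flagging in the writeup is the role of the hypothesis $n>1$, which is exactly what guarantees that $|S_n|\ge 2$ so that the exponential $|S_n|^{|P_n\sbs\{1\}|}$ is genuinely $\mathfrak c$ rather than $1$.
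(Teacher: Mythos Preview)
Your proof is correct and follows essentially the same approach as the paper's: both use item (4) of proposition \ref{prop:appropriate} to obtain the lower bound by varying the permutations $\pi_a$ over $a\in P_n\sbs\{1\}$, and both get the upper bound trivially from $|C|\le n^{\aleph_0}$. Your explicit verification that distinct permutation families yield distinct colorings is a nice touch that the paper leaves implicit in its appeal to the ``bijective correspondence'' of item (4).
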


\begin{proof}
If there is a coloring of the core (thought of as a function with range $[n]$), then there are at least $n! \ge 2$ such colorings of any $a\cdot K_n$ for $a\in P_n$ different from $1$,
obtained by invoking item (3) of proposition \ref{prop:appropriate} and varying the permutation $\pi$.  By item (4) of proposition \ref{prop:appropriate}, there is a bijective 
correspondence between the elements of $C$, and the set of functions with domain $P_n$ that pick for each $a\in P_n$ a member of $C_{K_n}$ and a permutation of $[n]$ 
(with the permutation being the identity if $a=1$), from which we get that $|C|  \ge  n!^{|P_n\sbs\{1\}|} = \mathfrak c$.

On the other hand, any element of $C$ is a function from $\mathbb Z^+$ to $[n]$ (satisfying certain restrictions), so $|C|\le|[n]^{\mathbb Z^+}|  = n^{\aleph_0}=\mathfrak c$, and 
it follows that $|C|=\mathfrak c$ by the Cantor--Schr\"oder--Bernstein theorem.
\end{proof}

Proposition \ref{prop:appropriate} and corollary \ref{cor:many} give us that if there is an $n$-satisfactory coloring of $K_n$, then there are as many $n$-satisfactory colorings of 
$\mathbb Z^+$ as there are real numbers. However, this abundance of colorings is a distraction since the underlying structure of any satisfactory coloring can be described in 
terms of what is happening on the core.

\subsection{The structure of $C_{K_n}$} \label{subsec:structure}

We mention here some easy observations regarding the closure of $C_{K_n}$ under some natural operations.

\vspace{1mm}
(1) First, $C_{K_n}$ is a closed subset of the Polish space $[n]^{K_n}$ of functions from $K_n$ to $[n]$ under the product topology (with $[n]$ discrete): $c\in C_{K_n}$ if and 
only if 
 $$ c\in \{f\in [n]^{K_n}:\forall i\in[n]\,(f(i)=i)\}\cap\bigcap_{a\in K_n}\bigcap_{1\le i<j\le n}\{g\in[n]^{K_n}:g(ia)\ne g(ja)\}, $$
and note that for any distinct $b,c\in K_n$, 
 $$ \{g\in[n]^{K_n}:g(b)\ne g(c)\}=\bigcup_{\substack{(\alpha,\beta)\in[n]\times [n]\\ \alpha\ne \beta}} \{g\in[n]^{K_n}:g(b)=\alpha\mbox{ and }g(c)=\beta\} $$
is a finite union of closed sets. 

This topological fact is trivial in some cases, since (as shown in \S\,\ref{subs:nle5}) $C_{K_n}$ is sometimes finite, but see \S\,\ref{subs:n6}. The Polish topology of the space 
$[n]^{K_n}$ is generated by a natural metric: enumerate $K_n$ in increasing order as $\{k_i:i\in\mathbb Z^+\}$. The distance between two distinct colorings $c,c'$ is $1/N$, 
where $N$ is the least index of a disagreement, that is, $c(k_i)=c'(k_i)$ for all $i<N$, but $c(k_N)\ne c'(k_N)$. This metric is complete both in the whole space $[n]^{K_n}$ and 
in $C_{K_n}$.

The truth is, we understand very little of the topological structure of $C_{K_n}$. It is unclear, for instance, whether the following question should have a positive answer.

\begin{question} \label{q:closed}
Given $n\in\mathbb Z^+$, suppose that $C_{K_n}$ is nonempty. Should it have isolated points?
\end{question}

\vspace{1mm}
(2) The following is an immediate but useful observation.

\begin{lemma} \label{lem:indiscernible}
Let $\rho$ be an automorphism of the structure $([n],|)$, that is, of the Hasse diagram for divisibility on $[n]$. Extend $\rho$ to a bijection of $K_n$ in the natural way: if 
$m=2^{\alpha_1}\dots {p_k}^{\alpha_k}$ is the prime factorization of $m\in K_n$, then 
\begin{equation} \label{eq:rho}
\rho(m)=\rho(2)^{\alpha_1}\dots \rho(p_k)^{\alpha_k}.
\end{equation} 
If $c$ is an $n$-satisfactory coloring of $K_n$, then so is $\tilde c$, where $\tilde c(a)=\tilde c(b)$ if and only if $c(\rho(a))=c(\rho(b))$.
\end{lemma}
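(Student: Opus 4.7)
The plan is a direct verification from the definitions, with two preliminary points to clear up. First, I would observe that the map $\rho\!:\!K_n\to K_n$ defined by (\ref{eq:rho}) is \emph{multiplicative}, i.e.\ $\rho(xy)=\rho(x)\rho(y)$ for all $x,y\in K_n$; this is immediate from the formula together with unique prime factorization. It is also a bijection of $K_n$ because the assignment $p\mapsto\rho(p)$ permutes the primes in $[n]$ (any poset-automorphism of $([n],|)$ sends atoms to atoms), so the induced action on exponent tuples in $\mathbb{Z}_{\ge 0}^{\pi(n)}$ is a coordinate permutation.

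Next, I would check that this extension actually agrees with the original $\rho$ on $[n]\subseteq K_n$, so that the word ``extend'' is justified. Since $\rho$ is a poset automorphism, it carries the divisor set of any $m\in[n]$ bijectively onto the divisor set (in $[n]$) of $\rho(m)$. An easy induction on the number of prime factors of $m$ (with multiplicity) then pins down $\rho(m)$: for a prime power $p^\alpha\in[n]$, the divisor set of $\rho(p^\alpha)$ in $[n]$ is $\{1,\rho(p),\rho(p^2),\dots,\rho(p^\alpha)\}$, forcing $\rho(p^\alpha)=\rho(p)^\alpha$; for $pq\in[n]$ with $p\ne q$ primes, the divisor set forces $\rho(pq)=\rho(p)\rho(q)$; and so on. Hence the automorphism and the extension coincide on $[n]$.

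With these two facts in hand, the verification of the lemma is immediate. Fix $a\in K_n$ and $1\le i<j\le n$. By multiplicativity,
\[ \rho(ia)=\rho(i)\rho(a),\qquad \rho(ja)=\rho(j)\rho(a). \]
Because $\rho$ restricts to a bijection of $[n]$ onto itself, $i':=\rho(i)$ and $j':=\rho(j)$ are distinct elements of $[n]$, while $a':=\rho(a)\in K_n$. Applying the $n$-satisfactoriness of $c$ to $a'$ gives $c(i'a')\ne c(j'a')$ (after possibly swapping $i',j'$ if $i'>j'$, which does not affect the inequality). Translating back, $c(\rho(ia))\ne c(\rho(ja))$, which by definition of $\tilde c$ is precisely $\tilde c(ia)\ne \tilde c(ja)$. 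Finally, $\tilde c$ uses exactly $n$ colors since its equivalence classes are the $\rho^{-1}$-images of the color classes of $c$, and $\rho$ is a bijection of $K_n$. The only mildly delicate step is the compatibility of the formula-defined extension with the poset automorphism on $[n]$; everything else is a one-line calculation.
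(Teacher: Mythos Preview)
Your proof is correct and follows essentially the same route as the paper's: both establish that the extended $\rho$ is multiplicative and agrees with the original automorphism on $[n]$, then use $\tilde c(ia)=c(\rho(i)\rho(a))\ne c(\rho(j)\rho(a))=\tilde c(ja)$ from the $n$-satisfactoriness of $c$ together with $\rho(i),\rho(j)\in[n]$ distinct. You are simply more explicit than the paper about why the formula-defined extension coincides with the poset automorphism on $[n]$.
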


For an application, see the discussion of the case $n=5$ in \S\,\ref{subs:nle5}, where it is also shown that the more inclusive condition that $\rho$ be a permutation of the set of 
primes in $[n]$ is not enough in general. 

\begin{proof}
Note first that $\rho$ permutes the primes less than or equal to $n$, and equation (\ref{eq:rho}) holds for all $m\in[n]$, so that the suggested extension is well defined and maps 
$K_n$ to itself. Since $\rho$ is a permutation on the primes in $[n]$, it follows as well that $\rho$ is surjective on $K_n$. Note also that for any $m_1,m_2\in K_n$, 
$\rho(m_1m_2)=\rho(m_1)\rho(m_2)$.

Now, if $c$ is satisfactory and $\tilde c$ is as indicated, then for $i\ne j$ in $[n]$ and $a\in K_n$, we have that $\tilde c(ia)=c(\rho(i)\rho(a))\ne c(\rho(j)\rho(a))=\tilde c(ja)$ since 
$\rho(i),\rho(j)\in[n]$.
\end{proof}

\vspace{1mm}
(3) Another natural operation on $C_{K_n}$ can be defined by letting $c_k\in C_{K_n}$, for $c\in C_{K_n}$ and $k\in K_n$, be given by $c_k(l)=c_k(m)$ if and only if 
$c(kl)=c(km)$. (Abusing slightly\footnote{In proposition \ref{prop:appropriate} we require $k\in P_n$, but $P_n\cap K_n=\{1\}$.} the notation used in proposition 
\ref{prop:appropriate}, $c_k=\dv(c\upharpoonright k\cdot K_n,k)$.) We remark that although we concentrate on $n$-satisfactory colorings throughout the whole paper, on occasion 
we may consider \emph{translations} $c_k$ of arbitrary $n$-colorings $c$, with the understanding that the definition just given applies in general.

Most of the colorings we consider in this paper are \emph{multiplicative} (see \S\,\ref{subs:multiplicative}). For them, this operation is uninteresting: $c=c_k$ for any $k\in K_n$ 
whenever $c$ is multiplicative. However, the operation may generate new colorings otherwise, see \S\,\ref{subs:n6}. It also suggests the following natural problem.

\begin{question} \label{q:equation}
Given an $n$-satisfactory coloring $c$ and $k\in K_n$, is there is an $n$-satisfactory coloring $d$ such that $d_k=c$? In that case, how many such colorings $d$ are there?
\end{question}

\vspace{1mm}
(4) Because $C_{K_n}$ is closed in $[n]^{K_n}$, it is also closed under a construction coming from applications of K\H onig's infinity lemma. We discuss this construction in the 
next subsection, once the appropriate notation and terminology have been introduced, see remark \ref{rem:konig}.

\subsection{Tilings} \label{subs:tilings}

The switch from $\mathbb Z^+$ to the set $K_n$ allows us to restate question \ref{q:problem} as a problem about tilings (this restatement is also mentioned by P\'alv\"olgyi 
on his post in MathOverflow, and is the subject of \cite[\S\,4]{Boseketal18}). 

To simplify the description, consider for now the case $n=3$. In this case, the problem lives in the integer grid. Identify $m=2^a3^b\in K_3$ with the point $(a,b)$ in the 
first quadrant of the integer lattice or, equivalently, the unit square with sides parallel to the axes and bottom left corner at $(a,b)$. Now, a coloring is 3-satisfactory if and only 
if for any such pair $(a,b)$, the pairs $(a,b)$, $(a+1,b)$ (corresponding to $2m$) and $(a,b+1)$ (corresponding to $3m$) all receive different colors. The question of whether 
there is a 3-satisfactory coloring of $K_3$ becomes the question of whether we can assign to each unit square in the first quadrant one of three colors in such a way that all 
translates of the triomino consisting of the three unit squares in the bottom left corner of the quadrant  contain tiles of all colors.

The case $n=3$ is simple enough that one can easily see that there is a unique way of accomplishing this, illustrated in figures \ref{figure:3tiling} and \ref{figure:3tiling-b}. 
(The names of the colors in figure \ref{figure:3tiling} are chosen so that the color of each $i=1,2,3$ is $i$ itself.)

\begin{figure}[ht]
\begin{tikzpicture}[scale=.4]
\draw[step=1cm,gray,very thin] (0,0) grid (6.1,6.1);
\draw[thick,->] (0,0) -- (6.1,0);
\draw[thick,->] (0,0) -- (0,6.1);

    \setcounter{row}{1}
    \setrow {\scriptsize 2}{\scriptsize 3}{\scriptsize 1}{\scriptsize 2}{\scriptsize 3}{\scriptsize 1}  
    \setrow {\scriptsize 3}{\scriptsize 1}{\scriptsize 2}{\scriptsize 3}{\scriptsize 1}{\scriptsize 2}  
    \setrow {\scriptsize 1}{\scriptsize 2}{\scriptsize 3}{\scriptsize 1}{\scriptsize 2}{\scriptsize 3}
    \setrow {\scriptsize 2}{\scriptsize 3}{\scriptsize 1}{\scriptsize 2}{\scriptsize 3}{\scriptsize 1}  
    \setrow {\scriptsize 3}{\scriptsize 1}{\scriptsize 2}{\scriptsize 3}{\scriptsize 1}{\scriptsize 2}  
    \setrow {\scriptsize 1}{\scriptsize 2}{\scriptsize 3}{\scriptsize 1}{\scriptsize 2}{\scriptsize 3}
        
\draw[very thick]  (0,0) -- (2,0) -- (2,1) -- (1,1) -- (1,2) -- (0,2) -- (0,0);
\draw[blue,very thick] (3,2) -- (5,2) -- (5,3) -- (4,3) -- (4,4) -- (3,4) -- (3,2);

\end{tikzpicture}
\caption{Tiling of the first quadrant of $\mathbb Z^2$ corresponding to a 3-satisfactory coloring. The relevant triomino is shown at the bottom left corner. Any copy of the triomino 
contains all three colors.}
\label{figure:3tiling}
\end{figure}

\begin{figure}[ht]
\begin{tikzpicture}[scale=.75]
\draw[step=1cm,gray,very thin] (-0.1,-0.1) grid (6.1,6.1);
\draw[thick,->] (0,0) -- (6.1,0) node [anchor=north west] {$\times2$};
\draw[thick,->] (0,0) -- (0,6.1) node [anchor=south east] {$\times3$};
\draw[very thick] (0,3) -- (6.1,3);
\draw[very thick] (3,0) -- (3,6.1);
\draw[very thick] (0,6) -- (6.1,6);
\draw[very thick] (6,0) -- (6,6.1);

   \setcounter{row}{1}
    \setrow {{\color{blue}{$243_2$}}}{$486_3$}{{\color{red}{$972_1$}}}{\footnotesize{\color{blue}{$1944_2$}}}{\footnotesize{$3888_3$}}{\footnotesize{\color{red}{$7776_1$}}}
    \setrow {$81_3$}{{\color{red}{$162_1$}}}{{\color{blue}{$324_2$}}}{$648_3$}{\footnotesize{\color{red}{$1296_1$}}}{\footnotesize{\color{blue}{$2592_2$}}}   
    \setrow {{\color{red}{$27_1$}}}{{\color{blue}{$54_2$}}}{$108_3$}{{\color{red}{$216_1$}}}{{\color{blue}{$432_2$}}}{$864_3$}
    \setrow {{\color{blue}$9_2$}}{$18_3$}{{\color{red}$36_1$}}{{\color{blue}$72_2$}}{$144_3$}{{\color{red}{$288_1$}}}
    \setrow {$3_3$}{{\color{red}$6_1$}}{{\color{blue}$12_2$}}{$24_3$}{{\color{red}{$48_1$}}}{{\color{blue}{$96_2$}}}
    \setrow {{\color{red}$1_1$}}{{\color{blue}$2_2$}}{$4_3$}{{\color{red}$8_1$}}{{\color{blue}{$16_2$}}}{$32_3$}
\end{tikzpicture}
\vspace{-1mm}
\caption{The unique 3-satisfactory coloring of $K_3$. (Subindices indicate colors.) Notice the periodicity of the coloring, resulting in a tiling of the first quadrant with identically 
colored $3\times 3$ squares.}
\label{figure:3tiling-b}
\end{figure}

Before proceeding, the reader may enjoy verifying that, similarly, there is a unique 4-satisfactory tiling of the first quadrant, as illustrated in figures \ref{figure:4tiling} and 
\ref{figure:4tiling-b}.

\begin{figure}[ht]
\begin{tikzpicture}[scale=.4]
\draw[step=1cm,gray,very thin] (0,0) grid (8.1,8.1);
\draw[thick,->] (0,0) -- (8.1,0);
\draw[thick,->] (0,0) -- (0,8.1);

    \setcounter{rowf}{1}
    \setrowf {\scriptsize 2}{\scriptsize 4}{\scriptsize 3}{\scriptsize 1}{\scriptsize 2}{\scriptsize 4}{\scriptsize 3}{\scriptsize 1}  
    \setrowf {\scriptsize 4}{\scriptsize 3}{\scriptsize 1}{\scriptsize 2}{\scriptsize 4}{\scriptsize 3}{\scriptsize 1}{\scriptsize 2}  
    \setrowf {\scriptsize 3}{\scriptsize 1}{\scriptsize 2}{\scriptsize 4}{\scriptsize 3}{\scriptsize 1}{\scriptsize 2}{\scriptsize 4}
    \setrowf {\scriptsize 1}{\scriptsize 2}{\scriptsize 4}{\scriptsize 3}{\scriptsize 1}{\scriptsize 2}{\scriptsize 4}{\scriptsize 3}  
    \setrowf {\scriptsize 2}{\scriptsize 4}{\scriptsize 3}{\scriptsize 1}{\scriptsize 2}{\scriptsize 4}{\scriptsize 3}{\scriptsize 1}  
    \setrowf {\scriptsize 4}{\scriptsize 3}{\scriptsize 1}{\scriptsize 2}{\scriptsize 4}{\scriptsize 3}{\scriptsize 1}{\scriptsize 2}  
    \setrowf {\scriptsize 3}{\scriptsize 1}{\scriptsize 2}{\scriptsize 4}{\scriptsize 3}{\scriptsize 1}{\scriptsize 2}{\scriptsize 4}
    \setrowf {\scriptsize 1}{\scriptsize 2}{\scriptsize 4}{\scriptsize 3}{\scriptsize 1}{\scriptsize 2}{\scriptsize 4}{\scriptsize 3}  
        
\draw[very thick]  (0,0) -- (3,0) -- (3,1) -- (1,1) -- (1,2) -- (0,2) -- (0,0);
\draw[blue,very thick] (4,3) -- (7,3) -- (7,4) -- (5,4) -- (5,5) -- (4,5) -- (4,3);

\end{tikzpicture}
\caption{Tiling of the first quadrant of $\mathbb Z^2$ corresponding to a 4-satisfactory coloring. The relevant polyomino is shown at the bottom left corner. Any copy of the 
polyomino contains all 4 colors.}
\label{figure:4tiling}
\end{figure}

\begin{figure}[ht]
\begin{tikzpicture}[scale=.75]
\draw[step=1cm,gray,very thin] (0,0) grid (6.1,6.1);
\draw[thick,->] (0,0) -- (6.1,0) node [anchor=north west] {$\times 2$};
\draw[thick,->] (0,0) -- (0,6.1) node [anchor=south east] {$\times 3$};
\draw[very thick] (0,4) -- (6.1,4);
\draw[very thick] (4,0) -- (4,6.1);

    \setcounter{row}{1}
    \setrow {$243_3$}{{\color{red}{$486_1$}}}{{\color{blue}{$972_2$}}}{\footnotesize{\color{purple}{$1944_4$}}}{\footnotesize{$3888_3$}}{\footnotesize{\color{red}{$7776_1$}}}
    \setrow {{\color{red}{$81_1$}}}{{\color{blue}{$162_2$}}}{{\color{purple}{$324_4$}}}{$648_3$}{\footnotesize{\color{red}{$1296_1$}}}{\footnotesize{\color{blue}{$2592_2$}}} 
    \setrow {{\color{blue}{$27_2$}}}{{\color{purple}{$54_4$}}}{$108_3$}{{\color{red}{$216_1$}}}{{\color{blue}{$432_2$}}}{{\color{purple}{$864_4$}}} 
    \setrow {{\color{purple}{$9_4$}}}{$18_3$}{{\color{red}{$36_1$}}}{{\color{blue}{$72_2$}}}{{\color{purple}{$144_4$}}}{$288_3$} 
    \setrow {$3_3$}{{\color{red}{$6_1$}}}{{\color{blue}{$12_2$}}}{{\color{purple}{$24_4$}}}{$48_3$}{{\color{red}{$96_1$}}}
    \setrow {{\color{red}{$1_1$}}}{{\color{blue}{$2_2$}}}{{\color{purple}{$4_4$}}}{$8_3$}{{\color{red}{$16_1$}}}{{\color{blue}{$32_2$}}} 
        
\end{tikzpicture}
\vspace{-1mm}
\caption{The unique 4-satisfactory coloring of $K_4$. The coloring is periodic, resulting in a tiling of the first quadrant with identically colored $4\times 4$ squares.}
\label{figure:4tiling-b}
\end{figure}

Further cases are harder to illustrate, as they correspond in general to tilings of the first orthant of $\mathbb Z^{\pi(n)}$ where, as usual, $\pi(\cdot)$ denotes the prime 
counting function (and in general lack the periodicity displayed in these two examples, but see remark \ref{rmk:periodic}). These tilings use unit ``cubes'' of $n$ possible 
colors as tiles. More interestingly, we can instead restate question \ref{q:problem} as a problem about tilings with translates of the $\pi(n)$-dimensional polyomino 
corresponding to the set $\{1,2,\dots,n\}$ as tiles. We proceed now to explain this connection. 

Given $n$, work in $\mathbb Z^{\pi(n)}$. As suggested above, we identify each member of $K_n$ with the tuple of its prime exponents: any $m\in K_n$ can be written in a 
unique way as $m=\prod_{i=1}^{\pi(n)}p_i^{\alpha_i}$, where $2=p_1<\dots<p_{\pi(n)}$ are the primes less than or equal to $n$, listed in increasing order, and the $\alpha_i$ 
are nonnegative integers. We identify $m$ with the tuple $t(m)=(\alpha_1,\dots,\alpha_{\pi(n)})$ in the first orthant $\mathbb O_n$ of $\mathbb Z^{\pi(n)}$, noting that 
$t\!:K_n\to\mathbb O_n$ is a bijection, and let $T_n=\{t(i): i\in [n]\}$. Note that $t$ turns multiplication into vector addition in the sense that $t(kk')=t(k)+t(k')$ for any 
$k,k'\in K_n$. We will find several maps with similar properties in what follows, see for instance definition \ref{def:multiplicative}, where we call them \emph{multiplicative}.

Given $A,C\subseteq \mathbb Z^{\pi(n)}$, say that \emph{$A$ tiles $C$} (or, equivalently, that \emph{$C$ can be tiled by $A$}) if and only if there is a set 
$B\subseteq \mathbb Z^{\pi(n)}$ such that $C$ is the direct sum of $A$ and $B$, that is,
\begin{enumerate}
\item
$C=A+B\coloneqq\{a+b: a\in A,b\in B\}$, and in fact
\item
any $c\in C$ admits a unique decomposition as a sum of a member of $A$ and a member of $B$, that is, there is a unique pair $(a,b)\in A\times B$ with $c=a+b$.
\end{enumerate}
Also, say that \emph{$A$ essentially tiles $C$} if and only if $C$ can be covered by a set that can be tiled by $A$ (in which case, we call such a tiling of a superset of $C$ an 
\emph{essential tiling} of $C$ by $A$). We remark that, as we did above, we may identify without further comment points $(\alpha_i: i\in[\pi(n)])$ in $\mathbb Z^{\pi(n)}$ with 
the corresponding $\pi(n)$-dimensional cubes $\{(x_i: i\in[\pi(n)]): \alpha_i\le x_i\le \alpha_i+1\}$.

For a fixed value of $n$, consider now the following two statements:
\begin{enumerate}[(i)]
\item
There is an $n$-satisfactory coloring of $K_n$.
\item
$T_n$ essentially tiles the orthant $\mathbb O_n$.
\end{enumerate}

We have the following result.

\begin{proposition} \label{proposition:tiling}
With notation as above, (i) and (ii) are equivalent.
\end{proposition}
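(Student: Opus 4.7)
The key observation is that via the bijection $t\colon K_n\to\mathbb O_n$, which turns multiplication into vector addition, the set $\{a,2a,\dots,na\}$ becomes the translate $t(a)+T_n$. Thus an $n$-satisfactory coloring of $K_n$ corresponds exactly to a function $c\colon\mathbb O_n\to[n]$ with the property that every translate $x+T_n$, for $x\in\mathbb O_n$, receives all $n$ colors.

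For the implication $\mathrm{(ii)}\Rightarrow\mathrm{(i)}$, I would suppose $C\supseteq\mathbb O_n$ admits a tiling $C=T_n\oplus B$ and define a coloring $c\colon C\to[n]$ by declaring $c(y)=i$ if and only if $y-t(i)\in B$; this is well defined by uniqueness of the direct sum decomposition. For any $x\in\mathbb O_n$, the translate $x+T_n$ lies in $\mathbb O_n\subseteq C$ (as $\mathbb O_n$ is closed under addition) and has exactly $n$ elements. If $c(x+t(i))=c(x+t(j))=k$ with $i\ne j$, then writing $x+t(i)=t(k)+b_1$ and $x+t(j)=t(k)+b_2$ yields the identity $t(i)+b_2=t(j)+b_1$, where both sides belong to $T_n+B=C$, contradicting uniqueness. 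Pulling back along $t$ therefore produces an $n$-satisfactory coloring of $K_n$.

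For $\mathrm{(i)}\Rightarrow\mathrm{(ii)}$, given an $n$-satisfactory coloring $c$ on $K_n$, my plan is first to extend it to a coloring $\tilde c\colon\mathbb Z^{\pi(n)}\to[n]$ such that every translate $y+T_n$ receives all $n$ colors. For each $N\ge 0$, setting $z_N=(-N,\dots,-N)$ and $c_N(y)=c(t^{-1}(y-z_N))$ on $z_N+\mathbb O_n\supseteq[-N,N]^{\pi(n)}$ produces a coloring whose translates lying inside this half-space receive all $n$ colors, because under $t$ such a translate corresponds to $\{a,2a,\dots,na\}$ for $a=t^{-1}(y-z_N)\in K_n$. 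Extending each $c_N$ arbitrarily to $\mathbb Z^{\pi(n)}$, the sequence $(c_N)$ has a limit point $\tilde c$ in the compact space $[n]^{\mathbb Z^{\pi(n)}}$ by Tychonoff. The property that $y+T_n$ receives all $n$ colors depends on $\tilde c$ only on the finite set $y+T_n$ and is thus clopen; since it holds for all sufficiently large $N$, it is inherited by $\tilde c$. Setting $B'=\tilde c^{-1}(1)$, this property is equivalent to saying that for each $y$ there is a unique $t\in T_n$ with $y+t\in B'$, so $\mathbb Z^{\pi(n)}=B'\oplus(-T_n)$. Negating both sides of each such decomposition shows $\mathbb Z^{\pi(n)}=(-B')\oplus T_n$, and since $\mathbb O_n\subseteq\mathbb Z^{\pi(n)}$ this is the essential tiling sought.

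The main obstacle is the compactness step, namely verifying that the property ``every translate receives all $n$ colors'' is preserved under passage to a limit; the fact that it localizes to a clopen condition on finitely many coordinates is what makes this work. As a pleasant side effect, the argument produces a genuine tiling of the full lattice $\mathbb Z^{\pi(n)}$, and the same shift-plus-compactness technique should be useful for the analogous statement about $\hat K_n$ indicated in proposition \ref{prop:tilingz}.
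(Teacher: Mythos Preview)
Your argument is correct. The direction $\mathrm{(ii)}\Rightarrow\mathrm{(i)}$ coincides with the paper's: both color a point by its position within its tile and use directness of $T_n+B$ to rule out collisions. For $\mathrm{(i)}\Rightarrow\mathrm{(ii)}$ the approaches diverge. The paper argues directly and elementarily: it lets $B'$ be the $t$-image of a single color class of the given coloring of $K_n$, checks that the sum $T_n+B'$ is direct, and then uses pigeonhole to see that every point of $\mathbb O_n$ sufficiently far from the boundary (namely every $t(m)$ with $n!\mid m$) is covered; a single translation by $-t(\operatorname{lcm}([n]))$ then yields the required essential tiling of $\mathbb O_n$. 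Your route instead passes through compactness to manufacture a coloring of the whole lattice before extracting the tiling from a color class. This buys you more than was asked: you obtain an honest tiling of all of $\mathbb Z^{\pi(n)}$, which is the paper's statement (iii), established there separately (proposition \ref{prop:tilingz}) via essentially the same de Bruijn--Erd\H os/K\H onig mechanism you deploy. The paper's proof, by contrast, stays finitary and choice-free, and keeps visible the link between color classes of the \emph{original} coloring and the tiling, a theme revisited in questions \ref{q:inverse} and \ref{q:inverse2}.
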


\begin{proof}
To see that (ii) implies (i), consider a tiling by $T_n$ of a superset $C$ of $\mathbb O_n$, say $C=T_n+B$, the sum being direct. Note that via this direct sum, each element of 
$C$, and therefore each $x\in\mathbb O_n$, belongs to exactly one tile, that is, a unique copy of $T_n$. There is a unique $m\in K_n$ such that $x=t(m)$, where $t$ is the map 
described above. Color $m$ with the position of $x$ within this tile. In other words, let the color classes be the preimages under $t$ of the sets $a+B$ for $a\in T_n$. We must 
argue that this coloring is $n$-satisfactory. Indeed, given $k\in K_n$ and $i,j\in[n]$, suppose that $ki$ and $kj$ receive the same color, that is, there are $\alpha\in T_n$ and 
$b_1,b_2\in B$ such that $t(ki)=\alpha+b_1$ and $t(kj)=\alpha+b_2$. By the multiplicative property of $t$, it follows that 
 $$ t(j)+b_1=t(i) +b_2. $$
Since the sum $T_n+B$ is direct, this means that ($b_1=b_2$ and) $t(i)=t(j)$, thus $i=j$, and the coloring is indeed $n$-satisfactory; see figure \ref{figure:A3tiling}.

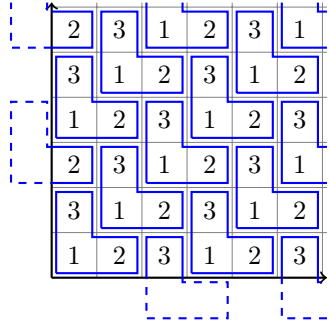
\begin{figure}[ht]
\begin{tikzpicture}[scale=.6]
\draw[step=1cm,gray,very thin] (0,0) grid (6.1,6.1);
\draw[thick,->] (0,0) -- (6.1,0);
\draw[thick,->] (0,0) -- (0,6.1);

    \setcounter{row}{1}
    \setrow {2}{3}{1}{2}{3}{1}  
    \setrow {3}{1}{2}{3}{1}{2}  
    \setrow {1}{2}{3}{1}{2}{3}
    \setrow {2}{3}{1}{2}{3}{1}  
    \setrow {3}{1}{2}{3}{1}{2}  
    \setrow {1}{2}{3}{1}{2}{3}
        
\draw[thick,blue] (0.1,0.1) -- (1.9,0.1) -- (1.9,0.9) -- (0.9,0.9) -- (0.9,1.9) -- (0.1,1.9) -- (0.1,0.1);
\draw[thick,blue] (3.1,0.1) -- (4.9,0.1) -- (4.9,0.9) -- (3.9,0.9) -- (3.9,1.9) -- (3.1,1.9) -- (3.1,0.1);
\draw[thick,blue] (1.1,1.1) -- (2.9,1.1) -- (2.9,1.9) -- (1.9,1.9) -- (1.9,2.9) -- (1.1,2.9) -- (1.1,1.1);
\draw[thick,blue] (4.1,1.1) -- (5.9,1.1) -- (5.9,1.9) -- (4.9,1.9) -- (4.9,2.9) -- (4.1,2.9) -- (4.1,1.1);
\draw[thick,blue] (2.1,2.1) -- (3.9,2.1) -- (3.9,2.9) -- (2.9,2.9) -- (2.9,3.9) -- (2.1,3.9) -- (2.1,2.1);
\draw[thick,blue] (0.1,3.1) -- (1.9,3.1) -- (1.9,3.9) -- (0.9,3.9) -- (0.9,4.9) -- (0.1,4.9) -- (0.1,3.1);
\draw[thick,blue] (3.1,3.1) -- (4.9,3.1) -- (4.9,3.9) -- (3.9,3.9) -- (3.9,4.9) -- (3.1,4.9) -- (3.1,3.1);
\draw[thick,blue] (1.1,4.1) -- (2.9,4.1) -- (2.9,4.9) -- (1.9,4.9) -- (1.9,5.9) -- (1.1,5.9) -- (1.1,4.1);
\draw[thick,blue] (4.1,4.1) -- (5.9,4.1) -- (5.9,4.9) -- (4.9,4.9) -- (4.9,5.9) -- (4.1,5.9) -- (4.1,4.1);
\draw[thick,blue] (2.1,6.1) -- (2.1,5.1) -- (3.9,5.1) -- (3.9,5.9) -- (2.9,5.9) -- (2.9,6.1);
\draw[thick,blue] (6.1,2.1) -- (5.1,2.1) -- (5.1,3.9) -- (5.9,3.9) -- (5.9,2.9) -- (6.1,2.9);
\draw[thick,blue] (5.1,6.1) -- (5.1,5.1) -- (6.1,5.1);
\draw[thick,blue] (5.9,6.1) -- (5.9,5.9) -- (6.1,5.9);
\draw[thick,blue] (0,5.1) -- (0.9,5.1) -- (0.9,5.9) -- (0,5.9);
\draw[thick,dashed,blue] (0,5.1) -- (-0.9,5.1) -- (-0.9,6.1);
\draw[thick,dashed,blue] (-0.1,6.1) -- (-0.1,5.9) -- (0,5.9);
\draw[thick,blue] (0,2.1) -- (0.9,2.1) -- (0.9,2.9) -- (0,2.9);
\draw[thick,dashed,blue] (0,2.9) -- (-0.1,2.9) -- (-0.1,3.9) -- (-0.9,3.9) -- (-0.9,2.1) -- (0,2.1);
\draw[thick,blue] (2.1,0) -- (2.1,0.9) -- (2.9,0.9) -- (2.9,0);
\draw[thick,dashed,blue] (2.9,0) -- (2.9,-0.1) -- (3.9,-0.1) -- (3.9,-0.9) -- (2.1,-0.9) -- (2.1,0);
\draw[thick,blue] (5.1,0) -- (5.1,0.9) -- (5.9,0.9) -- (5.9,0);
\draw[thick,dashed,blue] (5.9,0) -- (5.9,-0.1) -- (6.1,-0.1);
\draw[thick,dashed,blue] (6.1,-0.9) -- (5.1,-0.9) -- (5.1,0);

\end{tikzpicture}
\caption{Tiling of a superset of $\mathbb O_3$ by $T_3$, and the 3-satisfactory coloring it induces.}
\label{figure:A3tiling}
\end{figure}

\vspace{1mm}
To see that, conversely, (i) implies (ii), consider an $n$-satisfactory coloring $c$. Letting $B'$ be the image under the map $t$ of one of the color classes, note that the sum 
$T_n+B'$ is direct. Indeed, suppose that $i,j\in [n]$, and $k,k'\in K_n$ are such that $t(k),t(k')\in B'$ and $t(i)+t(k)=t(j)+t(k')$, that is, $ik=jk'$. By removing common factors 
if necessary, we may further assume that $i,j$ are relatively prime. This means that there is a positive integer $k''$ such that $k=jk''$ and $k'=ik''$. Observe that 
$k''\in K_n$. The assumption that $B'$ is the image of a color class gives us that $c(jk'')=c(ik'')$ and, since $c$ is $n$-satisfactory, then $i=j$ and so also $k=k'$. This 
proves that the sum $T_n+B'$ is indeed direct.

Let now $x\in\mathbb O_n$ be sufficiently far from the boundary of $\mathbb O_n$, in the sense that all of $x-t(1),\dots,x-t(n)$ are themselves in $\mathbb O_n$ 
(equivalently, if $x=t(m)$, then all of $m,m/2,\dots,m/n$ are positive integers), and fix the image $B'$ of a color class. We claim that for some $i\in [n]$, we have that $x-t(i)\in B'$. 
Otherwise, by the pigeonhole principle, for some $i\ne j$, both in $[n]$, it must be that $x-t(i)$ and $x-t(j)$ are in the same image $B''$ of a color class. This is impossible, since 
the decompositions $x=t(i)+(x-t(i))=t(j)+(x-t(j))$ contradict that the sum $T_n+B''$ is direct, as shown in the previous paragraph. 

We have shown that for any image $B'$ of a color class, the sum $T_n+B'$ is direct and contains a translate of $\mathbb O_n$, for instance $x_0+\mathbb O_n$, where 
$x_0=t(\operatorname{lcm}([n]))$. Setting $B=B'-x_0$, then $T_n+B$ is a direct sum and covers $\mathbb O_n$, as desired; see figure \ref{figure:A4tiling}.   
\end{proof}

\begin{figure}[ht]
\begin{tikzpicture}[scale=.6]
\draw[step=1cm,gray,very thin] (0,0) grid (6.1,6.1);
\draw[thick,->] (0,0) -- (6.1,0);
\draw[thick,->] (0,0) -- (0,6.1);

    \setcounter{row}{1}
    \setrow {3}{1}{2}{4}{3}{1}  
    \setrow {1}{2}{4}{3}{1}{2}  
    \setrow {2}{4}{3}{1}{2}{4}
    \setrow {4}{3}{1}{2}{4}{3}  
    \setrow {3}{1}{2}{4}{3}{1}  
    \setrow {1}{2}{4}{3}{1}{2}
        
\draw[very thick]  (2,6.1) -- (2,1) -- (6.1,1);
\draw[thick,dotted,red] (0.1,1.9) -- (0.1,0.1) -- (2.9,0.1) -- (2.9,0.9) -- (0.9,0.9) -- (0.9,1.9) -- (0.1,1.9);
\draw[thick,blue] (2,4.1) -- (2.9,4.1) -- (2.9,4.9) -- (2,4.9); 
\draw[thick,dashed,blue] (2,4.9) -- (0.9,4.9) -- (0.9,5.9) -- (0.1,5.9) -- (0.1,4.1) -- (2,4.1); 
\draw[thick,blue] (2,1.1) -- (3.9,1.1) -- (3.9,1.9) -- (2,1.9);
\draw[thick,dashed,blue] (1.9,1.9) -- (1.9,2.9) -- (1.1,2.9) -- (1.1,1.1) -- (2,1.1);
\draw[thick,blue] (2.1,3.9) -- (2.1,2.1) -- (4.9,2.1) -- (4.9,2.9) -- (2.9,2.9) -- (2.9,3.9) -- (2.1,3.9);
\draw[thick,blue] (3.1,4.9) -- (3.1,3.1) -- (5.9,3.1) -- (5.9,3.9) -- (3.9,3.9) -- (3.9,4.9) -- (3.1,4.9);
\draw[thick,blue] (6.1,4.9) -- (4.9,4.9) -- (4.9,5.9) -- (4.1,5.9) -- (4.1,4.1) -- (6.1,4.1); 
\draw[thick,blue] (6.1,5.9) -- (5.9,5.9) -- (5.9,6.1); 
\draw[thick,blue] (5.1,6.1) -- (5.1,5.1) -- (6.1,5.1); 
\draw[thick,blue] (4.9,1) -- (4.9,1.9) -- (4.1,1.9) -- (4.1,1);
\draw[thick,dashed,blue] (6.1,0.9) -- (4.9,0.9) -- (4.9,1);
 \draw[thick,dashed,blue] (4.1,1) -- (4.1,0.1) -- (6.1,0.1);
\draw[thick,blue] (6.1,1.9) -- (5.9,1.9) -- (5.9,2.9) -- (5.1,2.9) -- (5.1,1.1) -- (6.1,1.1);
\draw[thick,blue] (2,5.1) -- (3.9,5.1) -- (3.9,5.9) -- (2,5.9);
\draw[thick,dashed,blue] (1.1,6.1) -- (1.1,5.1) -- (2,5.1); 
\draw[thick,dashed,blue] (2,5.9) -- (1.9,5.9) -- (1.9,6.1);

\end{tikzpicture}
\caption{Tiling by $T_4$ of a superset of a translate of $\mathbb O_4$ induced by a 4-satisfactory coloring.}
\label{figure:A4tiling}
\end{figure}
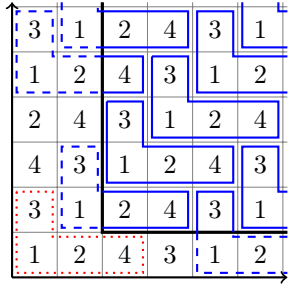

Now consider the following additional statement:

\begin{enumerate}
\item[(iii)]
$\mathbb Z^{\pi(n)}$ can be tiled by $T_n$.
\end{enumerate}

Obviously, (iii) implies (ii) (and therefore (i)), and it is natural to ask whether the converse holds. We argue below that this is indeed the case. Note that the proof of proposition 
\ref{proposition:tiling} shows that (iii) is equivalent to the following statement  (cf. remark \ref{rmk:quotient}):

\begin{enumerate}
\item[(iv)]
There is an $n$-satisfactory coloring of $\hat K_n=\{a/b:a,b\in K_n\}$.
\end{enumerate}

\begin{proposition} \label{prop:tilingz}
With notation as above, (i) implies (iii), and therefore (i)--(iv) are all equivalent.
\end{proposition}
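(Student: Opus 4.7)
The plan is to reduce (iii) to (iv) and then extend the $n$-satisfactory coloring of $K_n$ provided by (i) to one of $\hat K_n$ by a compactness argument. The reduction is essentially free: as noted in the paragraph preceding the statement, the proof of Proposition \ref{proposition:tiling} shows that (iii) and (iv) are equivalent --- the second half of that proof, applied with $\mathbb Z^{\pi(n)}$ in place of $\mathbb O_n$ (so that there is no boundary and the final lcm shift is no longer required), turns an $n$-satisfactory coloring of $\hat K_n$ into a direct-sum decomposition $\mathbb Z^{\pi(n)} = T_n + B$. Via the bijection $t$, it thus suffices to extend a given $n$-satisfactory $c : \mathbb O_n \to [n]$ to a map $\bar c : \mathbb Z^{\pi(n)} \to [n]$ with $\bar c(x + t(i)) \ne \bar c(x + t(j))$ for every $x \in \mathbb Z^{\pi(n)}$ and $1 \le i < j \le n$.

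For this extension I will invoke compactness --- this is the application of K\H onig's infinity lemma foreshadowed in item (4) of \S\,\ref{subsec:structure}. For each $v \in \mathbb O_n$, define $c_v : \mathbb Z^{\pi(n)} \to [n]$ by $c_v(x) = c(x + v)$ whenever $x + v \in \mathbb O_n$, and $c_v(x) = 1$ otherwise. Each $c_v$ is $n$-satisfactory on the shifted orthant $-v + \mathbb O_n$, and these shifted orthants exhaust $\mathbb Z^{\pi(n)}$ as $v$ grows coordinatewise. Since $\mathbb Z^{\pi(n)}$ is countable and $[n]$ is finite, the space $[n]^{\mathbb Z^{\pi(n)}}$ is a compact metrizable product, and a standard diagonal extraction yields a sequence $v_1, v_2, \ldots$ (for instance, $v_k = t((p_1 \cdots p_{\pi(n)})^k)$) along which $c_{v_k}$ converges pointwise to some $\bar c : \mathbb Z^{\pi(n)} \to [n]$.

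The verification that $\bar c$ is $n$-satisfactory is then immediate: for any fixed $x \in \mathbb Z^{\pi(n)}$ and $1 \le i < j \le n$, both $x + t(i)$ and $x + t(j)$ lie in $-v_k + \mathbb O_n$ for all sufficiently large $k$, so the satisfactory property of $c_{v_k}$ forces $c_{v_k}(x + t(i)) \ne c_{v_k}(x + t(j))$ at that stage; pointwise convergence transfers the inequality to $\bar c$. I expect the main obstacle to be purely bookkeeping --- organizing the diagonal extraction cleanly and making sure the chosen $v_k$'s dominate every fixed point of $\mathbb O_n$ eventually. No new combinatorial input is needed, since (iii) differs from (ii) only in requiring the tiling to cover the whole lattice rather than the orthant, and this gap is exactly what compactness bridges.
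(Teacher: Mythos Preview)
Your proposal is correct and matches the paper's approach: the paper first gives a one-line proof invoking the de Bruijn--Erd\H{o}s theorem on the graph $\hat{\mathcal G}_n$, then spells out essentially your argument as a second proof, translating cubes $D_m$ into the orthant and applying K\H{o}nig's lemma (equivalently, sequential compactness of $[n]^{\mathbb Z^{\pi(n)}}$) to extract a limit coloring. One minor wording issue: your ``diagonal extraction yields a sequence $v_1,v_2,\ldots$ along which $c_{v_k}$ converges'' should more precisely say that diagonal extraction yields a \emph{subsequence} of your chosen $(v_k)$ along which convergence holds, but this is cosmetic.
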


\begin{proof}
Let $\hat{\mathcal G}_n$ be the graph with set of vertices $\hat K_n$ where two points $x,y$ are connected if and only if there is an $m\in\hat K_n$ such that $x,y\in\{im:i\in[n]\}$ 
(cf. \S\,\ref{subsec:graham}). It is enough to argue that $\chi(\hat{\mathcal G}_n)=n$, since this is equivalent to (iv). But this is a consequence of compactness (in the form of the 
de Bruijn--Erd\H os theorem \cite{deBruijnErdos51}): given any finite subgraph $G$ of $\hat{\mathcal G}_n$, by multiplying all vertices by an appropriate $k\in K_n$ we see that 
$G$ is isomorphic to a finite subgraph of $\mathcal G_n$ and is therefore $n$-colorable, since (i) is equivalent to the assertion that $\chi(\mathcal G_n)=n$.
\end{proof}

Essentially the same argument was also noted in \cite{Boseketal18}, where $\hat{\mathcal G}_n$ is denoted $W_n$. Incorporating into the argument the proof of the de 
Bruijn--Erd\H os theorem in the countable case reveals a subtlety worth pointing out, as it leads to the interesting question \ref{q:tilingz} below. To help see the connection, we 
rephrase the proof just given using directly the integer grid rather than the accompanying graph.

For each positive integer $m$, let $D_m$ be the hypercube
 $$ D_m=\{(a_1,\dots,a_{\pi(n)}): |a_i|\le m\mbox{ for all }i\}. $$
Each $D_m$ admits a coloring $d^m$ that is ``partially $n$-satisfactory'' in the sense that any copy of the polyomino $T_n$ completely contained in $D_m$ receives $n$ colors.
Namely, consider an $n$-satisfactory coloring of $K_n$, seen as a coloring of the orthant $\mathbb O_n$, and a cube $D_m'$ of the same size as $D_m$ but completely contained 
in $\mathbb O_n$. Now define $d^m$ simply by translating $D_m'$ onto $D_m$ and copying the given coloring. Naturally, the colorings $d^m$ are not compatible in general. To 
obtain an actual $n$-satisfactory coloring of $\hat K_n$, that is, a coloring of the whole integer grid where any copy of $T_n$ receives $n$ colors, we need an additional step, 
which amounts to a standard application of K\H onig's infinity lemma. 

Explicitly: enumerate the points in the grid as $v_1,v_2,\dots$. Note that if $m<m'$, then $D_m\subsetneq D_{m'}$, and that the union over all $m$ of the hypercubes $D_m$ is the 
whole space $\mathbb Z^{\pi(n)}$ so that, for any $k$, $v_k$ is in $D_m$ for all sufficiently large $m$. Consider the sequence of colorings $\vec d=(d^m)_{m>0}$. Since only 
$n$ colors are possible, there is a subsequence of $\vec d$ that always assigns to $v_1$ the same color. Passing to a subsubsequence, we can also fix the color assigned to 
$v_2$. Going to yet a further subsequence, we can fix the color of $v_3$. Recursively carrying this procedure out produces a ``limit'' $n$-coloring $d$ of the whole grid that is in 
addition satisfactory, since for any two points $x,y$ with $x=t(ik)$, $y=t(jk)$ for $i\ne j$ in $[n]$ and some $k\in\hat K_n$, for $m$ large enough (say, $m\ge m_0$) the tile 
$T_n+t(k)$ is completely contained in $D_m$ and so all associated colorings $d^m$ assign to $x,y$ different colors. Say $x=v_r$ and $y=v_s$ with $r<s$. Since the color that 
$d$ assigns to $x$ is $d^m(x)$ for infinitely many $m$, and the color that it assigns to $y$ is $d^m(y)$ for a subsequence of these $m$ (using that 
$r<s$), in particular there is such an $m$ with $m\ge m_0$ and therefore $d(x)\ne d(y)$, as needed.

The subtlety we referred to above is that, naturally, the resulting coloring $d$ needs not be compatible with the initial coloring $c$ of the orthant; in fact, for any $k\in K_n$, $d$ 
may not be compatible with any of the translates $c_k$ (that is, with the original coloring of any of the translates $\mathbb O_n+t(k)$) and the question remains whether we can 
further impose this compatibility requirement. Say that a tiling $T_n+B'$ of $\mathbb Z^{\pi(n)}$ \emph{essentially extends} a tiling $T_n+B$ of a superset of $\mathbb O_n$ if 
and only if any tile of $T_n+B$ completely contained in $\mathbb O_n$ is a tile of $T_n+B'$ (so that, if at all, only partial tiles covering a part of the boundary of the orthant could 
in principle change). 

\begin{question} \label{q:tilingz}
Let $n\in\mathbb Z^+$.
\begin{enumerate}
\item
Does any $n$-satisfactory coloring of $K_n$ extend to one of $\hat K_n$?
\item
If $T_n$ essentially tiles $\mathbb O_n$ via a tiling $T_n+B$, is there a tiling by $T_n$ of all of $\mathbb Z^{\pi(n)}$ that essentially extends it?
\end{enumerate}
\end{question}

Question \ref{q:tilingz} has a positive answer for $n=3,4$ but seems delicate in general. It is clear that a multiplicative $n$-satisfactory coloring can be extended as in (1) 
(see  remark \ref{rmk:periodic}); in particular, (1) has a positive answer if all $n$-satisfactory colorings are multiplicative.  In that case, (2) has a positive answer as well. More 
generally, (2) has a positive answer if the coloring induced by $T_n+B$ (as in the proof of proposition \ref{proposition:tiling}) is multiplicative. Question \ref{q:equation} (whether for 
any $n$-satisfactory $c$ and any $k\in K_n$ we can find an $n$-satisfactory $d$ such that $d_k=c$) is a close relative; we briefly explore the latter in a particular case in 
\S\,\ref{subs:n6}. In terms of tilings, question \ref{q:equation} is asking whether we can extend any tiling by $T_n$ of (a superset of) $\mathbb O_n$ to one of 
$\mathbb O_n-t(k)$. If this is always possible for a given $n$, it provides us with a positive answer to question \ref{q:tilingz}.

\begin{lemma}
For any $n$, a positive answer to question \ref{q:equation} implies a positive answer to question \ref{q:tilingz}.
\end{lemma}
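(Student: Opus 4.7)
The plan is to derive positive answers to both parts of question \ref{q:tilingz} from the hypothesized positive answer to question \ref{q:equation} by iterating the latter. Parts (1) and (2) of question \ref{q:tilingz} correspond under the coloring/tiling dictionary of propositions \ref{proposition:tiling} and \ref{prop:tilingz}, so the main content is part (1): the extension of an $n$-satisfactory coloring of $K_n$ to one of $\hat K_n$.

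For part (1), I would fix $c\in C_{K_n}$ and choose a divisibility-cofinal sequence $1=k_0\mid k_1\mid k_2\mid\cdots$ in $K_n$, for instance $k_i=(n!)^i$, so that every $q\in\hat K_n$ satisfies $qk_i\in K_n$ for all sufficiently large $i$. I would then build $c^{(0)},c^{(1)},\ldots\in C_{K_n}$ inductively: set $c^{(0)}=c$, and given $c^{(i)}$, invoke the hypothesized positive answer to question \ref{q:equation} with input $(c^{(i)},k_{i+1}/k_i)$ to obtain $c^{(i+1)}\in C_{K_n}$ satisfying $(c^{(i+1)})_{k_{i+1}/k_i}=c^{(i)}$. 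A short induction on $j-i$, factoring $k_j/k_i$ through the intermediate quotients $k_{i+1}/k_i,\ldots,k_j/k_{j-1}$, then shows $(c^{(j)})_{k_j/k_i}=c^{(i)}$ for all $j\ge i\ge 0$. I would next define $\tilde c(q)=c^{(i)}(qk_i)$ for any $i$ with $qk_i\in K_n$; the compatibility just established makes this well-defined, and taking $i=0$ gives $\tilde c\upharpoonright K_n=c$. The $n$-satisfactoriness of $\tilde c$ is then immediate: for $q\in\hat K_n$ and $a<b$ in $[n]$, picking $i$ with $qk_i\in K_n$ yields $\tilde c(aq)=c^{(i)}(a\cdot qk_i)\ne c^{(i)}(b\cdot qk_i)=\tilde c(bq)$, since $c^{(i)}$ is $n$-satisfactory and $qk_i\in K_n$.

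For part (2), I would convert a given essential tiling $T_n+B$ of (a superset of) $\mathbb O_n$ into a coloring $c\in C_{K_n}$ via proposition \ref{proposition:tiling}, extend to $\tilde c\in C_{\hat K_n}$ by part (1), and convert back via proposition \ref{prop:tilingz} to a tiling $T_n+B'$ of $\mathbb Z^{\pi(n)}$, taking $B'$ to be the image under $t$ of the same color class of $\tilde c$ that produced $B$. Since $\tilde c$ extends $c$, any tile of $T_n+B$ fully contained in $\mathbb O_n$ reappears as a tile of $T_n+B'$, so $T_n+B'$ essentially extends $T_n+B$. The main obstacle I expect is administrative: the equality $(c^{(i+1)})_{k_{i+1}/k_i}=c^{(i)}$ is most naturally an equality of equivalence relations, and aligning it with the identity-on-$[n]$ normalization adopted throughout the paper requires composing permutations of $[n]$ at each inductive step. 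The cleanest workaround should be to carry out the entire construction at the level of equivalence relations and apply a single normalizing permutation only at the very end.
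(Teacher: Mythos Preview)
Your proposal is correct and follows essentially the same approach as the paper: iterate the hypothesized solution to question~\ref{q:equation} to build a compatible sequence of $n$-satisfactory colorings whose union furnishes the desired extension to $\hat K_n$. The paper extends one prime at a time, cycling through $p_1,\dots,p_{\pi(n)}$, whereas you extend by $n!$ at each step; both choices work for the same reason, and your compatibility verification via $(c^{(j)})_{k_j/k_i}=c^{(i)}$ is exactly the content of the paper's ``the colorings $d^j$ agree on their common domains.'' Your explicit treatment of part~(2) of question~\ref{q:tilingz} and your remark about the normalization permutation go slightly beyond what the paper writes out (the paper leaves part~(2) to the coloring/tiling correspondence), but the underlying argument is the same.
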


\begin{proof}
Let $p_1<\dots<p_{\pi(n)}$ be the primes in $[n]$. Starting with an $n$-satisfactory coloring $c=d^0$ of $K_n$, iteratively extend the corresponding coloring of the orthant ``one 
layer'' in each dimension, i.e., find $n$-satisfactory colorings $d^1,d^2,\dots$ such that $d^0={d^1}_{p_1}$ and, in general, 
 $$ d^{\pi(n)\cdot m+a-1}={d^{\pi(n)\cdot m+a}}_{p_a} $$ 
for any nonnegative $m$ and any $a\in[\pi(n)]$. If $d^j={d^{j+1}}_p$, we can think of $d^{j+1}$ as extending the domain of $d^j$ to the set 
$\operatorname{dom}(d^{j+1})=\{a/p:a\in\operatorname{dom}(d^j)\}$. The colorings $d^j$ agree on their common domains as $j$ increases, and any $m\in\hat K_n$ is eventually 
included in these domains. This means that there is a unique ``limit'' $n$-satisfactory coloring $d$ of all of $\hat K_n$ obtained by this process.
\end{proof}

\begin{remark} \label{rem:konig}
The second proof we gave of proposition \ref{prop:tilingz} illustrates an application of K\H onig's lemma that can be interpreted as a construction that the space $C_{K_n}$ is 
closed under. We promised in (4) of \S\,\ref{subsec:structure} to explain this construction here.

Consider a finite coloring $c$ of $K_n$, seen as a coloring of $\mathbb O_n$. For each $l\in\mathbb N$, let 
 $$ D_l=\{(a_1,\dots,a_{\pi(n)})\in\mathbb O_n: 0\le a_i\le l\mbox{ for all }i\in[\pi(n)]\} $$ 
be the $\pi(n)$-dimensional cube with sides of length $l$. There are only finitely many colorings $d$ of $D_l$ that are realized by $c$ in the sense that for some 
$\mathbf x\in\mathbb O_n$, the coloring $c\upharpoonright(\mathbf x+D_l)$, seen as a coloring of $D_l$ in the natural way, coincides with $d$, that is, 
$d(\mathbf y)=c(\mathbf x+\mathbf y)$ for any $\mathbf y\in D_l$.

Define an infinite finitely branching tree $\mathcal T$ as follows: start with the empty coloring of the empty set (seen as the only node of $\mathcal T$ at level $-1$) and, for each 
$l\in\mathbb N$, use as nodes of the $l^{\mathrm{th}}$ level of $\mathcal T$ the colorings $d$ realized by $c$ on infinitely many distinct copies of $D_l$ (that is, those $d$ for which 
there are infinitely many $\mathbf x$ as above). Use as immediate successors of such a coloring $d$ the colorings $d'$ at the $(l+1)^{\mathrm{st}}$ level that extend $d$ in the 
sense that $d'\upharpoonright D_l=d$.

By K\H onig's lemma, the tree admits an infinite branch, that is, a sequence of colorings $(d^l:l\in\mathbb N)$ such that $\operatorname{dom}(d^l)=D_l$ for each $l$, and the 
colorings are compatible in the sense that $d^l=d^{l+1}\upharpoonright D_l$ for each $l$. The union of these colorings is a coloring $d$ of $\mathbb O_n$ and, just as in the 
second proof of proposition \ref{prop:tilingz}, if $c$ is $n$-satisfactory, then so is $d$.

Note that we could define another finitely branching tree $\mathcal T'$ by being more generous and considering all colorings that are realized rather than only those that are 
realized infinitely often, but this version realizes as branches many colorings we already had access to by other procedures (for instance, starting with $c$, all colorings $c_k$, 
$k\in K_n$, appear as branches of $\mathcal T'$), while restricting attention to $\mathcal T$ may potentially result in different colorings. Moreover, even if, say, $c$ itself appears 
as a branch through $\mathcal T$, this now reveals something about the structure of $c$.

The combinatorial fact behind both the argument just given and the second proof of proposition \ref{prop:tilingz} is that in order to show that there are $n$-satisfactory colorings
of $K_n$ it is enough to argue that there are partially $n$-satisfactory colorings of the cubes $D_l$ for all $l$ (in the sense mentioned earlier, that any copy of the polyomino 
$T_n$ completely contained in $D_l$ receives all colors). However, we do not see at the moment a scenario allowing us to verify the latter without directly exhibiting the former.
\end{remark}

A related matter is whether the operations described in the proof of proposition \ref{proposition:tiling} are inverses of each other. We formulate this as a question about the proof of 
the equivalence between (iii) and (iv) above. 

\begin{question} \label{q:inverse}
Given an $n$-satisfactory coloring $c$ of $\hat K_n$, let $B$ be the image under $t$ of a color class of $c$. The proof of proposition \ref{proposition:tiling} shows that the sum 
$T_n+B$ is a tiling of $\mathbb Z^{\pi(n)}$. From this tiling we can define an $n$-satisfactory coloring $c'$ with color classes the preimages under $t$ of the translates $t(i)+B$, 
$i\in[n]$. Is $c'=c$?
\end{question}

The answer is positive for multiplicative colorings, see remark \ref{rmk:lattice2}. Also, note the order in which we consider the operations: if instead we start with a tiling, define a 
coloring from it, and use the coloring to derive a tiling, we simply return to the original tiling. 

Instead of (iii) and (iv) we could consider (i) and (ii). The situation here (where we only consider the orthant $\mathbb O_n$ rather than the whole $\mathbb Z^{\pi(n)}$) is somewhat 
more delicate: now from an essential tiling of $\mathbb O_n$ we get an $n$-satisfactory coloring of $K_n$ just as before, but from such a coloring $c$ we only get a tiling of a subset 
of the orthant, and it was only by translation that we got a tiling of a superset in the proof of proposition \ref{proposition:tiling}. However, the process of translation may effectively 
change even well-behaved colorings (see for instance theorem \ref{thm:nonmultiplicative}). On the other hand, $c$ gives us not just one, but $n$ partial tilings of $\mathbb O_n$, 
and any point in the orthant belongs to at least one of the resulting direct sums $T_n+B$. Any of these sums defines a partial $n$-satisfactory coloring of $K_n$.

\begin{question} \label{q:inverse2}
In the setting just described, are the resulting partial colorings compatible? If they are, their union gives us a coloring $c'$ of $K_n$. Is $c'=c$?
\end{question}

We can also ask whether the partial tilings can be extended to essential tilings in compatible ways. We address question \ref{q:inverse2} in remark \ref{rmk:n6} where we show that, 
perhaps surprisingly, there are instances where the answer is negative.

\subsection{Satisfactory colorings with $n\le 5$} \label{subs:nle5}

For $n\le 5$ it is easy to give an explicit description of all $n$-satisfactory colorings. For each $n<5$ there is exactly one such coloring, and there are precisely two for $n=5$. 
As we will see in \S\,\ref{subs:n6}, such an explicit list is no longer possible even for $n=6$. We use $\mathbb N$ for the set of natural numbers, including 0.

\begin{itemize}
\item 
$n=1$.
\end{itemize}

Trivially, there is only one 1-satisfactory coloring of $K_1=\{1\}$. 

\begin{itemize}
\item
$n=2$.
\end{itemize}

Similarly, there is only one 2-satisfactory coloring $c$ of $K_2=\{2^a:a\in\mathbb N\}$: presented as an equivalence relation with 2 classes $c(1)\ne c(2)$, we have 
\begin{equation} \label{eqn:col2}
c(2^\alpha)=c(2^{\alpha\bmod2})
\end{equation}
for all $\alpha\in\mathbb N$.

This introduces a recurring theme: we could describe the coloring simply as $c(2^\alpha)=(\alpha\bmod 2)$, that is, the coloring described this way has precisely the same 
classes as the one in equation (\ref{eqn:col2}).

Note that $2+1=3$ is prime, so $c(m)=(m\bmod 3)$ is a 2-satisfactory coloring of $K_2$, as shown in \S\,\ref{subsec:komal}. One can easily verify that (as expected) this 
coloring also coincides with the one in equation (\ref{eqn:col2}).

\begin{itemize}
\item
$n=3$.
\end{itemize}

Suppose now that $c$ is a 3-satisfactory coloring of $K_3=\{2^\alpha3^\beta:\alpha,\beta\in\mathbb N\}$. For $a$ a positive integer we have that $c(a)$, $c(2a)$ and $c(3a)$ 
are all different. 

It follows that $c(2a),c(4a),c(6a)$ are different, and so are $c(3a),c(6a),c(9a)$. In particular, $c(6a)\ne c(2a),c(3a)$, so $c(6a)=c(a)$ and therefore $c(4a)=c(3a)$, from which
we conclude that, in fact, $c(2^\alpha3^\beta)=c(2^{\alpha+2\beta})$. Also, since $c(a),c(2a),c(4a)$ are different, we have that $c(2a),c(4a),c(8a)$ are different as well, and
it follows  that $c(8a)=c(a)$. This means that 
\begin{equation} \label{eq:3} 
c(2^\alpha3^\beta)=c(2^{\alpha+2\beta\bmod3})
\end{equation}
for all $\alpha,\beta\in\mathbb N$. Conversely, it is easy to check that equation (\ref{eq:3}) together with the requirement that $c(1),c(2),c(3)$ are distinct describes a 
3-satisfactory coloring of $K_3$. 

Naturally, this is the coloring indicated in figures \ref{figure:3tiling} and \ref{figure:3tiling-b}, which in turn can be described (up to the name of the colors used) by saying that 
for nonnegative integers $\alpha,\beta$, the unit square with bottom left corner at $(\alpha,\beta)$ has color $(\alpha+2\beta\bmod 3)$, so that as before, the coloring of a 
number $m=2^\alpha3^\beta\in K_3$ is given as a linear equation in the exponents of the prime factorization of $m$.

Note that the permutation $(23)$ is an automorphism of $([3],|)$, see figure \ref{figure:Hasse3}. By lemma \ref{lem:indiscernible}, the coloring $\tilde c$ is also 3-satisfactory, 
where 
 $$ \tilde c(2^\alpha3^\beta)=\tilde c(3^{\beta+2\alpha\bmod 3}) $$
and, again, we require $\tilde c(1),\tilde c(2),\tilde c(3)$ to be different. Uniqueness, of course, simply means that $c$ is invariant under this permutation, i.e., $c=\tilde c$. 

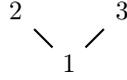
\begin{figure}[htt]
 \begin{tikzpicture}[-,>=stealth',shorten >=1pt,auto,node distance=1cm,thick, scale=.8]
  
 \node (1) {1};
 \node (2) [above left of=1] {2};
 \node (3)[above right of=1]{3};
 
\path
        (1) edge node {} (2)
        (1) edge node {} (3);
        
\end{tikzpicture}
\caption{Hasse diagram for divisibility on $[3]$. Note $(23)$ is an automorphism of this structure.}
\label{figure:Hasse3}
\end{figure}
 
\begin{itemize}
\item
$n=4$.
\end{itemize}

The argument is similar to the previous case: if $c$ is a 4-satisfactory coloring of $K_4=K_3$, then for any positive integer $a$, the colors $c(a),c(2a),c(3a),c(4a)$ are different, 
and $c(6a)\ne c(2a),c(4a),c(8a)$ and $c(6a)\ne c(3a),c(9a),c(12a)$, so $c(6a)=c(a)$ and $c(8a)=c(3a)$. 

\begin{figure}[htt]
\begin{tikzpicture}[scale=.8]
\draw[step=1cm,gray,very thin] (0,0) grid (5.1,4.1);
\draw[very thick] (0,0) -- (3,0) -- (3,1) -- (1,1) -- (1,2) -- (0,2) -- (0,0);
\draw[thick,red] (0.1,2.9) -- (0.1,1.1) -- (2.9,1.1) -- (2.9,1.8) -- (0.9,1.8) -- (0.9,2.9) -- (0.1,2.9);
\draw[thick,blue] (1.1,1.9) -- (1.1,0.1) -- (3.9,0.1) -- (3.9,0.9) -- (1.9,0.9) -- (1.9,1.9) -- (1.1,1.9);

\node[anchor=center] at (0.5, 0.5) {{\footnotesize$c(a)$}};
\node[anchor=center] at (1.5,0.5) {{\footnotesize${}\,c(2a)$}};
\node[anchor=center] at (2.5,0.5) {{\footnotesize$c(4a)$}};
\node[anchor=center] at (0.5,1.5) {{\footnotesize${}\,c(3a)$}};
\node[anchor=center] at (1.5,1.5) {{\footnotesize$\underline{c(a)}$}};
\node[anchor=center] at (3.5,0.5) {{\footnotesize$\underline{c(3a)}$}};
\node[anchor=center] at (4.5,0.5) {{\footnotesize$\underline{c(a)}$}};

\end{tikzpicture}
\caption{A 4-satisfactory coloring $c$ satisfies $c(6a)=c(a)$, $c(8a)=c(3a)$, and $c(16a)=c(a)$ for all $a\in K_4$.}
\label{figure:4coloring}
\end{figure}
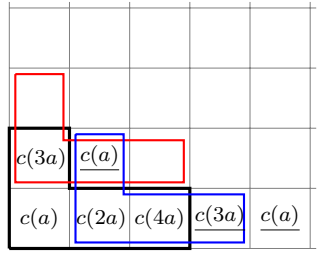

Also, since $c(a),c(2a),c(4a),c(8a)$ are different, we see that $c(16a)=c(a)$, see figure \ref{figure:4coloring}. Thus, 
\begin{equation}
\label{eq:4}
c(2^\alpha3^\beta)=c(2^{\alpha+3\beta\bmod4})
\end{equation}
for all $\alpha,\beta\in\mathbb N$. Conversely, equation (\ref{eq:4}) and the requirement that $c(1),c(2)$, $c(3),c(4)$ are distinct describes a 4-satisfactory coloring of $K_4$. 
This is the coloring indicated in figure \ref{figure:4tiling}. Again the coloring can be succinctly described by a linear equation as $c(2^\alpha3^\beta)=(\alpha+3\beta\bmod 4)$. 

Again by uniqueness and the result of \S\,\ref{subsec:komal} (since $4+1=5$ is prime), the coloring can also be described by $c(m)=(m\bmod 5)$.

\begin{itemize}
\item
$n=5$.
\end{itemize}

Note first that if $c$ is a 5-satisfactory coloring of $K_5$, then $c(8a)\ne c(a)$ for any $a\in K_5$. Indeed, otherwise $c(10a)=c(3a)$ and $c(6a)=c(5a)$, which further forces 
$c(12a)=c(2a)$ and no color can be assigned to $20a$. This means that either $c(6a)=c(a)$ or $c(10a)=c(a)$. In particular, either $c(6)=c(1)$ or $c(10)=c(1)$.

Consider first the case where $c(6)=c(1)$, and let 
 $$ K^1=\{n\in K_5:c(6n)=c(n)\}, $$ 
so that $1\in K^1$. Suppose that $a\in K^1$, that is, $c(6a)=c(a)$. We have that $c(10a)=c(3a)$ and $c(8a)=c(5a)$, thus $c(12a)=c(2a)$ or $2a\in K^1$. It follows that 
$c(15a)=c(4a)$ and $c(9a)=c(5a)$. 

Now: we just proved that $a\in K^1$ implies $c(9a)=c(5a)$; since it also implies that $2a\in K^1$, it follows that 
$c(6\cdot 3a)=c(18a)=c(9\cdot 2a)=c(5\cdot 2a)=c(10a)=c(3a)$ and, similarly, $c(6\cdot 5a)=c(30a)=c(15\cdot 2a)=c(4\cdot 2a)=c(8a)=c(5a)$. That is, $3a,5a\in K^1$ as 
well. 

This means that $K^1=K_5$ and 
 $$ c(2^5 a)=c(8\cdot 4a)=c(5\cdot 4a)=c(10\cdot 2a)=c(6a)=c(a) $$ 
for all $a\in K_5$. Now we can proceed as in the previous cases: note that $c(5a)=c(8a)$ and $c(3a)=c(10a)=c(5\cdot 2a)=c(8\cdot 2a)=c(16a)$, so 
\begin{equation}
\label{eq:5-1}
c(2^\alpha3^\beta5^\gamma)=c(2^{\alpha+4\beta+3\gamma\bmod5})
\end{equation}
for all $\alpha,\beta,\gamma\in\mathbb N$. Conversely, equation (\ref{eq:5-1}) and the requirement that $c(1),\dots,c(5)$ are distinct describes a 5-satisfactory coloring of 
$K_5$; moreover, this is the only such coloring with $c(6)=c(1)$.

A similar analysis shows that 
\begin{equation}
\label{eq:5-2}
c(2^\alpha3^\beta5^\gamma)=c(2^{\alpha+3\beta+4\gamma\bmod5})
\end{equation}
for all $\alpha,\beta,\gamma\in\mathbb N$, and the requirement that $c(1),\dots,c(5)$ are distinct describes the unique 5-satisfactory coloring of $K_5$ with $c(10)=c(1)$. 

Actually, the latter analysis can be avoided by noting that 3 and 5 are indiscernible in $K_5$ in the sense that the transposition $(35)$ is an automorphism of the Hasse 
diagram for divisibility in $[5]$, see figure \ref{figure:Hasse5}. By lemma \ref{lem:indiscernible}, there is a one-to-one correspondence between 5-satisfactory colorings with 
$c(2\cdot 5)=c(1)$ and those with $c(2\cdot 3)=c(1)$, so in particular there is a unique 5-satisfactory coloring with $c(10)=c(1)$, and the lemma allows us to recover the precise 
form of equation (\ref{eq:5-2}). 

\begin{figure}[htt]
 \begin{tikzpicture}[-,>=stealth',shorten >=1pt,auto,node distance=1cm,thick, scale=.8]
  
 \node (1) {1};
 \node (2) [above left of=1] {2};
 \node (3)[above of=1]{3};
 \node (4)[above of=2]{4};
 \node (5)[above right of=1]{5};
 
\path
        (1) edge node {} (2)
        (2) edge node {} (4)
        (1) edge node {} (3)
        (1) edge node {} (5);
        
\end{tikzpicture}
\caption{Hasse diagram for divisibility on $[5]$. Note $(35)$ is an automorphism of this structure.}
\label{figure:Hasse5}
\end{figure}
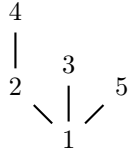

In terms of linear equations, the two colorings we obtained are 
$$ c^1(2^\alpha3^\beta5^\gamma)=(\alpha+4\beta+3\gamma\bmod5) \quad \mbox{ and }\quad c^5(2^\alpha3^\beta5^\gamma)=(\alpha+3\beta+4\gamma\bmod5), $$ 
where the superindices in $c^1,c^5$ refer to whether $c(6)=c(1)$ or $c(6)=c(5)$, respectively.

(Note that typically the same coloring can be described by several linear equations. For example, the 5-coloring that to $2^\alpha3^\beta5^\gamma$ assigns 
$(2\alpha+\beta+3\gamma\bmod 5)$ coincides with $c^1$.)

Finally, we can now illustrate why lemma \ref{lem:indiscernible} cannot be strengthened by allowing $\rho$ to be any permutation of the set of primes in $[n]$. Indeed,
consider the transposition $\rho=(23)$, extend it to a permutation of $K_5$ as in equation (\ref{eq:rho}), and note that the coloring $\tilde{c^1}$, given by 
$\tilde{c^1}(a)=\tilde{c^1}(b)$ if and only if $c^1(\rho(a))=c^1(\rho(b))$ is \emph{not} a 5-satisfactory coloring, since $\tilde{c^1}(4)=c^1(9)=c^1(5)=\tilde{c^1}(5)$. 

\subsection{A table of linear equations} \label{subsec:table}

We close the section by providing in table \ref{table:linear} a nonexhaustive list of linear equations verifying a positive solution to question \ref{q:problem} for $n\le 31$: given 
such an $n$, let $k=\pi(n)$ and let $p_1<\dots<p_k$ be the prime numbers less than or equal to $n$. We exhibit coefficients $a_1,\dots,a_k$ such that the $n$-coloring $c$ of 
$K_n$ given by 
 $$ c\left(\prod_{i=1}^k p_i^{\alpha_i}\right)=\left(\sum_{i=1}^k a_i \alpha_i\bmod n\right) $$ 
is $n$-satisfactory. In particular, note that the entry for $n=7$ provides a positive solution to K\"oMaL problem B.4265.

We identify these coefficients by a naive greedy algorithm, where for each $i$ we choose $a_i$ as small as possible so that no repeated colors occur among the numbers in 
$[n]$ of the form $\prod_{j\le i}p_i^{\beta_i}$. The point is, of course, that such a linear coloring $c$ is $n$-satisfactory if and only if it is injective on $[n]$. For instance, for 
$n=7$, the coloring indicated in table \ref{table:linear} is given by 
 $$ c(2^\alpha 3^\beta 5^\gamma 7^\delta)= (\alpha+3\beta+5\gamma+6\delta\bmod 7), $$
so that if $c(m)=(k\bmod 7)$, say, then 
 $$ (c(im): i\in[7])=(k,k+1,k+3,k+2,k+5,k+4,k+6)\bmod 7, $$
and $c$ is indeed 7-satisfactory. 

We revisit this approach and provide additional context through the notion of partial isomorphism in \S\,\ref{subs:partialisomorphism}.

{\footnotesize\begin{table}[ht]
\begin{tabular}{||c|c|c||} 
\hline \hline
$n$&$m\in K_n$&$c(m)$ modulo $n$\\
\hline \hline
1&1&0\\
2&$2^\alpha$&$\alpha$\\
3&$2^\alpha3^\beta$&$\alpha+2\beta$\\
4&$2^\alpha3^\beta$&$\alpha+3\beta$\\
5&$2^\alpha3^\beta5^\gamma$&$\alpha+3\beta+4\gamma$\\
6&$2^\alpha3^\beta5^\gamma$&$\alpha+3\beta+5\gamma$\\
7&$2^\alpha3^\beta5^\gamma7^\delta$&$\alpha+3\beta+5\gamma+6\delta$\\
8&$2^\alpha3^\beta5^\gamma7^\delta$&$\alpha+4\beta+6\gamma+7\delta$\\
9&$2^\alpha3^\beta5^\gamma7^\delta$&$\alpha+4\beta+6\gamma+7\delta$\\
10&$2^\alpha3^\beta5^\gamma7^\delta$&$\alpha+4\beta+6\gamma+9\delta$\\
11&$2^\alpha3^\beta5^\gamma7^\delta11^\epsilon$&$\alpha+4\beta+6\gamma+9\delta+10\epsilon$\\
12&$2^\alpha3^\beta5^\gamma7^\delta11^\epsilon$&$\alpha+4\beta+9\gamma+7\delta+11\epsilon$\\
13&$2^\alpha3^\beta5^\gamma7^\delta11^\epsilon13^\zeta$&$\alpha+4\beta+9\gamma+7\delta+11\epsilon+12\zeta$\\
14&$2^\alpha3^\beta5^\gamma7^\delta11^\epsilon13^\zeta$&$\alpha+4\beta+9\gamma+11\delta+7\epsilon+13\zeta$\\
15&$2^\alpha3^\beta5^\gamma7^\delta11^\epsilon13^\zeta$&$\alpha+4\beta+9\gamma+11\delta+7\epsilon+14\zeta$\\
16&$2^\alpha3^\beta5^\gamma7^\delta11^\epsilon13^\zeta$&$\alpha+5\beta+8\gamma+11\delta+14\epsilon+15\zeta$\\
17&$2^\alpha3^\beta5^\gamma7^\delta11^\epsilon13^\zeta17^\eta$&$\alpha+5\beta+8\gamma+11\delta+14\epsilon+15\zeta+16\eta$\\
18&$2^\alpha3^\beta5^\gamma7^\delta11^\epsilon13^\zeta17^\eta$&$\alpha+5\beta+8\gamma+14\delta+12\epsilon+16\zeta+17\eta$\\
19&$2^\alpha3^\beta5^\gamma7^\delta11^\epsilon13^\zeta17^\eta19^\theta$&$\alpha+5\beta+8\gamma+14\delta+12\epsilon+16\zeta+17\eta+18\theta$\\
20&$2^\alpha3^\beta5^\gamma7^\delta11^\epsilon13^\zeta17^\eta19^\theta$&$\alpha+5\beta+12\gamma+8\delta+15\epsilon+16\zeta+18\eta+19\theta$\\
21&$2^\alpha3^\beta5^\gamma7^\delta11^\epsilon13^\zeta17^\eta19^\theta$&$\alpha+5\beta+12\gamma+15\delta+8\epsilon+9\zeta+18\eta+19\theta$\\
22&$2^\alpha3^\beta5^\gamma7^\delta11^\epsilon13^\zeta17^\eta19^\theta$&$\alpha+5\beta+12\gamma+15\delta+8\epsilon+18\zeta+19\eta+21\theta$\\
23&$2^\alpha3^\beta5^\gamma7^\delta11^\epsilon13^\zeta17^\eta19^\theta23^\iota$&$\alpha+5\beta+12\gamma+15\delta+8\epsilon+18\zeta+19\eta+21\theta+22\iota$\\
24&$2^\alpha3^\beta5^\gamma7^\delta11^\epsilon13^\zeta17^\eta19^\theta23^\iota$&$\alpha+5\beta+12\gamma+15\delta+18\epsilon+9\zeta+21\eta+22\theta+23\iota$\\
25&$2^\alpha3^\beta5^\gamma7^\delta11^\epsilon13^\zeta17^\eta19^\theta23^\iota$&$\alpha+5\beta+12\gamma+15\delta+18\epsilon+9\zeta+21\eta+22\theta+23\iota$\\
26&$2^\alpha3^\beta5^\gamma7^\delta11^\epsilon13^\zeta17^\eta19^\theta23^\iota$&$\alpha+5\beta+12\gamma+15\delta+18\epsilon+21\zeta+9\eta+23\theta+25\iota$\\
27&$2^\alpha3^\beta5^\gamma7^\delta11^\epsilon13^\zeta17^\eta19^\theta23^\iota$&$\alpha+5\beta+12\gamma+18\delta+20\epsilon+25\zeta+9\eta+16\theta+22\iota$\\
28&$2^\alpha3^\beta5^\gamma7^\delta11^\epsilon13^\zeta17^\eta19^\theta23^\iota$&$\alpha+5\beta+12\gamma+18\delta+21\epsilon+25\zeta+9\eta+16\theta+27\iota$\\
29&$2^\alpha3^\beta5^\gamma7^\delta11^\epsilon13^\zeta17^\eta19^\theta23^\iota29^\kappa$&$
\alpha+5\beta+12\gamma+18\delta+21\epsilon+25\zeta+9\eta+16\theta+27\iota+28\kappa$\\
30&$2^\alpha3^\beta5^\gamma7^\delta11^\epsilon13^\zeta17^\eta19^\theta23^\iota29^\kappa$&$
\alpha+5\beta+12\gamma+20\delta+26\epsilon+28\zeta+9\eta+16\theta+19\iota+23\kappa$\\
31&$2^\alpha3^\beta5^\gamma7^\delta11^\epsilon13^\zeta17^\eta19^\theta23^\iota29^\kappa31^\lambda$&
$\alpha+5\beta+12\gamma+20\delta+26\epsilon+28\zeta+9\eta+16\theta+19\iota+23\kappa+30\lambda$\\
\hline \hline
\end{tabular}\vspace{3mm}
\caption{Linear equations describing an $n$-satisfactory coloring $c$ for $n\le 31$.}
\label{table:linear}
\vspace{-7mm}
\end{table}}

\section{Generalizing the approach for $p$ prime} \label{sec:pk+1}

\subsection{Strong representatives} \label{subsec:strong}

In this section, we present a condition on $n$ that, if satisfied, ensures the existence of $n$-satisfactory colorings. The construction below was first noticed by the third-named 
author in 2009. It was suggested independently in MathOverflow by Victor Protsak\footnote{See \href{https://mathoverflow.net/q/26358/}{https://mathoverflow.net/q/26358/}}. It 
has also been considered before in connection with Graham's conjecture discussed in \S\,\ref{subsec:graham}, see for instance \cite[\S\,2]{ForcadePollington90} and references 
therein.

\begin{theorem} \label{thm:representative}
If $n,k$ are positive integers such that $p=kn+1$ is prime and  $1^k,2^k,\dots,n^k$ are distinct modulo $p$, then $c(m)=({m^k}\bmod p)$, $m\in K_n$, is an $n$-satisfactory 
coloring of $K_n$.
\end{theorem}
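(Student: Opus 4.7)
The plan is to verify directly, using elementary properties of residues modulo the prime $p=kn+1$, that the map $c(m)=(m^k\bmod p)$ uses exactly $n$ colors and has the satisfactory property. The hypotheses are tailor-made for this, so no deep tool is needed; what requires care is only the bookkeeping about the range of $c$.

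First I would observe that since every prime factor of any $m\in K_n$ is at most $n<p$, we have $\gcd(m,p)=1$, so $m^k\bmod p$ is a nonzero residue. Thus $c$ takes values in the subgroup $H\le(\mathbb Z/p\mathbb Z)^*$ consisting of $k$-th power residues. Because $(\mathbb Z/p\mathbb Z)^*$ is cyclic of order $p-1=kn$, the subgroup $H$ is the unique subgroup of order $(p-1)/\gcd(k,p-1)=kn/k=n$. So the image of $c$ has at most $n$ elements. The hypothesis that $1^k,2^k,\dots,n^k$ are pairwise distinct modulo $p$ then forces $c$ to attain all $n$ values of $H$ already on $[n]$, establishing that $c$ is an $n$-coloring.

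Next I would verify the satisfactory condition. Fix $a\in K_n$ and indices $1\le i<j\le n$. Then
\[ c(ia)-c(ja)\equiv a^k(i^k-j^k)\pmod p. \]
As noted above $\gcd(a,p)=1$, so $a^k$ is invertible modulo $p$; hence $c(ia)\equiv c(ja)\pmod p$ would force $i^k\equiv j^k\pmod p$, contradicting the hypothesis. Therefore $c(a),c(2a),\dots,c(na)$ are pairwise distinct, and $c$ is $n$-satisfactory.

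There is no real obstacle: the nontrivial content is packaged entirely into the hypothesis ``$1^k,\dots,n^k$ are distinct modulo $p$,'' which is exactly what is needed to make both the count of colors and the pairwise distinctness work. The only conceptual point worth flagging — and the reason the statement is phrased up to renaming of colors rather than with codomain $[n]$ — is that the color classes are naturally indexed by elements of the subgroup $H$ of order $n$ in $(\mathbb Z/p\mathbb Z)^*$ rather than by $[n]$ itself; identifying $H$ with $[n]$ via any bijection (and composing so that $c$ restricted to $[n]$ is the identity, per the convention of \S\,\ref{sec:preliminaries}) turns $c$ into an $n$-satisfactory coloring in the literal sense of Definition \ref{def:appropriate}.
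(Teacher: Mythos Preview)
Your proof is correct and follows essentially the same approach as the paper: both note that the nonzero $k^{\mathrm{th}}$ power residues modulo $p=kn+1$ form a set of size exactly $n$, and both verify the satisfactory condition via $a^ki^k\not\equiv a^kj^k\pmod p$. Your version simply spells out more of the details (the coprimality of $m\in K_n$ with $p$, the structure of $H$, and the bookkeeping about identifying colors with $[n]$) that the paper's brief proof leaves implicit.
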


\begin{proof}
We begin noting that there are exactly $n$ pairwise incongruent nonzero $k^{\mathrm{th}}$ power residues modulo $kn + 1$. 

For $i\ne j\in[n]$ and $a\in K_n$, we note that $c(ia)\ne c(ja)$ 
since the hypothesis implies that $a^k i^k\not\equiv a^k j^k\pmod p$.
\end{proof}

In particular, we recover the proof of problem A.506 from K\"oMaL given in the introduction since the assumption that $1,2,\dots,n$ are distinct modulo $n+1$ is trivially valid. We 
also have the following consequence (and note that there are infinitely many $n$ such that $2n+1$ is prime).

\begin{corollary} \label{cor:2n+1}
If $p=2n+1$ is prime, then $c(m)=({m^2}\bmod p)$ is an $n$-satisfactory coloring of $K_n$.
\end{corollary}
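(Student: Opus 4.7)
The plan is to obtain Corollary~\ref{cor:2n+1} as a direct instance of Theorem~\ref{thm:representative} in the case $k=2$. Since Theorem~\ref{thm:representative} guarantees that $c(m)=(m^k\bmod p)$ is $n$-satisfactory on $K_n$ whenever $p=kn+1$ is prime and the values $1^k,2^k,\dots,n^k$ are pairwise incongruent modulo $p$, all that is left is to verify this incongruence hypothesis when $k=2$ and $p=2n+1$ is prime.

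So the first step is to suppose $i,j\in[n]$ with $i^2\equiv j^2\pmod{p}$ and show $i=j$. Factor the congruence as $p\mid (i-j)(i+j)$. Since $p$ is prime, one of the two factors must be divisible by $p$. The hard part (really the only part) is ruling out both cases using the size bounds: on the one hand, $|i-j|\le n-1<p$, so $p\mid(i-j)$ forces $i-j=0$; on the other hand, $1\le i+j\le 2n<2n+1=p$, so $p\mid(i+j)$ is impossible. Hence $i=j$, as required.

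This establishes that $1^2,2^2,\dots,n^2$ are pairwise distinct modulo $p$, so Theorem~\ref{thm:representative} applies with $k=2$ and yields that $c(m)=(m^2\bmod p)$ is an $n$-satisfactory coloring of $K_n$. The only genuine content beyond citing the theorem is the elementary size argument above; there is no real obstacle, and the proof is essentially a two-line application.
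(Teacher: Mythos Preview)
Your proof is correct and follows essentially the same approach as the paper: reduce to Theorem~\ref{thm:representative} with $k=2$ and verify the distinctness of $1^2,\dots,n^2$ modulo $p$ via the factorization $i^2-j^2=(i-j)(i+j)$ together with the size bounds $|i-j|<p$ and $0<i+j<p$.
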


\begin{proof}
It is enough to verify that $1^2,\dots,n^2$ are pairwise incongruent modulo $p$. This is immediate since $i^2\equiv j^2\pmod p$ if and only if either $i\equiv j\pmod p$ or 
$i\equiv-j\pmod p$, but the latter is impossible if $i,j\in[n]$. 
\end{proof}

This leads us to the following definition.

\begin{definition}[Strong representatives] \label{def:strongrepresentatives}
A satisfactory $n$-coloring $c$ \emph{admits a strong representation} if and only if there exists a prime $p$ of the form $kn+1$ for some positive integer $k$ such that 
$1^k,\dots,n^k$ are pairwise distinct modulo $p$, and $c(m)=(m^k\bmod p)$ for all $m\in K_n$. In this case, we call $p$ a \emph{strong representative of order $n$} (for $c$). 
If some satisfactory $n$-coloring admits a strong representation, we also say that $n$ \emph{admits a strong representative}.
\end{definition}

Whenever it applies, theorem \ref{thm:representative} allows us to exhibit satisfactory colorings with a simple structure. However, given $n$, even if there are primes $p=kn+1$ 
as required by the theorem, identifying them is not necessarily feasible. For instance, the smallest strong representative of order $32$ is $p =5,209,690,063,553$. Table 
\ref{table:representative} lists for $n \le 33$ the smallest strong representative of order $n$. 

\begin{table}[ht]
\begin{tabular}{||c||c|c||} 
\hline \hline
$n$&$k$&$p$ \\
\hline \hline
$\mathbf 1$ & 1 & 2\\
$\mathbf 2$ & 1 & 3\\
$\mathbf 3$ & 2 & 7\\
$\mathbf 4$ & 1 & 5\\
$\mathbf 5$ & 2 & 11\\
$\mathbf 6$ & 1 & 7\\
$\mathbf 7$ & 94 & 659\\
$\mathbf 8$ & 2 & 17\\
$\mathbf 9$ & 2 & 19\\
$\mathbf{10}$ & 1 & 11\\
$\mathbf{11}$ & 2 & 23\\
$\mathbf{12}$ & 1 & 13\\
$\mathbf{13}$ & 198,364 & 2,578,733\\
$\mathbf{14}$ & 2 & 29\\
$\mathbf{15}$ & 2 & 31\\
$\mathbf{16}$ & 1 & 17\\
$\mathbf{17}$ & 2,859,480 & 48,611,161\\
$\mathbf{18}$ & 1 & 19\\
$\mathbf{19}$ & 533,410 & 10,134,791\\
$\mathbf{20}$ & 2 & 41\\
$\mathbf{21}$ & 2 & 43\\
$\mathbf{22}$ & 1 & 23\\
$\mathbf{23}$ & 2 & 47\\
$\mathbf{24}$ & 56,610,508 & 1,358,652,193\\
$\mathbf{25}$ & 1,170,546,910 & 29,263,672,751\\
$\mathbf{26}$ & 2 & 53\\
$\mathbf{27}$ & 6,700,156,678 & 180,904,230,307\\ 
$\mathbf{28}$ & 1 & 29\\
$\mathbf{29}$ & 2 & 59\\
$\mathbf{30}$ & 1 & 31\\
$\mathbf{31}$ & 27,184,496,610 & 842,719,394,911\\
$\mathbf{32}$ & 162,802,814,486 & 5,209,690,063,553\\ 
$\mathbf{33}$ & 2 & 67\\ 
\hline \hline
\end{tabular}\vspace{3mm}
\caption{Smallest strong representative $p = kn + 1$ of order $n$ for $n \le 33$.}
\label{table:representative}
\vspace{-7mm}
\end{table}

As table \ref{table:representative} suggests, unlike the cases $k=1,2$, the primality of $kn+1$ for $k>2$ does not automatically ensure that the hypothesis of theorem 
\ref{thm:representative} is satisfied. For example, if $n=3$, then $p=4n+1=13$ is prime. However, $2^4=16$, $3^4=81$, and $16\equiv 81\pmod{13}$. This is further discussed 
in \S\,\ref{subsec:krepresentatives}.

\begin{remark} \label{rmk:variety}
In terms of notions introduced below, all colorings obtained through strong representatives are multiplicative, and in fact are $\mathbb Z/n\mathbb Z$-colorings. However, there are 
satisfactory colorings that are nonmultiplicative (see theorem \ref{thm:n6}), multiplicative colorings that are not $\mathbb Z/n\mathbb Z$-colorings (see table \ref{table:3751} for 
$a=1$), and $\mathbb Z/n\mathbb Z$-colorings that do not admit a strong representative (see table \ref{table:z81537}). 
\end{remark}

\subsection{$k$-representatives} \label {subsec:krepresentatives}

\begin{definition} \label{def:krepresentative}
Let $k\in\mathbb Z^+$. A prime $p$ of the form $kn + 1$ is a \emph{$k$-representative} if and only if $p$ is a strong representative of order $n$, that is, the numbers 
$1^k,\dots,n^k$ are distinct modulo $p$.
\end{definition}

Note that, in general, the roles of $k$ and $n$ cannot be interchanged. If $p=k+1$ is prime, it is trivially a $k$-representative. Our goal in this subsection is to show that for 
every $k > 2$ there are only finitely many $n$ such that $p = kn + 1$ is a $k$-representative. In fact, we will show that for some values of $k$ there are no such $n$.

We begin by discussing the case $k = 3$; this case was also the subject of K\"oMaL problem B.4401 in November 
2011,\footnote{See \href{https://www.komal.hu/feladat?a=honap&h=201111&t=mat&l=en}{https://www.komal.hu/feladat?a=honap\&h=201111\&t=mat\&l=en}.} proposed by the 
third-named author.

\begin{theorem} \label{thm:3-representative}
If $p=3n+1$ is prime, then $p$ is not a 3-representative. 

In particular, if $n>2$, then there is an $i\in[n]$, $i>2$, such that $i^3\equiv 1\pmod p$ or $i^3\equiv 8\pmod p$.
\end{theorem}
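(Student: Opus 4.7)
The plan is to exhibit an $i\in[3,n]$ with $i^3\equiv 1\pmod p$ or $i^3\equiv 8\pmod p$, since such an $i$ collides with $1$ or $2$ and witnesses that $p$ is not a $3$-representative. Because $3\mid p-1=3n$, there is a primitive cube root of unity $\omega$ in the multiplicative group modulo $p$; normalize by choosing integer representatives so that $1<\omega<\omega^2<p$. From $\omega\cdot\omega^2\equiv 1$ and $\omega+\omega^2\equiv -1\pmod p$, combined with the range $[1,p-1]$, I deduce $\omega+\omega^2=3n$. The small case $p=7$ (i.e., $n=2$) is handled directly by $1^3\equiv 2^3\pmod 7$, and $n=1$ produces no prime; for $n>2$, the relation $\omega=2\Rightarrow p\mid 7$ forces $\omega\ge 3$.

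Now I split into two cases. If $\omega\le n$, take $i:=\omega\in[3,n]$; then $i^3\equiv 1\pmod p$ and we are done. Otherwise $\omega>n$, so from $\omega+\omega^2=3n$ both $\omega,\omega^2$ lie in $(n,2n)$. Consider $i:=2\omega^2\bmod p$. Using $\omega^2=3n-\omega$ and $\omega^2>3n/2$ (hence $2\omega^2>p$), I compute
\[ i=2\omega^2-p=3n-1-2\omega. \]
As $\omega$ ranges over the integers in $(n,3n/2)$, the integer $i$ ranges over $\{0,1,\ldots,n-3\}$.

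To close Case B, I exclude $i\in\{0,1,2\}$. The value $i=0$ is impossible since $p$ is odd and $\omega\not\equiv 0\pmod p$. If $i=1$, cubing yields $8\equiv(2\omega^2)^3\equiv 1\pmod p$ (using $\omega^6=1$), again forcing $p=7$. If $i=2$, then $\omega^2\equiv 1\pmod p$, contradicting that $\omega$ has multiplicative order $3$. Hence $i\in[3,n-3]\subseteq[3,n]$, and $i^3\equiv 8\omega^6\equiv 8\pmod p$, finishing the proof.

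The main obstacle, which turns out to be minor, is confirming that the interval $[3,n-3]$ is nonempty whenever Case B actually occurs. A short census of $n\le 5$ with $p=3n+1$ prime yields only $p\in\{7,13\}$, both already covered ($n=2$ by hand and $n=4$ by Case A, since the cube roots of $1$ modulo $13$ are $\{1,3,9\}$, so $\omega=3\le 4$). Thus Case B first arises at $n=6$, where $n-3=3$ and the unique value $i=3$ supplies the witness modulo $p=19$; for all larger $n$ the interval is comfortably nonempty and the argument proceeds uniformly.
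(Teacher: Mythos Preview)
Your proof is correct. The approach differs from the paper's in a pleasant way, so a brief comparison is in order.

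The paper establishes the existence of the needed element by first showing, via quadratic reciprocity, that $-3$ is a quadratic residue modulo $p$, then taking a square root $y$ of $-3$ in $[1,p-1]$. The paper's case split is on the location of $y$: if $y$ lies in the first third of $[1,p-1]$, then $x=y-1\in[n]$ satisfies $(x+1)^2\equiv-3$, hence $x^3\equiv 8$; otherwise one of $y,p-y$ is odd and at most $2p/3$, and $x=(z-1)/2\in[n]$ satisfies $(2x+1)^2\equiv-3$, hence $x^3\equiv 1$.

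You bypass quadratic reciprocity by invoking directly that $3\mid p-1$ guarantees a primitive cube root of unity $\omega$, and your case split is on whether $\omega\le n$. The two viewpoints are of course linked by the identity $(2\omega+1)^2\equiv-3$, so the underlying objects are the same, but the organization is different: your Case~A/Case~B does not line up with the paper's first-third/otherwise split, and your Case~B witness $i=2\omega^2-p=3n-1-2\omega$ is obtained by a direct computation rather than via a parity argument. Your route is slightly more self-contained (no appeal to the Legendre symbol), at the modest cost of the small-$n$ census to ensure the interval $[3,n-3]$ is nonempty when Case~B occurs; the paper's argument handles all $n>2$ uniformly without such a check.
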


\begin{proof}
For $n=2$ we have that $1^3\equiv 2^3\pmod 7$. Suppose now that $n>2$ and $p=3n+1$ is prime. Work in $\mathbb Z/p\mathbb Z$. Note that $x^3 =1$ and $x\ne 1$ if and 
only if $x^2+x+1=0$ if and only if $4x^2+4x+4=0$, or $(2x+1)^2 =-3$. Also, $x^3 =8$ and $x\ne 2$ if and only if $x^2 +2x+4=0$, or $(x+1)^2 =-3$.

We claim that at least one of these two situations must happen for some $x\in[n]$. Note first that $-3$ is a quadratic residue modulo $p$:
 $$ \left(\frac{-3}p\right)=\left(\frac{-1}p\right)\left(\frac3p\right)=(-1)^{\frac{p-1}2}(-1)^{\frac{p-1}2\frac{3-1}2}\left(\frac p3\right)=\left(\frac{3n+1}3\right)=\left(\frac13\right)=1, $$ 
where $\left(\frac qp\right)$ denotes the Legendre symbol. 

It follows that the equation $y^2 = -3$ has two solutions, one in the first half of the interval $[1, p - 1]$. If $y$ is actually in the first third, we are done, we get $x = y - 1 \in [n]$. 
Suppose otherwise. Note that either $y$ or $p-y$ is odd. Call it $z$, and note that $z\le 2p/3$, and therefore $x=(z-1)/2$ is at most $(p-1)/3$, so it is in $[n]$.
\end{proof}

The case when $k$ is a multiple of 4 can also be treated by elementary means. The key is Fermat's result that an odd prime $p$ is a sum of two squares if and only if 
$p\equiv1\pmod4$.

\begin{theorem} \label{thm:4m-representative}
If $k$ is a multiple of 4 and $p = kn+1$ is a $k$-representative, then $p <k^2$, so in particular, there are only finitely many $k$-representatives.
\end{theorem}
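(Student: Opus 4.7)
The plan is to produce, under the assumption $p \ge k^2+1$, two distinct elements $a, b \in [n]$ satisfying $a^k \equiv b^k \pmod p$, thereby contradicting the hypothesis that $p$ is a $k$-representative. The contrapositive then gives $p \le k^2$, and since $k \mid p-1$ rules out $p = k^2$ for $k \ge 2$, it will follow that $p < k^2$; finiteness of the set of admissible $n$ is then immediate since only finitely many primes lie below $k^2$.

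The source of the coincidence comes straight from the hint. Writing $k = 4m$, I have $p = 4mn + 1 \equiv 1 \pmod 4$, so Fermat's two-square theorem provides positive integers $a, b$ with $p = a^2 + b^2$. Since $p$ is an odd prime, we cannot have $a = b$ (else $p = 2a^2$). From $a^2 + b^2 = p$, I read off $a^2 \equiv -b^2 \pmod p$, whence $a^4 \equiv b^4 \pmod p$, and raising to the $m^{\mathrm{th}}$ power gives $a^k \equiv b^k \pmod p$, as desired.

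It remains to check that both $a$ and $b$ lie in $[n]$. Since $a, b \ge 1$, I get $\max(a,b)^2 \le p - \min(a,b)^2 \le p-1$, hence $\max(a,b) \le \sqrt{p-1}$. The elementary inequality
\[
\sqrt{p-1} \;\le\; \frac{p-1}{k} \;=\; n
\]
is equivalent to $k^2 \le p-1$, which is precisely the standing assumption $p \ge k^2 + 1$. So both $a$ and $b$ lie in $[n]$, completing the contradiction.

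Honestly, I do not expect a real obstacle here: once Fermat's theorem is brought in, the divisibility $4 \mid k$ is exactly what converts the identity $a^2 \equiv -b^2 \pmod p$ into a $k^{\mathrm{th}}$-power coincidence, and the remaining estimate $\max(a,b) \le \sqrt{p-1} \le n$ is routine. The only points requiring a brief verbal check are that $a \ne b$ (from $p$ odd) and that $p = k^2$ is ruled out by $k \mid p - 1$, both of which are immediate.
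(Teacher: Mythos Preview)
Your proof is correct and follows essentially the same approach as the paper: use Fermat's two-square theorem to write $p=a^2+b^2$, observe $a^k\equiv b^k\pmod p$ since $4\mid k$, and bound $\max(a,b)\le n$ via the assumed lower bound on $p$. The only cosmetic difference is that the paper derives the contradiction directly from $p\ge k^2$ (using $y<p/k=n+1/k$ to force $y\le n$), whereas you assume $p\ge k^2+1$ and then dispose of $p=k^2$ separately via $k\nmid k^2-1$.
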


\begin{proof}
Suppose $p = kn + 1$ is a $k$-representative. By Fermat's result, there are integers $x$ and $y$ with $1\le x<y$ such that $p=x^2 +y^2$. Note that if $p\ge k^2$, then 
$p^2/k^2=p\cdot \frac p{k^2}\ge p>y^2$, so $x<y\le p/k=n+1/k$ and therefore in fact $x<y\in[n]$, but $x^2\equiv-y^2 \pmod p$, so $x^k\equiv y^k \pmod p$.
\end{proof}

The bound on $p$ found in the theorem allows us to identify by a quick exhaustive search all the possible values of $p$ that are $k$-representatives, for any given value of $k$ 
that is a multiple of 4. Table \ref{table:4mrepresentatives} lists these values $p=4mn+1$ for all $k=4m\le 100$.

\begin{table}[ht]
\begin{tabular}{||c||c|c||} 
\hline \hline
$k=4m$&$n$&$p=kn+1$\\
\hline \hline
4 & 1 & 5\\
8 & none & none\\
12 & 1 and 3 & 13 and 37\\
16 & 1 & 17\\
20 & none & none\\
24 & none & none\\
28 & 1 & 29\\
32 & none & none\\
36 & 1 & 37\\
40 & 1 & 41\\
44 & none & none\\
48 & none & none\\
52 & 1 & 53\\
56 & none & none\\
60 & 1 and 3 & 61 and 181\\
64 & none & none\\
68 & none & none\\
72 & 1 & 73\\
76 & none & none\\
80 & 3 & 241\\
84 & 5 & 421\\
88 & 1 & 89\\
92 & none & none\\
96 & 1 & 97\\
100 & 1 & 101\\
\hline \hline
\end{tabular}\vspace{3mm}
\caption{$4m$-representatives for $m\le 25$.}
\label{table:4mrepresentatives}
\vspace{-7mm}
\end{table}

We now proceed to the general case. The key observation is that if $p$ is prime and $G\le(\mathbb Z/p\mathbb Z)^*$ is nontrivial, then $\sum_{g\in G}g=0$. Indeed, let 
$S=\sum_{g\in G}g$ and let $h\in G$ be different from the identity. The map $g\mapsto hg$ is a permutation of $G$, and we have $S=\sum_{g\in G}hg=hS$. 

Suppose now that $(n,k)\ne(1,1)$ and $p=kn+1$ is prime. The observation, applied to the case where $G$ is the group of $k^{\mathrm{th}}$ powers of nonzero elements of 
$(\mathbb Z/p\mathbb Z)^*$, gives us that if $p$ is a $k$-representative, then $\sum_{i=1}^n i^k\equiv 0\pmod p$. But this sum is a polynomial $P_k(x)$ (of degree $k+1$) with 
rational coefficients evaluated at $x=n$. \emph{If} $kx+1$ is not a factor of $P_k(x)$, then, by applying the division algorithm and clearing out denominators, there are integers 
$a,b,c$ with $c\ne 0$, such that $aP_k(x)$ has integer coefficients and 
 $$ aP_k(x)+b(kx+1)=c. $$ 
For $x=n$ we have that $p=kn+1$ divides $P_k(n)$ (by the observation) and therefore $p\mid c$, and there are only finitely many possibilities for $p$, all of which lie among the 
prime factors of $c$. The only remaining issue is how to prove that indeed $kx+1$ is not a factor of $P_k(x)$. 

We circumvent this obstacle by arguing instead that also $\sum_{i=1}^n i^{2k}\equiv0\pmod p$, and the polynomial $P_{2k}(n)$ is now $B_{2k+1}(n+1)$ for $B_{2k+1}(x)$ the 
$(2k+1)^{\mathrm{st}}$ Bernoulli polynomial, for which all its rational roots are known, and we can proceed as above.

\begin{theorem} \label{thm:k-representative}
If $k > 2$, then only finitely many primes are $k$-representatives.
\end{theorem}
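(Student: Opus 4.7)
The plan is to combine a subgroup-averaging argument modulo $p$ with Faulhaber's identity and the Euclidean algorithm on polynomials, so that all difficulty localizes to a single nonvanishing statement about a Bernoulli polynomial.

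Fix $k>2$ and suppose $p=kn+1$ is a $k$-representative. The residues $1^k,\dots,n^k$ being pairwise distinct modulo $p$ means that they enumerate the subgroup $H\le(\mathbb Z/p\mathbb Z)^*$ of $k^{\mathrm{th}}$-power residues, which has order $n$. Squaring, the set $\{i^{2k}\bmod p:i\in[n]\}$ traverses the subgroup $H^2$ with multiplicity $\gcd(2,n)$, and $H^2$ has order $n/\gcd(2,n)$. For $n\ge 4$ this subgroup is nontrivial, so the vanishing-of-subgroup-sums observation recalled just before the statement of the theorem yields
\[
\sum_{i=1}^n i^{2k}\equiv 0\pmod p.
\]

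By Faulhaber's identity,
\[
(2k+1)\sum_{i=1}^n i^{2k}=B_{2k+1}(n+1)-B_{2k+1}(1)=B_{2k+1}(n+1),
\]
using that $B_{2k+1}(1)=0$ since $2k+1\ge 3$ is odd. Consequently $p$ divides the integer $B_{2k+1}(n+1)$ while also dividing $kn+1$. The key assertion at this point is the polynomial one: the factor $kx+1$ does \emph{not} divide $B_{2k+1}(x+1)$ in $\mathbb Q[x]$. Granting this, the Euclidean algorithm (after clearing denominators) produces polynomials $A(x),B(x)\in\mathbb Z[x]$ and a nonzero integer $c=c(k)$ with
\[
A(x)B_{2k+1}(x+1)+B(x)(kx+1)=c.
\]
Substituting $x=n$ forces $p\mid c$, bounding $p$ by $|c|$, which depends only on $k$. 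Together with the finitely many exceptions $n\le 3$ (notably $n=1$, $p=k+1$, which is automatically a $k$-representative whenever prime), this proves the theorem.

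All the substance is thus concentrated in the nondivisibility, which, by the antisymmetry $B_{2k+1}(1-y)=-B_{2k+1}(y)$, is equivalent to
\[
B_{2k+1}(1/k)\ne 0\quad\text{for every }k>2.
\]
The hypothesis $k>2$ is essential: for $k=1,2$ the point $1/k$ lies among the trivial zeros $\{0,1/2,1\}$ of $B_{2k+1}$, and the strategy genuinely collapses (consistent with $k+1$ being a strong representative in those cases). I would expect the proof to proceed either via a generating-function identity, a $p$-adic analysis of the denominator of $B_{2k+1}$, or the classical Fourier expansion
\[
B_{2k+1}(x)=(-1)^{k+1}\frac{2(2k+1)!}{(2\pi)^{2k+1}}\sum_{m=1}^\infty\frac{\sin(2\pi mx)}{m^{2k+1}},
\]
evaluated at $x=1/k$: the leading term contributes $\sin(2\pi/k)$, while the tail is bounded in absolute value by $\sum_{m\ge 2}m^{-(2k+1)}<1/(2k)$, which is strictly smaller than $\sin(2\pi/k)$ for every $k\ge 3$. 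This is the content attributed to Grinberg and Harcos; everything else is bookkeeping.
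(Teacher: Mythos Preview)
Your argument follows the paper's exactly in its architecture: pass to $\sum_{i=1}^n i^{2k}$, identify it with $B_{2k+1}(n+1)/(2k+1)$, observe that the nondivisibility of $B_{2k+1}(x+1)$ by $kx+1$ lets the Euclidean algorithm bound $p$ by a constant depending only on $k$. The one substantive difference is in how you establish that nondivisibility: the paper invokes Inkeri's theorem that the only rational roots of $B_m$ for odd $m>1$ are $0,\tfrac12,1$, whereas you prove $B_{2k+1}(1/k)\ne 0$ directly from the Fourier expansion by isolating the $m=1$ term. Your estimate is correct (for $k\ge 3$ the tail $\sum_{m\ge 2}m^{-(2k+1)}$ is dominated by $2^{-2k}$, which is well below $\sin(2\pi/k)\sim 2\pi/k$), and it has the virtue of being self-contained, while Inkeri's theorem gives the stronger structural statement about all rational roots. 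One small slip: the subgroup $H^2$ is already nontrivial for $n\ge 3$ (its order is $n/\gcd(2,n)\ge 2$ once $n\ge 3$), so only $n\le 2$ need be set aside as exceptions; this does not affect the conclusion.
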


The argument we have been outlining was suggested by Darij Grinberg and Gergely 
Harcos\footnote{See \href{https://mathoverflow.net/q/78270/}{https://mathoverflow.net/q/78270/}}.

Note also that for $k=1,2$ we have that 
 $$ \sum_{i=1}^n i=\frac{n(n+1)}2\quad\mbox{ and }\quad\sum_{i=1}^n i^2=\frac{n(n+1)(2n+1)}6, $$ 
so the argument above fails (as it should) since $kn+1$ is in both cases a factor of the corresponding polynomial.

\begin{proof}
Let $B(t,x)=\frac{te^{tx}}{e^t-1}$. The Bernoulli polynomials $B_m(x)$ are defined as follows using the power series expansion in terms of $t$ of $B(t,x)$:
 $$ B(t,x)=\sum_{m=0}^\infty B_m(x)\frac{t^m}{m!}. $$
It is well known that each $B_m(x)$ is a polynomial in $x$ of degree $m$ with rational coefficients, and 
 $$ \sum_{i=1}^n i^m=\frac{B_{m+1}(n+1)-B_{m+1}(0)}{m+1} $$
for all positive integers $n$, see for instance \cite[chapter 4]{Washington97}.

Writing
 $$ B_m(x)=\sum_{k=0}^m\binom m{m-k} b_k x^{m-k}, $$ 
the numbers $b_k = B_k(0)$ are usually called the \emph{Bernoulli numbers}; they satisfy $b_{2k+1} = 0$ for all $k \ge 1$. As indicated above, it will be important for us to 
know all the rational linear factors of the polynomial $B_m(x)-B_m(0)$; when $m$ is odd this reduces to determining the rational linear factors of $B_m(x)$. The following 
result of K. Inkeri \cite[theorem 3]{Inkeri59} solves this problem.

\begin{theorem}[Inkeri] \label{thm:inkeri}
The rational roots of a Bernoulli polynomial $B_m(x)$ can be only 0, $1/2$, and 1. Moreover, all these are roots when $m > 1$ is odd.
\end{theorem}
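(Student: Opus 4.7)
The plan is to handle the two directions of the theorem separately.

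The ``moreover'' direction is almost immediate from standard identities. Assume $m > 1$ is odd. Then $b_m = 0$ (as noted in the paragraph preceding the theorem), so $B_m(0) = b_m = 0$. The reflection identity $B_m(1-x) = (-1)^m B_m(x)$, which follows from the generating-function observation that $(-t)e^{(-t)(1-x)}/(e^{-t}-1) = te^{tx}/(e^{t}-1)$, specializes for odd $m$ to $B_m(1-x) = -B_m(x)$. Setting $x = 0$ gives $B_m(1) = -B_m(0) = 0$; setting $x = 1/2$ gives $B_m(1/2) = -B_m(1/2)$, hence $B_m(1/2) = 0$.

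For the main direction, I want to show that every rational root of $B_m(x)$ lies in $\{0, 1/2, 1\}$. The case $m = 1$ is immediate from $B_1(x) = x - 1/2$, so assume $m \geq 2$. Suppose $r = a/b$ (in lowest terms, $b \geq 1$) is a root of $B_m$, and clear denominators in $B_m(x) = \sum_{k=0}^{m} \binom{m}{k} b_{m-k} x^{k}$ by multiplying by $b^{m}$ to obtain
$$\sum_{k=0}^{m} \binom{m}{k}\, b_{m-k}\, a^{k}\, b^{m-k} = 0.$$
I then plan to invoke the von Staudt--Clausen theorem, which gives the exact denominator of $b_{2j}$ (for $j \geq 1$) as the squarefree integer $\prod_{(p-1)\mid 2j} p$ and asserts $p \cdot b_{2j} \equiv -1 \pmod{p}$ for every such prime $p$. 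Analyzing the displayed relation $p$-adically for each prime $p$ dividing $b$, one argues that for odd $p$ exactly one term dominates modulo a suitable power of $p$ and forces a contradiction, while a separate $2$-adic analysis restricts $b$ to $\{1, 2\}$. Once $b$ is constrained, integrality of the remaining coefficients together with direct evaluation at the small candidates forces $a \in \{0, 1\}$ when $b = 1$ and $a = 1$ when $b = 2$.

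The main obstacle I anticipate is the $p$-adic bookkeeping: the coefficients $\binom{m}{k} b_{m-k}$ have denominators depending intricately on $m$ and $k$, so isolating a single dominant term requires splitting by the parity of $m$ and by whether $p = 2$ or $p$ is odd. This is precisely the technical heart of Inkeri's proof in \cite{Inkeri59}, which the paper cites as the source; the cleanest implementation reproduces his case analysis rather than attempting a more uniform argument. A useful preliminary reduction for the odd-$m$ case is to use the ``moreover'' direction to factor $B_m(x) = x(x-1)(x-1/2)\, Q(x)$ and apply the same $p$-adic analysis to $Q(x)$, whose coefficients one can still control via the shift $B_m(x+1/2)$ combined with the multiplication formula $B_m(2x) = 2^{m-1}[B_m(x) + B_m(x + 1/2)]$.
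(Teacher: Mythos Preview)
The paper does not prove this theorem at all: it is quoted inside the proof of theorem \ref{thm:k-representative} as a result of Inkeri, with a citation to \cite[theorem 3]{Inkeri59}, and then simply used. So there is no ``paper's own proof'' to compare against.

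Your ``moreover'' direction is complete and correct: the reflection identity $B_m(1-x)=(-1)^m B_m(x)$ together with $b_m=0$ for odd $m>1$ immediately gives the three roots $0$, $1/2$, $1$.

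Your main direction, however, is not a proof but a plan. You correctly identify the tool (von Staudt--Clausen controlling the denominators of the $b_{2j}$) and the shape of the argument (a $p$-adic dominant-term analysis of $\sum_k \binom{m}{k} b_{m-k} a^k b^{m-k}=0$, with a separate treatment of $p=2$), but you do not carry it out, and you yourself say the ``cleanest implementation reproduces his case analysis.'' That is an honest assessment: the bookkeeping is genuinely delicate, because one must track simultaneously the $p$-adic valuation coming from $b_{m-k}$ (governed by von Staudt--Clausen and Kummer congruences), from $\binom{m}{k}$ (governed by Kummer's theorem on carries), and from $b^{m-k}$, and isolate a unique term of minimal valuation. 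Your suggested reduction in the odd-$m$ case via factoring out $x(x-1)(x-1/2)$ and using the multiplication formula is sensible, but it does not circumvent the core difficulty. As written, your proposal is best described as ``the easy half, plus a correct pointer to where the hard half lives,'' which is exactly the status the paper itself assigns to this theorem.
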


Suppose $p = kn + 1$ is a $k$-representative. We claim that
 $$ 1^{2k} + 2^{2k} + \cdots + n^{2k} \equiv 0\pmod p. $$
To see this, notice that there are precisely $\frac{p-1}d = n/\gcd(2,n)$ incongruent $(2k)^{\mathrm{th}}$ power residues modulo $p$, where $d = \gcd(2k, p - 1) = k\gcd(2, n)$. If 
$n$ is odd, this is precisely $n$, which means that the numbers $1^{2k},\dots,n^{2k}$ are all distinct and are precisely all the nonzero $(2k)^{\mathrm{th}}$ powers. If $n$ is even, 
this means that each nonzero $(2k)^{\mathrm{th}}$ power appears exactly twice among these numbers. In either case, it follows that the sum is zero by the same argument as 
above.

Since 
 $$ \sum_{i=0}^n i^{2k}=\frac{B_{2k+1}(n+1)}{2k+1}, $$
it must be the case that $(kn + 1)\mid B_{2k+1}(n + 1)$. By Inkeri's theorem \ref{thm:inkeri}, since $k > 2$, the polynomial $kx + 1$ is relatively prime to the polynomial 
$B_{2k+1}(x + 1)$. Thus, there must be polynomials $u, v \in\mathbb Q[x]$ such that
 $$ (kx + 1) \cdot u(x) + B_{2k+1}(x + 1) \cdot v(x) = 1. $$
(In fact, $v$ is a constant.)

Multiplying this identity by an appropriate integer constant $L = L_1L_2$, it follows that there are polynomials $\check u= Lu$, $\check B_{2k+1}= L_1B_{2k+1}$, and 
$\check v = L_2v$, all in $\mathbb Z[x]$ such that 
 $$ ( k x + 1 ) \cdot \check u  ( x ) + \check B_{2k + 1} ( x + 1 ) \cdot\check v ( x ) = L . $$
Since $B_{2k+1}(n + 1) \equiv 0 \pmod{kn + 1}$, evaluating the last displayed equation at $x = n$ gives us that $p = kn + 1 \mid L$. But there are only finitely many such $p$.
\end{proof}

Note that this argument does not supersede theorems \ref{thm:3-representative} or \ref{thm:4m-representative}. For theorem \ref{thm:4m-representative} in particular, note that 
the bound obtained there is in general much smaller than the bound $L$ found in the proof of theorem \ref{thm:k-representative}, which depends on the size of the denominator 
of $B_{2k+1}(x + 1)$. 

\subsection{Examples}  \label{subsec:k-representatives}

Let us illustrate theorem \ref{thm:k-representative} with some examples, for which it suffices to consider $\sum_{i=1}^n i^k$ rather than the sum of $(2k)^{\mathrm{th}}$ powers.

\begin{itemize}
\item 
$k=3$.
\end{itemize}

Recall that 
 $$ \sum_{i=1}^n i^3=\frac{n^2(n+1)^2}4. $$ 
Clearly, if $3n+1$ is prime, it does not divide $n^2(n+1)^2$, and it follows that no prime is a 3-representative. This provides another solution to K\"oMaL problem B.4401.

\begin{itemize}
\item
$k=4$.
\end{itemize}

We have that 
 $$ \sum_{i=1}^n i^4=\frac{ n(n+1)(2n+1)(3n^2 +3n-1)}{30}. $$
If $4n + 1$ is a 4-representative, then it must divide $3n^2 + 3n - 1$. But 
 $$ 16(3n^2 +3n-1)=(9+12n)(4n+1)-25, $$
so $4n + 1$ must divide $25$. Hence $n = 1$ and $p = 5$ is the only 4-representative.

\begin{itemize}
\item
$k=5$.
\end{itemize}

We have that 
 $$ \sum_{i=1}^n i^5=\frac{n^2(n+1)^2(2n^2+2n-1)}{12}. $$
If $5n+1$ is a 5-representative, then it must divide $2n^2+2n-1$. But 
 $$ 25(2n^2 +2n-1)=(10n+8)(5n+1)-33, $$
so $5n+1$ must divide $33$. Hence $n=2$. Since $2^5 =32\equiv-1\not\equiv1 \pmod{11}$, it follows that $p = 11$ is indeed the only 5-representative.

\begin{itemize}
\item
$k=6$.
\end{itemize}

We have that 
 $$ \sum_{i=1}^n i^6 = \frac{n(n+1)(2n+1)(3n^4 +6n^3 -3n+1)}{42}. $$
If $6n+1$ is a 6-representative, then it must divide $3n^4+6n^3-3n+1$. But 
 $$ 432(3n^4 +6n^3 -3n+1)=(216n^3 +396n^2 -66n-205)(6n+1)+637, $$
so $6n+1$ must divide $637=7^2\cdot 13$, and $n=1$ or $n=2$. 

Since $2^6 =64\equiv-1\not\equiv 1 \pmod{13}$, it follows that $p = 7$ and $p = 13$ are the only 6-representatives.

\begin{itemize}
\item
$k=7$.
\end{itemize}

We have 
 $$ \sum_{i=1}^n i^7 = \frac{n^2(n+1)^2(3n^4 +6n^3 -n^2 -4n+2)}{24}. $$
If $7n + 1$ is a 7-representative, then it must divide $3n^4 + 6n^3 - n^2 - 4n + 2$. But
 $$ 2401(3n^4 +6n^3 -n^2 -4n+2)=(1029n^3 +1911n^2 -616n-1284)(7n+1)+6086, $$
so $7n + 1$ must divide $6086 = 2 \cdot 17 \cdot 179$. However, since none of these prime factors is congruent to 1 modulo 7, it follows that there are no 7-representatives.

\begin{itemize}
\item
$k=8$.
\end{itemize}

We have that
 $$ \sum_{i=1}^n i^8 = \frac{n(n+1)(2n+1)(5n^6 +15n^5 +5n^4 -15n^3 -n^2 +9n-3)}{90}. $$
If $8n+1$ is an 8-representative, it must divide $5n^6 +15n^5 +5n^4 -15n^3 -n^2 +9n-3$. 
But
\begin{multline*}
262144(5n^6 +15n^5 +5n^4 -15n^3 -n^2 +9n-3)=\\
(163840n^5 + 471040n^4 + 104960n^3 - 504640n^2 + 30312n+291123)(8n + 1) - 1077555, 
\end{multline*}
so $8n + 1$ must divide $1077555 = 3 \cdot 5 \cdot 71837$.
However, since none of these prime factors is congruent to 1 modulo 8, it follows that there are no 8-representatives.

Although it ends up not making a significant difference here, using theorem 5.21, we only had to consider primes not exceeding 64.

\begin{itemize}
\item
$k=9$.
\end{itemize}

We have that
 $$ \sum_{i=1}^n i^9 = \frac{n^2(n+1)^2(n^2 +n-1)(2n^4 +4n^3 -n^2 -3n+3)}{20}. $$
If $9n+1$ is a 9-representative, then it must divide $n^2 +n-1$ or $2n^4 +4n^3 -n^2 -3n+3$.
The first case is impossible since
 $$ 81(n^2+n-1)=(9n+8)(9n+1)-89, $$ 
so  $9n + 1$ would have to divide $89\not\equiv 1 \pmod 9$. Now, since
 $$ 6561(2n^4 +4n^3 -n^2 -3n+3)=(1458n^3 +2754n^2 -1035n-2072)(9n+1)+21755, $$ 
then in the second case $9n+1$ must divide $21755=5\cdot 19\cdot 229$, so the only possibility is $n=2$. Since
$2^9 =512\equiv-1\not\equiv 1\pmod{19}$,
it follows that $p = 19$ is indeed the only 9-representative. 

\begin{itemize}
\item 
$k=10$.
\end{itemize}

We have that 
 $$ \sum_{i=1}^n i^{10}=\frac{n(n+1)(2n+1)(n^2 +n-1)(3n^6 +9n^5 +2n^4 -11n^3 +3n^2 +10n-5)}{66}. $$
If $10n + 1$ is a $10$-representative, it must divide one of the two factors $n^2 +n-1$ or $3n^6 +9n^5 +2n^4 -11n^3 +3n^2 +10n-5$.
But
 $$ 100(n^2 +n-1)=(10n+9)(10n+1)-109 $$ 
and 
\begin{multline*}
10^6(3n^6 +9n^5 +2n^4 -11n^3 +3n^2 +10n-5)=\\
(3\cdot 10^4n^4+6\cdot 10^4 n^3-42700 n^2-72700 n+106543)(10n+9)(10n + 1) - 5958887, 
\end{multline*}
so $10n+1$ must divide $109$ or $5958887 = 11^5 \cdot 37$, so $n = 1$. It follows that $p = 11$ is the
only $10$-representative.

\subsection{Density of strong representatives} \label{subsec:density}

All satisfactory $n$-colorings with $n\le 5$ admit strong representations: first, for each $n\le 4$ there is exactly one satisfactory $n$-coloring, as shown in \S\,\ref{subs:nle5}; 
moreover, $n+1$ is prime for $n=1,2,4$, and $2n+1$ is prime for $n=3$. 

For $n=5$, there are precisely two satisfactory $n$-colorings, which we labeled $c^1$ and $c^5$ so that $c^i(6)=c^i(i)$, see \S\,\ref{subs:nle5}. Note that $2n+1=11$ is prime; 
the corresponding coloring is $c^5$ since $6^2=36\equiv 25=5^2\pmod{11}$. 

Similarly, $421=84\cdot 5+1$ is prime, and is a strong representative of order 5 for $c^1$ since 
 $$ (1^{84},2^{84},3^{84},4^{84},5^{84},6^{84})\equiv(1,279,252,377,354,1)\pmod{421}. $$  
 
Strong representatives are hardly unique. For instance, any prime is a strong representative of order 1. The case $n=2$ is more interesting.

\begin{fact} \label{fact:den-2}
A prime $p$ is a strong representative of order 2 if and only if 
 $$ p\equiv\pm3\pmod 8. $$ 
In particular, there are infinitely many such primes. 
\end{fact}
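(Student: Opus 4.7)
The plan is to reduce the condition to a statement about the quadratic character of $2$ modulo $p$ and then invoke the second supplement to quadratic reciprocity. Unpacking definition \ref{def:strongrepresentatives} in the case $n=2$: any odd prime $p$ automatically has the form $2k+1$ with $k = (p-1)/2$, so $p$ is a strong representative of order $2$ if and only if $1^k$ and $2^k$ are distinct modulo $p$, that is, if and only if
\[ 2^{(p-1)/2} \not\equiv 1 \pmod{p}. \]
(The prime $p=2$ is trivially excluded, since then there is no valid $k$.)

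By Euler's criterion, $2^{(p-1)/2} \equiv \left(\frac{2}{p}\right) \pmod{p}$, and this residue is $\pm 1$. Hence the displayed condition is equivalent to $\left(\frac{2}{p}\right) = -1$, i.e., $2$ is a quadratic nonresidue modulo $p$. The second supplement to the law of quadratic reciprocity states precisely that $\left(\frac{2}{p}\right) = 1$ when $p \equiv \pm 1 \pmod 8$ and $\left(\frac{2}{p}\right) = -1$ when $p \equiv \pm 3 \pmod 8$, which gives the claimed characterization.

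The ``in particular'' clause then follows from Dirichlet's theorem on primes in arithmetic progressions, applied to the class $3 \pmod 8$ (or $5 \pmod 8$), which guarantees infinitely many primes in each. No step is really a genuine obstacle; the whole argument is essentially a two-line translation of the definition plus citation of Euler's criterion and the standard supplementary formula for $\left(\frac{2}{p}\right)$.
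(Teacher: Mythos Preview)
Your proof is correct and follows essentially the same approach as the paper: both reduce the condition to $2$ being a quadratic nonresidue modulo $p$ and then invoke the second supplement to quadratic reciprocity. The only minor difference is that the paper additionally observes (via the prime number theorem for arithmetic progressions) that such primes have natural density $1/2$, whereas you content yourself with Dirichlet's theorem for the ``infinitely many'' claim, which is all the statement requires.
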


\begin{proof}
The first sentence is a restatement of the supplementary law for quadratic reciprocity: a prime $p=2k+1$ is a strong representative of order 2 if and only if 
$2^k=2^{\frac{p-1}2}\not\equiv 1\pmod p$, which is equivalent to asserting that $2$ is not a square modulo $p$. The supplementary law tells us that 
 $$ \left(\frac 2p\right)=(-1)^{\frac{p^2-1}8}, $$
which equals $-1$ if and only if $p\equiv\pm 3\pmod 8$. By the prime number theorem for arithmetic progressions, half of all primes 
are of this form in the sense of natural density:  simply note that $\phi(8)=4$, where $\phi(\cdot)$ is Euler's totient function, and that, asymptotically, $1/4$ of all primes have 
the form $8k+a$ for any given $a\in[8]$ relatively prime with 8, see \cite[chapter 22]{Davenport00}.
\end{proof}

Some natural questions occur at this point.

\begin{question} \label{qu:natdensity}
Let $n\in\mathbb Z^+$.
\begin{enumerate}
\item
If $n$ admits a strong representative $p$, does it admit infinitely many? 
\item
If the answer to item \emph{(1)} is positive, is the set of such primes of positive natural density among all primes? 
\item
Further, suppose $n$ admits a strong representative. For a satisfactory $n$-coloring $c$, is the set of strong representatives of order $n$ for $c$ of positive 
natural density, and is this density independent of $c$?
\end{enumerate}
\end{question}

We present some numerical data for $n\le 10$. Consider, for instance, $n=5$. First, none of the 17 primes of the form $5n+1$ in the interval $[12,420]$ is a strong 
representative of order 5, and neither are any of the 11 such primes in the interval $[422,700]$. However, additional strong representatives eventually appear.

\begin{example}
The prime $p = 701 = 140\cdot 5 + 1$ is a strong representative of order 5 for $c^1$. In effect,
 $$ (1^{140},2^{140},3^{140},4^{140},5^{140},6^{140})\equiv(1,210,464,638,89,1)\pmod{701}. $$ 
Similarly, one can check that $p = 2311 = 462\cdot 5 + 1$ is a strong representative of order 5 for $c^5$.
\end{example}

Given a real $x$, denote by $\mathcal C^1(x)$ and $\mathcal C^5(x)$ the sets of primes $p \le x$ that are strong representatives of order 5 for $c^1$ and $c^5$, respectively, 
and let $\mathcal C(x) = \mathcal C^1(x) \cup \mathcal C^5(x)$. Also, write $\mathcal C_T(x)$ for the set of all primes $p\le x$ of the form $5n+1$. Table \ref{table:con-5} 
provides some numerical evidence suggesting a positive answer to item (3) of question \ref{qu:natdensity} for $n=5$.

\begin{table}[ht]
\begin{tabular}{|c||c|c|c|c|c|}
\hline 
$m$&$10^6$&$2\cdot 10^6$&$3\cdot 10^6$&$4\cdot 10^6$&$5\cdot 10^6$\\
\hline\hline 
$|\mathcal C^1(m)|$ & 626 & 1203 & 1757 & 2314 & 2838\\
\hline
$|\mathcal C^5(m)|$ & 626 & 1210 & 1783 & 2291 & 2822\\
\hline
$|\mathcal C(m)|$ & 1252 & 2413 & 3540 & 4605 & 5660\\
\hline
$|\mathcal C_T(m)|$ & 19617 & 37188 & 54175 & 70779 & 87062\\
\hline
$\frac{|\mathcal C^1(m)|}{|\mathcal C^5(m)|}$ & 1 & 0.994215 & 0.985418 & 1.010039 & 1.005670\\
\hline
$\frac{|\mathcal C(m)|}{|\mathcal C_T(m)|}$ & 0.063822 & 0.064887 & 0.065344 & 0.065062 & 0.065011\\
\hline 
\end{tabular}
\vspace{2mm}

\begin{tabular}{|c||c|c|c|c|c|}
\hline 
$m$&$6\cdot 10^6$&$7\cdot 10^6$&$8\cdot 10^6$&$9\cdot 10^6$&$10^7$\\
\hline\hline 
$|\mathcal C^1(m)|$ & 3376 & 3873 & 4386 & 4886 & 5358\\
\hline
$|\mathcal C^5(m)|$ & 3309 & 3843 & 4302 & 4772 & 5265\\
\hline
$|\mathcal C(m)|$ & 6685 & 7716 & 8688 & 9658 & 10623\\
\hline
$|\mathcal C_T(m)|$ & 103153 & 119109 & 134912 & 150604 & 166104\\
\hline
$\frac{|\mathcal C^1(m)|}{|\mathcal C^5(m)|}$ & 1.020248 & 1.007806 & 1.019526 & 1.023889 & 1.017664\\
\hline
$\frac{|\mathcal C(m)|}{|\mathcal C_T(m)|}$ & 0.064807 & 0.064781 & 0.064398 & 0.064128 & 0.063954\\
\hline 
\end{tabular}
\vspace{3mm}
\caption{Density of strong representatives of order 5.}
\label{table:con-5}
\vspace{-7mm}
\end{table}

For $x\ge0$, denote by $\pi_n(x)$ the number of strong representatives of order $n$ less than or equal to $x$. In table \ref{table:con-10} we provide data suggesting the density 
of strong representatives of order $n$ in the set of primes, for $n\le 10$.  

\begin{table}[ht]
\begin{tabular}{|c|c||c|c||c|c||c|c|}
\hline 
$N$&$\pi(N)$&$\pi_2(N)$&$\pi_2(N)/\pi(N)$&$\pi_3(N)$&$\pi_3(N)/\pi(N)$&$\pi_4(N)$&$\pi_4(N)/\pi(N)$\\
\hline\hline 
$10^2$ & 25 & 13 & 0.52& 2 & 0.08 & 1 & 0.04\\
\hline
$10^3$ & 168 & 87 & 0.51785\dots & 20 & 0.11904\dots& 10 & 0.05952\dots\\
\hline
$10^4$ & 1229 & 625& 0.50854\dots & 134 & 0.10903\dots& 82 & 0.06672\dots\\
\hline
$10^5$ & 9592 & 4808 & 0.50125\dots & 1087 & 0.11332\dots& 602 & 0.06276\dots\\
\hline
$10^6$ & 78498 & 39276 & 0.50034\dots & 8732 & 0.11123\dots& 4857 & 0.06187\dots \\
\hline
\end{tabular}
\vspace{2mm} 

\begin{tabular}{|c|c||c|c||c|c|}
\hline 
$N$&$\pi(N)$&$\pi_5(N)$&$\pi_5(N)/\pi(N)$&$\pi_6(N)$&$\pi_6(N)/\pi(N)$\\
\hline\hline 
$10^2$ & 25 & 1 & 0.04 & 2 & 0.08\\
\hline
$10^3$ & 168 & 3 & 0.01785\dots & 7 & $0.041\overline6$\\
\hline
$10^4$ & 1229 & 16 & 0.01301\dots & 19 & 0.01545\dots\\
\hline
$10^5$ & 9592 & 147 & 0.01532\dots & 203 & 0.02116\dots\\
\hline
$10^6$ & 78498 & 1252 & 0.01594\dots & 1803 & 0.02296\\
\hline
\end{tabular}
\vspace{2mm}

\begin{tabular}{|c|c||c|c||c|c|}
\hline 
$N$&$\pi(N)$&$\pi_7(N)$&$\pi_7(N)/\pi(N)$&$\pi_8(N)$&$\pi_8(N)/\pi(N)$\\
\hline\hline 
$10^2$ & 25 & 0 & 0 & 1 & 0.04 \\
\hline
$10^3$ & 168 & 1 & 0.00595\dots & 1& 0.00595\dots\\
\hline
$10^4$ & 1229 & 6& 0.00488\dots & 5 & 0.00406\dots\\
\hline
$10^5$ & 9592 & 30 & 0.00312\dots & 21 & 0.00218\dots\\
\hline
$10^6$ & 78498 & 195 & 0.00248\dots & 165 & 0.00210\dots\\
\hline
$10^7$ & 664579 & 1624 & 0.00244\dots & 1344 & 0.00202\dots\\
\hline
\end{tabular}
\vspace{2mm}

\begin{tabular}{|c|c||c|c||c|c|}
\hline 
$N$&$\pi(N)$&$\pi_9(N)$&$\pi_9(N)/\pi(N)$&$\pi_{10}(N)$&$\pi_{10}(N)/\pi(N)$\\
\hline\hline 
$10^2$ & 25 & 1 & 0.04 & 1 & 0.04 \\
\hline
$10^3$ & 168 & 1 & 0.00595\dots & 1& 0.00595\dots\\
\hline
$10^4$ & 1229 & 1 & 0.00081\dots & 2 & 0.00162\dots\\
\hline
$10^5$ & 9592 & 7 & 0.00072\dots & 5 & 0.00052\dots\\
\hline
$10^6$ & 78498 & 42 & 0.00053\dots & 31 & 0.00039\dots\\
\hline
$10^7$ & 664579 & 374 & 0.00056\dots & 281 & 0.00042\dots\\
\hline
\end{tabular}
\vspace{3mm}
\caption{Density of strong representatives of order $n$, $2\le n\le 10$.}
\label{table:con-10}
\vspace{-7mm}
\end{table}

We now mention some remarks explaining that items (1) and (2) of question \ref{qu:natdensity} admit a positive answer. First, we recall a well-known observation.

\begin{lemma} \label{lemma:allnpowers}
Suppose that $n$ and $p$ are prime. If not all numbers are $n^{\mathrm{th}}$ powers modulo $p$, then $p\equiv 1\pmod n$.
\end{lemma}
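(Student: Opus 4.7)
The plan is to work in the cyclic group $(\mathbb{Z}/p\mathbb{Z})^*$ of order $p-1$ and examine the image of the power map. First I would dispose of the trivial case $p = n$: by Fermat's little theorem every $x$ satisfies $x^p \equiv x \pmod p$, so every residue is a (in fact its own) $p^{\mathrm{th}} = n^{\mathrm{th}}$ power, and the hypothesis fails. From now on assume $p \ne n$.

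Next I would consider the homomorphism $\phi\!:(\mathbb{Z}/p\mathbb{Z})^* \to (\mathbb{Z}/p\mathbb{Z})^*$ defined by $\phi(x) = x^n$. Because $(\mathbb{Z}/p\mathbb{Z})^*$ is cyclic of order $p-1$, the standard fact about cyclic groups gives $|\ker \phi| = \gcd(n, p-1)$, and therefore $|\operatorname{im}\phi| = (p-1)/\gcd(n, p-1)$. So the $n^{\mathrm{th}}$ powers, together with $0 = 0^n$, exhaust $\mathbb{Z}/p\mathbb{Z}$ precisely when $\gcd(n, p-1) = 1$.

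Now I would argue the contrapositive of the claim: if some residue modulo $p$ fails to be an $n^{\mathrm{th}}$ power, then $\gcd(n, p-1) \ne 1$. Since $n$ is prime and $p \ne n$, the only way $\gcd(n, p-1)$ can exceed $1$ is if $n \mid p-1$, i.e.\ $p \equiv 1 \pmod n$, as required.

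There is no real obstacle here; the only subtlety is remembering the degenerate case $p = n$ and noting that $n$ prime is essential in the final step (so that $\gcd(n, p-1) > 1$ forces $n \mid p-1$). Hence the lemma is just a one-line consequence of the structure of the cyclic group $(\mathbb{Z}/p\mathbb{Z})^*$.
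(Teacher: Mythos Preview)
Your proof is correct. The paper takes essentially the same contrapositive route but packages it slightly differently: rather than invoking the cyclic structure of $(\mathbb Z/p\mathbb Z)^*$, it works directly from Fermat's little theorem. After disposing of $p=n$ just as you do, for $p=nk+a$ with $1<a<n$ it lets $\alpha\in[n]$ be the inverse of $1-a$ modulo $n$ and observes that $x^{\alpha(p-1)+1}\equiv x\pmod p$ while $\alpha(p-1)+1\equiv 0\pmod n$, so every $x$ is explicitly an $n^{\mathrm{th}}$ power. Your argument is cleaner conceptually (image size $=(p-1)/\gcd(n,p-1)$), while the paper's is marginally more elementary in that it avoids the primitive root theorem; in substance the two are the same.
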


\begin{proof}
Indeed, all numbers are $n^{\mathrm{th}}$ powers modulo $n$. For $p$ of the form $nk+a$ with $1<a<n$, let $\alpha\in[n]$ be the multiplicative inverse of $1-a$ modulo $n$, and 
note that $x^{\alpha(p-1)+1}\equiv x\pmod p$ for any $x$ and that $\alpha(p-1)+1\equiv 0\pmod n$. 
\end{proof}

Many of the intricacies of the general case seem to be present already for $n=3$, so we consider this case first in some detail. 

\begin{theorem}
The set of primes that are strong representatives of order 3 has natural (asymptotic) density $1/9$ in the set of all primes.
\end{theorem}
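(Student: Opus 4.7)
The plan is to translate the condition ``$p=3k+1$ is a strong representative of order $3$'' into a splitting condition for $p$ in a suitable Galois extension of $\mathbb{Q}$, and then apply Chebotar\"ev's density theorem.

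First, I reformulate the condition. If $p=3k+1$ is prime, then $p\equiv 1\pmod 3$ and the subgroup of $k$-th powers in $(\mathbb Z/p\mathbb Z)^*$ is exactly the cyclic group of order $3$ consisting of the cube roots of unity mod $p$. In particular, $1^k=1$, and $2^k, 3^k\in\{1,\omega,\omega^2\}$ where $\omega$ is a primitive cube root of unity mod $p$. The three elements $1^k,2^k,3^k$ are distinct mod $p$ if and only if $2^k\neq 1$, $3^k\neq 1$, and $2^k\neq 3^k$, i.e., none of $2$, $3$, or $2\cdot 3^{-1}$ is a cube mod $p$. So the strong representatives of order $3$ are exactly the primes $p\equiv 1\pmod 3$ (other than $2,3$) at which $2$, $3$, and $2/3$ are simultaneously non-cubes.

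Next I set $K=\mathbb Q(\zeta_3,\sqrt[3]{2},\sqrt[3]{3})$. Using Kummer theory over $\mathbb Q(\zeta_3)$, together with the fact that $3$ (respectively $2$) is not of the form $u\cdot \alpha^3$ with $u\in\{2^i:i\in\mathbb Z\}$ and $\alpha\in\mathbb Q(\zeta_3)$, one shows $[K:\mathbb Q]=18$, and $G=\mathrm{Gal}(K/\mathbb Q)\cong (\mathbb Z/3)^2\rtimes\mathbb Z/2$. Write $H=\mathrm{Gal}(K/\mathbb Q(\zeta_3))$ and parametrize its elements by $(a,b)\in(\mathbb Z/3)^2$ via
\[ \sigma_{a,b}\!:\sqrt[3]{2}\mapsto\zeta_3^a\sqrt[3]{2},\ \sqrt[3]{3}\mapsto\zeta_3^b\sqrt[3]{3}. \]
Complex conjugation $\tau\in G\setminus H$ fixes the real radicals and sends $\zeta_3\mapsto\zeta_3^{-1}$; a direct computation gives $\tau\sigma_{a,b}\tau^{-1}=\sigma_{-a,-b}$, so conjugation by $G$ acts on $H$ by inversion.

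Now I translate the cube conditions. For an unramified prime $p$ (i.e., $p\neq 2,3$), let $\mathrm{Frob}_p\subset G$ denote its Frobenius conjugacy class. The condition $p\equiv 1\pmod 3$ is equivalent to $\mathrm{Frob}_p\subset H$. Then
\[ 2\text{ is a cube mod }p\iff \mathrm{Frob}_p\text{ fixes }\sqrt[3]{2}\iff a=0, \]
and analogously $3$ is a cube iff $b=0$, while $2/3$ is a cube iff $a=b$. Thus $p$ is a strong representative of order $3$ if and only if $\mathrm{Frob}_p$ meets the set
\[ S=\{(a,b)\in(\mathbb Z/3)^2:a\neq 0,\ b\neq 0,\ a\neq b\}=\{(1,2),(2,1)\}. \]
Since $(2,1)=-(1,2)\pmod 3$, the two elements of $S$ form a single $G$-conjugacy class (of size $2$).

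Finally, by Chebotar\"ev's density theorem, the set of unramified primes with Frobenius in this class has natural density $|S|/|G|=2/18=1/9$ in the set of all primes; the finitely many ramified primes $2,3$ do not affect the density, completing the proof. The only substantive obstacles are the degree computation $[K:\mathbb Q]=18$ (standard Kummer theory) and the conjugation formula $\tau\sigma_{a,b}\tau^{-1}=\sigma_{-a,-b}$; everything else is a bookkeeping exercise once the Galois-theoretic picture is in place.
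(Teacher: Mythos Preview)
Your proof is correct and follows essentially the same route as the paper: both reduce to the splitting field $\mathbb Q(\zeta_3,\sqrt[3]{2},\sqrt[3]{3})$ of degree $18$, identify the Galois group as $(\mathbb Z/3)^2\rtimes\mathbb Z/2$, and apply Chebotar\"ev to a conjugacy class of size $2$. The only cosmetic differences are that the paper phrases the third non-cube condition as ``$12$ is not a cube'' (equivalent to your ``$2/3$ is not a cube'' since $12\equiv (3/2)\cdot 2^3$) and routes the counting through Frobenius's factorization-pattern theorem rather than working directly with the Frobenius element; the resulting enumeration of admissible $(a,b)$ is identical.
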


\begin{proof}
A prime $p=3k+1$ is a strong representative of order 3 if and only if 
$$ 2^k\not\equiv 1\pmod p,\quad 3^k\not\equiv 1\pmod p,\quad \mbox{and} \quad 2^k\not\equiv 3^k\pmod p, $$ 
and this is equivalent to asserting that 2, 3, and $12=2^3\cdot 3/2$ are not cubes modulo $p$ (note that the fact that $p\equiv 1\pmod 3$ follows from the assertion that 2 is 
not a cube modulo $p$, by lemma \ref{lemma:allnpowers}). This indicates that the key technical result needed to determine whether question \ref{qu:natdensity} holds is 
Chebotar\"ev's theorem, see \cite[Theorem VIII.10]{Lang94}\footnote{Lang states the result in terms of Dirichlet density, but the same conclusion holds for natural 
density, see \cite[\S\,XV.5]{Lang94}.}. We recall the theorem and some basic facts from algebraic number theory that should allow us to apply it in the case at hand.

Denote by $\mathbb P$ the set of integral primes. Recall that a set $A\subseteq \mathbb P$ has \emph{natural density} $\delta$ in $\mathbb P$ if and only if 
$\lim_{n\to\infty}|A\cap[n]|/\pi(n)$ exists and equals $\delta$.

\begin{theorem}[Chebotar\"ev] \label{thm:chebotarev}
Let $L/k$ be a Galois extension with Galois group $G=\operatorname{Gal}(L/k)$, and let $C$ be a conjugacy class of $G$. The set of primes $\mathfrak p$ of $k$ that are 
unramified in $L$ and for which the Frobenius symbol $\sigma_{\mathfrak p}$ of $\mathfrak p$ in $G$ is in $C$ has natural density $|C|/|G|$.
\end{theorem}

For instance, in the example under consideration, that $2,3,12$ are not cubes modulo $p$ means that 
 $$ f(x)\coloneqq(x^3-2)(x^3-3)(x^3-12) $$ 
has no roots modulo $p$. This suggests to consider $L=\mathbb Q(\sqrt[3]2,\sqrt[3]3,\zeta_3)$, the splitting field of $f$ over $k=\mathbb Q$, where $\zeta_3$ denotes a 
primitive cubic root of unity. Note that $[L:\mathbb Q]=18$. In fact, we can quickly check that $G$ is the generalized dihedral group for the elementary abelian group of order 
9, that is, 
 $$ \operatorname{Gal}(L/\mathbb Q)\cong \{-1,-1\}\ltimes(\mathbb Z/3\mathbb Z)^2: $$
any automorphism in $G$ is determined by its action on $\sqrt[3]2$, $\sqrt[3]3$, and $\zeta_3$; the former two correspond to independent copies of $\mathbb Z/3\mathbb Z$, 
while the latter corresponds to the abelian group of order 2, which acts on $(\mathbb Z/3\mathbb Z)^2$ via the inverse map. We can list $G$ as 
 $$ G=\{\pi_{a,b,c}:a,b=-1,0,1; c=-1,1\}, $$
where $\pi_{a,b,c}$ is the field automorphism of $L$ that maps $\sqrt[3]2$ to $\zeta_3^a\sqrt[3]2$, $\sqrt[3]3$ to $\zeta_3^b\sqrt[3]3$, and $\zeta_3$ to $\zeta_3^c$.

We concentrate on those primes $p$ that do not ramify over $L$. For this, note that $L$ is the compositum of $\mathbb Q(\sqrt[3]2),\mathbb Q(\sqrt[3]3),\mathbb Q(\zeta_3)$, and
therefore an integral prime ramifies over $L$ if and only if it ramifies in one of these fields, so the only such primes are $2,3$. In particular, all strong representatives $p$ of order 
3 are unramified in $L$. 

Although we do not need it explicitly, we briefly explain what remains undescribed from the statement of theorem \ref{thm:chebotarev}, namely the Frobenius, or Artin symbol. 
Suppose an integral prime $p$ is unramified in $L$ and $\mathfrak q$ is a prime of $L$ lying over $p$. The Frobenius is the unique $\sigma\in G$ such that 
 $$ \sigma(\alpha)\equiv\alpha^{{\mathrm N}_{L/\mathbb Q}(p)}\pmod{\mathfrak q} $$ 
for all $\alpha\in L$, where ${\mathrm N}_{L/\mathbb Q}(\cdot)$ is the norm. The choice of $\sigma=\sigma_p$ depends on $\mathfrak q$, but any two such choices are 
conjugate, which explains why we consider conjugacy classes rather than individual members of the Galois group. 

What we really need of Chebotar\"ev's result is the following application, actually due to Frobenius, see \cite{StevenhagenLenstra96}: suppose $g\in\mathbb Q[x]$ is monic and 
$K$ is its splitting field. Given an integral prime $p$, denote by $\mathbb F_p$ the field of $p$ elements. In $\mathbb F_p[x]$, $g$ factors into irreducible polynomials, say 
$g=g_1\cdots g_m$. Letting $n_i$ denote the degree of $g_i$ for $i\in[m]$, we can associate to $g$ the partition 
 $$ \Pi_p=\Pi_p(g)=(n_1,\dots,n_m) $$ 
of $\deg(g)$. Letting $\mathcal G=\operatorname{Gal}(K/\mathbb Q)$, we can identify $\mathcal G$ with a group of permutations of the roots of $g$ in $K$. Fixing an ordering 
of the roots, we can write each $\sigma\in \mathcal G$ as a product of disjoint cycles, say $\sigma=\tau_1\cdots\tau_l$. Letting $t_i$ denote the length of $\tau_i$, we can 
associate to $\sigma$ its cycle pattern 
 $$ \Lambda_\sigma=(t_1,\dots,t_l). $$ 
Frobenius's theorem states that the set of $p$ unramified in $K$ with associated partition $\Pi_p$ has natural density in the set of primes equal to the fraction of 
$\sigma\in\mathcal G$ whose associated cycle pattern $\Lambda_\sigma$ coincides with $\Pi_p$.

In the case under consideration, the condition on $2,3,12$ means that we are looking at those integral primes $p$ with $\Pi_p(f)=(3,3,3)$. Since each $\pi_{a,b,c}$ fixes (setwise) 
the sets $\{\zeta_3^s\sqrt[3]{r}:s\in[3]\}$ for $r=2,3,12$, what we need to count is those automorphisms that do not fix any of the $\zeta_3^s\sqrt[3]{r}$. 

Very explicitly: fix $a,b\in\{-1,0,1\}$ and $c\in\{-1,1\}$. We see that $\pi_{a,b,c}$ maps each $\zeta_3^j\sqrt[3]2$ to $\zeta_3^{jc+a}\sqrt[3]2$, each $\zeta_3^k\sqrt[3]3$ to 
$\zeta_3^{kc+b}\sqrt[3]3$, and each $\zeta_3^l\sqrt[3]{12}$ to $\zeta_3^{lc+2a+b}\sqrt[3]{12}$, and we need that $jc+a\ne j$, $kc+b\ne k$ and $lc+2a+b\ne l$ for any $j,k,l$, 
where the inequalities are all modulo 3. 

There are two cases, depending on $c$. First, if $c=-1$, the first condition says that $a-j\ne j$, or $a\ne 2j$, but this is impossible to 
satisfy simultaneously for all $j$. Second, if $c=1$, what we need is that $a+j\ne j$, $k+b\ne k$ and $l+2a+b\ne l$, that is, $a$, $b$ and $2a+b$ should all be different from 0, 
and the last requirement is equivalent to asking that $a\ne b$. There are precisely two members of $G$ that satisfy all these conditions, namely $\pi_{1,-1,1}$ and $\pi_{-1,1,1}$. 

This means that the set of strong representatives of order 3 has natural density $2/18=1/9$.
\end{proof}

\begin{remark}
Note that the value $1/9$ was to be expected: with notation as in the proof above, since  $|G|=18$, if the set of strong representatives of order 3 was to have a natural density 
$r$ at all, $r$ would have to be a rational number of the form $a/18$ for some $a\in[18]$, and table \ref{table:con-10} strongly suggests that, indeed, $r=1/9=0.\overline1$. 

Note also that our argument in particular established the existence of strong representatives of order 3 (although, of course, there are much simpler proofs of this assertion); the 
point is that this is a benefit that does not automatically generalize, as there are primes $n$ for which there are no strong representatives of order $n$, such as $n=211$, see 
table \ref{table:groupless} (that is, we cannot remove in item (1) of question \ref{qu:natdensity} the hypothesis that strong representatives of order $n$ exist).
\end{remark}

The same approach works in general: given $n$, that a prime $p=nk+1$ is a strong representative of order $n$ means that $i^k\not\equiv j^k\pmod p$ whenever $i<j$ 
are in $[n]$, that is, $(j/i)^k\not\equiv 1\pmod p$, which means that $j\cdot i^{n-1}$ is not an $n^{\mathrm{th}}$ power modulo $p$, or, what is the same, that the polynomial 
$x^n-j\cdot i^{n-1}$ has no roots modulo $p$. This translates, just as in the example above, into a condition on a conjugacy class in the Galois group of certain Galois extension 
of $\mathbb Q$, namely $L=\mathbb Q(\zeta_n,\root n \of j:j\in[n]\cap\mathbb P)$, the splitting field over $\mathbb Q$ of the polynomial 
 $$ f(x)=\prod_{1\le i<j\le n}(x^n-j\cdot i^{n-1}), $$ 
where $\zeta_n$ denotes a primitive $n^{\mathrm{th}}$ root of unity. The condition is in general messier than in the case $n=3$, since many different factorization patterns may 
occur for $f$ in $\mathbb F_p[x]$ that are compatible with $f$ not having roots in $\mathbb F_p$. 

As before, $L$ is the compositum of the fields $\mathbb Q(\sqrt[n] j)$, $\mathbb Q(\zeta_n)$, for $j\in[n]\cap\mathbb P$, so any integral prime that ramifies in $L$ divides $n!$, 
and in particular any strong representative of order $n$ is unramified in $L$. If $n$ is prime, it follows from lemma \ref{lemma:allnpowers} that any $p$ for which $f$ has no roots 
modulo $p$ is automatically congruent to 1 modulo $n$. We also expect that, if $n$ itself is prime, the Galois group of the extension should be given by 
 $$ G=\operatorname{Gal}(L/\mathbb Q)\cong(\mathbb Z/n\mathbb Z)^*\ltimes(\mathbb Z/n\mathbb Z)^{\pi(n)}, $$
where $(\mathbb Z/n\mathbb Z)^*$ denotes the group of units modulo $n$, a group of order $\phi(n)=n-1$. We have verified this by direct computation for small values of $n$. If 
$n$ is not prime, however, there may be unexpected relations between the various $\sqrt[n]j$ and $\zeta_n$. For instance, for $n=8$ and $\zeta_8=e^{2\pi \sqrt{-1}/8}$, we have 
$\zeta_8+\zeta_8^{-1}=\sqrt2$ or, if $n=2p$ where $p$ is prime and $p\equiv 1\pmod 4$, then $\sqrt p\in\mathbb Q(\zeta_n)$. Thus in general the Galois group may 
be a proper subgroup of the semidirect product indicated above (determining whether this is indeed the case involves Kummer theory). Still, we can ensure that the primes we 
consider are congruent to 1 modulo $n$: note that there is a natural projection from the Galois group of the extension to $\operatorname{Gal}(\mathbb Q(\zeta_n)/\mathbb Q)$, 
and the congruence condition means that this projection maps the automorphisms we are interested in counting to the identity of this smaller Galois group.

Via Chebotar\"ev's theorem (or, rather, Frobenius's theorem), if the requirements imposed on the automorphisms in $G$ are at all satisfiable, then a positive proportion of all primes
are strong representatives modulo $n$. But that the requirements are satisfiable is precisely the claim that there is at least one such prime. We have proved:

\begin{theorem} \label{thm:positivenatural}
If there is a strong representative of order $n$, then the set of such primes has positive natural density in the set of all primes.
\end{theorem}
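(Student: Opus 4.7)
The plan is to apply the Chebotar\"ev density theorem (theorem \ref{thm:chebotarev}), in its factorization-pattern reformulation due to Frobenius, to the Galois extension $L/\mathbb{Q}$, where $L = \mathbb{Q}(\zeta_n, \sqrt[n]{m} : m \in [n])$ is the splitting field over $\mathbb{Q}$ of
$$ f(x) = \prod_{1 \le i < j \le n}\bigl(x^n - j\, i^{n-1}\bigr). $$
This is exactly the setup assembled in the paragraphs immediately preceding the theorem, and the proof reduces to stitching those ingredients together.

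First, I would restate strong representativeness as a splitting condition on $f$ modulo $p$: as recorded in the excerpt, a prime $p = nk+1$ is a strong representative of order $n$ if and only if $j\,i^{n-1}$ fails to be an $n$-th power modulo $p$ for every $1 \le i < j \le n$, and this in turn is equivalent to the single statement that $f$ has no root in $\mathbb{F}_p$. Second, I would note that any prime ramifying in $L$ must ramify in one of the fields $\mathbb{Q}(\sqrt[n]{m})$ for $m\in[n]$ or in $\mathbb{Q}(\zeta_n)$, hence must divide $n!$. In particular, every strong representative of order $n$ (being automatically larger than $n$) is unramified in $L$.

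Next, by Frobenius's theorem as recalled in the excerpt, the set of unramified primes $p$ such that $f \bmod p$ has no linear factor in $\mathbb{F}_p[x]$ has natural density in $\mathbb{P}$ equal to
$$ \delta_n \coloneqq \frac{|\{\sigma \in G : \sigma \text{ fixes no root of } f\}|}{|G|}, $$
where $G = \operatorname{Gal}(L/\mathbb{Q})$ acts faithfully as a permutation group on the roots of $f$. Up to the finitely many primes dividing $n!$, this density agrees with the natural density of the set of strong representatives of order $n$, so it suffices to show that $\delta_n > 0$.

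Finally, the hypothesis supplies one strong representative $p_0$ of order $n$. Since $p_0$ is unramified in $L$, its Frobenius class $[\sigma_{p_0}] \subseteq G$ is well defined, and by Frobenius's theorem every element of this conjugacy class acts on the roots of $f$ with the same cycle pattern as the factorization of $f$ modulo $p_0$; in particular each such element has no fixed point among the roots. Hence the numerator in $\delta_n$ is at least $|[\sigma_{p_0}]| \ge 1$, so $\delta_n \ge 1/|G| > 0$, as required. The only genuinely delicate step is the first one: the translation of the system of noncoincidences $(j/i)^k \not\equiv 1 \pmod{p}$ into the single statement that $f$ has no root in $\mathbb{F}_p$, and this translation has already been carried out in the discussion preceding the theorem; the remainder is a direct invocation of Chebotar\"ev.
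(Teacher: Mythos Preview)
Your proof is correct and follows essentially the same route as the paper: set up the splitting field $L$ of $f$, invoke Frobenius's form of Chebotar\"ev, and use the given strong representative $p_0$ to exhibit a conjugacy class of fixed-point-free elements of $G$. One sentence to tighten: the claim that $\delta_n$ \emph{equals} the density of strong representatives presupposes that, for unramified $p$, ``$f$ has no root mod $p$'' already forces $p\equiv 1\pmod n$, which the paper only asserts when $n$ is prime and otherwise handles by imposing the cyclotomic condition separately; this is harmless here, since your final argument via the class $[\sigma_{p_0}]$ already yields primes that automatically satisfy $p\equiv 1\pmod n$ (conjugate elements share the same image in the abelian quotient $\operatorname{Gal}(\mathbb{Q}(\zeta_n)/\mathbb{Q})$), so the conclusion stands.
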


Given an $n$-satisfactory coloring $c$, this analysis can be further extended to capture in addition that $p$ is a strong representative for $c$. This is slightly more delicate, but 
the point is that if $c$ admits a strong representative, then it is \emph{multiplicative}, in the sense of section \ref{sec:multiplicative}, and such a $c$ is completely determined by 
the tuple of values $(c(ij):i\le j\in[n])$, see for instance corollary \ref{cor:finitelymultiplicative}. This translates into yet a further condition on a conjugacy class, and the problem of 
computing the natural densities becomes a purely group-theoretic question.

Note that as long as no hidden relations are present (in particular, we expect this to be the case for $n$ prime), the extension has degree $[L:\mathbb Q]=n^{\pi(n)}\cdot\phi(n)$. 
For instance, for $n=2$, the extension is of degree 2, and thus table \ref{table:con-10} suggests the density is $1/2$, as we indeed verified in fact \ref{fact:den-2}. Table 
\ref{table:suggested} shows the degree of the corresponding extension for each prime $n$ with $2\le n\le 10$ and the density that table \ref{table:con-10} suggests accordingly. 
For $n=4$ no additional relations occur either. In that case, the extension has degree $4^2\cdot 2=32$, and the expected density is $1/16=0.0625$.\footnote{The discussion 
here incorporates suggestions of Felipe Voloch to the first-named author at \href{https://mathoverflow.net/q/141993}{https://mathoverflow.net/q/141993} and through private 
communication. Thanks are also due to David E Speyer.}
 
\begin{table}
\begin{tabular}{|c||c|c|}
\hline 
$n$&degree&expected density\\
\hline\hline 
2&$2^1\cdot 1=2$&$1/2=0.5$\\
\hline
3&$3^2\cdot 2=18$&$1/9=0.\overline1$\\
\hline
5&$5^3\cdot4=500$&$2/125=0.016$\\
\hline
7&$7^4\cdot 6=14406$&$6/2401=0.00249\dots$\\
\hline
\end{tabular}
\vspace{3mm}
\caption{Degree of $\mathbb Q(\zeta_n,\sqrt[n] j:j\in[n]\cap\mathbb P):\mathbb Q$ and expected density of the set of strong representatives of order $n$ in the set of all 
primes for $n$ prime below 10.}
\vspace{-5mm}
\label{table:suggested}
\end{table}

\begin{remark}
Chebotar\"ev's theorem admits an effective version. It follows that the expected densities can be verified not just by a combinatorial analysis of the relevant Galois groups, but 
simply by determining the sizes of these groups, and extending the entries in table \ref{table:con-10} to a sufficiently large number, allowing us to compare the results with 
rationals of the form $a/m$ where $m$ is the size of the corresponding group. 
\end{remark}

\subsection{Asymptotics of coincidences} \label{subs:asymptotics}

Fix $k>2$. For primes $p=kn+1$ sufficiently large, theorem \ref{thm:k-representative} shows the existence of coincidences 
 $$ a^k\equiv b^k\pmod p $$
with $1\le a<b\le n$. We close this section by showing that, in fact, the number of such coincidences is asymptotically proportional to $p$.

The result is due to Noam D. Elkies\footnote{\label{footnote:elkies}See \href{https://mathoverflow.net/q/78270/}{https://mathoverflow.net/q/78270/}}, and what follows is closely 
based on his argument.

\begin{theorem}[Elkies] \label{thm:elkies}
For $k>2$, the number of coincidences $a^k\equiv b^k\pmod p$ for $p$ of the form $kn+1$ and sufficiently large, and distinct $a,b\in[n]$ is 
 $$ C_k p + O_k(p^{1-\epsilon(k)}), $$ 
where 
 $$ C_k = \begin{cases}
 \vspace{1mm}\displaystyle \frac{k - 1}{2k^2} & \mbox{ if $k$ is odd, and}\\ 
 \displaystyle \frac{k - 2}{2k^2} & \mbox{ if $k$ is even,}
 \end{cases}$$ 
and $\epsilon(k) = 1/\phi(k)$. 
\end{theorem}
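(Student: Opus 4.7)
Let $\mu_k = \{\zeta \in \mathbb{F}_p^* : \zeta^k = 1\}$, a cyclic subgroup of order $k$ since $k \mid p - 1$. Distinct $a, b \in [n]$ satisfy $a^k \equiv b^k \pmod{p}$ if and only if $b/a \in \mu_k \setminus \{1\}$, that is, $b \equiv \zeta a \pmod{p}$ for some nontrivial $\zeta \in \mu_k$. Writing $\bar\zeta$ for the representative of $\zeta$ in $\{1,\dots,p-1\}$, set
\[
N(\zeta) = |\{a \in [n] : \bar\zeta a \bmod p \in [n]\}|.
\]
The number of ordered coincident pairs $(a,b)$ with $a \ne b$ is $M := \sum_{\zeta \in \mu_k \setminus \{1\}} N(\zeta)$, and the quantity of interest is $M/2$.

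To extract the main term, first observe that when $k$ is even, $-1 \in \mu_k$ has representative $p-1$, so $(p-1)a \bmod p = p - a \in [(k-1)n+1,\,kn]$ for $a \in [n]$, disjoint from $[n]$ whenever $k \ge 2$; hence $N(-1) = 0$. For any other nontrivial $\zeta$, multiplication by $\bar\zeta$ is a bijection of $\mathbb{F}_p^*$, and equidistribution predicts $N(\zeta) \approx n \cdot (n/p) = n/k$. Summing this prediction over $\mu_k \setminus \{1\}$ (respectively $\mu_k \setminus \{1,-1\}$ when $k$ is even) and using $n = (p-1)/k$, the main term of $M/2$ is $(k-1)(p-1)/(2k^2)$ if $k$ is odd and $(k-2)(p-1)/(2k^2)$ if $k$ is even, matching $C_k p$ up to a constant absorbed into the error.

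The analytic core of the proof is the uniform bound $|N(\zeta) - n/k| = O_k(p^{1 - 1/\phi(k)})$ for $\zeta \ne 1, -1$. My plan is a geometry-of-numbers argument. If $\zeta$ has exact order $d \mid k$, then $\bar\zeta$ arises as the reduction of a primitive $d$-th root of unity through the isomorphism $\mathbb{Z}[\zeta_d]/\mathfrak{p} \cong \mathbb{F}_p$ for some prime ideal $\mathfrak{p}$ above $p$. Applying Minkowski's convex body theorem to the lattice $\mathfrak{p}$ inside $\mathbb{Z}[\zeta_d]$---a cyclotomic generalization of Thue's lemma---yields a representation $\bar\zeta \equiv A/B \pmod{p}$ with $A, B \in \mathbb{Z}[\zeta_d]$ of archimedean size $O(p^{1/\phi(d)}) \le O(p^{1/\phi(k)})$. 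This effectively expresses $\bar\zeta/p$ as a fraction of small height, so the rotation $a \mapsto \{a\bar\zeta/p\}$ over $a \in [n]$ equidistributes modulo $1$ with discrepancy $O(p^{-1/\phi(k)})$, yielding the required pointwise error in $N(\zeta)$.

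The chief obstacle is this last step: extracting, uniformly in $\zeta$ and $p$, a rational representative of $\bar\zeta/p$ with controlled height via geometry of numbers, and converting it into the claimed discrepancy bound with precisely the exponent $1/\phi(k)$. Everything else---the reformulation in terms of $N(\zeta)$, the vanishing of $N(-1)$ for even $k$, and the assembly of the leading constants $C_k$---is elementary bookkeeping whose outcome is forced by the setup.
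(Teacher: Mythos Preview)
Your setup and bookkeeping match the paper: decompose over the nontrivial $k$th roots of unity $\zeta$, note $N(-1)=0$ when $k$ is even, and recognize $N(\zeta)$ as the number of points of the lattice $L_\zeta=\{(a,b)\in\mathbb Z^2:b\equiv\bar\zeta a\pmod p\}$ in the box $[1,n]^2$, with main term $n^2/p\sim p/k^2$.

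The gap is the error bound, where your Minkowski argument points in the wrong direction. To control the lattice-point error (equivalently, the discrepancy of $a\mapsto\{a\bar\zeta/p\}$) you need a \emph{lower} bound on the shortest nonzero vector of $L_\zeta$; Minkowski supplies \emph{upper} bounds. Your cyclotomic Thue produces a small $\alpha\in\mathfrak p\subset\mathbb Z[\zeta_d]$, which reduces to a relation $\sum_i c_i\bar\zeta^i\equiv 0\pmod p$ of degree $\phi(d)-1$ with $|c_i|\ll p^{1/\phi(d)}$ --- not linear once $\phi(d)>2$, and in any case a short vector in $L_\zeta$ would witness a \emph{good} rational approximation to $\bar\zeta/p$ and hence \emph{large} discrepancy, not small. (Also, $d\mid k$ gives $\phi(d)\le\phi(k)$, so $p^{1/\phi(d)}\ge p^{1/\phi(k)}$, the reverse of the inequality you wrote.) Your instinct that the cyclotomic structure controls the exponent is correct, but the right tool is divisibility, not geometry of numbers: any nonzero $(a,b)\in L_\zeta$ has $p\mid a^k-b^k=\prod_{d\mid k}\Phi_d(a,b)$, and since $\zeta\ne\pm1$ forces each homogeneous cyclotomic factor to be a nonzero integer, $p\le|\Phi_{d}(a,b)|\ll\max(|a|,|b|)^{\phi(k)}$ for some $d\mid k$, whence $\lambda_1(L_\zeta)=\Omega(p^{1/\phi(k)})$. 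The standard lattice-point estimate (error $\ll$ side length divided by $\lambda_1$) then gives $N(\zeta)=p/k^2+O_k(p^{1-1/\phi(k)})$, and summing over $\zeta$ finishes. This is exactly the paper's argument.
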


\begin{proof}
First, for $a,b$ nonzero and distinct modulo $p$, that 
 $$ a^k \equiv b^k\pmod p$$
is equivalent to saying that $b\equiv ma \pmod p$ where $m\ne 1$ is a $k^{\mathrm{th}}$ root of unity: 
$m^k\equiv 1\pmod p$. Since we are only interested in the case where $a,b\in[n]$, for $k$ even we further exclude $m=-1$. Fix $m$, and consider the nonzero vectors $(a,b)$ 
in $\mathbb Z^2$ defined by the relation $b \equiv ma \pmod p$, $a,b\in[n]$. Note that for any such vector, $p\mid a^k-b^k$, and the latter factors into homogeneous 
polynomials in $a, b$ of degree at most $\phi(k)$, none of which is zero, and therefore the length of the vector is $\Omega(p^{\epsilon(k)})$. 

This means that the solutions to the equation $b \equiv ma \pmod p$ with $a, b \in [n]$ are the lattice points in the square with sides parallel to the axis of side length 
$n\approx p/k$ and bottom left corner at the origin. This number can be readily estimated as $p^{-1}(p/ k)^2 = p/ k^2$, with an error bound proportional to the fraction 
 $$ \frac{\text{side length}}{\text{length of smallest such vector}}=O(p^{1-\epsilon(k)}). $$ 
 
The total of such coincidences is now obtained by summing these estimates over all $k-1$ or $k-2$ possible values of $m$, and then dividing by 2 (since each coincidence has 
been counted twice in the above, as both $(a,b)$ and $(b,a)$).
\end{proof}

The argument can be strengthened to estimate for $k,n,p$ as before the proportion of distinct $k^{\mathrm{th}}$ powers of members of $n$. A quick computation verifies that the 
fractions 
 $$ \frac{|\{(i^k\bmod p) :i\in[n]\}|} n $$ 
stay rather close to $2/3$ for $k=3$, and to $84/125$ for $k=5$. For instance, for $k = 3$, $n = 387,642$, and $p = 1,162,927$, the fraction is 
 $$ 258429/387642= 0.6666692464\dots, $$
while for $k=5$, $n=35,804$ and $p=179,021$, the fraction is 
 $$ 24065/35804=0.6721316054\dots\,.$$

The result, also due to Elkies, shows that these values are to be expected.

\begin{theorem}[Elkies] \label{thm:elkies2}
For $k>2$ and $p$ of the form $kn+1$ and sufficiently large, the fraction $|\{(i^k\bmod p) :i\in[n]\}|/n$ of distinct $k^{\mathrm{th}}$ powers of members of $[n]$ is asymptotic to 
$1-((k -1)^k + 1) /k^k$.
\end{theorem}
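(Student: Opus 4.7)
The plan is to extend the inclusion-exclusion underlying theorem \ref{thm:elkies} to the full group of $k$-th roots of unity modulo $p$. Let $H = \{m_0 = 1, m_1, \ldots, m_{k-1}\} \subset (\mathbb{Z}/p\mathbb{Z})^*$ be that group; the $k$-th power map on $(\mathbb{Z}/p\mathbb{Z})^*$ has fibers equal to cosets of $H$, so the $n$ nonzero $k$-th power residues correspond bijectively to the cosets of $H$, and
$$|\{i^k \bmod p : i \in [n]\}| = n - N_0,$$
where $N_0$ counts the cosets entirely contained in $A^c = \{n+1, \ldots, p-1\}$, with $A = [1, n]$.

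Since each coset is represented by exactly $k$ values of $x$, inclusion-exclusion gives
$$k N_0 = \sum_{x \in (\mathbb{Z}/p\mathbb{Z})^*} \prod_{j=0}^{k-1} \mathbf{1}[m_j x \bmod p \in A^c] = \sum_{S \subseteq H} (-1)^{|S|} \#\bigl\{x : m x \bmod p \in A \text{ for all } m \in S\bigr\}.$$
For each $S$ of size $s$, the inner cardinality counts $x \in [1, p-1]$ lying simultaneously in $s$ multiplicatively-twisted copies of $A$. By the same Weyl-type equidistribution/lattice bounds that power theorem \ref{thm:elkies}, this count is $(p-1)/k^s + O_k(p^{1 - \epsilon(k)})$ unless a linear relation on the chosen roots of unity forces it to vanish exactly.

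The decisive exception is $S = H$. Since $\sum_{m \in H} m = 0$ for $k > 1$, any $x$ placing every $m x \bmod p$ in $[1, n]$ would force the positive integer $\sum_{m \in H}(m x \bmod p)$ to be a multiple of $p$; but this sum lies in $[k, k n] = [k, p-1]$, which contains no positive multiple of $p$. Hence the $S = H$ contribution is exactly $0$, rather than the generic $(-1)^k (p-1)/k^k$. Folding this correction into the inclusion-exclusion and using $\sum_{s = 0}^{k} (-1)^s \binom{k}{s} k^{-s} = ((k-1)/k)^k$,
$$k N_0 = (p-1) \cdot \frac{(k-1)^k - (-1)^k}{k^k} + O_k\bigl(p^{1 - \epsilon(k)}\bigr).$$
Dividing by $kn = p - 1$ gives $N_0/n \to ((k-1)^k - (-1)^k)/k^k$, which for odd $k$ equals $((k-1)^k + 1)/k^k$, yielding the claimed asymptotic $1 - ((k-1)^k + 1)/k^k$.

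The principal obstacles are twofold. First, one must establish the equidistribution estimate uniformly over all $S$ with a power-saving error matching $O_k(p^{1 - \epsilon(k)})$; this is the same geometry-of-numbers argument used for theorem \ref{thm:elkies}, applied now to intersections of $|S|$ twisted intervals rather than a single pair. Second, one has to enumerate and handle all degenerate subsets: for prime odd $k$ only $S = H$ is degenerate, but for composite or even $k$ additional subsets $S \subsetneq H$ with $\sum_{m \in S} m \equiv 0 \pmod p$ (for instance, $\{m, -m\}$ pairs when $k$ is even, or proper sub-root-of-unity subgroups when a prime divides $k$) also force inner counts of zero and must be combined carefully to recover the formula uniformly across all $k > 2$.
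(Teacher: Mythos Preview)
Your proposal is correct and follows essentially the same approach as the paper's sketch: both arguments apply inclusion--exclusion over subsets of the group $H$ of $k^{\mathrm{th}}$ roots of unity, invoke the lattice/equidistribution estimate from theorem~\ref{thm:elkies} to get the generic term $(p-1)/k^{|S|}$, and single out the degeneracy at $S=H$ (equivalently, the paper's observation that there are no $k$-element coincident subsets) coming from $\sum_{m\in H}m=0$. The only cosmetic difference is bookkeeping---you count cosets entirely missing $[n]$ while the paper counts $j$-element subsets of $[n]$ with the same $k^{\mathrm{th}}$ power---and both treatments handle odd prime $k$ cleanly while flagging, without fully resolving, the extra dependencies that arise for composite or even $k$.
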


In particular, for $k=3$ the fraction approaches $1-\frac{2^3+1}{3^3}=2/3$ and for $k = 5$ it approaches $1-\frac{4^5+1}{5^5}=84 /125$, as expected, and, as 
$k\to\infty$, the proportion of $k^{\mathrm{th}}$ powers with small $k^{\mathrm{th}}$ roots approaches $1 - (1 /e)$.

As Elkies remarks (at the post linked to in footnote \ref{footnote:elkies}), the same approach as for the previous theorem allows one to estimate the number of coincidental triples, 
or quadruples, etc. Care must be taken ``with subsets of the $k^{\mathrm{th}}$ roots of unity that have integer dependencies, but at least when $k$ is prime there are no 
dependencies except that all $k$ of them sum to zero''. Elkies further indicates that for $j<k$ the number of $j$-element subsets of $n$ with the same $k^{\mathrm{th}}$ power is 
asymptotic to  
 $$ \binom kj p /k^{j+1}, $$ 
while there are no such subsets with $j = k$ because the sum of all $k$ solutions of $a^k \equiv c \pmod p$ vanishes (for any $c$). By an inclusion-exclusion 
argument one then obtains the estimate indicated in theorem \ref{thm:elkies2}.

\section{Multiplicative colorings} \label{sec:multiplicative}

\subsection{Multiplicativity} \label{subs:multiplicative}

As shown in the previous section, any $n$-satisfactory coloring for $n\le 5$ admits strong representatives. Colorings with strong representatives are very special: fix some $n$, 
and suppose that $c$ is a satisfactory coloring of $K_n$ admitting a strong representative $p=kn+1$. Let 
 $$ G=\{(a^k\bmod p): a\in[n]\}\le(\mathbb Z/p\mathbb Z)^*. $$ 
The group $G$ is isomorphic to $\mathbb Z/n\mathbb Z$. The map $h\!:K_n\to G$ given by 
 $$ h(a)=(a^k\bmod p) $$
satisfies
 $$ h(ab) = h(a) \cdot h(b) $$
for any $a,b \in K_n$, where $ab$ is the usual product of $a$ and $b$ and $h(a)\cdot h(b)$ is the
product in $G$. We generalize this setting in the following definition.

\begin{definition} \label{def:multiplicative}
A satisfactory coloring $c$ of $K_n$ is \emph{multiplicative} if and only if there exists a group $(G,\cdot)$ of order $n$ and a bijection $\varphi\!:[n]\to G$ such that, thinking of 
$c$ as a map $c\!:K_n\to[n]$ with $c(i)=i$ for all $i\in[n]$, and letting $h=\varphi\circ c$, we have that
\begin{equation} \label{eq:mult}
h(ab) = h(a) \cdot h(b) 
\end{equation}
for all $a, b \in K_n$. In this case, we say that $c$ is a $G$-coloring.

Multiplicative colorings of $\mathbb Z^+$ are defined the same way, only requiring that the domain of $c$ be $\mathbb Z^+$ and that equation \ref{eq:mult} holds for all positive 
integers.
\end{definition}

The usefulness of the notion is stated explicitly in theorem \ref{thm:partialisomorphism} below, the point is that to describe a multiplicative coloring it is enough to describe what we 
call a partial isomorphism, see \S\,\ref{subs:partialisomorphism}, which reduces the problem of searching for a multiplicative coloring to a finite question. 

The following observation should be immediate.

\begin{fact}
If a satisfactory coloring of $K_n$ is both a $G_1$-coloring and a $G_2$-coloring, then $G_1 \cong G_2$.
\end{fact}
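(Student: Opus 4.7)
The plan is to construct an explicit isomorphism $\psi\colon G_1\to G_2$ built directly from the two bijections $\varphi_1,\varphi_2\colon[n]\to G_i$ witnessing multiplicativity. Since the same coloring $c$ underlies both structures, the maps $h_i=\varphi_i\circ c\colon K_n\to G_i$ have the same fibers (namely the color classes of $c$), so it is natural to define
\[
\psi=\varphi_2\circ\varphi_1^{-1}\colon G_1\to G_2.
\]
As a composition of bijections, $\psi$ is clearly a bijection, so only the homomorphism property needs work.

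First I would record that each $h_i$ is surjective: by the standing convention $c$ restricts to the identity on $[n]$, hence $c$ is surjective onto $[n]$, and composing with the bijection $\varphi_i$ yields a surjection onto $G_i$. Then, given $g,g'\in G_1$, I would pick any $a,b\in K_n$ with $h_1(a)=g$ and $h_1(b)=g'$, and compute
\[
\psi(g\cdot g')=\psi(h_1(a)\cdot h_1(b))=\psi(h_1(ab))=h_2(ab)=h_2(a)\cdot h_2(b)=\psi(g)\cdot\psi(g'),
\]
where the middle equalities use that both $h_1$ and $h_2$ are multiplicative and that $\psi\circ h_1=h_2$ (which is exactly the definition of $\psi$ after unfolding $h_i=\varphi_i\circ c$).

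The only subtle point, and thus the main thing to verify carefully, is that $\psi$ is well defined on $G_1$ independently of the chosen representatives. This follows from the observation that $h_1(a)=h_1(b)$ forces $\varphi_1(c(a))=\varphi_1(c(b))$, hence $c(a)=c(b)$ since $\varphi_1$ is injective, which in turn yields $h_2(a)=h_2(b)$. Equivalently, $\psi$ is defined without reference to $K_n$ at all, purely as the composition of bijections $\varphi_2\circ\varphi_1^{-1}$; the computation above then shows this bijection transports the group law of $G_1$ to that of $G_2$, giving $G_1\cong G_2$.
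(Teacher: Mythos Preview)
Your proof is correct. The paper does not actually supply a proof of this fact; it simply introduces it with ``The following observation should be immediate'' and moves on. Your argument---defining $\psi=\varphi_2\circ\varphi_1^{-1}$ and checking that $\psi\circ h_1=h_2$ transports the multiplicativity of $h_1$ to that of $h_2$---is exactly the natural verification the paper is leaving to the reader. One minor remark: your discussion of well-definedness is redundant, as you yourself note at the end; since $\psi$ is defined purely as a composition of bijections $[n]\to G_i$, there is no choice of representative involved, and the surjectivity of $h_1$ is the only thing needed to make the homomorphism computation go through.
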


Note that if $G$ is as in definition \ref{def:multiplicative}, then $G$ is abelian, and consequently we adopt additive notation in what follows, so $h$ is a kind of discrete logarithm 
but, rather than referring to it this way, we also say that $h$ is multiplicative.

\begin{definition} 
If $(G, +)$ is an abelian group (of order $n$) and the map $h\!:K_n \to G$ satisfies that $h(ab) = h(a) + h(b)$ for any $a, b \in K_n$, we say that $h$ is \emph{multiplicative}.
\end{definition}

\begin{corollary} \label{cor:finitelymultiplicative}
For any $n$, there are only finitely many multiplicative colorings of $K_n$.
\end{corollary}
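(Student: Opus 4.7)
The plan is a straightforward counting argument based on the observation that a multiplicative coloring is determined by very little data. Suppose $c$ is a multiplicative coloring of $K_n$ witnessed by a group $(G,+)$ of order $n$ and a bijection $\varphi\!:[n]\to G$. Transporting the group structure along $\varphi$ yields a group operation $\oplus$ on the set $[n]$ itself, namely $i\oplus j=\varphi^{-1}(\varphi(i)+\varphi(j))$, and the multiplicativity of $h=\varphi\circ c$ translates into the identity $c(ab)=c(a)\oplus c(b)$ for all $a,b\in K_n$. So a multiplicative coloring may be identified with a pair consisting of an abelian group operation $\oplus$ on $[n]$ together with a map $c\!:K_n\to[n]$ satisfying this identity and the normalization $c(i)=i$ for $i\in[n]$.

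Now, $K_n$ is generated as a commutative monoid by the primes $p_1<\dots<p_{\pi(n)}$ that are at most $n$. Given $\oplus$, for any $m=\prod_i p_i^{\alpha_i}\in K_n$ the value $c(m)$ is forced by repeated application of multiplicativity to equal
\[ c(m) \;=\; \bigoplus_{i} \alpha_i\cdot c(p_i) \;=\; \bigoplus_{i} \alpha_i\cdot p_i, \]
where $\alpha\cdot p$ denotes the $\alpha$-fold $\oplus$-sum of $p$ with itself, and where in the second equality we used the normalization $c(p_i)=p_i$ (each $p_i$ lies in $[n]$). Hence, once $\oplus$ is fixed, the coloring $c$ is uniquely determined.

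It remains to observe that there are only finitely many abelian group operations $\oplus$ on the finite set $[n]$: each such operation is a function $[n]\times[n]\to[n]$, and there are only $n^{n^2}$ such functions, so certainly finitely many. (One can of course be much more precise by bounding the count by the product of the number of isomorphism classes of abelian groups of order $n$ and $n!$, but no sharp estimate is needed here.) Combining the last two paragraphs, the number of multiplicative $n$-satisfactory colorings of $K_n$ is bounded above by the number of abelian group operations on $[n]$, and is therefore finite. No real obstacle arises; the only moderately subtle point is the passage from an external group $G$ to an internal operation $\oplus$ on $[n]$, which is pure bookkeeping made possible by the convention $c(i)=i$ for $i\in[n]$.
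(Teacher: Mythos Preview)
Your proof is correct and follows essentially the same approach as the paper: transport the group structure along $\varphi$ to an operation $\oplus$ on $[n]$, observe that multiplicativity forces $c(\prod_i p_i^{\alpha_i})=\bigoplus_i \alpha_i\cdot p_i$, and conclude from the finiteness of group operations on $[n]$. The paper's proof is organized identically, with only cosmetic differences in presentation.
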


\begin{proof}
Suppose $c$ is multiplicative as witnessed by $(G, +), \varphi$. Let $h =\varphi\circ  c$, where as before, $c$ is interpreted as a map $c\!:K_n\to[n]$ with $c(i)=i$ for $i\in[n]$, so 
$h(ab) = h(a)+h(b)$ for all $a, b \in K_n$. Note that this induces a group structure $\oplus$ on $[n]$ isomorphic to $G$ because $c$ is the identity on $[n]$, so if $a \in[n]$, then 
 $$ h(a) = \varphi(c(a)) = \varphi(a), $$ 
and we are setting $a \oplus b = d$ for $a,b,d \in [n]$ if and only if $\varphi(d) = \varphi(a) + \varphi(b)$. By identifying $(G, +)$ with 
$([n], \oplus)$, it follows that we may assume that $\varphi$ is the identity so $h = c$. But now we see that $([n],\oplus)$ completely determines $c$. In effect, if 
$p_1<\dots<p_{\pi(n)}$ are the primes less than or equal to $n$ and $s=\pi(n)$, then the multiplicity requirement gives us
\begin{equation} \label{eq:multformula}
c(p_1^{\alpha_1}\cdots p_s^{\alpha_s} ) = \alpha_1c(p_1) \oplus\cdots\oplus\alpha_sc(p_s),
\end{equation}
where $\alpha_i c(p_i)$ is the result of adding $c(p_i)$ to itself $\alpha_i$ times in $([n],\oplus)$. 

Since there are only finitely many group structures on $[n]$, we are done. In fact, all these group structures can be efficiently identified, from the classification theorem for finite 
abelian groups.
\end{proof}

\begin{remark} \label{rmk:periodic}
Note that whenever an $n$-satisfactory coloring is multiplicative as witnessed by a group $(G,\cdot)$, the corresponding tiling of $\mathbb O_n$ by unit blocks of $n$ colors is 
periodic, which explains the patterns observed in figures \ref{figure:3tiling-b} and \ref{figure:4tiling-b}. Indeed, this periodicity is simply a consequence of the fact that $x^n$ is the 
identity of $G$ for any $x\in G$. In turn, this also gives periodicity of the tiling by $T_n$, see \S\,\ref{subs:translation}.
\end{remark}

In what follows, given an abelian group $(G, \oplus)$, we will denote $\alpha$-fold sums of the form $\underbrace{g\oplus\cdots\oplus g}_{\text{$\alpha$ times}}$ by 
$g^{\oplus\alpha}$ rather than $\alpha g$ as above. We extend the notation to include negative values of $\alpha$ (including $\alpha=-1$).

\begin{remark}
Note that not every abelian group structure on $[n]$ gives rise to a multiplicative coloring. For example, if $n = 4$, then $\oplus$ is given by 
 $$ a\oplus b = ab \pmod 5 $$ 
and, in 
particular, $([4],\oplus)$ is isomorphic to $\mathbb Z/4\mathbb Z$ and not to $\mathbb Z/2\mathbb Z \times \mathbb Z/2\mathbb Z$. 
\end{remark}

The case when $G \cong \mathbb Z/n\mathbb Z$, as in the case of a strong representation, deserves special attention. 

\begin{question} \label{q:zncoloringrepresentation}
Does every $\mathbb Z/n\mathbb Z$-coloring admit a strong representation?
\end{question}

This is a good point to reiterate what we mentioned in remark \ref{rmk:variety}. Perhaps surprisingly, the answer to question \ref{q:zncoloringrepresentation} is negative, as we 
show below, see the analysis of multiplicative 8-colorings in \S\,\ref{sub:nle8} and in particular the coloring described in table \ref{table:z81537}. Nevertheless, we can answer 
the question affirmatively at the cost of replacing strong representations with a weak variant, see remark \ref{rem:weak}. Although so far our examples and results have only 
exhibited multiplicative colorings, it should be pointed out that not every satisfactory coloring is multiplicative, see \S\,\ref{subs:n6} and \S\,\ref{subs:n8} for dramatic examples. 
Similarly, not every multiplicative coloring is a $\mathbb Z/n\mathbb Z$-coloring. Examples are presented in \S\,\ref{sub:nle8}, in particular see table \ref{table:3751} for $a=1$. 
Nonmultiplicative colorings seem more difficult to analyze, and we do not understand them well. In what follows, we restrict our attention to the multiplicative case except for 
\S\,\ref{subs:n6} and \S\,\ref{subs:n8}.

\subsection{Partial $G$-isomorphisms} \label{subs:partialisomorphism}

The following notion has appeared before in the literature, in particular in connection with Graham's conjecture, and goes back at least to Galovich and Stein 
\cite{GalovichStein81}, who talk of KM logarithms, for Kummer and Mills. In MathOverflow, Ewan 
Delanoy\footnote{See \href{https://mathoverflow.net/q/26358/}{https://mathoverflow.net/q/26358/}} considered the case 
$G = \mathbb Z/n\mathbb Z$. Though not identical, it is closely related to the concept of Freiman homomorphism in additive combinatorics, see \cite[definition 5.21]{TaoVu06}.

\begin{definition} \label{def:partialisomorphism}
Let $(G, +)$ be an abelian group of order $n$. A map $h\!:[n]\to G$ is a \emph{partial $G$-isomorphism} if and only if $h$ is a bijection and, whenever $a, b \in [n]$, if $ab \in[n]$, 
then $h(ab) = h(a)+h(b)$. If $G = \mathbb Z/n\mathbb Z$, we simply call $h$ a \emph{partial isomorphism}.
\end{definition}

\begin{remark} \label{rem:chandler}
We require $G$ to be abelian as our goal is to relate partial $G$-isomorphisms to satisfactory colorings. This is done via an explicit construction in theorem 
\ref{thm:partialisomorphism} below, and although the coloring we describe is perhaps the ``natural'' one, our formula requires that $G$ is abelian, and we do not see a way to 
proceed otherwise. But the question of whether there are partial $G$-isomorphisms where $G$ is not abelian is interesting in its own right. This seems to be open in 
general, but for $n$ odd the answer is negative, as shown by K. A. Chandler \cite{Chandler88}.
\end{remark}

Since $h$ is a bijection, it induces a group operation $\oplus$ on $[n]$ such that 
 $$ ([n],\oplus) \cong (G,+) $$ 
and $\oplus$ extends the partial graph of multiplication on $[n]$. Our use of the term isomorphism here is perhaps further justified by noting that if $h$ is a partial 
$G$-isomorphism, then $h(1) = h(1 \cdot 1) = h(1\oplus 1)=h(1) + h(1)$, and it follows that $h(1) = 0_G$. 

\begin{theorem} \label{thm:partialisomorphism}
If $h\!: [n]\to G$ is a partial $G$-isomorphism, then $h$ can be uniquely extended to a multiplicative map $\hat h\!:K_n \to G$. Moreover, $h^{-1}\circ\hat h\!: K_n \to [n]$ is a 
$G$-coloring of $K_n$.
\end{theorem}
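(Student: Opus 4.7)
The plan is to extend $h$ to $K_n$ via the prime factorization. Given $m \in K_n$ with factorization $m = p_1^{\alpha_1}\cdots p_s^{\alpha_s}$, where $p_1<\cdots<p_s$ enumerate the primes in $[n]$, set
\[
\hat h(m) = \sum_{i=1}^s \alpha_i\cdot h(p_i),
\]
with the empty sum interpreted as $0_G$ when $m=1$. Multiplicativity of $\hat h$ on $K_n$ is immediate from this formula, since multiplication in $K_n$ corresponds to addition of exponent vectors, and uniqueness of any multiplicative extension follows because such an extension is determined by its values on the primes in $[n]$.

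The key verification is that $\hat h$ actually extends $h$, that is, $\hat h(m) = h(m)$ for all $m\in[n]$. I would prove this by induction on the total number of prime factors of $m$ counted with multiplicity. The base case $m=1$ uses the partial $G$-isomorphism property applied to the identity $1\cdot 1 = 1$, which gives $h(1) = h(1)+h(1)$ and hence $h(1)=0_G=\hat h(1)$. The case where $m$ is itself prime is immediate. For composite $m\in[n]$, choose any prime factor $p$ of $m$; since $m/p<m\le n$, we have $m/p\in[n]$, and since $p\cdot(m/p)=m\in[n]$ the partial isomorphism property applies and yields $h(m)=h(p)+h(m/p)$. By the induction hypothesis $h(m/p)=\hat h(m/p)$, while the defining formula gives $\hat h(m)=h(p)+\hat h(m/p)$, so $h(m)=\hat h(m)$.

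Once $\hat h$ is in place, set $c=h^{-1}\circ\hat h$. Then $c(i)=h^{-1}(h(i))=i$ for $i\in[n]$, so $c$ respects the normalization convention and uses all $n$ colors. For $a\in K_n$ and $1\le i<j\le n$, multiplicativity of $\hat h$ gives
\[
\hat h(ia) = h(i)+\hat h(a) \ne h(j)+\hat h(a) = \hat h(ja)
\]
since $h$ is a bijection on $[n]$; applying $h^{-1}$ preserves this inequality, so $c(ia)\ne c(ja)$, establishing that $c$ is $n$-satisfactory. That $c$ is a $G$-coloring is then immediate taking $\varphi=h$ as the required bijection, since $\varphi\circ c=\hat h$ is multiplicative by construction.

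The only subtle point is the inductive step verifying that $\hat h$ extends $h$, which depends on being able to split any composite $m\in[n]$ as a product of two elements of $[n]$ --- this is automatic because any proper divisor of $m\le n$ is itself at most $n$. Beyond this the construction is essentially forced, and the remaining verifications are routine.
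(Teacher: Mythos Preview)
Your proof is correct and follows essentially the same approach as the paper: define $\hat h$ via the prime-exponent formula, and then check that $c=h^{-1}\circ\hat h$ is an $n$-satisfactory $G$-coloring. Your inductive verification that $\hat h$ actually agrees with $h$ on $[n]$ is more explicit than the paper's treatment, which simply asserts the equivalence without spelling out this step.
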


\begin{proof}
Let $h\!:[n]\to G$ be a partial $G$-isomorphism. Letting $p_1,\dots, p_s$ be the primes less than or equal to $n$, a map $\hat h\!:K_n \to G$ extends $h$ and is multiplicative if 
and only if for any $a_1,\dots,a_s \in \mathbb N$, we have
 $$ \hat h(p_1^{a_1}\cdots p_s^{a_s}) =  \bigoplus_{i=1}^s h(p_i)^{\oplus a_i}. $$
This proves the existence and uniqueness of the extension $\hat h$. 

Moreover, if $1 \le i < j \le n$ and $a\in K_n$, then
 $$ \hat h(ia) = \hat h(i) + \hat h(a) \ne \hat h(j) + \hat h(a) = \hat h(ja) $$
because $\hat h \upharpoonright[n]=h$ is a bijection.

Letting $c=h^{-1}\circ\hat h$, this gives us that $c\!:K_n\to[n]$ is a $G$-coloring.
\end{proof}

\begin{remark}
Note the similarity between this argument and the proof of corollary \ref{cor:finitelymultiplicative}.
\end{remark}

Obviously, if $\hat h\!:K_n \to G$ is multiplicative and $h = \hat h\upharpoonright[n]$ is a bijection, then $h$ is a partial $G$-isomorphism. Therefore, if $c\!: K_n \to [n]$ is a 
$G$-coloring as witnessed by the bijection $\varphi\!:[n]\to G$, then $\varphi$ is a partial $G$-isomorphism as, by definition, $h = \varphi\circ c$ is multiplicative, and 
$\varphi = h \upharpoonright [n]$.

This shows that the problem of building $G$-colorings of $K_n$ is equivalent to the problem of building partial $G$-isomorphisms or, equivalently, $G$-satisfactory groups:

\begin{definition} \label{def:satisfactory}
Given an abelian group $(G,+)$ of order $n$, we say that an abelian group structure on $[n]$, $([n], \oplus)$, is a \emph{$G$-satisfactory group} if and only if 
 $$ ([n], \oplus) \cong (G, +) $$ 
and $a\oplus b = ab$ whenever $a, b, ab \in[n]$.

We say that the $G$-coloring resulting from extending $\oplus$ as in theorem \ref{thm:partialisomorphism} is \emph{associated} to $([n],\oplus)$.
\end{definition}

There is a two-fold advantage on building $G$-satisfactory groups rather than partial $G$-isomorphisms: first, the extension to a $G$-coloring is immediate. Second, and more 
significantly, different partial $G$-isomorphisms may give rise to the same $G$-coloring, as the notion is only uniquely determined up to automorphisms of $G$.

For example, if $h_1\!:[6]\to\mathbb Z/6\mathbb Z$ and $h_2\!:[6]\to\mathbb Z/6\mathbb Z$ are the maps
 $$ (1,2,3,4,5,6) \overset{h_1}\longmapsto (0,2,1,4,5,3) $$
and  
 $$ (1,2,3,4,5,6)\overset{h_2}\longmapsto (0,4,5,2,1,3), $$
then both give rise to the $\mathbb Z/6\mathbb Z$-coloring strongly represented by $7 = 1\cdot 6 + 1$, and this coloring is associated to the $G$-satisfactory group shown in 
table \ref{table:z6satisfactory}.

\begin{table}[ht]
\begin{tabular}{c||c|c|c|c|c|c|} 
$\oplus$&1&2&3&4&5&6\\
\hline \hline
1&1&2&3&4&5&6\\
2&2&4&6&1&3&5\\
3&3&6&2&5&1&4\\
4&4&1&5&2&6&3\\
5&5&3&1&6&4&2\\
6&6&5&4&3&2&1\\
\hline
\end{tabular}\vspace{3mm}
\caption{A $\mathbb Z/6\mathbb Z$-satisfactory group.}
\label{table:z6satisfactory}
\vspace{-7mm}
\end{table}

In \S\,\ref{sub:nle8}, we use systematically the notation of $G$-satisfactory groups to identify all multiplicative colorings with at most eight colors.

\begin{remark} \label{rem:weak} 
We are now in a position to explain how $\mathbb Z/n\mathbb Z$-colorings or, equivalently, partial isomorphisms are closely related to strong representations. In fact, we can 
prove that any $\mathbb Z/n\mathbb Z$-coloring admits a ``weak'' representation. Let $h\!:[n]\to \mathbb Z/n\mathbb Z$ be a partial isomorphism. As before, let $p_1,\dots,p_s$ 
be the primes less than or equal to $n$. Extend $h$ to a map from $K_n$ to $\mathbb Z/n\mathbb Z$ as in the proof of theorem \ref{thm:partialisomorphism}. Denote the extension 
again by $h$.

By Dirichlet's theorem, there are primes $P$ of the form $kn + 1$. For any such $P$, let $g$ be a primitive root modulo $P$, i.e., a generator of $(\mathbb Z/P\mathbb Z)^*$. In 
other words, the powers $g^{ki}$ are precisely the $k^{\mathrm{th}}$ power residues modulo $P$. Invoking again Dirichlet's theorem, for each $p_i$ we can find a prime $q_i$ such 
that 
 $$ q_i\equiv g^{h(p_i)} \pmod P. $$
Now for $x\in K_n$ define $d\!: K_n \to (\mathbb Z/P\mathbb Z)^*$ by $d(x) = g^{kh(x)}$.

If $x=\prod_{i=1}^s p_i^{a_i}$, then $h(x)=\sum_i a_ih(p_i)$ and 
\begin{equation} \label{eq:prod}
d(x)=\prod_i(g^{h(p_i)})^{ka_i}=\left(\prod_i q_i^{a_i}\right)^k
\end{equation}
where of course the products are computed modulo $P$.

The point is that if $i,j \in[n]$, then $d(i) \ne d(j)$ because $h(i) \ne h(j)$, $h$ being a bijection. If $0 \le h(i) < h(j) < n$, then $0 \le kh(i) < kh(j) < kn$, and $g^{kh(i)} \ne g^{kh(j)}$, 
since $g$ is a primitive root. It follows that $d(ix) = d(i)d(x) \ne d(j)d(x) = d(jx)$, and $d$ defines a $\mathbb Z/n\mathbb Z$-coloring.

Note how close the coloring given by equation (\ref{eq:prod}) is to the colorings described in definition \ref{def:strongrepresentatives}. Strong representations are the particular 
case where we can choose $P$ for which we can take $q_i = p_i$ for all $i$.
\end{remark}

Partial isomorphisms are easy to construct ``by hand'' for small values of $n$. Examples of partial isomorphisms for all $n\le 31$ are given in table \ref{table:linear}. In 
appendix B of the second-named author's master's thesis\footnote{See \href{http://scholarworks.boisestate.edu/td/231/}{http://scholarworks.boisestate.edu/td/231/}}, this is 
extended to all $n\le 54$. The authors of \cite{ForcadePollington90} have verified their existence for all $n<195$.

Given $n$, define $M$ and $M_{K_n}$ as the sets of multiplicative colorings of $\mathbb Z^+$ and of $K_n$, respectively. In corollary \ref{cor:finitelymultiplicative} we 
showed that $M_{K_n}$ is finite. We now show that restricting attention to colorings in $M$ does not affect the computation of the number of satisfactory colorings (corollary 
\ref{cor:many}).

\begin{theorem} \label{thm:manymultiplicative}
If $n>1$ and $M_{K_n}\ne\emptyset$, then $|M| = \mathfrak c$.
\end{theorem}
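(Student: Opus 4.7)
The plan is to mimic the argument used for corollary \ref{cor:many}, but to perform the extension from $K_n$ to $\mathbb Z^+$ multiplicatively rather than arbitrarily. Fix $c_0 \in M_{K_n}$, witnessed by an abelian group $(G,+)$ of order $n$ and a bijection $\varphi\!:[n]\to G$, so that $h_0\coloneqq \varphi\circ c_0\!: K_n\to G$ is multiplicative. Every positive integer has a unique prime factorization, so a multiplicative map $h\!:\mathbb Z^+\to G$ is determined by its values on primes. I would therefore fix $h(p)=h_0(p)=\varphi(p)$ for each prime $p\le n$, and, for each prime $q>n$, freely choose a value $h(q)\in G$; then extend multiplicatively by setting $h\bigl(\prod_i p_i^{a_i}\bigr)=\bigoplus_i h(p_i)^{\oplus a_i}$.

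The first step is to check that $c\coloneqq \varphi^{-1}\circ h$ is a multiplicative $n$-satisfactory coloring of $\mathbb Z^+$ extending $c_0$. By construction $h$ is multiplicative, so $c$ is multiplicative as witnessed by the same pair $(G,\varphi)$. Restricted to $K_n$, $h$ agrees with $h_0$ since both are multiplicative maps agreeing on the primes $\le n$, hence $c\upharpoonright K_n=c_0$; in particular $c\upharpoonright[n]$ is the identity. For $n$-satisfactoriness, given any $a\in\mathbb Z^+$ and $1\le i<j\le n$, multiplicativity gives $h(ia)=h(i)+h(a)=\varphi(i)+h(a)\ne\varphi(j)+h(a)=h(ja)$, since $\varphi$ is a bijection, and therefore $c(ia)\ne c(ja)$.

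Next I would count. Let $Q$ be the (countably infinite) set of primes $>n$. The construction above produces a well-defined injection $G^{Q}\to M$, sending the sequence $(h(q))_{q\in Q}$ to the corresponding coloring $c$: injectivity is immediate since $h(q)$ can be recovered from $c(q)$ via $\varphi$. Because $n\ge 2$, we obtain
\[
|M|\ge |G|^{|Q|}=n^{\aleph_0}=\mathfrak c.
\]
Conversely, every coloring in $M$ is in particular a function $\mathbb Z^+\to[n]$, so $|M|\le n^{\aleph_0}=\mathfrak c$. By Cantor--Schr\"oder--Bernstein, $|M|=\mathfrak c$.

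There is no real obstacle here; the only point that deserves some care (and which I would state explicitly) is that the freedom lies exactly in the values $h(q)$ for primes $q>n$, since the convention $c\upharpoonright [n]=\id$ forces $h(p)=\varphi(p)$ for primes $p\le n$, and multiplicativity then determines $h$ everywhere from its values on primes. This is why the argument produces $\mathfrak c$ colorings despite $M_{K_n}$ being finite (corollary \ref{cor:finitelymultiplicative}): the abundance of primes outside $[n]$ provides all the degrees of freedom.
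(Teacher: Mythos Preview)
Your proof is correct and follows essentially the same approach as the paper's: both extend a multiplicative coloring of $K_n$ to $\mathbb Z^+$ by freely assigning group elements to the primes larger than $n$ and extending multiplicatively, then count the resulting $n^{\aleph_0}=\mathfrak c$ choices. The only cosmetic difference is that the paper works directly with the $G$-satisfactory group $([n],\oplus)$ (so $\varphi$ is the identity and one assigns $a_p\in[n]$), whereas you keep the abstract group $G$ and the bijection $\varphi$.
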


\begin{proof}
As in corollary \ref{cor:many}, it is enough to show that $n^{\aleph_0}\le |M|$. Let $c\!:K_n\to[n]$ be a multiplicative coloring associated to the $G$-satisfactory group 
$([n],\oplus)$. To each prime $p$ assign a number $a_p\in[n]$ with the only restriction that $a_p = p$ if $p \in n$. Now define $c'\!:\mathbb Z^+\to[n]$ as follows: if 
$m \in \mathbb Z^+$, let $\prod_i p_i^{b_i}$ be its prime factorization, and set
 $$ c'(m)=\bigoplus_i{a_{p_i}}^{\oplus b_i}. $$
It is immediate that any $c'$ defined this way is multiplicative and extends $c$, and that different sequences $(a_p: p$ prime$)$ give rise to different $c'$, and therefore we have 
associated $n^{\aleph_0}$ colorings in $M$ to each $c\in M_{K_n}$.
\end{proof}

\subsection{Translation invariance} \label{subs:translation}

In this subsection we show that multiplicativity of a coloring, an algebraic condition, is equivalent to translation invariance, a geometric condition. This helps elucidate the relation 
between multiplicativity of colorings and periodicity of the corresponding tilings. We have organized the presentation to highlight how far the assumption of translation invariance 
alone takes us, with the equivalence itself established at the end.

Recall that if $c$ is a coloring of $K_n$ and $k\in K_n$, then $c_k$ is the coloring where two numbers $m,m'\in K_n$ receive the same color precisely when $c(km)=c(km')$.

\begin{definition} \label{def:trans}
A coloring $c$ of $K_n$ is \emph{translation invariant} if and only if $c_k=c$ for all $k\in K_n$.
\end{definition}

For any $k\in K_n$, we can naturally identify $\mathbb O_n$ and $\mathbb O_n+t(k)$. This induces a coloring of $\mathbb O_n$ from one of $\mathbb O_n+t(k)$. If we start 
with $c$, the resulting coloring is precisely $c_k$. That $c$ is translation invariant means that, for any $k\in K_n$, this coloring is again $c$. Clearly, multiplicative colorings are 
translation invariant. Notice that translation invariance is a strong requirement on a coloring: the color classes must all look the same, no matter from where we start to look at 
them. We illustrate this with figure \ref{fig:4classes}, showing the four color classes of the 4-satisfactory coloring depicted in figure \ref{figure:4tiling}. In the figure, we have also 
indicated the axes of an orthant $\mathbb O_4+t(k)$, and the reader can see that the four color classes of the induced coloring of this orthant look precisely like the original ones.

\begin{figure}[ht]
\begin{tikzpicture}[scale=.3]

\draw[step=1cm,gray,very thin] (0,0) grid (6.1,6.1); 
\draw[thick,->] (0,0) -- (6.1,0);
\draw[thick,->] (0,0) -- (0,6.1);

\draw[step=1cm,gray,very thin] (8,0) grid (14.1,6.1);
\draw[thick,->] (8,0) -- (14.1,0);
\draw[thick,->] (8,0) -- (8,6.1);

\draw[step=1cm,gray,very thin] (16,0) grid (22.1,6.1);
\draw[thick,->] (16,0) -- (22.1,0);
\draw[thick,->] (16,0) -- (16,6.1);

\draw[step=1cm,gray,very thin] (24,0) grid (30.1,6.1); 
\draw[thick,->] (24,0) -- (30.1,0);
\draw[thick,->] (24,0) -- (24,6.1);

\draw[very thick,densely dotted,->] (3,1) -- (6.3,1);
\draw[very thick,densely dotted,->] (3,1) -- (3,6.3);

\draw[very thick,densely dotted,->] (11,1) -- (14.3,1);
\draw[very thick,densely dotted,->] (11,1) -- (11,6.3);

\draw[very thick,densely dotted,->] (19,1) -- (22.3,1);
\draw[very thick,densely dotted,->] (19,1) -- (19,6.3);

\draw[very thick,densely dotted,->] (27,1) -- (30.3,1);
\draw[very thick,densely dotted,->] (27,1) -- (27,6.3);

\path[fill=red] (0,0) -- (1,0) -- (1,2) -- (3,2) -- (3,4) -- (5,4) -- (5,6) -- (6,6) -- (6,5) -- (4,5) -- (4,3) -- (2,3) -- (2,1) -- (0,1) -- (0,0);
\path[fill=red] (4,0) -- (5,0) -- (5,2) -- (6,2) -- (6,1) -- (4,1) -- (4,0);
\path[fill=red] (0,4) -- (1,4) -- (1,6) -- (2,6) -- (2,5) -- (0,5) -- (0,4);

\path[fill=blue] (9,0) -- (10,0) -- (10,2) -- (12,2) -- (12,4) -- (14,4) -- (14,5) -- (13,5) -- (13,3) -- (11,3) -- (11,1) -- (9,1) -- (9,0);
\path[fill=blue] (13,0) -- (14,0) -- (14,1) -- (13,1) -- (13,0);
\path[fill=blue] (8,3) -- (9,3) -- (9,5) -- (11,5) -- (11,6) -- (10,6) -- (10,4) -- (8,4) -- (8,3);

\path[fill=black] (16,1) -- (17,1) -- (17,3) -- (19,3) -- (19,5) -- (21,5) -- (21,6) -- (20,6) -- (20,4) -- (18,4) -- (18,2) -- (16,2) -- (16,1);
\path[fill=black] (19,0) -- (20,0) -- (20,2) -- (22,2) -- (22,3) -- (21,3) -- (21,1) -- (19,1) -- (19,0);
\path[fill=black] (16,5) -- (17,5) -- (17,6) -- (16,6) -- (16,5); 

\path[fill=purple] (26,0) -- (27,0) -- (27,2) -- (29,2) -- (29,4) -- (30, 4) -- (30,3) -- (28,3) -- (28,1) -- (26,1) -- (26,0);
\path[fill=purple] (24,2) -- (25,2) -- (25,4) -- (27,4) -- (27,6) -- (28,6) -- (28,5) -- (26,5) -- (26,3) -- (24,3) -- (24,2); 

\end{tikzpicture}
\caption{The four color classes of the unique 4-satisfactory coloring.}
\label{fig:4classes}
\end{figure}
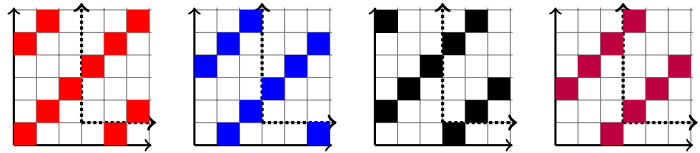

\begin{fact}
Let $c$ be a translation invariant finite coloring of $K_n$. For any $k\in K_n$ there is a least positive integer $o(k)$ such that $c(k^{o(k)}m)=c(m)$ for all $m\in K_n$. 

If $N$ is the number of colors used by $c$, then $o(k)\mid N$. 
\end{fact}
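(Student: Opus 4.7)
The plan is to associate to $k$ a permutation $\hat T_k$ of the (finite) set of color classes of $c$, defined by $[m]\mapsto[km]$, and then use translation invariance twice to force every orbit of $\hat T_k$ to have the same length, from which the divisibility statement is immediate.

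First I would observe that translation invariance, written as $c_k=c$, says exactly that for all $l,m\in K_n$ one has $c(l)=c(m)$ if and only if $c(kl)=c(km)$. Hence the rule $[m]\mapsto[km]$ is a well-defined and injective self-map on the finite set of color classes, so it is a permutation $\hat T_k$. By definition, the order of $\hat T_k$ is the least positive integer $r$ such that $\hat T_k^{\,r}$ fixes every color class, that is, such that $c(k^r m)=c(m)$ for every $m\in K_n$; this both produces $o(k)$ and identifies it with the order of $\hat T_k$.

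The main step is to show that every $\hat T_k$-orbit has length exactly $o(k)$. Suppose $d$ is the length of the orbit of some class $[m]$. Then $c(k^d m)=c(m)$, and since $c_m=c$ this is equivalent to $c(k^d)=c(1)$. Now I would reverse the procedure with an arbitrary $m'\in K_n$: the equality $c(k^d)=c(1)$ together with $c_{m'}=c$ yields $c(k^d m')=c(m')$. Therefore $\hat T_k^{\,d}$ fixes every color class, which forces $o(k)\mid d$; combined with the automatic $d\mid o(k)$, this gives $d=o(k)$. I expect this two-directional use of translation invariance (first using $c_m=c$ to pass from $c(k^d m)=c(m)$ to $c(k^d)=c(1)$, then using $c_{m'}=c$ to pass from $c(k^d)=c(1)$ back to $c(k^d m')=c(m')$) to be the crux of the argument.

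Since $\hat T_k$ is a permutation of an $N$-element set all of whose orbits have the common size $o(k)$, the orbits partition the set into $N/o(k)$ blocks of equal size, and hence $o(k)\mid N$, as required.
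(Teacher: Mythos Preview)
Your proof is correct and is essentially the same as the paper's. The paper argues directly with the sets $L_m=\{c(m),c(km),\dots,c(k^{o(k)-1}m)\}$ rather than naming the permutation $\hat T_k$, but these $L_m$ are precisely your orbits, and the paper's use of translation invariance to show the $L_m$ all have size $o(k)$ and partition the color set is exactly your argument that every orbit has length $o(k)$.
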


\begin{proof}
We simply use the standard argument for Lagrange's theorem: the list of colors $c(1),c(k),c(k^2),\dots$ must eventually have repetitions. If $i<j$ and 
 $$ c(k^i)=c(k^j), $$ 
then, since $c_{k^i}=c$, we see that $c(1)=c(k^{j-i})$, and it follows that there is a least $o(k)>0$ with $c(k^{o(k)})=c(1)$. By translation invariance, in fact $c(k^{o(k)}m)=c(m)$ 
for all $m\in K_n$. 

For any $m\in K_n$, the colors $c(m),c(km),\dots,c(k^{o(k)-1}m)$ are all distinct, since a coincidence $c(k^am)=c(k^bm)$ with $a<b$ implies $c(1)=c(k^{b-a})$ by translation 
invariance. Let $L_m=\{c(m),c(km),\dots,c(k^{o(k)-1}m)\}$, and note that any two such sets $L_m,L_{m'}$ are either disjoint or coincide, since if $c(mk^a)=c(m'k^b)$, then, 
letting $l=b-a$ if $b\ge a$ or $b+o(k)-a$ otherwise, we see that $c(m)=c(m'k^l)$, from which $L_m=L_{m'}$ follows. This shows that the sets $L_m$ partition the set of colors
into classes of the same size, which completes the proof.
\end{proof}

Arguably, a coloring $c$ of $K_n$ deserves to be called periodic if there is a $k\in K_n$ larger than 1 such that $c(km)=c(m)$ for all $m\in K_n$. Our actual definition is 
somewhat more stringent.

\begin{definition}
Let $c$ be a coloring of $K_n$, $k\in K_n$ be larger than 1, and $l\in\mathbb Z^+$. Say that $c$ is \emph{periodic in the direction of $k$ with period $l$} if and only if 
$c(k^lm)=c(m)$ for all $m\in K_n$.

Say that $c$ is \emph{periodic} if and only if it is periodic in every direction. 
\end{definition}

The following result is an immediate consequence of the existence of the \emph{orders} $o(k)$, $k\in K_n$, for translation invariant colorings.

\begin{corollary}
If $c$ is a translation invariant $n$-coloring of $K_n$, then $c$ is periodic, with period $n$ in every direction.
\end{corollary}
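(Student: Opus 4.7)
The plan is essentially to invoke the preceding fact and use that the number of colors equals $n$. Let $c$ be a translation invariant $n$-coloring of $K_n$, and fix an arbitrary $k\in K_n$ with $k>1$. The fact immediately preceding the corollary guarantees the existence of a least positive integer $o(k)$ with $c(k^{o(k)}m)=c(m)$ for all $m\in K_n$, and further tells us that $o(k)\mid N$, where $N$ is the number of colors used by $c$. Since $c$ is an $n$-coloring, $N=n$, so $o(k)\mid n$.

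Now write $n=o(k)\cdot q$ for some positive integer $q$. Iterating the identity $c(k^{o(k)}m)=c(m)$ a total of $q$ times (that is, applying it first to $m$, then to $k^{o(k)}m$, and so on) yields
\begin{equation*}
c(k^n m)=c\bigl((k^{o(k)})^q m\bigr)=c(m)
\end{equation*}
for all $m\in K_n$. Thus $c$ is periodic in the direction of $k$ with period $n$. Since $k\in K_n$ was arbitrary (with $k>1$), this shows $c$ is periodic, with period $n$ in every direction, as desired.

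There is no real obstacle here: the fact does all the work, and the only observation beyond it is the tautology $N=n$ together with the divisibility $o(k)\mid n$, which lets us replace the (unknown, $k$-dependent) period $o(k)$ by the uniform period $n$.
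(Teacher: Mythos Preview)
Your proof is correct and matches the paper's approach exactly: the paper simply states that the corollary ``is an immediate consequence of the existence of the orders $o(k)$, $k\in K_n$, for translation invariant colorings,'' and your argument spells out precisely that immediacy.
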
 

Translation invariant $n$-satisfactory colorings are particularly well-behaved. 

\begin{lemma} \label{lem:extend}
For any $N$, any translation invariant $N$-coloring of $K_n$ admits a unique extension to such a coloring of $\hat K_n$; moreover, the extension $c$ is translation invariant in 
the strong sense that $c_k=c$ for all $k\in\hat K_n$. If the original coloring is in addition $n$-satisfactory, then so is the extension. 
\end{lemma}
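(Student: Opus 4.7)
The plan is to use translation invariance of $c$ to distill a permutation action of an abelian group on the color set $[N]$, and then extend this action from $K_n$ to $\hat K_n$.

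Think of $c$ as a surjection $c\colon K_n\to[N]$. Translation invariance $c_k=c$ for $k\in K_n$ says that $c(km)$ depends only on the color $c(m)$, yielding a well-defined map $\pi_k\colon[N]\to[N]$ with $c(km)=\pi_k(c(m))$, which is a bijection because the same invariance run in reverse makes $\pi_k$ injective. The assignment $k\mapsto\pi_k$ is a monoid homomorphism $\pi\colon(K_n,\cdot)\to\operatorname{Sym}([N])$ whose image $H$ is commutative; being a finite commutative submonoid of a group, $H$ is a subgroup. Hence $\pi$ extends uniquely to a group homomorphism $\bar\pi\colon\hat K_n\to H$ by $\bar\pi_{a/b}=\pi_a\pi_b^{-1}$, well-defined because $a/b=a'/b'$ forces $ab'=a'b$, whence $\pi_a\pi_{b'}=\pi_{ab'}=\pi_{a'b}=\pi_{a'}\pi_b$. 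The action of $H$ on $[N]$ is transitive: the orbit of $c(1)$ is $\{c(k):k\in K_n\}=[N]$.

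Next, define the extension $\bar c\colon\hat K_n\to[N]$ by $\bar c(x)=\bar\pi_x(c(1))$. Restriction to $K_n$ recovers $c$ since $\bar\pi_k(c(1))=c(k\cdot 1)=c(k)$, and $\bar c$ uses all $N$ colors because $c$ already does. Strong translation invariance is immediate: for any $q\in\hat K_n$,
$$\bar c(qx)=\bar\pi_{qx}(c(1))=\bar\pi_q(\bar\pi_x(c(1)))=\bar\pi_q(\bar c(x)),$$
and since $\bar\pi_q$ is a bijection on $[N]$, $\bar c(qx)=\bar c(qx')$ iff $\bar c(x)=\bar c(x')$. For uniqueness, any translation-invariant extension $\bar c'$ (requiring only $\bar c'_k=\bar c'$ for $k\in K_n$) induces its own permutations $\bar\pi'_k$ which agree with $\pi_k$ on the range of $c$ — and hence everywhere, since $c$ is surjective. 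Then $c(a)=\bar c'(b\cdot(a/b))=\pi_b(\bar c'(a/b))$ forces $\bar c'(a/b)=\pi_b^{-1}(c(a))=\bar c(a/b)$.

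Finally, if $c$ is $n$-satisfactory, then for any $m\in K_n$ and $i\ne j$ in $[n]$, $\pi_i(c(m))=c(im)\ne c(jm)=\pi_j(c(m))$; surjectivity of $c$ upgrades this to the purely algebraic statement that $\pi_j^{-1}\pi_i$ is fixed-point-free on $[N]$. But then for any $x\in\hat K_n$, $\bar c(ix)=\pi_i(\bar c(x))\ne\pi_j(\bar c(x))=\bar c(jx)$, so $\bar c$ is $n$-satisfactory too. There is no real obstacle to be overcome — every step is routine once the permutations $\pi_k$ are identified — but the conceptual content of the argument is that translation invariance is exactly the statement that the coloring arises from a transitive action of an abelian group on $[N]$, a structure that passes effortlessly from $K_n$ to $\hat K_n$ via the universal property of the group of fractions.
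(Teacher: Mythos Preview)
Your proof is correct and takes a genuinely different route from the paper's. The paper works concretely: using the previously established orders $o(k)$, it defines the extension by the explicit formula $c(m'/m)\coloneqq c(m'm^{o(m)-1})$ and then checks well-definedness, strong translation invariance, the $n$-satisfactory property, and uniqueness by direct computation with these orders.

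Your argument replaces all of this with one structural observation: translation invariance is precisely the statement that $c$ factors through a monoid homomorphism $K_n\to\operatorname{Sym}([N])$, and since the image is a finite commutative submonoid of a group it is automatically a group, so the universal property of $\hat K_n$ as the group of fractions of $K_n$ hands you the extension for free. This is cleaner and more conceptual: strong translation invariance and the preservation of $n$-satisfactoriness become one-line consequences of the action, and you never need to invoke the orders $o(k)$ or verify equation~(\ref{eq:product}). What the paper's approach buys is complete elementarity (no appeal to submonoids of groups or universal properties) and an explicit formula for the extended coloring in terms of the original one, which is handy for the later computations in \S\,\ref{subs:translation}. Your approach, on the other hand, essentially anticipates the content of theorem~\ref{thm:inv}: the abelian group $H$ you construct, together with its transitive action on $[N]$, is exactly the multiplicative structure that theorem isolates.
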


\begin{proof}
Let $c$ be translation invariant. We define an extension, that we also denote by $c$, in the natural way: given $m,m'\in K_n$, let 
 $$ c(m'/m)\coloneqq c(m'm^{o(m)-1}). $$ 
This is well-defined, in the sense that if $m'/m=s'/s$ for $s,s'\in K_n$, then 
 $$ c(m'm^{o(m)-1})=c(s's^{o(s)-1}), $$ 
because
 $$ c(m'm^{o(m)-1}ms)=c(m's)=c(ms')=c(s's^{o(s)-1}ms), $$ 
and therefore  $c(m'm^{o(m)-1})=c(s's^{o(s)-1})$, by translation invariance.

Similarly, if $a,m_1,m_2\in K_n$, then 
\begin{equation} \label{eq:product}
c\left(\frac a{m_1m_2}\right)=c(a\, m_1^{o(m_1)-1}m_2^{o(m_2)-1}),
\end{equation} 
because 
 $$ c(a\, m_1^{o(m_1)-1}m_2^{o(m_2)-1}m_1m_2)=c(a)=c(a (m_1m_2)^{o(m_1m_2)-1}m_1m_2). $$
 
We argue that the extension is translation invariant in the strong sense indicated above: let $k\in \hat K_n$. We must show that $c_k=c$, that is, that if $a,b\in\hat K_n$, then 
$c_k(a)=c_k(b)$ if and only if $c(a)=c(b)$. For this, let $m_1,\dots,m_6\in K_n$ be such that $a=m_1/m_2$, $b=m_3/m_4$ and $k=m_5/m_6$, and note that $c_k(a)=c_k(b)$ 
if and only if 
 $$ c\left(\frac{m_1m_5}{m_2m_6}\right)=c\left(\frac{m_3m_5}{m_4m_6}\right) $$ 
or, equivalently, 
 $$ c(m_1m_5m_2^{o(m_2)-1}m_6^{o(m_6)-1})=c(m_3m_5m_4^{o(m_4)-1}m_6^{o(m_6)-1}), $$ 
which, in turn, is equivalent to 
 $$ c(m_1m_2^{o(m_2)-1})=c(m_3m_4^{o(m_4)-1}), $$ 
that is, to $c(a)=c(b)$, as wanted.

Suppose now that $c$ is in addition $n$-satisfactoty. To see that the extension is again $n$-satisfactory, note that if $i,j\in[n]$ and $c(im'/m)=c(jm'/m)$, then 
$c(im'm^{o(s)-1})=c(jm'm^{o(s)-1})$, and it follows that $i=j$. 

To see that the extension we defined is the only possible translation invariant extension using the same colors, suppose $c'$ is such an extension of $c$, and that $a,b,i\in K_n$
are such that $c'(a/b)=c'(i)=c(i)$. By translation invariance of $c'$, this is equivalent to asserting that $c'(bi)=c'(a)$, that is $c(bi)=c(a)=c(ab^{o(b)})$ which, again by translation 
invariance, is in turn equivalent to $c'(i)=c(i)=c(ab^{o(b)-1})$. This shows that the extension defined above is indeed the only possible one.
\end{proof}

For $c$ a translation invariant coloring of $K_n$, call its extension to $\hat K_n$ constructed in the proof of lemma \ref{lem:extend} the \emph{canonical extension} of $c$. The 
resemblance between classes in translation invariant colorings, mentioned above and illustrated in figure \ref{fig:4classes}, is even stronger once we pass from the coloring to 
its canonical extension. Doing so eliminates the ``boundary'' of $\mathbb O_n$ given by the coordinate axes. In the absence of such a frame of reference, the color classes are 
entirely indistinguishable from one another.

\begin{fact} \label{fact:extmult}
The canonical extension of a multiplicative coloring is again multiplicative.
\end{fact}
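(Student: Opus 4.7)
The plan is to use the explicit formula defining the canonical extension and verify multiplicativity by a short computation inside the abelian group that witnesses multiplicativity of the original coloring.

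Let $c$ be a multiplicative coloring of $K_n$ witnessed by the abelian group structure $([n],\oplus)$ with identity $0_\oplus$, so that $c(ab)=c(a)\oplus c(b)$ for all $a,b\in K_n$. Applying this with $a=b=1$ gives $c(1)=c(1)\oplus c(1)$, hence $c(1)=0_\oplus$. Since $c$ is multiplicative it is automatically translation invariant: $c_k(m)=c_k(m')$ iff $c(k)\oplus c(m)=c(k)\oplus c(m')$ iff $c(m)=c(m')$, so $c_k=c$ for every $k\in K_n$. Thus lemma \ref{lem:extend} applies and the canonical extension is well defined; I denote it again by $c$.

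The key observation is that, for every $k\in K_n$,
\[
c(k)^{\oplus o(k)} \;=\; c(k^{o(k)}) \;=\; c(k^{o(k)}\cdot 1) \;=\; c(1) \;=\; 0_\oplus,
\]
where the first equality uses multiplicativity on $K_n$ and the third is the defining property of $o(k)$ applied at $m=1$. Hence $c(k)^{\oplus(o(k)-1)}=c(k)^{\oplus(-1)}$. Substituting into the formula that defines the canonical extension yields
\[
c(m'/m) \;=\; c(m'\cdot m^{o(m)-1}) \;=\; c(m')\oplus c(m)^{\oplus(o(m)-1)} \;=\; c(m')\oplus c(m)^{\oplus(-1)}
\qquad (m,m'\in K_n).
\]
In particular, the extension still takes values in $[n]$, so $([n],\oplus)$ is the natural candidate to witness multiplicativity of the extension.

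Finally, I would verify multiplicativity of the extension on $\hat K_n$: for $x=p/q$ and $y=r/s$ with $p,q,r,s\in K_n$, we have $xy=(pr)/(qs)$, and
\[
c(xy) \;=\; c(pr)\oplus c(qs)^{\oplus(-1)} \;=\; \bigl(c(p)\oplus c(r)\bigr)\oplus\bigl(c(q)\oplus c(s)\bigr)^{\oplus(-1)} \;=\; c(x)\oplus c(y),
\]
using multiplicativity of $c$ on $K_n$ and the commutativity of $\oplus$ to regroup the terms. This completes the argument. There is no serious obstacle; the whole proof reduces to the identification $c(k)^{\oplus(o(k)-1)}=c(k)^{\oplus(-1)}$, after which the verification is mechanical.
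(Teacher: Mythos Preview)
Your proof is correct and follows essentially the same approach as the paper: both compute $c(ab)$ for $a,b\in\hat K_n$ by writing $a,b$ as quotients of elements of $K_n$, invoking the defining formula $c(m'/m)=c(m'\,m^{o(m)-1})$ of the canonical extension, and then using multiplicativity of $c$ on $K_n$ to split the result. Your explicit identification $c(k)^{\oplus(o(k)-1)}=c(k)^{\oplus(-1)}$ is a nice simplification that the paper leaves implicit (it works directly with the exponents $o(m)-1$ via equation~(\ref{eq:product})), but the underlying argument is the same.
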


\begin{proof}
Suppose $c$ is the multiplicative $n$-satisfactory coloring determined by the abelian group $([n],\oplus)$. For $a,b\in\hat K_n$, let $m_1,m_2,m_3,m_4\in K_n$ be such 
that $a=m_1/m_2$ and $b=m_3/m_4$. We have that 
 $$ c(ab)=c\left(\frac {m_1m_2}{m_3m_4}\right)=c(m_1 m_2^{o(m_2)-1}m_3 m_4^{o(m_4)-1}), $$ 
by equation (\ref{eq:product}). Since $c$ is multiplicative on $K_n$, the last expression equals $c(m_1m_2^{o(m_2)-1})\oplus c(m_3m_4^{o(m_4)-1})=c(a)\oplus c(b)$, 
as wanted.
\end{proof}

\begin{remark}
With notation as in the proof of fact \ref{fact:extmult}, let $k\in \hat K_n$, and write its prime factorization as 
 $$ k=\prod_{p_i\in P}p_i^{\alpha_i}\cdot\prod_{p_i\in N}p_i^{\alpha_i}, $$
where the $p_i$ are the primes in $[n]$, listed in increasing order, $P$ is the set of primes $p_i$ that appear in $k$ with positive exponent $\alpha_i$, while $N$ is the set of 
such primes present in $k$ with negative exponent. Since $c$ is multiplicative, and using the convention that $c(i)=i$ for $i\in[n]$, we have that 
 $$ c(k)=\bigoplus_{p_i\in P}p_i^{\oplus\alpha_i}\oplus\bigoplus_{p_i\in N}p_i^{\oplus(-\alpha_i)(o(p_i)-1)}=
 \bigoplus_{p_i\in P}p_i^{\oplus\alpha_i}\oplus\bigoplus_{p_i\in N}p_i^{\oplus\alpha_i}=\bigoplus_i p_i^{\oplus\alpha_i}, $$
that is, equation (\ref{eq:multformula}) still holds, regardless of the sign of the exponents.  
\end{remark}

For $l$ finite, a tiling  of $\mathbb Z^l$ by $T$, say $T+B$ where the sum is direct, is \emph{periodic} if and only if there is a finite index subgroup $\Lambda$ of $\mathbb Z^l$ 
such that $B+\Lambda=B$.

In \S\,\ref{subs:tilings} we defined the natural bijective map $t\!:K_n\to\mathbb O_n$ associating to a point $k$ in $K_n$ the point in $\mathbb O_n$ whose coordinates are the 
exponents of the prime factorization of $k$. The same definition gives us an extension of this map to $\hat K_n$ that we again denote by $t$ and is now a bijection with 
$\mathbb Z^{\pi(n)}$. The proof of proposition \ref{proposition:tiling} gives us that if $B$ is the image under $t$ of any of the color classes of the canonical extension of 
$c$, then the sum $T_n+B$ is direct and tiles $\mathbb Z^{\pi(n)}$. 

\begin{lemma} \label{lem:lattice}
Let $c$ be a translation invariant $n$-satisfactory coloring and let $B$ be the image under $t$ of a color class of the canonical extension of $c$. The tiling $T_n+B$ of 
$\mathbb Z^{\pi(n)}$ by $T_n$ is periodic. In particular, this holds for multiplicative colorings.
\end{lemma}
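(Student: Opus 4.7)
The plan is to use translation invariance to define a group action of $\hat K_n$ (under multiplication) on the set of $n$ colors, and then pull back a stabilizer through the bijection $t$ to produce the desired finite-index subgroup of $\mathbb Z^{\pi(n)}$.

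First I would invoke lemma \ref{lem:extend} to extend $c$ to a translation invariant coloring of $\hat K_n$, still denoted $c$, so that $c_k = c$ for \emph{every} $k\in\hat K_n$. Translation invariance says precisely that $c(km)=c(km')$ iff $c(m)=c(m')$ for all $m,m'\in\hat K_n$, so multiplication by $k$ sends color classes bijectively to color classes. Consequently there is a well-defined permutation $\sigma_k$ on the set $[n]$ of colors such that $c(km)=\sigma_k(c(m))$ for every $m\in\hat K_n$. From $c(k_1k_2 m)=\sigma_{k_1}(\sigma_{k_2}(c(m)))$ one sees that the assignment $k\mapsto\sigma_k$ is a group homomorphism from the multiplicative group $\hat K_n$ to the symmetric group on $[n]$.

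Next, let $i$ be the color with $B=t(C_i)$, where $C_i\subseteq\hat K_n$ is the corresponding color class, and consider the stabilizer
\[
H_i=\{k\in\hat K_n:\sigma_k(i)=i\}=\{k\in\hat K_n: k\cdot C_i=C_i\}.
\]
This is a subgroup of $\hat K_n$, and its index equals the size of the orbit of $i$, which is at most $n$. Set $\Lambda=t(H_i)$. Because $t\!:\hat K_n\to\mathbb Z^{\pi(n)}$ is a group isomorphism (multiplication in $\hat K_n$ corresponds to addition of exponent vectors) and $H_i$ has finite index in $\hat K_n$, the image $\Lambda$ is a subgroup of $\mathbb Z^{\pi(n)}$ of finite index. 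For any $k\in H_i$ and $m\in C_i$ we have $km\in C_i$, so $t(k)+t(m)\in B$, giving $\Lambda+B\subseteq B$; since $\Lambda$ is a group we obtain $\Lambda+B=B$. Thus the tiling $T_n+B$ of $\mathbb Z^{\pi(n)}$ is periodic in the sense defined just before the lemma.

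For the final sentence, recall that multiplicative colorings of $K_n$ are translation invariant (immediate from $c(km)=c(k)\oplus c(m)$), so the canonical extension to $\hat K_n$ is again translation invariant (indeed multiplicative, by fact \ref{fact:extmult}), and the above applies. The only part requiring care is the verification that $k\mapsto\sigma_k$ is a genuine homomorphism into $S_n$ — equivalently that every $\sigma_k$ is a permutation — but this is simply a restatement of translation invariance together with the group structure on $\hat K_n$, so no real obstacle arises.
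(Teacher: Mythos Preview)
Your proof is correct but organized differently from the paper's. The paper builds $\Lambda$ concretely: for each prime $p_i\le n$ it takes the order $o(p_i)$ established earlier, sets $\mathbf x_i=t(p_i^{o(p_i)})$, and lets $\Lambda=\langle\mathbf x_i:i\in[\pi(n)]\rangle$; finite index is immediate since the generators are positive multiples of the standard basis vectors, and $B+\Lambda=B$ follows because each $p_i^{o(p_i)}$ acts as the identity on colors. Your argument instead exhibits $\Lambda$ as the image under $t$ of the stabilizer of the chosen color under the homomorphism $\hat K_n\to S_n$, $k\mapsto\sigma_k$, with finite index coming from orbit--stabilizer. The paper's $\Lambda$ sits inside the kernel of this action, hence inside every one of your stabilizers, so you are producing a (potentially larger) period lattice that may vary with the color class, whereas the paper produces a single smaller lattice that works uniformly for all classes. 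Both are perfectly adequate for the stated conclusion; yours is a bit more conceptual, the paper's a bit more explicit and uniform.
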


\begin{proof}
Let $p_1<\dots<p_{\pi(n)}$ be the primes less than or equal to $n$. For $i\in[\pi(n)]$ let $\mathbf x_i=t(p_i^{o(p_i)})$, and let $\Lambda=\langle x_i:i\in[\pi(n)]\rangle$, so that 
$\Lambda$ has finite index in $\mathbb Z^{\pi(n)}$. We claim that $B+\Lambda=B$. The point is that if $k\in t^{-1}(\Lambda)$, then $c(k)=c(1)$ and if $k'\in t^{-1}(B)$, then 
$c(kk')=c(k')$, so that $t(k)+t(k')\in B$, that is, $B+\Lambda\subseteq B$. But, since $\mathbf0\in\Lambda$, clearly $B\subseteq B+\Lambda$.
\end{proof}

In fact, we can prove a bit more for multiplicative colorings.

\begin{lemma} \label{lem:lattice2}
Let $c$ be a multiplicative $n$-satisfactory coloring as witnessed by the group $(G,\oplus)$, let $B$ be the image under $t$ of a color class of the canonical extension of $c$, and 
let $\Lambda$ be the image under $t$ of the color class of 1. We have that $\Lambda$ is a finite index subgroup of $\mathbb Z^{\pi(n)}$ and that $B+\Lambda=\Lambda$. In fact, 
$\mathbb Z^{\pi(n)}/\Lambda\cong G$ and the images of the color classes are the cosets of $\Lambda$ in $\mathbb Z^{\pi(n)}$.
\end{lemma}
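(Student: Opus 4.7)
The plan is to exhibit the canonical extension of $c$, viewed through the bijection $t$, as a surjective group homomorphism $\mathbb{Z}^{\pi(n)} \to G$, at which point everything follows from the first isomorphism theorem applied to its kernel $\Lambda$.

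First, I would fix the setup: by fact~\ref{fact:extmult}, the canonical extension of $c$ to $\hat K_n$ is again multiplicative, so $c(ab) = c(a) \oplus c(b)$ for all $a, b \in \hat K_n$. Note that the map $t\!: \hat K_n \to \mathbb{Z}^{\pi(n)}$ defined by taking the vector of prime exponents is a group isomorphism from $(\hat K_n, \cdot)$ to $(\mathbb{Z}^{\pi(n)}, +)$, since $t(ab) = t(a) + t(b)$ and $t$ is a bijection. Defining $\bar c \coloneqq c \circ t^{-1}\!: \mathbb{Z}^{\pi(n)} \to G$, the multiplicativity of $c$ translates into
\[ \bar c(\mathbf{x} + \mathbf{y}) = c(t^{-1}(\mathbf{x}) \cdot t^{-1}(\mathbf{y})) = c(t^{-1}(\mathbf{x})) \oplus c(t^{-1}(\mathbf{y})) = \bar c(\mathbf{x}) \oplus \bar c(\mathbf{y}), \]
so $\bar c$ is a group homomorphism. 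It is surjective because $c$ uses all $n$ colors, and the identification $[n] \leftrightarrow G$ is a bijection.

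Now the claims of the lemma reduce to standard facts. By construction $\Lambda = t(c^{-1}(c(1))) = \bar c^{-1}(0_G) = \ker(\bar c)$, so $\Lambda$ is a subgroup of $\mathbb{Z}^{\pi(n)}$. The first isomorphism theorem gives $\mathbb{Z}^{\pi(n)}/\Lambda \cong G$, hence $\Lambda$ has finite index equal to $n$. For any color class of $c$, its image under $t$ is a fiber $\bar c^{-1}(g)$ for some $g \in G$, and fibers of a homomorphism are precisely the cosets of the kernel; so the images of the color classes are exactly the $n$ cosets of $\Lambda$ in $\mathbb{Z}^{\pi(n)}$. In particular, if $B = \bar c^{-1}(g)$ is any such image, then $B + \Lambda = B$ by the coset property, which is the remaining assertion (strengthening lemma~\ref{lem:lattice} by identifying the period lattice $\Lambda$ explicitly as the color class of~$1$).

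There is no genuine obstacle here — once the framing of $\bar c$ as a homomorphism is in place, the content is entirely formal. The only thing to verify carefully is that $t$ and $\bar c$ are genuinely well-defined on $\hat K_n$ and $\mathbb{Z}^{\pi(n)}$ respectively (rather than merely on $K_n$ and $\mathbb{O}_n$), but this is precisely what the canonical extension of lemma~\ref{lem:extend} and fact~\ref{fact:extmult} provide.
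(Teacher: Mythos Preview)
Your proof is correct and is essentially a cleaner repackaging of the paper's argument. The paper proceeds more explicitly: it verifies by hand that $\Lambda$ contains $\mathbf 0$, is closed under negation and addition (using multiplicativity of $c$), deduces finite index by observing that $\Lambda$ contains the period lattice from lemma~\ref{lem:lattice}, and then shows directly that each color class $B$ equals $t(i)+\Lambda$ for the appropriate $i\in[n]$. Your approach compresses all of this into the single observation that $\bar c=c\circ t^{-1}$ is a surjective group homomorphism $\mathbb Z^{\pi(n)}\to G$, after which the first isomorphism theorem delivers everything at once: $\Lambda=\ker\bar c$ is a subgroup, the quotient is $G$ (hence finite index $n$), and the color classes are the fibers, i.e., the cosets. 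The two arguments have identical content; yours just names the organizing principle explicitly, which makes the finite-index claim and the coset identification immediate rather than requiring the auxiliary containment in the lattice of lemma~\ref{lem:lattice}. Note also that you correctly prove $B+\Lambda=B$, which is what the paper's proof establishes as well; the ``$B+\Lambda=\Lambda$'' in the lemma statement is a typo.
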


Note that, in particular, $\mathbb Z^{\pi(n)}=T_n+\Lambda$, and the sum is direct.

\begin{proof}
That $\Lambda$ is a group is clear from the fact that $c$ is multiplicative: first, 
 $$ \mathbf 0=t(1)\in\Lambda; $$ 
if $c(k)=c(1)$, then $c(k^{-1})=c(k)^{\oplus(-1)}=c(1)$, so $\Lambda$ is closed under inverses; and since $c(k_1k_2)=c(k_1)\oplus c(k_2)$, then $\Lambda$ is closed under 
products. That it has finite index in $\mathbb Z^{\pi(n)}$ follows from the fact that it contains the group we denoted by $\Lambda$ in the proof of lemma \ref{lem:lattice}. That 
$B+\Lambda=B$ is exactly as in that proof. 

In fact, let $B$ be the image under $t$ of the color class of $i\in[n]$, and note that $T_n\cap B=\{t(i)\}$. We claim that $B=t(i)+\Lambda$, from which it follows that 
$\mathbb Z^{\pi(n)}/\Lambda\cong G$ and that the images of the color classes are the cosets of $\Lambda$. Clearly $B\supseteq t(i)+\Lambda$, and if $t(k)\in B$, then 
 $$ c(k/i)=c(k)\oplus c(i)^{\oplus(-1)}=c(1), $$ 
so that 
 $$ t(k)=t(i)+(t(k)-t(i))\in t(i)+\Lambda. $$ 
This completes the proof.
\end{proof}

Figure \ref{fig:lattice} illustrates the group $\Lambda$ for the unique 4-satisfactory coloring. 

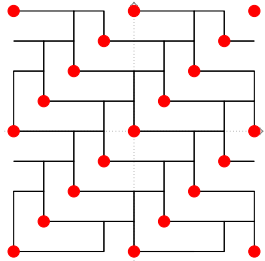
\begin{figure}[ht]
\begin{tikzpicture}[scale=.4,%
  every node/.style={draw,color=red,fill,circle,minimum size=0.5pt,inner sep=1.5pt}]

\draw[very thin,densely dotted,gray,->] (-4.3,0) -- (4.3,0);
\draw[very thin,densely dotted,gray,->] (0,-4.3) -- (0,4.3);

\draw (0,0) -- (3,0) -- (3,1) -- (1,1) -- (1,2) -- (0,2) -- (0,0);
\draw (1,1) -- (4,1) -- (4,2) -- (2,2) -- (2,3) -- (1,3) -- (1,1);
\draw (4,2) -- (2,2) -- (2,4) -- (3,4) -- (3,3) -- (4,3);
\draw (3,4) -- (3,3) -- (4,3);
\draw (-1,-1) -- (2,-1) -- (2,0) -- (0,0) -- (0,1) -- (-1,1) -- (-1,-1);
\draw (-2,-2) -- (1,-2) -- (1,-1) -- (-1,-1) -- (-1,0) -- (-2,0) -- (-2,-2);
\draw (-3,-3) -- (0,-3) -- (0,-2) -- (-2,-2) -- (-2,-1) -- (-3,-1) -- (-3,-3);
\draw (-4,-4) -- (-1,-4) -- (-1,-3) -- (-3,-3) -- (-3,-2) -- (-4,-2) -- (-4,-4);
\draw (-1,-4) -- (-1,-3) -- (0,-3) -- (0,-4) -- (2,-4);
\draw (0,-4) -- (3,-4) -- (3,-3) -- (1,-3) -- (1,-2) -- (0,-2) -- (0,-4);
\draw (1,-3) -- (4,-3) -- (4,-2) -- (2,-2) -- (2,-1) -- (1,-1) -- (1,-3);
\draw (3,-4) -- (3,-3) -- (4,-3) -- (4,-4);
\draw (4,-2) -- (2,-2) -- (2,0) -- (3,0) -- (3,-1) -- (4,-1);
\draw (4,0) -- (4,1) -- (3,1) -- (3,-1) -- (4,-1);
\draw (-4,0) -- (-1,0) -- (-1,1) -- (-3,1) -- (-3,2) -- (-4,2) -- (-4,0);
\draw (-3,1) -- (0,1) -- (0,2) -- (-2,2) -- (-2,3) -- (-3,3) -- (-3,1);
\draw (-2,2) -- (1,2) -- (1,3) -- (-1,3) -- (-1,4) -- (-2,4) -- (-2,2);
\draw (-1,4) -- (-1,3) -- (2,3) -- (2,4) -- (0,4);
\draw (-4,3) -- (-2,3) -- (-2,4) -- (-4,4);
\draw (-4,2) -- (-3,2) -- (-3,3) -- (-4,3);
\draw (-4,-1) -- (-2,-1) -- (-2,0) -- (-4,0);
\draw (-4,-2) -- (-3,-2) -- (-3,-1) -- (-4,-1); 

\node at (-4,-4) {};
\node at (-3,-3) {};
\node at (-2,-2) {};
\node at (-1,-1) {};
\node at (0,0) {};
\node at (1,1) {};
\node at (2,2) {};
\node at (3,3) {};
\node at (4,4) {};
\node at (-4,0) {};
\node at (-3,1) {};
\node at (-2,2) {};
\node at (-1,3) {};
\node at (0,4) {};
\node at (0,-4) {}; 
\node at (1,-3) {};
\node at (2,-2) {};
\node at (3,-1) {};
\node at (4,0) {};
\node at (-4,4) {};
\node at (4,-4) {};

\end{tikzpicture}
\caption{Tiling corresponding to the unique 4-satisfactory coloring: $\mathbb Z^2=T_4+\Lambda$, the sum being direct. The dots indicate the members of the group $\Lambda$.}
\label{fig:lattice}
\end{figure}

\begin{remark} \label{rmk:lattice2}
We can easily address question \ref{q:inverse} for multiplicative colorings via lemma \ref{lem:lattice2}. Suppose $c$ is a multiplicative $n$-satisfactory coloring. Let $\Lambda$ 
be the image under $t$ of the color class of 1, so any color class $B$ is a coset $t(i_0)+\Lambda$. The coloring $c'$ derived from the tiling $T_n+B$ has color classes 
$t(i)+(t(i_0)+\Lambda)$ for $i\in[n]$, but these are precisely the color classes of $c$, as $(c(ii_0):i\in[n])$ is just a permutation of $[n]$.
\end{remark}

We now establish the main result of this subsection.

\begin{theorem} \label{thm:inv}
A translation invariant $n$-satisfactory coloring is multiplicative.
\end{theorem}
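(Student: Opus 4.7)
The plan is to construct an abelian group structure on $[n]$ directly from $c$ and verify that $c$ is multiplicative with respect to it. Interpret $c$ as a function $c\colon K_n \to [n]$ with $c(i) = i$ for $i \in [n]$. The crucial tool will be the functional identity
$$ c(km) = c(k \cdot c(m)) \quad \mbox{ for all } k, m \in K_n, \qquad (\star) $$
which I would deduce from translation invariance: the equality $c_k = c$ says that $c\circ\mu_k$ (with $\mu_k$ multiplication by $k$) has the same color classes as $c$, and since both $c$ and $c_k$ are normalized to be the identity on $[n]$, they coincide as functions; unraveling the definition of $c_k$ then yields $(\star)$.

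Next I would define $a \oplus b := c(ab)$ for $a, b \in [n]$, which is a well-defined binary operation since $c(ab)\in[n]$. Commutativity is immediate, and $1$ is a two-sided identity because $c(i)=i$ on $[n]$. Since $c$ is $n$-satisfactory, the map $b\mapsto c(ab)$ is a bijection of $[n]$ (the numbers $a,2a,\dots,na$ receive all $n$ colors), so each $a\in[n]$ has a $\oplus$-inverse. Associativity should fall out of two applications of $(\star)$: writing $e=c(ab)\in[n]$, one computes
$$ (a \oplus b) \oplus d = c(ed) = c(c(ab) \cdot d) = c(abd) = c(a \cdot c(bd)) = a \oplus (b \oplus d), $$
where the third and fourth equalities are instances of $(\star)$ with pairs $(d,ab)$ and $(a,bd)$. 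Hence $([n],\oplus)$ is an abelian group of order $n$.

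Finally, multiplicativity of $c$ should follow by iterating $(\star)$: for arbitrary $k, m \in K_n$,
$$ c(km) = c(k\cdot c(m)) = c(c(m)\cdot k) = c(c(m)\cdot c(k)) = c(c(k)\cdot c(m)) = c(k) \oplus c(m), $$
where the second and fourth equalities are commutativity of integer multiplication and the third is $(\star)$ with $c(m)$ and $k$ in the roles of $k$ and $m$. The main obstacle is extracting $(\star)$ cleanly from the definition of translation invariance — keeping the normalization convention straight so that the equation $c_k=c$ can be read as an equality of functions rather than merely of equivalence relations. Once $(\star)$ is in hand, all remaining steps are short algebraic manipulations driven by that single identity.
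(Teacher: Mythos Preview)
Your proof is correct and takes essentially the same approach as the paper's. The only cosmetic difference is that you isolate the identity $(\star)$ as an explicit lemma and then invoke it repeatedly, whereas the paper derives the needed instances of $(\star)$ inline: it notes that $c(m)=c(c(m))$ (since $c(m)\in[n]$) and then uses translation invariance to conclude $c(km)=c(k\cdot c(m))$, which is precisely your $(\star)$; associativity and multiplicativity then follow by the same computations you give.
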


\begin{proof}
Let $c$ be translation invariant and $n$-satisfactory. Using the convention that $c(i)=i$ for $i\in[n]$, we need to verify that the map $\oplus\!:[n]\times[n]\to[n]$ given by 
$i\oplus j=c(ij)$ defines a group operation on $[n]$. Clearly, $\oplus$ is commutative and, for any $j$, the sequence $(i\oplus j:i\in[n])$ is a permutation of $[n]$. The issue is 
whether $\oplus$ is associative, that is, whether for $i,j,k\in[n]$ we have 
 $$ c(c(ij)k)=c(ic(jk)). $$

To see this, note that $c(ij)=c(c(ij))$ and therefore, by translation invariance, $c(ijk)=c(c(ij)k)$. Similarly, $c(ijk)=c(ic(jk))$, and we are done.

Finally, we must check that for any $a,b\in K_n$, $c(ab)=c(a)\oplus c(b)$. For this, note that $c(a)=c(c(a))$ and $c(b)=c(c(b))$ so, by translation invariance, 
 $$ c(ab)=c(c(a)b)=c(c(a)c(b))=c(a)\oplus c(b), $$
where the last equality is by definition.
\end{proof}

We close by reminding the reader of the periodic tiling conjecture of Jeffrey Lagarias and Yang Wang, see \cite{LagariasWang96}, the relevant version of which 
in our setting asks whether, given any finite set $T\subseteq \mathbb Z^l$, if $T$ tiles $\mathbb Z^l$, then it also does so periodically. With $l=\pi(n)$ and $T=T_n$, this asks 
whether the existence of an $n$-satisfactory coloring implies the existence of a periodic one. We remark that many of the examples of 6-satisfactory colorings constructed in 
theorem \ref{thm:n6} below are periodic, which shows that a periodic coloring needs not be translation invariant. However, note that in general, no period in the direction of 3 
of the periodic examples exhibited in that result is a factor of 6.

\subsection{A multiplicative coloring of $p^2-p$}

In this short subsection we argue that question \ref{q:problem} has a positive answer for $n=p^2-p$ with $p$ prime, by exhibiting a multiplicative coloring of $n$.

\begin{theorem} \label{thm:p2-p}
For any prime $p$, there is a $G$-coloring of $K_{p^2-p}$, where 
 $$ G=(\mathbb Z/p^2\mathbb Z)^*. $$
\end{theorem}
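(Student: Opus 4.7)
The plan is to construct a partial $G$-isomorphism $h\colon[n]\to G$ for $n=p^2-p$ and $G=(\mathbb{Z}/p^2\mathbb{Z})^*$ and then invoke theorem~\ref{thm:partialisomorphism}. The cardinalities match since $|G|=\phi(p^2)=p(p-1)=n$, and $G$ is abelian, so the hypotheses of that theorem are available.

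I partition $[n]$ into its $(p-1)^2$ members coprime to $p$ and its $p-1$ multiples of $p$, namely $p,2p,\ldots,(p-1)p$. Representing $G$ by the integers in $\{1,\ldots,p^2-1\}$ coprime to $p$, I split $G$ correspondingly into $\{a\in[p^2-p] : p\nmid a\}$ (of size $(p-1)^2$) and $\{p^2-p+j : j\in[p-1]\}$ (of size $p-1$; each such element is $\equiv j\pmod p$ and hence coprime to $p$). I then define
\[
h(a) \;=\; \begin{cases} a & \text{if } p\nmid a,\\ p^2-k & \text{if } a=kp \text{ with } k\in[p-1]. \end{cases}
\]
Both pieces are injective and land in the two complementary subsets of $G$ just described, so $h$ is a bijection.

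To verify the partial isomorphism property, I fix $a,b,ab\in[n]$ and split into cases. If $p\nmid a$ and $p\nmid b$, then $ab\leq n<p^2$ is also coprime to $p$, so $h(ab)=ab\equiv h(a)h(b)\pmod{p^2}$. If both $a$ and $b$ are multiples of $p$, then $ab\geq p^2>n$, so this case is vacuous. Finally, if $a=kp$ with $k\in[p-1]$ and $p\nmid b$, then $ab=(kb)p\in[n]$ forces $kb\leq p-1$, and
\[
h(a)h(b) \;=\; (p^2-k)b \;\equiv\; -kb \;\equiv\; p^2-kb \pmod{p^2},
\]
which is exactly $h((kb)p)=h(ab)$. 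Theorem~\ref{thm:partialisomorphism} then extends $h$ to the desired $G$-coloring of $K_n$.

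The argument is essentially a direct computation once one guesses the correct formula $h(kp)=p^2-k$. The only minor subtlety is recognizing in advance that the multiplicative constraint among the multiples of $p$ forces $h(kp)\equiv Ck\pmod{p^2}$ for some fixed $C\in G$, and that $C=-1$ is the unique sign making the image of these $p-1$ elements land precisely in the complementary subset $\{p^2-p+j:j\in[p-1]\}$ of $G$ not already hit by the coprime-to-$p$ residues.
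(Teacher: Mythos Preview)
Your proof is correct and essentially identical to the paper's: you define the same bijection $h(a)=a$ for $p\nmid a$ and $h(kp)=p^2-k$, verify the partial $G$-isomorphism property by the same three-case split, and then invoke theorem~\ref{thm:partialisomorphism}. The only difference is cosmetic notation and your added closing remark explaining why $C=-1$ is the natural choice, which the paper does not include.
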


The group $G$ in the theorem is isomorphic to $\mathbb Z/(p^2-p)\mathbb Z$, but visualizing it as indicated was essential to identifying this coloring.

\begin{proof}
For $i\in[p^2-p]$ such that $p\nmid i$, let $g_i\equiv i \pmod{p^2}$, and for $j\in[p-1]$, let $g_{pj}\equiv p^2-j \pmod {p^2}$, so that $g_1,g_2,\dots,g_{p^2-p}$ lists the 
elements of a reduced residue system modulo $p^2$.

We claim that $g_{ij}=g_ig_j$, whenever $ij\leq n$. If $p\nmid i,j$, then the statement is clear. If $p\nmid i$, but $j=pj'$, then 
 $$ g_ig_j\equiv i(p^2-j')\equiv -ij'\equiv g_{ij}\pmod{p^2}. $$ 
Finally, if $p\mid i,j$, then $ij$ is too large. For $G=(\mathbb Z/p^2\mathbb Z)^*$, this shows that the map 
$g\!:[p^2-p]\to G$ given by $i\mapsto g_i$ is a partial $G$-isomorphism, and therefore there is a $G$-coloring of $K_{p^2-p}$, by theorem \ref{thm:partialisomorphism}.
\end{proof}

\subsection{Multiplicative colorings for $n\le8$} \label{sub:nle8}

In this subsection we list all $G$-sat\-is\-fac\-to\-ry groups for $6 \le |G|\le 8$, thus determining all multiplicative colorings with at most eight colors. The construction is relatively 
simple in each case and proceeds by explicitly exhibiting the multiplication table of all possible $G$-satisfactory groups $([n],\oplus)$ for $6\le n\le 8$. For instance, we use the 
fact that if $([n],\oplus)$ is $G$-satisfactory for some $G$, then 
 $$ \{2\oplus a :  a \in [n], 2a > n\} $$ 
coincides with the set of odd integers in $[n]$, and repeatedly make use of the fact that $\oplus$ must be associative and commutative, that each row and column of the 
multiplication table must be a permutation of $[n]$, and that the order in $([n],\oplus)$ of any element $a$ must divide $n$. In a sense, identifying these colorings is akin to solving 
a Sudoku puzzle. The complete analysis is perhaps a bit too tedious to present in full. We provide essentially all details for $n=8$, the most involved case, and 
sketch the cases $n=6,7$; these sketches can be fleshed out along the same lines as for $n=8$ but more straightforwardly. 

\begin{itemize}
\item
To begin with, the only abelian $G$ of size 6 is $\mathbb Z/6\mathbb Z$, and there are precisely five $\mathbb Z/6\mathbb Z$-colorings of $K_6$.
\end{itemize}

To see this, begin by building the partial multiplication table of a putative $G$-satisfactory group $([6],\oplus)$. The only entries we know originally are those of the form 
$a\oplus b$ with $ab\le 6$ (in which case $a\oplus b=ab$). Note that $\{2\oplus a:a>3\}=\{1,3,5\}$. We consider three cases, according to the value of $a$ with $2\oplus a=1$.

If $2\oplus 5=1$, then $4\oplus 5=2$ and all values of the table are completely determined from the elementary observations two paragraphs above; the result is table 
\ref{table:z6547} below. 

If $2\oplus 6=1$, similarly all values are completely determined; the result is table \ref{table:z613}. 

If $2\oplus 4=1$, then $2\oplus 5=3$ (it cannot be 5 since already $1\oplus5=5$) and $2\oplus 6=5$. Thus, $4\oplus2=2\oplus4=1$, $4\oplus 3=2\oplus 6=5$, 
$4\oplus4=2\oplus(2\oplus4)=2\oplus1=2$, and $4\oplus6\ne6$, so $4\oplus 5=6$ and $4\oplus6=3$. We cannot complete the table just yet, but we do as soon as we choose the 
value of $3\oplus 3$, which must be one of 1, 2, or 4; the results are shown in tables \ref{table:z6103}, \ref{table:z6satisfactory} and \ref{table:z6487}, respectively.

The resulting five colorings are all strongly representable, namely, by $7=1\cdot 6+1$, $13=2\cdot 6+1$, $103=17\cdot 6+1$, $487=81\cdot 6+1$, and $547=91\cdot 6+1$, and 
the tables are listed in the increasing order of these primes. Note that the construction given in the proof of theorem \ref{thm:p2-p} for $p=3$ results in table \ref{table:z6547}.

\begin{table}[ht]
\begin{tabular}{c||c|c|c|c|c|c|} 
$\oplus$&1&2&3&4&5&6\\
\hline \hline
1&1&2&3&4&5&6\\
2&2&4&6&5&3&1\\
3&3&6&4&1&2&5\\
4&4&5&1&3&6&2\\
5&5&3&2&6&1&4\\
6&6&1&5&2&4&3\\
\hline
\end{tabular}\vspace{3mm}
\caption{$\mathbb Z/6\mathbb Z$-coloring strongly represented by $13=2\cdot 6+1$.}
\label{table:z613}
\vspace{-7mm}
\end{table}

\begin{table}[ht]
\begin{tabular}{c||c|c|c|c|c|c|} 
$\oplus$&1&2&3&4&5&6\\
\hline \hline
1&1&2&3&4&5&6\\
2&2&4&6&1&3&5\\
3&3&6&1&5&4&2\\
4&4&1&5&2&6&3\\
5&5&3&4&6&2&1\\
6&6&5&2&3&1&4\\
\hline
\end{tabular}\vspace{3mm}
\caption{$\mathbb Z/6\mathbb Z$-coloring strongly represented by $103=17\cdot 6+1$.}
\label{table:z6103}
\vspace{-7mm}
\end{table}

\begin{table}[ht]
\begin{tabular}{c||c|c|c|c|c|c|} 
$\oplus$&1&2&3&4&5&6\\
\hline \hline
1&1&2&3&4&5&6\\
2&2&4&6&1&3&5\\
3&3&6&4&5&2&1\\
4&4&1&5&2&6&3\\
5&5&3&2&6&1&4\\
6&6&5&1&3&4&2\\
\hline
\end{tabular}\vspace{3mm}
\caption{$\mathbb Z/6\mathbb Z$-coloring strongly represented by $487=81\cdot 6+1$.}
\label{table:z6487}
\vspace{-7mm}
\end{table}

\begin{table}[ht]
\begin{tabular}{c||c|c|c|c|c|c|} 
$\oplus$&1&2&3&4&5&6\\
\hline \hline
1&1&2&3&4&5&6\\
2&2&4&6&3&1&5\\
3&3&6&1&5&4&2\\
4&4&3&5&6&2&1\\
5&5&1&4&2&6&3\\
6&6&5&2&1&3&4\\
\hline
\end{tabular}\vspace{3mm}
\caption{$\mathbb Z/6\mathbb Z$-coloring strongly represented by $547=91\cdot 6+1$; this is also the $(\mathbb Z/9\mathbb Z)^*$-coloring given by the proof of theorem 
\ref{thm:p2-p}.}
\label{table:z6547}
\vspace{-7mm}
\end{table}

\begin{itemize}
\item
The only abelian $G$ of size 7 is $\mathbb Z/7\mathbb Z$, and there are precisely six $\mathbb Z/7\mathbb Z$-colorings of $K_7$. 
\end{itemize}

We argue as before, by starting with the partial multiplication table of a putative $G$-satisfactory group $([7],\oplus)$ and analyzing cases. Note that $2\oplus 4 \ne 1$, since 1 is 
the identity of $([7],\oplus)$ and every member of $\mathbb Z/7\mathbb Z$ other than the identity has order~7. Also, $2\oplus 6 = 4\oplus 3 \ne 3$. Similarly, $2\oplus a \ne a$ for 
$a = 5,7$. Hence, the sequence $(2\oplus a : 4 \le a \le 7)$ is a permutation of the numbers $1,3,5,7$ that does not begin with 1, does not have 5 as its second element, does not 
have 3 as its third element, and does not end with 7.

Although nine permutations satisfy these requirements, $(2\oplus a : 4 \le a \le 7)$ cannot be any of the sequences $(3,7,1,5)$, $(5,1,7,3)$, and $(7,3,5,1)$: the first would imply that 
2 has order 5, while the other two would imply that it has order 4. For instance, if $(2\oplus a : 4 \le a \le 7) = (3,7,1,5)$, then $4\oplus 2=3$ and $6\oplus 2=1$, so $4\oplus 4=6$ and 
$6\oplus 2=2^{\oplus 5}$.

The remaining six sequences lead indeed to the multiplication table of a $\mathbb Z/7\mathbb Z$-satisfactory group. 

They are all strongly representable, by 
$659=94\cdot 7+1$, $1429=204\cdot 7+1$, $2087=298\cdot 7+1$, $3557=508\cdot7+1$, $17431=2490\cdot7+1$, and $21911=3130\cdot 7+1$, and are described by tables 
\ref{table:z7659}, \ref{table:z71429}, \ref{table:z72087}, \ref{table:z73557}, \ref{table:z717431}, and \ref{table:z721911}, respectively.

\begin{table}[ht]
\begin{tabular}{c||c|c|c|c|c|c|c|} 
$\oplus$&1&2&3&4&5&6&7\\
\hline \hline
1&1&2&3&4&5&6&7\\
2&2&4&6&5&3&7&1\\
3&3&6&2&7&1&4&5\\
4&4&5&7&3&6&1&2\\
5&5&3&1&6&7&2&4\\
6&6&7&4&1&2&5&3\\
7&7&1&5&2&4&3&6\\
\hline
\end{tabular}\vspace{3mm}
\caption{$\mathbb Z/7\mathbb Z$-coloring strongly represented by $659$.}
\label{table:z7659}
\vspace{-7mm}
\end{table}

\begin{table}[ht]
\begin{tabular}{c||c|c|c|c|c|c|c|} 
$\oplus$&1&2&3&4&5&6&7\\
\hline \hline
1&1&2&3&4&5&6&7\\
2&2&4&6&3&7&5&1\\
3&3&6&7&5&2&1&4\\
4&4&3&5&6&1&7&2\\
5&5&7&2&1&3&4&6\\
6&6&5&1&7&4&2&3\\
7&7&1&4&2&6&3&5\\
\hline
\end{tabular}\vspace{3mm}
\caption{$\mathbb Z/7\mathbb Z$-coloring strongly represented by $1429$.}
\label{table:z71429}
\vspace{-7mm}
\end{table}

\begin{table}[ht]
\begin{tabular}{c||c|c|c|c|c|c|c|} 
$\oplus$&1&2&3&4&5&6&7\\
\hline \hline
1&1&2&3&4&5&6&7\\
2&2&4&6&3&1&7&5\\
3&3&6&5&7&4&1&2\\
4&4&3&7&6&2&5&1\\
5&5&1&4&2&7&3&6\\
6&6&7&1&5&3&2&4\\
7&7&5&2&1&6&4&3\\
\hline
\end{tabular}\vspace{3mm}
\caption{$\mathbb Z/7\mathbb Z$-coloring strongly represented by $2087$.}
\label{table:z72087}
\vspace{-7mm}
\end{table}

\begin{table}[ht]
\begin{tabular}{c||c|c|c|c|c|c|c|} 
$\oplus$&1&2&3&4&5&6&7\\
\hline \hline
1&1&2&3&4&5&6&7\\
2&2&4&6&7&1&5&3\\
3&3&6&2&5&7&4&1\\
4&4&7&5&3&2&1&6\\
5&5&1&7&2&6&3&4\\
6&6&5&4&1&3&7&2\\
7&7&3&1&6&4&2&5\\
\hline
\end{tabular}\vspace{3mm}
\caption{$\mathbb Z/7\mathbb Z$-coloring strongly represented by $3557$.}
\label{table:z73557}
\vspace{-7mm}
\end{table}

\begin{table}[ht]
\begin{tabular}{c||c|c|c|c|c|c|c|} 
$\oplus$&1&2&3&4&5&6&7\\
\hline \hline
1&1&2&3&4&5&6&7\\
2&2&4&6&7&3&1&5\\
3&3&6&7&1&4&5&2\\
4&4&7&1&5&6&2&3\\
5&5&3&4&6&2&7&1\\
6&6&1&5&2&7&3&4\\
7&7&5&2&3&1&4&6\\
\hline
\end{tabular}\vspace{3mm}
\caption{$\mathbb Z/7\mathbb Z$-coloring strongly represented by $17431$.}
\label{table:z717431}
\vspace{-7mm}
\end{table}

\begin{table}[ht]
\begin{tabular}{c||c|c|c|c|c|c|c|} 
$\oplus$&1&2&3&4&5&6&7\\
\hline \hline
1&1&2&3&4&5&6&7\\
2&2&4&6&5&7&1&3\\
3&3&6&5&1&2&7&4\\
4&4&5&1&7&3&2&6\\
5&5&7&2&3&6&4&1\\
6&6&1&7&2&4&3&5\\
7&7&3&4&6&1&5&2\\
\hline
\end{tabular}\vspace{3mm}
\caption{$\mathbb Z/7\mathbb Z$-coloring strongly represented by $21911$.}
\label{table:z721911}
\vspace{-7mm}
\end{table}

\begin{itemize}
\item
There are three abelian groups of order 8, namely $\mathbb Z/8\mathbb Z$, $\mathbb Z/2\mathbb Z\times\mathbb Z/4\mathbb Z$, and 
$\mathbb Z/2\mathbb Z\times\mathbb Z/2\mathbb Z\times\mathbb Z/2\mathbb Z$. There are no 
$\mathbb Z/2\mathbb Z\times\mathbb Z/2\mathbb Z\times\mathbb Z/2\mathbb Z$-satisfactory groups, for the simple reason that $2\oplus 2=4\ne1$. There are precisely four 
$\mathbb Z/2\mathbb Z\times\mathbb Z/4\mathbb Z$-colorings of $K_8$, and four $\mathbb Z/8\mathbb Z$-colorings admitting strong representatives. There are also six 
additional $\mathbb Z/8\mathbb Z$-colorings that do not admit strong representatives.
\end{itemize}

We provide some details. Consider first the sequence $(2\oplus a: 5\le a\le 8)$, noting that it must be a permutation of the numbers $1,3,5,7$ that does not begin with 5 and does 
not have 7 as a third element. Moreover, 3 cannot be the second element, since $2\oplus 6 = 3$ would imply that $4\oplus 6=6$. This means that the sequence must be one of the 
following: $(1,5,3,7)$, $(1,7,3,5)$, $(1,7,5,3)$, $(3,1,5,7)$, $(3,5,1,7)$, $(3,7,1,5)$, $(3,7,5,1)$, $(7,1,3,5)$, $(7,1,5,3)$, $(7,5,1,3)$, or $(7,5,3,1)$.

However, $(1,7,3,5)$, $(3,5,1,7)$, and $(7,1,5,3)$ are not possible.

\begin{enumerate}
\item[a.] 
Consider $(1,7,3,5)$: if $2\oplus 6=7$ and $2\oplus 7=3$, then $8\oplus 3=3$.
\item[b.]
Consider $(3,5,1,7)$: if $2\oplus 5=3$ and $2\oplus 6=5$, then again $8\oplus 3=3$.
\item[c.]
Consider $(7,1,5,3)$: if $2\oplus 6=1$ and $2\oplus 8=3=4\oplus 4$, then $4^{\oplus 3} =3\oplus 4=1$, against Lagrange's theorem.
\end{enumerate}

Of the remaining eight sequences, six of them determine $\oplus$ uniquely as shown below. In all cases, the resulting group is $\mathbb Z/8\mathbb Z$-satisfactory and 2 is a 
generator. As we will see below, none of the associated colorings is strongly representable.

For $(1, 5, 3, 7)$, see table \ref{table:z81537}; for $(1, 7, 5, 3)$, see table \ref{table:z81753}; for $(3, 1, 5, 7)$, see table \ref{table:z83157}; for $(3, 7, 1, 5)$, see table 
\ref{table:z83715}; for $(7, 1, 3, 5)$, see table \ref{table:z87135}; and for $(7, 5, 1, 3)$, see table \ref{table:z87513}.

\begin{table}[ht]
\begin{tabular}{c||c|c|c|c|c|c|c|c|} 
$\oplus$&1&2&3&4&5&6&7&8\\
\hline \hline
1&1&2&3&4&5&6&7&8\\
2&2&4&6&8&1&5&3&7\\
3&3&6&4&5&7&8&2&1\\
4&4&8&5&7&2&1&6&3\\
5&5&1&7&2&6&3&8&4\\
6&6&5&8&1&3&7&4&2\\
7&7&3&2&6&8&4&1&5\\
8&8&7&1&3&4&2&5&6\\
\hline
\end{tabular}\vspace{3mm}
\caption{$\mathbb Z/8\mathbb Z$-coloring corresponding to the sequence $(1,5,3,7)$.}
\label{table:z81537}
\vspace{-7mm}
\end{table}

\begin{table}[ht]
\begin{tabular}{c||c|c|c|c|c|c|c|c|} 
$\oplus$&1&2&3&4&5&6&7&8\\
\hline \hline
1&1&2&3&4&5&6&7&8\\
2&2&4&6&8&1&7&5&3\\
3&3&6&1&7&8&2&4&5\\
4&4&8&7&3&2&5&1&6\\
5&5&1&8&2&7&3&6&4\\
6&6&7&2&5&3&4&8&1\\
7&7&5&4&1&6&8&3&2\\
8&8&3&5&6&4&1&2&7\\
\hline
\end{tabular}\vspace{3mm}
\caption{$\mathbb Z/8\mathbb Z$-coloring corresponding to the sequence $(1,7,5,3)$.}
\label{table:z81753}
\vspace{-7mm}
\end{table}

\begin{table}[ht]
\begin{tabular}{c||c|c|c|c|c|c|c|c|} 
$\oplus$&1&2&3&4&5&6&7&8\\
\hline \hline
1&1&2&3&4&5&6&7&8\\
2&2&4&6&8&3&1&5&7\\
3&3&6&7&1&8&5&4&2\\
4&4&8&1&7&6&2&3&5\\
5&5&3&8&6&4&7&2&1\\
6&6&1&5&2&7&3&8&4\\
7&7&5&4&3&2&8&1&6\\
8&8&7&2&5&1&4&6&3\\
\hline
\end{tabular}\vspace{3mm}
\caption{$\mathbb Z/8\mathbb Z$-coloring corresponding to the sequence $(3,1,5,7)$.}
\label{table:z83157}
\vspace{-7mm}
\end{table}

\begin{table}[ht]
\begin{tabular}{c||c|c|c|c|c|c|c|c|} 
$\oplus$&1&2&3&4&5&6&7&8\\
\hline \hline
1&1&2&3&4&5&6&7&8\\
2&2&4&6&8&3&7&1&5\\
3&3&6&4&7&2&8&5&1\\
4&4&8&7&5&6&1&2&3\\
5&5&3&2&6&1&4&8&7\\
6&6&7&8&1&4&5&3&2\\
7&7&1&5&2&8&3&6&4\\
8&8&5&1&3&7&2&4&6\\
\hline
\end{tabular}\vspace{3mm}
\caption{$\mathbb Z/8\mathbb Z$-coloring corresponding to the sequence $(3,7,1,5)$.}
\label{table:z83715}
\vspace{-7mm}
\end{table}

\begin{table}[ht]
\begin{tabular}{c||c|c|c|c|c|c|c|c|} 
$\oplus$&1&2&3&4&5&6&7&8\\
\hline \hline
1&1&2&3&4&5&6&7&8\\
2&2&4&6&8&7&1&3&5\\
3&3&6&5&1&4&7&8&2\\
4&4&8&1&5&3&2&6&7\\
5&5&7&4&3&1&8&2&6\\
6&6&1&7&2&8&3&5&4\\
7&7&3&8&6&2&5&4&1\\
8&8&5&2&7&6&4&1&3\\
\hline
\end{tabular}\vspace{3mm}
\caption{$\mathbb Z/8\mathbb Z$-coloring corresponding to the sequence $(7,1,3,5)$.}
\label{table:z87135}
\vspace{-7mm}
\end{table}

\begin{table}[ht]
\begin{tabular}{c||c|c|c|c|c|c|c|c|} 
$\oplus$&1&2&3&4&5&6&7&8\\
\hline \hline
1&1&2&3&4&5&6&7&8\\
2&2&4&6&8&7&5&1&3\\
3&3&6&1&5&4&2&8&7\\
4&4&8&5&3&1&7&2&6\\
5&5&7&4&1&3&8&6&2\\
6&6&5&2&7&8&4&3&1\\
7&7&1&8&2&6&3&5&4\\
8&8&3&7&6&2&1&4&5\\
\hline
\end{tabular}\vspace{3mm}
\caption{$\mathbb Z/8\mathbb Z$-coloring corresponding to the sequence $(7,5,1,3)$.}
\label{table:z87513}
\vspace{-7mm}
\end{table}

The remaining two sequences do not contain sufficient information to determine $\oplus$. What they determine of the multiplication table is shown in table \ref{table:partial3751} 
for $(3,7,5,1)$ and in table \ref{table:partial7531} for $(7,5,3,1)$.

\begin{table}[ht]
\begin{tabular}{c||c|c|c|c|c|c|c|c|} 
$\oplus$&1&2&3&4&5&6&7&8\\
\hline \hline
1&1&2&3&4&5&6&7&8\\
2&2&4&6&8&3&7&5&1\\
3&3&6&  &7&  &  &  &5\\
4&4&8&7&1&6&5&3&2\\
5&5&3&  &6&  &  &  &7\\
6&6&7&  &5&  &  &  &3\\
7&7&5&  &3&  &  &  &6\\
8&8&1&5&2&7&3&6&4\\
\hline
\end{tabular}\vspace{3mm}
\caption{The partial multiplication table determined by the sequence $(3,7,5,1)$.}
\label{table:partial3751}
\vspace{-7mm}
\end{table}

\begin{table}[ht]
\begin{tabular}{c||c|c|c|c|c|c|c|c|} 
$\oplus$&1&2&3&4&5&6&7&8\\
\hline \hline
1&1&2&3&4&5&6&7&8\\
2&2&4&6&8&7&5&3&1\\
3&3&6&  &5&  &  &  &7\\
4&4&8&5&1&3&7&6&2\\
5&5&7&  &3&  &  &  &6\\
6&6&5&  &7&  &  &  &3\\
7&7&3&  &6&  &  &  &5\\
8&8&1&7&2&6&3&5&4\\
\hline
\end{tabular}\vspace{3mm}
\caption{The partial multiplication table determined by the sequence $(7,5,3,1)$.}
\label{table:partial7531}
\vspace{-7mm}
\end{table}

Note that in both cases we have $2^{\oplus 4} = 1$. We conclude by observing that the value of $3\oplus 3 = a$ completely determines the tables, and any of the four options for 
$a$ (namely, 1, 2, 4, or 8) is possible, see tables \ref{table:3751} and \ref{table:7531}.

\begin{table}[ht]
\begin{tabular}{c||c|c|c|c|c|c|c|c|} 
$\oplus$&1&2&3&4&5&6&7&8\\
\hline \hline
1&1&2&3&4&5&6&7&8\\
2&2&4&6&8&3&7&5&1\\
3&3&6&$a$&7&$8\oplus a$&$2\oplus a$&$4\oplus a$&5\\
4&4&8&7&1&6&5&3&2\\
5&5&3&$8\oplus a$&6&$4\oplus a$&$a$&$2\oplus a$&7\\
6&6&7&$2\oplus a$&5&$a$&$4\oplus a$&$8\oplus a$&3\\
7&7&5&$4\oplus a$&3&$2\oplus a$&$8\oplus a$&$a$&6\\
8&8&1&5&2&7&3&6&4\\
\hline
\end{tabular}\vspace{3mm}
\caption{Group corresponding to the sequence $(3,7,5,1)$ with $a=1$, 2, 4, or 8.}
\label{table:3751}
\vspace{-7mm}
\end{table}

\begin{table}[ht]
\begin{tabular}{c||c|c|c|c|c|c|c|c|} 
$\oplus$&1&2&3&4&5&6&7&8\\
\hline \hline
1&1&2&3&4&5&6&7&8\\
2&2&4&6&8&7&5&3&1\\
3&3&6&$a$&5&$4\oplus a$&$2\oplus a$&$8\oplus a$&7\\
4&4&8&5&1&3&7&6&2\\
5&5&7&$4\oplus a$&3&$a$&$8\oplus a$&$2\oplus a$&6\\
6&6&5&$2\oplus a$&7&$8\oplus a$&$4\oplus a$&$a$&3\\
7&7&3&$8\oplus a$&6&$2\oplus a$&$a$&$4\oplus a$&5\\
8&8&1&7&2&6&3&5&4\\
\hline
\end{tabular}\vspace{3mm}
\caption{Group corresponding to the sequence $(7,5,3,1)$ with $a=1$, 2, 4, or 8.}
\label{table:7531}
\vspace{-7mm}
\end{table}

In both cases, we obtain $\mathbb Z/8\mathbb Z$-satisfactory groups if and only if $a = 2$ or 8. The associated colorings admit strong representatives, as follows: for the sequence 
$(3, 7, 5, 1)$, if $a = 2$, take $5417 = 677\cdot 8+1$, and if $a = 8$, take $117017 = 14627\cdot 8+1$. For the sequence $(7,5,3,1)$, if $a=2$, take $3617=452\cdot 8+1$, and if 
$a=8$, take $17=2\cdot 8+1$.

If instead we let $a = 1$ or 4, we obtain $\mathbb Z/2\mathbb Z \times\mathbb Z/4\mathbb Z$-satisfactory groups. If $a = 1$, in both cases, the unique group homomorphism that 
maps 2 to $(0,1)$ and 3 to $(1,0)$ is an isomorphism between $([8],\oplus)$ and $\mathbb Z/2\mathbb Z \times\mathbb Z/4\mathbb Z$. If $a = 4$ and the sequence is $(3, 7, 5, 1)$, 
the corresponding isomorphism is obtained by considering the homomorphism that maps 2 to $(0,1)$ and 5 to $(1,0)$. If $a = 4$ and the sequence is $(7,5,3,1)$, consider instead 
the homomorphism that maps 2 to $(0,1)$ and 7 to $(1,0)$.

Finally, we argue that the colorings associated with the first six $\mathbb Z/8\mathbb Z$-satisfactory groups we listed are not strongly representable. For this, simply note that if they 
were, any strong representative must be of the form $p = 8k + 1$, so $2^{4k}\equiv 1\pmod p$. But $2^{4k} = (2^4)^k$, so the corresponding coloring $c$ must satisfy 
 $$ c(2\oplus 8) = c(2^4) = 1 = c(1), $$ 
that is, we must have $2 \oplus 8 = 1$. 

\section{Groupless numbers and nonmultiplicative colorings} \label{sec:groupless}

In this section we recall results proving that not all numbers admit multiplicative colorings, and argue that not all satisfactory colorings are multiplicative.

\subsection{Groupless numbers} \label{subsec:groupless}

\begin{theorem}[Forcade-Pollington {\cite{ForcadePollington90}}] \label{thm:195}
There are positive integers $n$ for which no multiplicative colorings exist. The smallest such $n$ is $n=195$.
\end{theorem}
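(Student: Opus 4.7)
My plan is to use Theorem 4.12 to recast the question: a multiplicative coloring of $K_n$ exists exactly when there is a partial $G$-isomorphism $h\colon[n]\to G$ for some abelian group $G$ of order $n$. So the claim splits into a positive and a negative half. For the positive half (every $n<195$ admits a multiplicative coloring), I would argue by direct enumeration. Table \ref{table:linear} already handles $n\le 31$ via partial $\mathbb Z/n\mathbb Z$-isomorphisms; for $32\le n\le 194$ one loops over the (finitely many) abelian groups $G$ of order $n$ and, for each, runs a search for a bijection $h\colon[n]\to G$ compatible with the partial graph of multiplication. The search tree is small because each choice of $h(p)$ for a small prime $p$ determines $h(p^k)$ for all $k$ with $p^k\le n$, and the product constraints $h(ij)=h(i)+h(j)$ propagate aggressively. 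This is precisely what Forcade--Pollington carried out \cite{ForcadePollington90}, and an extended version of Table \ref{table:linear} (through $n\le 54$) already appears in the second author's thesis.

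For the negative half, suppose $h\colon[195]\to G$ is a partial $G$-iso\-mor\-phism. Since $195=3\cdot 5\cdot 13$ is squarefree, the classification of finite abelian groups forces $G=\mathbb Z/195\mathbb Z$, whose possible element orders are the divisors $\{1,3,5,13,15,39,65,195\}$. Because $h$ is a bijection sending $1$ to $0$, each prime $p\le 195$ satisfies: if $p^k\le 195$, then $h(p),2h(p),\dots,kh(p)$ are $k$ distinct nonzero elements, so $\operatorname{ord}(h(p))>k$. This immediately gives
\begin{itemize}
\item $\operatorname{ord}(h(2))\ge 8$, since $2^7=128\le 195$, so $\operatorname{ord}(h(2))\in\{13,15,39,65,195\}$;
\item $\operatorname{ord}(h(3))\ge 5$, since $3^4=81$, so $\operatorname{ord}(h(3))\in\{5,13,15,39,65,195\}$;
\item $\operatorname{ord}(h(5))\ge 4$, since $5^3=125$, so $\operatorname{ord}(h(5))\in\{5,13,15,39,65,195\}$;
\item $\operatorname{ord}(h(p))\ge 3$ for $p\in\{7,11,13\}$.
\end{itemize}
I would then combine these with the product constraints $h(p)+h(q)=h(pq)$ for every pair of primes $p,q$ with $pq\le 195$, and with the ``mixed'' relations $h(2^a3^b5^c\cdots)=ah(2)+bh(3)+\cdots$ whenever the indicated product lies in $[195]$. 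The strategy is to project through the three quotients $\pi_q\colon\mathbb Z/195\mathbb Z\to\mathbb Z/q\mathbb Z$ for $q\in\{3,5,13\}$: each composition $\pi_q\circ h$ is still a partial additive map, and the surjective fiber sizes are rigidly prescribed ($65,39,15$ preimages of each class for $q=3,5,13$ respectively). Case analysis on $\operatorname{ord}(h(2))$ and $\operatorname{ord}(h(3))$ then forces, through the prime-product relations, a color class of the wrong size or a coincidence $h(a)=h(b)$ for some $a\ne b$ in $[195]$.

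The main obstacle, and the reason I would not attempt to reproduce the full argument here, is the sheer bookkeeping: although each individual case is mechanical, there are several choices of $\operatorname{ord}(h(2))$ and $\operatorname{ord}(h(3))$, a similar spread for $h(5)$, and for each the elimination requires tracking dozens of compatibility conditions coming from the 44 primes $\le 195$ and their in-range products. Forcade and Pollington carried out this elimination systematically in \cite{ForcadePollington90}, and for the purposes of this paper I would cite their argument rather than reprove it, noting that every step reduces to checking finitely many identities in $\mathbb Z/195\mathbb Z$. The upshot is that no assignment satisfies all constraints simultaneously, establishing that $195$ is groupless and, together with the enumeration for $n<195$, that it is the smallest such $n$.
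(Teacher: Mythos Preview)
Your proposal is correct and aligns with the paper's treatment: the paper does not give an independent proof but simply attributes the result to Forcade and Pollington's exhaustive search algorithm in \cite{ForcadePollington90}, exactly as you do. Your sketch of the structural constraints for $n=195$ (the unique abelian group is $\mathbb Z/195\mathbb Z$, with the order restrictions on $h(2),h(3),h(5),\dots$) is a sound and helpful indication of why the computer search is finite and tractable, but the paper itself does not attempt even this much detail and simply cites the reference.
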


In particular, this refutes the natural conjectures that a careful probabilistic argument or even a more careful appeal to Chebator\"ev's theorem than the one in 
\S\,\ref{subsec:density} would prove the existence of multiplicative colorings for all $n$.\footnote{See for instance 
\href{https://mathoverflow.net/q/26358/}{https://mathoverflow.net/q/26358/}}

The motivation for this result was Graham's conjecture, discussed in \S\,\ref{subsec:graham}. The proof follows from the work initiated by R. W. Forcade, J. W. Lamoreaux, and A. D. 
Pollington when they posed the following question in 1986 \cite{ForcadeLamoreauxPollington86}.

\begin{question} \label{q:forcade}
Is it possible, changing only those products that exceed $n$, to make the set $[n]$ into a multiplicative group?
\end{question}

In our terminology, this is asking whether $G$-satisfactory groups exist for all values of $n$. In their article, they conjecture that the answer to question \ref{q:forcade} is affirmative. 
In their discussion, they also ask (in different terms) whether strong representatives exist for all values of $n$.

In 1990, perhaps surprisingly, Forcade and Pollington answered question \ref{q:forcade} negatively \cite{ForcadePollington90}. To do so they employed an exhaustive search 
algorithm that identified $195$ as the least value of $n$ for which there are no $G$-satisfactory groups.

Say that $n$ is \emph{groupless} if it admits no $G$-satisfactory group. Table \ref{table:groupless} lists all groupless $n \le 500$. The data for the table was supplied by Rodney 
Forcade. This is sequence OEIS A204811 in the Online Encyclopedia of Integer Se\-quen\-ces\footnote{See \href{http://oeis.org/A204811}{http://oeis.org/A204811}}.

\begin{table}[ht]
\begin{tabular}{cccccccccc} 
195 & 248 & 279 & 311 & 337 & 367 & 394 & 423 & 451 & 480 \\
205 & 252 & 283 & 313 & 339 & 368 & 395 & 424 & 452 & 481 \\
208 & 253 & 286 & 314 & 340 & 370 & 397 & 425 & 454 & 482 \\
211 & 255 & 287 & 317  & 343 & 373 & 399 & 427 & 457 & 484 \\
212 & 257 & 289 & 318 & 344 & 374 & 401 & 433 & 458 & 487 \\ 
214 & 258 & 290 & 319 & 347 & 376 & 402 & 434 & 461 & 489 \\
217 & 259 & 291 & 322 & 349 & 377 & 403 & 435 & 463 & 492 \\
218 & 263 & 294 & 324 & 351 & 379 & 406 & 436 & 465 & 493 \\
220 & 264 & 295 & 325 & 353 & 381 & 407 & 437 & 467 & 494 \\
227 & 265 & 297 & 327 & 355 & 383 & 409 & 439 & 469 & 496 \\
229 & 266 & 298 & 328 & 356 & 385 & 412 & 444 & 471 & 497 \\
235 & 267 & 301 & 331 & 357 & 387 & 415 & 445 & 472 & 499 \\
242 & 269 & 302 & 332 & 361 & 389 & 416 & 446 & 474 & 500 \\
244 & 271 & 304 & 333 & 362 & 390 & 417 & 447 & 475 \\
246 &274 & 305 & 334 & 364 & 391 & 421 & 449 & 477 \\
247 & 275 & 307 & 335 & 365 & 392 & 422 & 450 & 479
\end{tabular}\vspace{3mm}
\caption{Groupless $n\le 500$.}
\label{table:groupless}
\vspace{-7mm}
\end{table}

In \cite{BlackburnMcKee12}, S. R. Blackburn and J. F. McKee study partial $\mathbb Z/n\mathbb Z$-isomorphisms, in the context of constructing what they call $k$-radius 
sequences over a finite alphabet. In their paper, our partial isomorphisms are dubbed \emph{bijective logarithms of length $n$} or, simply, \emph{logarithms of length $n$}. 
Several references where they are studied are provided in their section 5.1. Their theorem 5.1, for which they further refer to \cite[theorem 3]{Mills63}, which makes essential 
use of Chebotar\"ev's theorem, seems particularly relevant to our question \ref{qu:natdensity}.

In \cite[section 5.2]{BlackburnMcKee12}, question \ref{q:problem} is considered (independently), in the language of tilings of powers of $\mathbb Z$. In 
\cite[section 9.2]{BlackburnMcKee12}, Blackburn and McKee discuss the number of partial isomorphisms for a given $n$ and present a table listing those $n \le 300$ that do 
not admit partial $\mathbb Z/n\mathbb Z$-isomorphisms (their table coincides with the beginning of our table \ref{table:groupless}). They further ask, motivated by numerical 
evidence, whether it is the case that if $n$ is large enough, then there is a partial $\mathbb Z/n\mathbb Z$-isomorphism if and only if either $n+1$ or $2n+1$ is prime. Note, 
however, that this turns out to be false, by theorem \ref{thm:p2-p}.

Nevertheless, the suggestion from the computations of \cite{BlackburnMcKee12} is that the set of groupless $n$ is large. It is thus natural to ask the following, as suggested by the 
referee.

\begin{question} \label{q:ref}
Is the set of groupless $n$ infinite, or even of natural density 1?
\end{question}

\subsection{Nonmultiplicative 6-satisfactory colorings} \label{subs:n6}

We finish the paper by proving that nonmultiplicative colorings exist, and in fact there may be many of them. Here we treat the case of $n=6$ colors. In \S\,\ref{subs:n8} we 
consider $n=8$.

Recall that question \ref{q:equation} asks, for a given $n$-satisfactory coloring $c$ and $k\in K_n$, to find all $n$-satisfactory colorings $d$ with $d_k=c$ (meaning that two 
numbers $m,m'$ receive the same color under $c$ if and only if $km$ and $km'$ receive the same color under $d$), that is, all extensions of a given coloring (or, in the sense 
introduced just before proposition \ref{proposition:tiling}, a given essential tiling) of $\mathbb O_n$ to one of $\mathbb O_n-t(k)$.

\begin{theorem} \label{thm:nonmultiplicative}
Let $c$ be the 6-satisfactory coloring with strong representative 7, that is, $c(m)=(m\bmod 7)$ for $m\in K_6$. There are exactly six 6-satisfactory colorings $d$ such that 
$d_5=c$. In particular, there are nonmultiplicative 6-satisfactory colorings. 
\end{theorem}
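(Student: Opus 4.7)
The plan is to reduce the count to a finite case analysis on the $3$-smooth grid. Unpacking $d_5=c$ through the definition of $\dv$ in proposition \ref{prop:appropriate}, the condition is equivalent to the existence of a permutation $\tau\colon[6]\to[6]$ with $\tau(1)=5$ such that $d(5m)=\tau(c(m))$ for every $m\in K_6$; in particular $d$ is completely determined on $5\cdot K_6$ by $\tau$. Write $\phi(a,b)=d(2^a3^b)$ for $(a,b)\in\mathbb N^2$. For each $3$-smooth $k=2^a3^b$, the $6$-satisfactoriness of $d$ on $\{k,2k,3k,4k,5k,6k\}$, once we incorporate that $d(5k)=\tau(c(k))$ is already known, amounts to the statement that the $P$-pentomino
\[P_{(a,b)}=\{(a,b),\,(a{+}1,b),\,(a,b{+}1),\,(a{+}2,b),\,(a{+}1,b{+}1)\}\]
receives under $\phi$ exactly the five colors $[6]\setminus\{M(a,b)\}$, where $M(a,b):=\tau(c(2^a3^b))$. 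Since $c(2^a3^b)\equiv 3^{2a+b}\pmod 7$ by multiplicativity of $c$, we have $M(a,b)=\mu(2a+b\bmod 6)$ for a bijection $\mu\colon\mathbb Z/6\mathbb Z\to[6]$. The convention $\phi(0,0),\phi(1,0),\phi(0,1),\phi(2,0),\phi(1,1)=1,2,3,4,6$ together with $P_{(0,0)}$ gives $\mu(0)=5$.

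Next I would run a finite case analysis on the pentominoes with small $(a,b)$ to pin down most of $\mu$ and the initial values of $\phi$. From $P_{(1,0)}$ one reads $\{\phi(3,0),\phi(2,1),\mu(2)\}=\{1,3,5\}$, and from $P_{(0,1)}$, $\{\phi(0,2),\phi(2,1),\phi(1,2),\mu(1)\}=\{1,2,4,5\}$, so $\phi(2,1)\in\{1,5\}$. The option $\phi(2,1)=1$ is eliminated since it forces either $\mu(2)=5=\mu(0)$ (contradicting the bijectivity of $\mu$) or $\phi(3,0)=5$, the latter making the missing color $\mu(0)=5$ appear as an actual color on $P_{(3,0)}$. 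Hence $\phi(2,1)=5$. An analogous argument using $P_{(0,2)}$ rules out $\phi(3,0)=3$: it would force $\tau(2)=\mu(2)=1$, and then the relation $\{\phi(0,2),\phi(1,2),\mu(1)\}=\{1,2,4\}$ from $P_{(0,1)}$ makes the forbidden color $1$ appear either as $\phi(0,2)$ or $\phi(1,2)$ inside $P_{(0,2)}$, or forces $\mu(1)=1=\mu(2)$. Thus $\phi(3,0)=1$ and $\mu(2)=3$. Propagating through $P_{(2,0)},P_{(1,1)}$ and the pentominoes $P_{(3,0)},P_{(0,2)},P_{(1,2)}$ in the same style forces $\mu(4)=6,\phi(4,0)=2,\phi(3,1)=3$ (every other sub-case is eliminated because $\phi(2,2)=3=\mu(2)$ or $\phi(1,2)=\mu(4)$ would contaminate a later pentomino). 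At this stage $(\mu(0),\mu(2),\mu(4))=(5,3,6)$ is determined, so $\{\mu(1),\mu(3),\mu(5)\}=\{1,2,4\}$ as a set, leaving at most $3!=6$ possibilities.

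Finally, for each of the six permutations of $(\mu(1),\mu(3),\mu(5))$ over $\{1,2,4\}$ I would verify that the pentomino constraints determine $\phi$ on all of $\mathbb N^2$ uniquely and consistently. Iteratively applying the constraint at each new pentomino fills in $\phi$ cell-by-cell; consistency is most cleanly certified by observing that the resulting coloring is periodic with period $3$ in $a$ and period $6$ in $b$ (inherited from the orders of $2$ and $3$ modulo $7$), reducing the check to a $3\times 6$ fundamental rectangle. The choice $(\mu(1),\mu(3),\mu(5))=(1,2,4)$ reproduces the multiplicative coloring $d=c$ itself (with $\tau$ the restriction to $[6]$ of multiplication by $5$ modulo $7$); the other five permutations yield distinct $\tau$'s, hence distinct $d$'s, none of which can be multiplicative: by theorem \ref{thm:inv} any multiplicative $d$ is translation invariant, so $d_5=d$, and the hypothesis $d_5=c$ combined with $d\ne c$ excludes this. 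This also yields the final assertion that nonmultiplicative $6$-satisfactory colorings exist. The main technical obstacle is the global consistency check for the five nonmultiplicative candidates; the periodicity reduction turns it into a finite verification, though a K\H onig's lemma compactness argument along the lines of remark \ref{rem:konig} would provide an equally viable fallback if one preferred not to rely on the explicit periodic form.
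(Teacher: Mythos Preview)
Your approach matches the paper's: both reduce to the two-dimensional pentomino constraint on the $3$-smooth layer, with the paper's free data being the permutation $(u,v,w)$ of $\{3,5,6\}$ on the bottom row at $\gamma=-1$, while yours is the permutation $(\mu(1),\mu(3),\mu(5))$ of $\{1,2,4\}$. These parametrize the same six colorings, and your case analysis forcing $(\mu(0),\mu(2),\mu(4))=(5,3,6)$ is correct.

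There is, however, a real error in your consistency step: the claimed period $6$ in $b$ is false for the five nonmultiplicative choices. The pentomino recursion forces the closed form $\phi(a,b)=\mu(b{+}1{+}2a)$ (argument read modulo $6$) for every $b\ge 1$, so those rows do repeat with period $6$; but row $b=0$ is pinned to $(1,2,4)$ by the convention $d(i)=i$, and this equals $(\mu(1),\mu(3),\mu(5))$ only when $d=c$. Hence $\phi$ is only \emph{eventually} periodic in $b$, and a $3\times 6$ rectangle is not a fundamental domain. The repair is immediate once you have the row formula: for $b\ge 1$ each pentomino $P_{(a,b)}$ exhibits $\mu$ at the five residues $2a{+}b{+}1,\dots,2a{+}b{+}5$ modulo $6$ and omits exactly $\mu(2a{+}b)$ as required, while $P_{(0,0)},P_{(1,0)},P_{(2,0)}$ are a direct check. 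Your K\H onig's-lemma fallback is more than is needed, since the row-by-row filling is already unique.
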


The coloring $c$ is particularly nicely behaved, which simplifies the analysis that follows. As a $\mathbb Z/6\mathbb Z$-coloring, it is determined by table 
\ref{table:z6satisfactory}. The reader may consider just as well the 6-satisfactory coloring $c'$ with strong representative 487, that is, $c'(m)=(m^{81}\bmod{487})$, for 
which the result also holds with a similar, but slightly more involved, geometric analysis than the one we suggest below for $c$. As a $\mathbb Z/6\mathbb Z$-coloring, 
$c'$ is determined by table \ref{table:z6487}.

\begin{proof}
Note that $c$ is multiplicative, $c(9)=2$ and $c(3^5)=5$, so that 
\begin{equation} \label{eqn:col-6}
c(2^\alpha3^\beta5^\gamma)=(3^{2\alpha+\beta+5\gamma\bmod 6}\bmod7)
\end{equation}
and, in particular, for fixed $\beta,\gamma$, $\{c(2^\alpha 3^\beta 5^\gamma):\alpha\in\mathbb N\}$ is either $\{1,2,4\}$ or $\{3,5,6\}$, the values alternating depending on the 
parity of $\beta+\gamma$. Indeed, $\beta+\gamma$ and $\beta+5\gamma$ have the same parity, and the powers of 3 are given modulo 7 by $1,3,2,6,4,5,1,3,\dots$. Moreover, 
since $c(2^3)=1$ while $c(i)=i$ for $i=1,2,4$, the value of $c(2^\alpha 3^\beta 5^\gamma)$ is periodic in $\alpha$ with period 3, see figure \ref{figure:coloringc}.

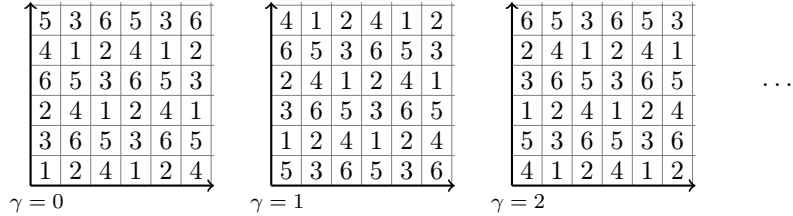
\begin{figure}[ht]
\begin{tikzpicture}[scale=.4]

\draw[step=1cm,gray,very thin] (0,0) grid (6.1,6.1); 
\draw[thick,->] (0,0) -- (6.1,0);
\draw[thick,->] (0,0) -- (0,6.1);
\draw (-1,-1.2) node[anchor=south west]{\footnotesize$\gamma=0$};

\draw[step=1cm,gray,very thin] (8,0) grid (14.1,6.1);
\draw[thick,->] (8,0) -- (14.1,0);
\draw[thick,->] (8,0) -- (8,6.1);
\draw (7,-1.2) node[anchor=south west]{\footnotesize$\gamma=1$};

\draw[step=1cm,gray,very thin] (16,0) grid (22.1,6.1);
\draw[thick,->] (16,0) -- (22.1,0);
\draw[thick,->] (16,0) -- (16,6.1);
\draw (15,-1.2) node[anchor=south west]{\footnotesize$\gamma=2$};

\draw (24,3) node[anchor=south west]{$\dots$};

    \setcounter{row}{1}
    \setrow {5}{3}{6}{5}{3}{6}  
    \setrow {4}{1}{2}{4}{1}{2}  
    \setrow {6}{5}{3}{6}{5}{3}
    \setrow {2}{4}{1}{2}{4}{1}  
    \setrow {3}{6}{5}{3}{6}{5}  
    \setrow {1}{2}{4}{1}{2}{4}
    
\setcounter{row}{1}
\setrowm {4}{1}{2}{4}{1}{2}{9}
\setrowm{6}{5}{3}{6}{5}{3}{9}
\setrowm{2}{4}{1}{2}{4}{1}{9}
\setrowm{3}{6}{5}{3}{6}{5}{9}
\setrowm{1}{2}{4}{1}{2}{4}{9}
\setrowm{5}{3}{6}{5}{3}{6}{9}

\setcounter{row}{1}
\setrowm{6}{5}{3}{6}{5}{3}{17}
\setrowm{2}{4}{1}{2}{4}{1}{17}
\setrowm{3}{6}{5}{3}{6}{5}{17}
\setrowm{1}{2}{4}{1}{2}{4}{17}
\setrowm{5}{3}{6}{5}{3}{6}{17}
\setrowm{4}{1}{2}{4}{1}{2}{17}

\end{tikzpicture}
\caption{Tiling of $\mathbb O_6$ (in the sense of figures \ref{figure:3tiling}, \ref{figure:4tiling}) corresponding to the 6-satisfactory coloring $c$.}
\label{figure:coloringc}
\end{figure}

We  proceed to verify the claim that there are precisely six 6-satisfactory colorings $d$ with $d_5=c$. Note that if $d$ is multiplicative, then $d_k=d$ for all $k$. In particular, 
five of these colorings are nonmultiplicative.

We think of the problem of finding $d$ as that of extending the coloring in figure \ref{figure:coloringc} one extra layer down in the axis corresponding to the prime 5, so that 
$\operatorname{dom}(d)=\frac15\cdot K_6$ and also the points in the 2-dimensional grid corresponding to $\gamma=-1$ are colored. The condition we should maintain is that 
if a copy of the 3-dimensional polyomino $T_6$ (depicted in figure \ref{fig:t6}) is completely contained in the extended orthant, then it must contain all colors. 

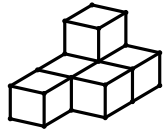
\begin{figure}[ht]
\begin{tikzpicture}[line cap=round,line join=round,>=triangle 45,x=1.0cm,y=1.0cm, scale = .5]

\draw [line width=1.2pt] (0.,0.)-- (0.,-0.8722001507199982);
\draw [line width=1.2pt] (0.,0.)-- (0.9,-0.2);
\draw [line width=1.2pt] (0.,0.)-- (0.724532936614435,0.4280339547378501);
\draw [line width=1.2pt] (0.,-0.8722001507199982)-- (0.9,-1.0722001507199983);
\draw [line width=1.2pt] (0.9,-1.0722001507199983)-- (0.9,-0.2);
\draw [line width=1.2pt] (0.9,-0.2)-- (1.624532936614435,0.22803395473785013);
\draw [line width=1.2pt] (1.624532936614435,0.22803395473785013)-- (1.6245329366144348,-0.6441661959821483);
\draw [line width=1.2pt] (1.6245329366144348,-0.6441661959821483)-- (0.9,-1.0722001507199983);
\draw [line width=1.2pt] (1.6245329366144348,-0.6441661959821483)-- (2.524532936614435,-0.8441661959821484);
\draw [line width=1.2pt] (2.524532936614435,-0.8441661959821484)-- (2.524532936614435,0.028033954737850107);
\draw [line width=1.2pt] (2.524532936614435,0.028033954737850107)-- (1.624532936614435,0.22803395473785013);
\draw [line width=1.2pt] (1.624532936614435,0.22803395473785013)-- (2.3490658732288696,0.6560679094757002);
\draw [line width=1.2pt] (2.3490658732288696,0.6560679094757002)-- (3.2490658732288695,0.4560679094757001);
\draw [line width=1.2pt] (3.2490658732288695,0.4560679094757001)-- (3.249065873228869,-0.4161322412442986);
\draw [line width=1.2pt] (3.249065873228869,-0.4161322412442986)-- (2.524532936614435,-0.8441661959821484);
\draw [line width=1.2pt] (2.524532936614435,0.028033954737850107)-- (3.2490658732288695,0.4560679094757001);
\draw [line width=1.2pt] (3.2490658732288695,0.4560679094757001)-- (3.973598809843305,0.8841018642135503);
\draw [line width=1.2pt] (3.973598809843305,0.8841018642135503)-- (3.973598809843304,0.011901713493551225);
\draw [line width=1.2pt] (3.973598809843304,0.011901713493551225)-- (3.249065873228869,-0.4161322412442986);
\draw [line width=1.2pt] (3.0735988098433045,1.0841018642135505)-- (3.973598809843305,0.8841018642135503);
\draw [line width=1.2pt] (2.3490658732288696,0.6560679094757002)-- (3.0735988098433045,1.0841018642135505);
\draw [line width=1.2pt] (0.724532936614435,0.4280339547378501)-- (1.44906587322887,0.8560679094757002);
\draw [line width=1.2pt] (1.44906587322887,0.8560679094757002)-- (2.3490658732288696,0.6560679094757002);
\draw [line width=1.2pt] (0.724532936614435,0.4280339547378501)-- (1.624532936614435,0.22803395473785013);
\draw [line width=1.2pt] (1.44906587322887,0.8560679094757002)-- (1.44906587322887,1.7282680601956997);
\draw [line width=1.2pt] (1.44906587322887,1.7282680601956997)-- (2.173598809843304,2.1563020149335497);
\draw [line width=1.2pt] (2.173598809843304,2.1563020149335497)-- (3.0735988098433045,1.9563020149335495);
\draw [line width=1.2pt] (3.0735988098433045,1.9563020149335495)-- (2.34906587322887,1.5282680601956997);
\draw [line width=1.2pt] (2.34906587322887,1.5282680601956997)-- (1.44906587322887,1.7282680601956997);
\draw [line width=1.2pt] (2.34906587322887,1.5282680601956997)-- (2.3490658732288696,0.6560679094757002);
\draw [line width=1.2pt] (3.0735988098433045,1.9563020149335495)-- (3.0735988098433045,1.0841018642135505);

\begin{scriptsize}
\draw [fill=black] (0.,0.) circle (1.5pt);
\draw [fill=black] (0.,-0.8722001507199982) circle (1.5pt);
\draw [fill=black] (0.724532936614435,0.4280339547378501) circle (1.5pt);
\draw [fill=black] (0.9,-0.2) circle (1.5pt);
\draw [fill=black] (0.9,-1.0722001507199983) circle (1.5pt);
\draw [fill=black] (1.624532936614435,0.22803395473785013) circle (1.5pt);
\draw [fill=black] (1.6245329366144348,-0.6441661959821483) circle (1.5pt);
\draw [fill=black] (1.44906587322887,0.8560679094757002) circle (1.5pt);
\draw [fill=black] (2.3490658732288696,0.6560679094757002) circle (1.5pt);
\draw [fill=black] (3.2490658732288695,0.4560679094757001) circle (1.5pt);
\draw [fill=black] (2.524532936614435,0.028033954737850107) circle (1.5pt);
\draw [fill=black] (2.524532936614435,-0.8441661959821484) circle (1.5pt);
\draw [fill=black] (3.249065873228869,-0.4161322412442986) circle (1.5pt);
\draw [fill=black] (3.0735988098433045,1.0841018642135505) circle (1.5pt);
\draw [fill=black] (3.973598809843305,0.8841018642135503) circle (1.5pt);
\draw [fill=black] (3.973598809843304,0.011901713493551225) circle (1.5pt);
\draw [fill=black] (3.0735988098433045,1.9563020149335495) circle (1.5pt);
\draw [fill=black] (2.34906587322887,1.5282680601956997) circle (1.5pt);
\draw [fill=black] (1.44906587322887,1.7282680601956997) circle (1.5pt);
\draw [fill=black] (2.173598809843304,2.1563020149335497) circle (1.5pt);
\end{scriptsize}

\end{tikzpicture}
\caption{The 3-dimensional polyomino $T_6$.}
\label{fig:t6}
\end{figure}

This leads to some restrictions, that we illustrate in figure \ref{figure:coloringd}, where for each $i\in[6]$ we show in the grid for $\gamma=-1$ the places where color $i$ is 
forced (as a result of $i$ not being present in any of the other 5 places within a tile, including the tile's top place in the grid for $\gamma=0$), as well as those where it is 
forbidden (because $i$ is the color of the top place in a tile; we indicate this by graying out the region occupied by the bottom layer of the tile). Since $c$ is periodic, it is easy 
to verify that the pattern suggested in figure \ref{figure:coloringd} indeed continues.

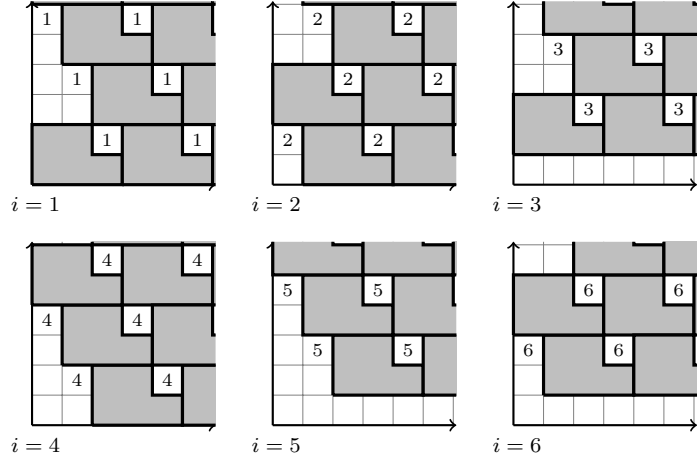
\begin{figure}[ht]
\begin{tikzpicture}[scale=.4]

\draw[step=1cm,gray,very thin] (0,0) grid (6.1,6.1); 
\draw[thick,->] (0,0) -- (6.1,0);
\draw[thick,->] (0,0) -- (0,6.1);
\draw (-1,-1.2) node[anchor=south west]{\footnotesize$i=1$};

\draw[very thick, fill=lightgray] (0,0) -- (0,2) -- (2,2) -- (2,1) -- (3,1) -- (3,0) -- (0,0);
\draw[very thick, fill=lightgray] (3,0) -- (3,2) -- (5,2) -- (5,1) -- (6,1) -- (6,0) -- (3,0);
\draw[very thick, fill=lightgray] (2,2) -- (2,4) -- (4,4) -- (4,3) -- (5,3) -- (5,2) -- (2,2);
\draw[very thick, fill=lightgray] (1,4) -- (1,6) -- (3,6) -- (3,5) -- (4,5) -- (4,4) -- (1,4);
\path[fill=lightgray] (4,4) -- (4,6) -- (6,6) -- (6,5) -- (6.1,5) -- (6.1,4) -- (4,4);
\draw[very thick] (6.1,5) -- (6,5) -- (6,6) -- (4,6) -- (4,4) -- (6.1,4);
\path[fill=lightgray] (0,6.1) -- (0,6) -- (3,6) -- (3,6.1) -- (0,6.1);
\draw[very thick] (0,6.1) -- (0,6) -- (3,6) -- (3,6.1);
\path[fill=lightgray] (3,6.1) -- (3,6) -- (6,6) -- (6,6.1) -- (3,6.1);
\draw[very thick] (3,6.1) -- (3,6) -- (6,6) -- (6,6.1);
\path[fill=lightgray] (6,6.1) -- (6,6) -- (6.1,6) -- (6.1,6.1) -- (6,6.1);
\draw[very thick] (6,6.1) -- (6,6) -- (6.1,6);
\path[fill=lightgray] (6.1,0) -- (6,0) -- (6,2) -- (6.1,2) -- (6.1,0);
\draw[very thick] (6.1,0) -- (6,0) -- (6,2) -- (6.1,2);
\path[fill=lightgray] (6.1,2) -- (5,2) -- (5,4) -- (6.1,4) -- (6.1,2);
\draw[very thick] (6.1,2) -- (5,2) -- (5,4) -- (6.1,4); 

\draw (2.5,1.5) node[anchor=center]{\scriptsize$1$};
\draw (5.5,1.5) node[anchor=center]{\scriptsize$1$};
\draw (1.5,3.5) node[anchor=center]{\scriptsize$1$};
\draw (4.5,3.5) node[anchor=center]{\scriptsize$1$};
\draw (0.5,5.5) node[anchor=center]{\scriptsize$1$};
\draw (3.5,5.5) node[anchor=center]{\scriptsize$1$};

\draw[step=1cm,gray,very thin] (8,0) grid (14.1,6.1);
\draw[thick,->] (8,0) -- (14.1,0);
\draw[thick,->] (8,0) -- (8,6.1);
\draw (7,-1.2) node[anchor=south west]{\footnotesize$i=2$};

\draw[very thick, fill=lightgray] (9,0) -- (9,2) -- (11,2) -- (11,1) -- (12,1) -- (12,0) -- (9,0);
\path[fill=lightgray] (14.1,1) -- (14,1) -- (14,2) -- (12,2) -- (12,0) -- (14.1,0) -- (14.1,1);
\draw[very thick] (14.1,1) -- (14,1) -- (14,2) -- (12,2) -- (12,0) -- (14.1,0); 
\draw[very thick, fill=lightgray] (8,2) -- (8,4) -- (10,4) -- (10,3) -- (11,3) -- (11,2) -- (8,2);
\draw[very thick, fill=lightgray] (11,2) -- (11,4) -- (13,4) -- (13,3) -- (14,3) -- (14,2) -- (11,2);
\path[fill=lightgray] (14.1,4) -- (14,4) -- (14,2) -- (14.1,2) -- (14.1,4);
\draw[very thick] (14.1,4) -- (14,4) -- (14,2) -- (14.1,2);
\draw[very thick, fill=lightgray] (10,4) -- (10,6) -- (12,6) -- (12,5) -- (13,5) -- (13,4) -- (10,4);
\path[fill=lightgray] (14.1,6) -- (13,6) -- (13,4) -- (14.1,4) -- (14.1,6);
\draw[very thick] (14.1,6) -- (13,6) -- (13,4) -- (14.1,4);
\path[fill=lightgray] (9,6.1) -- (9,6) -- (12,6) -- (12,6.1) -- (9,6.1);
\draw[very thick] (9,6.1) -- (9,6) -- (12,6) -- (12,6.1);
\path[fill=lightgray] (12,6.1) -- (12,6) -- (14.1,6) -- (14.1,6.1) -- (12,6.1);
\draw[very thick] (12,6.1) -- (12,6) -- (14.1,6);

\draw (8.5,1.5) node[anchor=center]{\scriptsize$2$};
\draw (11.5,1.5) node[anchor=center]{\scriptsize$2$};
\draw (10.5,3.5) node[anchor=center]{\scriptsize$2$};
\draw (13.5,3.5) node[anchor=center]{\scriptsize$2$};
\draw (9.5,5.5) node[anchor=center]{\scriptsize$2$};
\draw (12.5,5.5) node[anchor=center]{\scriptsize$2$};

\draw[step=1cm,gray,very thin] (16,0) grid (22.1,6.1);
\draw[thick,->] (16,0) -- (22.1,0);
\draw[thick,->] (16,0) -- (16,6.1);
\draw (15,-1.2) node[anchor=south west]{\footnotesize$i=3$};

\draw[very thick, fill=lightgray] (16,1) -- (19,1) -- (19,2) -- (18,2) -- (18,3) -- (16,3) -- (16,1);
\draw[very thick, fill=lightgray] (19,1) -- (22,1) -- (22,2) -- (21,2) -- (21,3) -- (19,3) -- (19,1);
\draw[very thick, fill=lightgray] (18,3) -- (21,3) -- (21,4) -- (20,4) -- (20,5) -- (18,5) -- (18,3);
\path[fill=lightgray] (22.1,3) -- (22,3) -- (22,1) -- (22.1,1) -- (22.1,3);
\draw[very thick] (22.1,3) -- (22,3) -- (22,1) -- (22.1,1);
\path[fill=lightgray] (22.1,5) -- (21,5) -- (21,3) -- (22.1,3) -- (22.1,5);
\draw[very thick] (22.1,5) -- (21,5) -- (21,3) -- (22.1,3);
\path[fill=lightgray] (17,6.1) -- (17,5) -- (20,5) -- (20,6) -- (19,6) -- (19,6.1) -- (17,6.1);
\draw[very thick] (17,6.1) -- (17,5) -- (20,5) -- (20,6) -- (19,6) -- (19,6.1);
\path[fill=lightgray] (20,6.1) -- (20,5) -- (22.1,5) -- (22.1,6) -- (22,6) -- (22,6.1) -- (20,6.1);
\draw[very thick] (20,6.1) -- (20,5) -- (22.1,5);
\draw[very thick] (22.1,6) -- (22,6) -- (22,6.1);

\draw (18.5,2.5) node[anchor=center]{\scriptsize$3$};
\draw (21.5,2.5) node[anchor=center]{\scriptsize$3$};
\draw (17.5,4.5) node[anchor=center]{\scriptsize$3$};
\draw (20.5,4.5) node[anchor=center]{\scriptsize$3$};

\draw[step=1cm,gray,very thin] (0,-8) grid (6.1,-1.9); 
\draw[thick,->] (0,-8) -- (6.1,-8);
\draw[thick,->] (0,-8) -- (0,-1.9);
\draw (-1,-9.2) node[anchor=south west]{\footnotesize$i=4$};

\draw[very thick, fill=lightgray] (2,-8) -- (5,-8) -- (5,-7) -- (4,-7) -- (4,-6) -- (2,-6) -- (2,-8);
\draw[very thick, fill=lightgray] (1,-6) -- (4,-6) -- (4,-5) -- (3,-5) -- (3,-4) -- (1,-4) -- (1,-6);
\draw[very thick, fill=lightgray] (0,-4) -- (3,-4) -- (3,-3) -- (2,-3) -- (2,-2) -- (0,-2) -- (0,-4);
\draw[very thick, fill=lightgray] (3,-4) -- (6,-4) -- (6,-3) -- (5,-3) -- (5,-2) -- (3,-2) -- (3,-4);
\path[fill=lightgray] (6.1,-8) -- (5,-8) -- (5,-6) -- (6.1,-6) -- (6.1,-8);
\draw[very thick] (6.1,-8) -- (5,-8) -- (5,-6) -- (6.1,-6);
\path[fill=lightgray] (6.1,-6) -- (4,-6) -- (4,-4) -- (6,-4) -- (6,-5) --(6.1,-5) -- (6.1,-6);
\draw[very thick] (6.1,-6) -- (4,-6) -- (4,-4) -- (6,-4) -- (6,-5) --(6.1,-5);
\path[fill=lightgray] (6.1,-4) -- (6,-4) -- (6,-2) -- (6.1,-2) -- (6.1,-4);
\draw[very thick] (6.1,-4) -- (6,-4) -- (6,-2) -- (6.1,-2);
\path[fill=lightgray] (2,-1.9) -- (2,-2) -- (5,-2) -- (5,-1.9) -- (2,-1.9);
\draw[very thick] (2,-1.9) -- (2,-2) -- (5,-2) -- (5,-1.9);
\path[fill=lightgray] (5,-1.9) -- (5,-2) -- (6.1,-2) -- (6.1,-1.9) -- (5,-1.9);
\draw[very thick] (5,-1.9) -- (5,-2) -- (6.1,-2);

\draw (1.5,-6.5) node[anchor=center]{\scriptsize$4$};
\draw (4.5,-6.5) node[anchor=center]{\scriptsize$4$};
\draw (0.5,-4.5) node[anchor=center]{\scriptsize$4$};
\draw (3.5,-4.5) node[anchor=center]{\scriptsize$4$};
\draw (2.5,-2.5) node[anchor=center]{\scriptsize$4$};
\draw (5.5,-2.5) node[anchor=center]{\scriptsize$4$};

\draw[step=1cm,gray,very thin] (8,-8) grid (14.1,-1.9);
\draw[thick,->] (8,-8) -- (14.1,-8);
\draw[thick,->] (8,-8) -- (8,-1.9);
\draw (7,-9.2) node[anchor=south west]{\footnotesize$i=5$};

\draw[very thick, fill=lightgray] (10,-7) -- (13,-7) -- (13,-6) -- (12,-6) -- (12,-5) -- (10,-5) -- (10,-7);
\draw[very thick, fill=lightgray] (9,-3) -- (9,-5) -- (12,-5) -- (12,-4) -- (11,-4) -- (11,-3) -- (9,-3);
\path[fill=lightgray] (14.1,-7) -- (13,-7) -- (13,-5) -- (14.1,-5) -- (14.1,-7);
\draw[very thick] (14.1,-7) -- (13,-7) -- (13,-5) -- (14.1,-5);
\path[fill=lightgray] (14.1,-4) -- (14,-4) -- (14,-3) -- (12,-3) -- (12,-5) -- (14.1,-5) -- (14.1,-4);
\draw[very thick] (14.1,-4) -- (14,-4) -- (14,-3) -- (12,-3) -- (12,-5) -- (14.1,-5);
\path[fill=lightgray] (8,-1.9) -- (8,-3) -- (11,-3) -- (11,-2) -- (10,-2) -- (10,-1.9) -- (8,-1.9);
\draw[very thick] (8,-1.9) -- (8,-3) -- (11,-3) -- (11,-2) -- (10,-2) -- (10,-1.9);
\path[fill=lightgray] (11,-1.9) -- (11,-3) -- (14,-3) -- (14,-2) -- (13,-2) -- (13,-1.9) -- (11,-1.9);
\draw[very thick] (11,-1.9) -- (11,-3) -- (14,-3) -- (14,-2) -- (13,-2) -- (13,-1.9);
\path[fill=lightgray] (14,-1.9) -- (14,-3) -- (14.1,-3) -- (14.1,-1.9) -- (14,-1.9);
\draw[very thick] (14,-1.9) -- (14,-3) -- (14.1,-3);

\draw (9.5,-5.5) node[anchor=center]{\scriptsize$5$};
\draw (12.5,-5.5) node[anchor=center]{\scriptsize$5$};
\draw (8.5,-3.5) node[anchor=center]{\scriptsize$5$};
\draw (11.5,-3.5) node[anchor=center]{\scriptsize$5$};

\draw[step=1cm,gray,very thin] (16,-8) grid (22.1,-1.9);
\draw[thick,->] (16,-8) -- (22.1,-8);
\draw[thick,->] (16,-8) -- (16,-1.9);
\draw (15,-9.2) node[anchor=south west]{\footnotesize$i=6$};

\draw[very thick, fill=lightgray] (17,-7) -- (20,-7) -- (20,-6) -- (19,-6) -- (19,-5) -- (17,-5) -- (17,-7);
\draw[very thick, fill=lightgray] (16,-5) -- (16,-3) -- (18,-3) -- (18,-4) -- (19,-4) -- (19,-5) -- (16,-5);
\draw[very thick, fill=lightgray] (19,-5) -- (19,-3) -- (21,-3) -- (21,-4) -- (22,-4) -- (22,-5) -- (19,-5);
\path[fill=lightgray] (22.1,-7) -- (20,-7) -- (20,-5) -- (22,-5) -- (22,-6) -- (22.1,-6) -- (22.1,-7);
\draw[very thick] (22.1,-7) -- (20,-7) -- (20,-5) -- (22,-5) -- (22,-6) -- (22.1,-6);
\path[fill=lightgray] (22.1,-5) -- (22,-5) -- (22,-3) -- (22.1,-3) -- (22.1,-5);
\draw[very thick] (22.1,-5) -- (22,-5) -- (22,-3) -- (22.1,-3);
\path[fill=lightgray] (18,-1.9) -- (18,-3) -- (21,-3) -- (21,-2) -- (20,-2) -- (20,-1.9) -- (18,-1.9);
\draw[very thick] (18,-1.9) -- (18,-3) -- (21,-3) -- (21,-2) -- (20,-2) -- (20,-1.9);
\path[fill=lightgray] (21,-1.9) -- (21,-3) -- (22.1,-3) -- (22.1,-1.9) -- (21,-1.9);
\draw[very thick] (21,-1.9) -- (21,-3) -- (22.1,-3);

\draw (16.5,-5.5) node[anchor=center]{\scriptsize$6$};
\draw (19.5,-5.5) node[anchor=center]{\scriptsize$6$};
\draw (18.5,-3.5) node[anchor=center]{\scriptsize$6$};
\draw (21.5,-3.5) node[anchor=center]{\scriptsize$6$};

\end{tikzpicture}
\caption{Extending $c$ to a 6-satisfactory coloring $d$.}
\label{figure:coloringd}
\end{figure}

The point is that these conditions do not determine $d$ entirely. On the grid corresponding to $\gamma=-1$, all colors are determined except for those in the bottom row, 
corresponding to $\beta=0$, see figure \ref{figure:coloringd-1}.

\begin{figure}[ht]
\begin{tikzpicture}[scale=.4]
\draw[step=1cm,gray,very thin] (0,0) grid (6.1,6.1);
\draw[thick,->] (0,0) -- (6.1,0);
\draw[thick,->] (0,0) -- (0,6.1);

    \setcounter{row}{1}
    \setrow {\scriptsize 1}{\scriptsize 2}{\scriptsize 4}{\scriptsize 1}{\scriptsize 2}{\scriptsize 4}  
    \setrow {\scriptsize 5}{\scriptsize 3}{\scriptsize 6}{\scriptsize 5}{\scriptsize 3}{\scriptsize 6}  
    \setrow {\scriptsize 4}{\scriptsize 1}{\scriptsize 2}{\scriptsize 4}{\scriptsize 1}{\scriptsize 2}
    \setrow {\scriptsize 6}{\scriptsize 5}{\scriptsize 3}{\scriptsize 6}{\scriptsize 5}{\scriptsize 3}  
    \setrow {\scriptsize 2}{\scriptsize 4}{\scriptsize 1}{\scriptsize 2}{\scriptsize 4}{\scriptsize 1}  

\end{tikzpicture}
\caption{Values taken by $d$ on points in $\{a5^{-1}:a\in K_5,5\nmid a\}$.}
\label{figure:coloringd-1}
\end{figure}

As for what colors $d$ must assign to points in that row, what we see is that if $d(5^{-1})=u,d(2\cdot 5^{-1})=v,d(4\cdot 5^{-1})=w$, then $\{u,v,w\}=\{3,5,6\}$, and $d$ still satisfies 
that $d(8\cdot x)=d(x)$ for any $x$ in its domain. But there are precisely six 6-colorings $d$ satisfying these requirements, and any of them is 6-satisfactory, as claimed.
\end{proof}

Note that the effort in the proof of theorem \ref{thm:nonmultiplicative} came in showing that the six colorings we identified are \emph{all} the satisfactory colorings $d$ with 
$d_5=c$. A direct verification would have sufficed if all we wanted was to show that there are at least six such colorings; again, thinking of them as having domain 
$\frac15\cdot K_6$, all of them are given by equation (\ref{eqn:col-6}) except for $\beta=0,\gamma=-1$, where they are given as described in the last paragraph of the proof. 

\begin{remark} \label{rmk:n6}
We can extend the argument of theorem \ref{thm:nonmultiplicative} in a few ways. For instance, their periodicity in the direction of 2 allows us to consider these colorings as 
defined on $\{a/2^n:a\in K_6,n\in\mathbb N\}$. And we can iterate the construction: assign to each permutation of $\{3,5,6\}$ a number in $[6]$, and do the same to the 
permutations of $\{1,2,4\}$. Denote by $[6]^{<\mathbb N}$ the set of finite strings of members of $[6]$, that is, 
 $$ [6]^{<\mathbb N}=\bigcup_{n\in\mathbb N}[6]^n. $$
For $\sigma\in [6]^{<\mathbb N}$ denote its length by $|\sigma|$. Starting with $d^\emptyset=c$, we can associate to each finite string $\sigma\in[6]^{<\mathbb N}$ a coloring 
$d^\sigma$ with the following properties:
\begin{enumerate}
\item 
$\operatorname{dom}(d^\sigma)=\bigl\{\frac{a}{2^n5^m}:a\in K_6,n\in\mathbb N,0\le m\le |\sigma|\bigr\}$ and $d^\sigma$ is 6-satisfactory on its domain.
\item 
The functions $d^{\sigma{}^\frown\langle i\rangle}$ for $i\in[6]$ are all the 6-satisfactory maps $d$ on their domain such that $d_5=d^\sigma$ (under the convention of theorem 
\ref{thm:nonmultiplicative}, where we think of the equation $d_5=d'$ for a given $d'$ as seeking a map $d$ with domain $\frac15\cdot \operatorname{dom}(d')$). 
\item 
Given $d^\sigma$, $d^{\sigma{}^\frown\langle i\rangle}$ is completely determined by its values on numbers of the form $2^\alpha 5^{-|\sigma|-1}$, $\alpha\in\mathbb Z$, and in 
turn these values are given by the permutation associated to $i$ corresponding to $\{3,5,6\}$ if $\sigma$ is even and to $\{1,2,4\}$ if $\sigma$ is odd, as follows: if the permutation 
is $(u_0,u_1,u_2)$, then $d^{\sigma{}^\frown\langle i\rangle}(2^\alpha 5^{-|\sigma|-1})=u_j$, where $\alpha\equiv j\pmod 3$. 
\end{enumerate}
(The proof of this is a straightforward extension of that of theorem \ref{thm:nonmultiplicative}, we omit the details.) In turn, this implies that if 
$\tilde K_6=\{2^\alpha 3^\beta 5^\gamma : \alpha,\gamma\in\mathbb Z,\beta\in\mathbb N\}$, then $|C_{\tilde K_6}|=\mathfrak c$, since to each infinite sequence 
$x\in[6]^{\mathbb Z^+}$ we can associate the coloring 
 $$ d^x=\bigcup_{n\in\mathbb N}d^{x\upharpoonright [n]}, $$
all these colorings are different, and all are 6-satisfactory.

Unfortunately, the argument does not seem to allow for a straightforward extension that would permit us to further extend the domains of these colorings to all of $\hat K_6$. 

The colorings so obtained have a further application, namely, they imply that question \ref{q:inverse2} has a negative answer. Indeed, for generic $x$, use $d^x$ to obtain a partial
tiling by $T_6$ of the image under $t$ of $\tilde K_6$, note that this can be extended to an essential tiling, and let $(d^x)'$ be the coloring coming from this tiling. We see that 
$(d^x)'\ne d^x$. Moreover, this is not an issue of the behavior of partial tiles at the boundary, as this tiling is actually quite tame. In fact, it suffices to take $x$ so that for 
$\gamma=-1$ we use the permutation $(3,5,6)$ and for any other $\gamma$ we use either $(3,6,5)$ or $(1,2,4)$. The point is that tiles contained in the orthant (so 
$\gamma\ge 0$) are colored in a certain pattern (copying the coloring of $T_6$ itself) while tiles involving points using the $(3,5,6)$ permutation are colored differently.
\end{remark}

That the analysis in the proof of theorem \ref{thm:nonmultiplicative} ended up working so neatly is because, in addition to the equation $c(8m)=c(m)$, the coloring $c$ also 
satisfies that for any fixed $\beta,\gamma$ in $\mathbb N$, the set $\{c(2^\alpha 3^\beta 5^\gamma):\alpha\in\mathbb N\}$ is either $\{1,2,4\}$ or $\{3,5,6\}$, and this only 
depends on the parity of $\beta+\gamma$. This is most readily apparent geometrically: consider an essential tiling of $\mathbb O_6$ by $T_6$ induced by $c$. For each 
fixed $\gamma$, the trace on this tiling on the plane grid corresponding to $\gamma$ shows up in horizontal strips of height two, as can be seen in figure \ref{figure:cstrips}. 

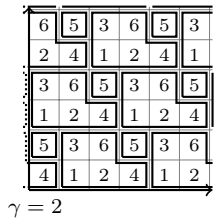
\begin{figure}[ht]
\begin{tikzpicture}[scale=.4]
\draw[step=1cm,gray,very thin] (0,0) grid (6.1,6.1);
\draw[thick,->] (0,0) -- (6.1,0);
\draw[thick,->] (0,0) -- (0,6.1);
\draw (-1,-1.2) node[anchor=south west]{\footnotesize$\gamma=2$};

\setcounter{row}{1}
\setrow{\scriptsize 6}{\scriptsize 5}{\scriptsize 3}{\scriptsize 6}{\scriptsize 5}{\scriptsize 3}
\setrow{\scriptsize 2}{\scriptsize 4}{\scriptsize 1}{\scriptsize 2}{\scriptsize 4}{\scriptsize 1}
\setrow{\scriptsize 3}{\scriptsize 6}{\scriptsize 5}{\scriptsize 3}{\scriptsize 6}{\scriptsize 5}
\setrow{\scriptsize 1}{\scriptsize 2}{\scriptsize 4}{\scriptsize 1}{\scriptsize 2}{\scriptsize 4}
\setrow{\scriptsize 5}{\scriptsize 3}{\scriptsize 6}{\scriptsize 5}{\scriptsize 3}{\scriptsize 6}
\setrow{\scriptsize 4}{\scriptsize 1}{\scriptsize 2}{\scriptsize 4}{\scriptsize 1}{\scriptsize 2}

\draw[thick] (0.1,2.1) -- (0.1,3.9) -- (1.9,3.9) -- (1.9,2.9) -- (2.9,2.9) -- (2.9,2.1) -- (0.1,2.1);
\draw[thick] (3.1,2.1) -- (3.1,3.9) -- (4.9,3.9) -- (4.9,2.9) -- (5.9,2.9) -- (5.9,2.1) -- (3.1,2.1);
\draw[thick] (1.1,0.1) -- (1.1,1.9) -- (2.9,1.9) -- (2.9,0.9) -- (3.9,0.9) -- (3.9,0.1) -- (1.1,0.1);
\draw[thick] (2.1,4.1) -- (2.1,5.9) -- (3.9,5.9) -- (3.9,4.9) -- (4.9,4.9) -- (4.9,4.1) -- (2.1,4.1);
\draw[thick] (0,6.1) -- (0.9,6.1);
\draw[thick,densely dotted] (-0.2,6.1) -- (0,6.1);
\draw[thick] (1.1,6.1) -- (3.9,6.1);
\draw[thick] (4.1,6.1) -- (6.1,6.1);
\draw[thick] (6.1,5.9) -- (5.1,5.9) -- (5.1,4.1) -- (6.1,4.1);
\draw[thick] (0,5.9) -- (0.9,5.9) -- (0.9,4.9) -- (1.9,4.9) -- (1.9,4.1) -- (0,4.1);
\draw[thick,densely dotted] (-0.2,5.9) -- (0,5.9); 
\draw[thick,densely dotted] (-0.2,4.1) -- (0,4.1);
\draw[thick] (6.1,3.9) -- (6.1,2.1);
\draw[thick] (0,0.9) -- (0.9,0.9) -- (0.9,0.1) -- (0,0.1);
\draw[thick,densely dotted] (0,0.9) -- (-0.1,0.9) -- (-0.1,1.9) -- (-0.2,1.9);
\draw[thick,densely dotted] (-0.2,0.1) -- (0,0.1);
\draw[thick,densely dotted] (-0.2,2.1) -- (-0.1,2.1) -- (-0.1,2.9) -- (-0.2,2.9);
\draw[thick] (6.1,0.9) -- (5.9,0.9) -- (5.9,1.9) -- (4.1,1.9) -- (4.1,0.1) -- (6.1,0.1);
\draw[thick] (0.1,1.1) -- (0.9,1.1) -- (0.9,1.9) -- (0.1,1.9) -- (0.1,1.1);
\draw[thick] (3.1,1.1) -- (3.9,1.1) -- (3.9,1.9) -- (3.1,1.9) -- (3.1,1.1);
\draw[thick] (2.1,3.1) -- (2.9,3.1) -- (2.9,3.9) -- (2.1,3.9) -- (2.1,3.1);
\draw[thick] (5.1,3.1) -- (5.9,3.1) -- (5.9,3.9) -- (5.1,3.9) -- (5.1,3.1);
\draw[thick] (1.1,5.1) -- (1.9,5.1) -- (1.9,5.9) -- (1.1,5.9) -- (1.1,5.1);
\draw[thick] (4.1,5.1) -- (4.9,5.1) -- (4.9,5.9) -- (4.1,5.9) -- (4.1,5.1);
\draw[thick,densely dotted] (-0.2,3.1) -- (-0.1,3.1) -- (-0.1,3.9) -- (-0.2,3.9);

\end{tikzpicture}
\caption{Trace of a tiling induced by $c$ on $\{(\alpha,\beta,\gamma):\alpha,\beta\in\mathbb N\}$ for fixed $\gamma$.}
\label{figure:cstrips}
\end{figure}

This suggests a natural approach towards strengthening the conclusion that there are nonmultiplicative colorings, that we now proceed to present.

\begin{theorem} \label{thm:n6}
$|C_{K_6}|=\mathfrak c$.
\end{theorem}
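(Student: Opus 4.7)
The bound $|C_{K_6}| \leq |[6]^{K_6}| = 6^{\aleph_0} = \mathfrak{c}$ is immediate, so it suffices to construct an injection from $\{0,1\}^{\mathbb{Z}^+}$ into $C_{K_6}$. The construction rests on theorem \ref{thm:nonmultiplicative}, the strip observation preceding the theorem statement (illustrated in figure \ref{figure:cstrips}), and the tiling reformulation of proposition \ref{proposition:tiling}. The geometric point is that $T_6$ has extent only two in the $\gamma$-direction, so any tile in $K_6$ occupies exactly two consecutive $\gamma$-layers.

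First I will show that, for each layer $\gamma = k \geq 0$, there are at least two distinct in-place perturbations of $c$ that modify the coloring only on the row $R_k = \{2^\alpha \cdot 5^k : \alpha \in \mathbb{N}\}$ and remain $6$-satisfactory on $K_6$. For $k=0$ this is theorem \ref{thm:nonmultiplicative} itself (the six bottom-row perturbations), rephrased as perturbations on $K_6$ rather than extensions to $\tfrac{1}{5} K_6$. For $k \geq 1$, I apply the same case analysis after translating by $5^k$, now also imposing the two-sided compatibility constraint coming from $c$ at $\gamma = k-1$: the six original permutations of the three bottom-row colors split into those compatible with the periodic dot pattern prescribed by the tiles based at $\gamma = k-1$, and I expect at least two to survive because the row is periodic of period three in $\alpha$ and the dot prescription admits multiple compatible cyclic completions.

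Next I will combine these perturbations independently across suitably separated layers. A perturbation at $R_k$ only affects tiles based at $\gamma \in \{k-1, k\}$, so if I perturb only at even layers $k = 2i$ and keep odd layers as $c$-valued buffers, the affected-tile sets for perturbations at $R_{2i}$ and $R_{2i'}$ ($i \neq i'$) are disjoint. Therefore, for each $x \in \{0,1\}^{\mathbb{Z}^+}$, the coloring $e^x$ defined to agree with $c$ everywhere except at the rows $R_{2i}$, where it applies the perturbation indexed by $x_i$, is still $6$-satisfactory. Since $x_i \neq x'_i$ already gives $e^x \upharpoonright R_{2i} \neq e^{x'} \upharpoonright R_{2i}$, the map $x \mapsto e^x$ is injective, yielding $|C_{K_6}| \geq 2^{\aleph_0} = \mathfrak{c}$.

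The main obstacle will be the single-layer in-place perturbation step for $k \geq 1$: theorem \ref{thm:nonmultiplicative} handles only a fresh layer beneath a fixed one, and one must rework its case analysis to accommodate a two-sided constraint with $c$ fixed both above and below. This reduces to tracking, for each of the six permutations of the bottom-row colors, whether the row values are consistent with the periodic dot pattern prescribed by $c$ at $\gamma = k-1$; I anticipate at least two options to persist. Once this is established, the independence argument above combines them across infinitely many layers to produce the required continuum of colorings of $K_6$.
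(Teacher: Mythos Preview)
Your approach has a genuine gap at the single-row perturbation step, and the gap is fatal rather than merely technical.

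For $k\ge 1$ you want to perturb $c$ only on the row $R_k=\{2^\alpha 5^k:\alpha\in\mathbb N\}$ while keeping $c$ fixed elsewhere, and you hope at least two such perturbations survive the two-sided constraint. In fact exactly one survives, namely $c$ itself. The reason is the asymmetry of $T_6$ in the $\gamma$-direction: the polyomino has five points at $\gamma=0$ and only one (namely $t(5)$) at $\gamma=1$. Thus for any $\alpha'\ge 0$ the tile based at $a=2^{\alpha'}5^{k-1}$ has its five points $a,2a,3a,4a,6a$ all at $\gamma=k-1$, hence fixed as $c$-values, and its single remaining point $5a=2^{\alpha'}5^k$ in $R_k$. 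Since $c$ is $6$-satisfactory those five fixed values are pairwise distinct, so the colour of $5a$ is forced to be the unique missing one, which is $c(5a)$. This determines $e\upharpoonright R_k=c\upharpoonright R_k$ completely; your anticipated freedom does not exist. The very buffer layers you insert to decouple perturbations are what rigidify each perturbed row.

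The case $k=0$ also fails, for a different reason: theorem \ref{thm:nonmultiplicative} cannot be ``rephrased'' as perturbations of $R_0$ within $K_6$. The six colorings $d$ there live on $\tfrac15K_6$; translating them back to $K_6$ requires renormalising so that $e(i)=i$ for $i\in[6]$, and the renormalising permutation depends on the choice of $(u,v,w)$. Only the choice $(u,v,w)=(3,6,5)$ yields $e=c$ on $\gamma\ge 1$; the other five give colorings that differ from $c$ globally, not just on $R_0$. Alternatively, arguing directly: the convention $e(1)=1,\,e(2)=2,\,e(4)=4$ together with the tile constraints at $a=2^{\alpha'}$ force $e(2^\alpha)=c(2^\alpha)$ for all $\alpha$.

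By contrast, the paper works in the $\beta$-direction, where $T_6$ has two points (at $t(3)$ and $t(6)$) rather than one in the upper layer. A tile therefore sits inside a single height-two strip $H_{2k}(\gamma)$, and distinct strips in the plane $\gamma=0$ can be assigned types independently; the $\gamma$-layers for $\gamma\ge 1$ are then determined recursively from those choices. The freedom comes from varying whole strips, not from perturbing single rows against a fixed background.
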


\begin{proof}
Write $\mathbb O_6=\{(\alpha,\beta,\gamma):\alpha,\beta,\gamma\in\mathbb N\}$, identifying each point $(\alpha,\beta,\gamma)$ with the number 
$2^\alpha 3^\beta 5^\gamma\in K_6$. We will define a family of 6-satisfactory colorings $d$ by specifying certain essential tilings of $\mathbb O_6$. For each of them, as in the 
example just discussed, see also figure \ref{figure:cstrips}, the trace of the tiling on any plane $\gamma=\gamma_0$ is naturally organized along horizontal strips of height two 
(which, in particular ensures that any such coloring $d$ satisfies $d(8m)=d(m)$ for all $m$), and that there are many such colorings comes from the fact that different strips are 
independent of each other.

Fixing $\gamma=\gamma_0$, each of the horizontal strips we consider has the form $H_k(\gamma_0)$ for some $k\in\mathbb N$, where 
 $$ H_k(\gamma_0)\coloneqq\{(\alpha,\beta,\gamma_0):\alpha\in\mathbb N, \beta=k\mbox{ or }k+1\}. $$  
We follow our usual convention that $d(i)=i$ for $i\in[6]$, meaning that $T_6$ itself is one of the tiles we use (equivalently, if the tiling is $T_6+B\supset \mathbb O_6$, the sum 
being direct, then $\mathbf 0\in B$). For the examples we consider, the trace of the tiling on a strip has one of six possible types, but it is enough for our purposes to only describe 
three of them. Each description refers to figure \ref{figure:cstrips} and the (essential) tiling induced by $c$ depicted there; for instance, to be of type 1 means to be exactly as the 
tiling of $H_2(2)$ shown in figure \ref{figure:cstrips}. Accordingly, in the descriptions below we omit the sentence ``shown in figure \ref{figure:cstrips}'' each time.

A tiling of a strip is of type
\begin{itemize}
\item 
1 if and only if it is (precisely) the tiling of $H_2(2)$,
\item
2 if and only if it is the tiling of $H_0(2)$, and 
\item
3 if and only if it is the tiling of $H_4(2)$.
\end{itemize}
(All tilings here are actually essential tilings, and we omit the word ``essential'' in what follows.) Note that if in a 6-satisfactory coloring a strip $H_k(\gamma)$ with $k+\gamma$ 
even is of type $j\in[3]$, then $H_{k-1}(\gamma+1)$ is of type $j+1$ (using cyclic notation, so that 4 is identified with 1). This includes the case $k=0$ in cases where $\gamma$ 
is even.

We are ready to describe the colorings, they are of the form $d^x$ for $x\in[3]^{\mathbb Z^+}$, where $d^x$ is defined as follows: the tiling induced by the coloring $d^x$ has the 
following trace on the plane $\gamma=0$: $H_0(0)$ has type 1 (as required by our convention). For each  $k>0$, $H_{2k}(0)$ has type $x(k)$. For $\gamma>0$, the traces are 
recursively defined according to the last sentence of the previous paragraph. 

It is immediate from the construction that if $x\ne x'$, then $d^x\ne d^{x'}$, so we have defined $3^{\aleph_0}=\mathfrak c$ colorings of $K_6$, and that all of them are 
6-satisfactory,  since what we have actually done is to give $d^x$ via an explicit essential tiling of $\mathbb O_6$. 
\end{proof}

Note that, for $x^1$ the constant function taking the value 1, the coloring $d^{x^1}$ just described is the $\mathbb Z_6$-coloring with strong representative 103 and 
determined by table \ref{table:z6103}. Also, note that the set $\{d^x:x\in[3]^{\mathbb Z^+}\}$ not only has the same size as the reals but is in fact a perfect subset of $C_{K_6}$. 

There are infinitely many ways of choosing $x$ so that the resulting $d^x$ is periodic in the direction of 3 (and therefore periodic), and we can moreover ensure that this period is 
not a factor of 6, in fact, $o(3)$ can be taken to be arbitrarily large. As remarked in \S\,\ref{subs:translation}, this indicates that the existence of periodic $n$-satisfactory colorings 
is not enough to ensure the existence of translation invariant (i.e., multiplicative) ones.

\begin{remark}
We previously obtained a different proof of theorem \ref{thm:n6} that used the fact that $C_{K_6}$ is closed in its natural topology. The argument proceeded in two stages. First,
theorem \ref{thm:nonmultiplicative} was extended using a variant of the construction in remark \ref{rmk:n6} to obtain an infinite countable family of 6-satisfactory colorings such 
that for each $c$ in the family there were six colorings $d$ in the family with $d_5=c$. The members of the family were arranged as nodes on a complete senary tree, in such a 
way that the colorings along each branch converged, and the limit colorings so produced were pairwise different. Further, all these colorings $d$ satisfy that $d(8m)=d(m)$ and 
$d(3m)=d(10m)$ for any $m\in K_6$. Lon Mitchell also found an elegant proof; his key insight was that one could get rid of the topological argument and instead define directly 
in a combinatorial way the colorings that the previous limit process had produced.
\end{remark}

\subsection{Nonmultiplicative 8-satisfactory colorings} \label{subs:n8}

We adapt the proof of theorem \ref{thm:n6} to show that the result applies to $C_{K_8}$ as well. 

\begin{theorem} \label{thm:n8}
$|C_{K_8}|=\mathfrak c$.
\end{theorem}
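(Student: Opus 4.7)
The plan is to adapt the construction of theorem \ref{thm:n6} to the four-dimensional setting, producing continuum many essential tilings of $\mathbb O_8$ by $T_8$ through independent local modifications in suitably chosen slabs.

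First, I would fix a multiplicative 8-satisfactory coloring $c$ of $K_8$ with good periodicity properties — for example, one of the $\mathbb Z/2\mathbb Z\times\mathbb Z/4\mathbb Z$-colorings from table \ref{table:3751} or \ref{table:7531}, where $2^{\oplus 4}=1$ (so $c(16m)=c(m)$), or alternatively the $\mathbb Z/8\mathbb Z$-coloring with strong representative $17$, which also satisfies $c(16m)=c(m)$. Writing $K_8=\{2^\alpha 3^\beta 5^\gamma 7^\delta:\alpha,\beta,\gamma,\delta\in\mathbb N\}$, such a $c$ corresponds via $t$ to an essential tiling $T_8+B$ of $\mathbb O_8\subset\mathbb Z^4$ which is periodic in every direction, by remark \ref{rmk:periodic}.

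Second, in analogy with the horizontal strips $H_k(\gamma_0)$ of theorem \ref{thm:n6}, I would identify ``slabs'' in $\mathbb O_8$ — fixing two of the four coordinates (say $\gamma$ and $\delta$) to lie in a small range, and letting the other two vary freely — inside which the trace of the tiling by $T_8$ admits several alternative configurations, say types $1,2,\dots,k$ with $k\ge 2$. As in theorem \ref{thm:n6}, the rule is that the type chosen in one slab forces (or only mildly constrains) the type in the neighboring slabs along one direction, while independent choices remain available along the orthogonal direction that indexes distinct slabs. Given a sequence $x\in[k]^{\mathbb Z^+}$, I would then define $d^x$ by assigning type $x(i)$ to the $i$-th ``free'' slab (with the starting slab $H_0$ fixed by the convention $d^x(i)=i$ for $i\in[8]$) and propagating deterministically to the remaining slabs.

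Third, once the construction is in place, one checks the two key properties: (a) each $d^x$ is in fact 8-satisfactory, which reduces to verifying locally that every copy of $T_8$ contained in $\mathbb O_8$ receives all eight colors — a finitary check for each slab boundary; and (b) distinct sequences $x\ne x'$ yield distinct colorings $d^x\ne d^{x'}$, which is immediate from the fact that different types produce different traces on the corresponding slab. Since $[k]^{\mathbb Z^+}$ has cardinality $k^{\aleph_0}=\mathfrak c$, and obviously $|C_{K_8}|\le 8^{|K_8|}=\mathfrak c$, the Cantor--Schr\"oder--Bernstein theorem gives $|C_{K_8}|=\mathfrak c$.

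The main obstacle is step two: unlike the three-dimensional case $n=6$, where figure \ref{figure:cstrips} makes the alternative strip tilings visibly apparent, the polyomino $T_8$ lives in $\mathbb Z^4$ and is considerably harder to visualize. Identifying a family of mutually independent local modifications of a fixed tiling requires a careful analysis of which placements of $T_8$ in a slab are compatible with the fixed placements outside the slab, and how different local configurations match along the boundary. A natural shortcut is to exploit the periodicity of $c$ (period dividing $8$ in every direction, and in the case of the chosen coloring, period $4$ in the direction of $2$): the induced tiling is highly structured, and one should be able to isolate two-dimensional ``sheets'' where the situation reduces to a lower-dimensional rearrangement problem analogous to the one already solved in theorem \ref{thm:n6}.
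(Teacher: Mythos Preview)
Your plan matches the paper's approach exactly, including the choice of a $\mathbb Z/2\mathbb Z\times\mathbb Z/4\mathbb Z$-coloring (specifically table~\ref{table:7531} with $a=1$) as the base. What you correctly flag as the obstacle is precisely the content of the paper's proof, so let me tell you how it is resolved.

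The slabs are not obtained by letting two coordinates lie in a range: rather, one fixes $\gamma=\gamma_0$ and $\delta=\delta_0$ exactly and takes two-row planar strips $H_k(\gamma_0,\delta_0)=\{(\alpha,\beta,\gamma_0,\delta_0):\alpha\in\mathbb N,\ \beta\in\{k,k+1\}\}$, just as for $n=6$ but with two frozen coordinates instead of one. There are four strip types, namely the four horizontal shifts (by $0,1,2,3$ in $\alpha$, cyclically mod~$4$) of a single pattern; the propagation rule is that if $k+\gamma_0+\delta_0$ is even and $H_k(\gamma_0,\delta_0)$ has type~$i$, then $H_{k-1}(\gamma_0+1,\delta_0)$ has type $i+2$ and $H_{k-1}(\gamma_0,\delta_0+1)$ has type $i+3$ (mod~$4$). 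One then checks well-definedness (the type of $H_k(\gamma,\delta)$ works out to $i+2\gamma+3\delta$, where $i$ is the freely chosen type of $H_{k+\gamma+\delta}(0,0)$, so the two propagation rules commute), and the 8-satisfactoriness check reduces to the relations $d(16m)=d(m)$, $d(5m)=d(12m)$, and $d(7m)=d(24m)$, which hold uniformly across all types. This yields colorings $d^x$ for $x\in[4]^{\mathbb Z^+}$, giving a perfect subset of $C_{K_8}$ of size~$\mathfrak c$.
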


\begin{proof}
We construct a perfect set of 8-satisfactory colorings of $K_8$ by describing the associated essential tilings of 
$\mathbb O_8=\{(\alpha,\beta,\gamma,\delta):\alpha,\beta,\gamma,\delta\in\mathbb N\}$. As before, the tilings have the form $T_8+B$ with $\mathbf 0\in B$ and are given by tiling
strips, which are now of the form $H_k(\gamma_0,\delta_0)$ for some fixed $\gamma_0,\delta_0$ and some $k\in\mathbb N$, where 
 $$ H_k(\gamma_0,\delta_0)\coloneqq\{(\alpha,\beta,\gamma_0,\delta_0):\alpha\in\mathbb N, \beta=k\mbox{ or }k+1\}. $$  

We begin by describing the four possible types a strip $H_k(\gamma_0,\delta_0)$ may be depending on the trace of the tiling on the strip, for which we refer to figure \ref{fig:trace}
(as in the $n=6$ case, more types are possible, but these are the only ones we consider). All our tilings are periodic in the sense that the associated coloring $d$ satisfies 
$d(16m)=d(m)$ for any $m\in K_8$. 

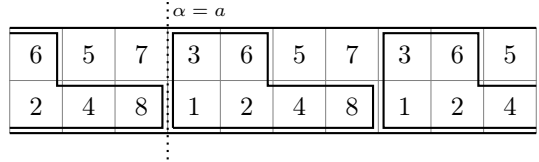
\begin{figure}[htt]
\begin{tikzpicture}[scale=.7]
\draw[step=1cm,gray,very thin] (0,0) grid (10,2);
\draw[thick] (0,0) -- (10,0);
\draw[thick] (0,2) -- (10,2);
\draw[thick, dotted] (3,-0.5) -- (3,2.5);

\draw[thick] (0,0.1) -- (2.9,0.1) -- (2.9,0.9) -- (0.9,0.9) -- (0.9,1.9) -- (0,1.9);
\draw[thick] (3.1,0.1) -- (6.9,0.1) -- (6.9,0.9) -- (4.9,0.9) -- (4.9,1.9) -- (3.1,1.9) -- (3.1,0.1);
\draw[thick] (10,0.1) -- (7.1,0.1) -- (7.1,1.9) -- (8.9,1.9) -- (8.9,0.9) -- (10,0.9);

\node[anchor=center] at (0.5,0.5) {2};
\node[anchor=center] at (1.5,0.5) {4};
\node[anchor=center] at (2.5,0.5) {8};
\node[anchor=center] at (3.5,0.5) {1};
\node[anchor=center] at (4.5,0.5) {2};
\node[anchor=center] at (5.5,0.5) {4};
\node[anchor=center] at (6.5,0.5) {8};
\node[anchor=center] at (7.5,0.5) {1};
\node[anchor=center] at (8.5,0.5) {2};
\node[anchor=center] at (9.5,0.5) {4};
\node[anchor=center] at (0.5,1.5) {6};
\node[anchor=center] at (1.5,1.5) {5};
\node[anchor=center] at (2.5,1.5) {7};
\node[anchor=center] at (3.5,1.5) {3};
\node[anchor=center] at (4.5,1.5) {6};
\node[anchor=center] at (5.5,1.5) {5};
\node[anchor=center] at (6.5,1.5) {7};
\node[anchor=center] at (7.5,1.5) {3};
\node[anchor=center] at (8.5,1.5) {6};
\node[anchor=center] at (9.5,1.5) {5};

\node at (3.6,2.3) {\scriptsize$\alpha=a$};

\end{tikzpicture}
\caption{Trace of a tiling on a planar strip $H_k(\gamma_0,\delta_0)$.}
\label{fig:trace}
\end{figure}

We say the tiling is of type
\begin{itemize}
\item
1 if and only if it is (precisely) the depicted tiling with $a=0$,
\item
2 if and only if it is the depicted tiling with $a=-1$,
\item
3 if and only if it is the depicted tiling with $a=-2$,
\item
4 if and only if it is the depicted tiling with $a=-3$.
\end{itemize} 

We will define the colorings we are interested in by describing the types of the strips $H_{2k}(0,0)$, and extending this to a coloring of all of $K_8$ recursively by the rule
that if $k+\gamma_0+\delta_0$ is even and $H_k(\gamma_0,\delta_0)$ is of type $i$, then $H_{k-1}(\gamma_0+1,\delta_0)$ is of type $i+2$ and 
$H_{k-1}(\gamma_0,\delta_0+1)$ is of type $i+3$, using cyclic notation modulo 4. 

We must verify that this procedure is well-defined, specifically, that the recursion just described assigns a unique color to any point in $\mathbb O_8$ once the colors of points 
in the (intersection of the orthant with the) plane $\gamma=\delta=0$ are specified. For this, first note that the recursion describes how to assign colors to points in 
$H_k(\gamma,\delta)$ for $k$ of the same parity as $\gamma+\delta$ so, in particular, fixing the values of $\gamma$ and $\delta$, for any point $x$ in the resulting plane 
there is a unique $k\ge -1$ with $x\in H_k(\gamma,\delta)$ and $k$ of the relevant parity. Now, by induction on $\gamma+\delta$, let $i$ be the type of 
$H_{k+\gamma+\delta}(0,0)$, and check that the rules specify that the type of $H_k(\gamma,\delta)$ is precisely $i+2\gamma+3\delta$ (using the cyclic convention), so the 
color assigned to $x$ is indeed unambiguous.

Finally, we define the colorings $d^x$ for $x\in[4]^{\mathbb Z^+}$ by setting $H_0(0,0)$ to be of type 1 and, for $k>0$, $H_{2k}(0,0)$ to be of type $x(k)$. The colorings so 
described are pairwise different and are 8-satisfactory since, by construction, $d^x(16m)=d^x(m)$ for any $m$ while $d^x(m),d^x(2m),d^x(4m),d^x(8m)$ are pairwise distinct, 
and they are all different from $d^x(3m),d^x(6m),d^x(12m),d^x(24m)$ and, moreover, $d^x(5m)=d^x(12m)$ and $d^x(7m)=d^x(24m)$. Note that, as in the $n=6$ case, the set 
$\{d^x:x\in [4]^{\mathbb Z^+}\}$ is a perfect subset of $C_{K_8}$.
\end{proof}

Note that for $x^1$ the constant function taking the value 1, the coloring $d^{x^1}$ is the $\mathbb Z/2\mathbb Z\times\mathbb Z/4\mathbb Z$-coloring determined by table 
\ref{table:7531} for $a=1$.

\begin{remark}
We could have presented the proof purely algebraically by building the colorings $d^x$ recursively, while requiring they satisfy the rules indicated in the last paragraph, but we 
chose the geometric presentation as it seems more intuitive.
\end{remark}

\section{Open questions} \label{sec:questions}

For the reader's convenience, we close the paper by listing the questions we have mentioned throughout the paper. We omit those that, like question 
\ref{q:zncoloringrepresentation}, were asked as rhetorical devices and are answered in the text.

\begin{questionn}{q:problem}
Given any positive integer $n$, is there a coloring of the positive integers using $n$ colors such that for any positive integer $a$, the numbers $a,2a,\dots,na$ all have 
different colors?
\end{questionn}

\begin{questionn}{q:prime}
Assuming that question \ref{q:problem} has a negative answer for $n$, can we find a better bound than the smallest prime larger than $n$ on the number of colors required to ensure 
a positive answer?
\end{questionn}

We refine the original formulation of question \ref{q:number} as follows:

\begin{questionn}{q:number}
Given $n>1$, how many $n$-satisfactory colorings of $K_n$ are there, if any at all? Is the map $n\mapsto |C_{K_n}|$ that assigns to each $n$ the number of $n$-satisfactory 
colorings of the core a recursive function (taking values in $\mathbb N\cup\{\aleph_0,\mathfrak c\}$)?
\end{questionn}

\begin{questionn}{q:closed} 
Given $n\in\mathbb Z^+$, suppose that $C_{K_n}$ is nonempty. Should it have isolated points?
\end{questionn}

For the operation $d\mapsto d_k$ on colorings, see item (3) in \S\,\ref{subsec:structure}.

\begin{questionn}{q:equation}
Given an $n$-satisfactory coloring $c$ and $k\in K_n$, is there an $n$-satisfactory coloring $d$ such that $d_k=c$? In that case, how many such colorings $d$ are there?
\end{questionn}

\begin{questionn}{q:tilingz}
Let $n\in\mathbb Z^+$.
\begin{enumerate}
\item
Does any $n$-satisfactory coloring of $K_n$ extend to one of $\hat K_n$?
\item
If $T_n$ essentially tiles $\mathbb O_n$ via a tiling $T_n+B$, is there a tiling by $T_n$ of all of $\mathbb Z^{\pi(n)}$ that essentially extends it?
\end{enumerate}
\end{questionn}

\begin{questionn}{q:inverse}
Given an $n$-satisfactory coloring $c$ of $\hat K_n$, let $B$ be the image under $t$ of a color class of $c$. The proof of proposition \ref{proposition:tiling} shows that the sum 
$T_n+B$ is a tiling of $\mathbb Z^{\pi(n)}$. From this tiling we can define an $n$-satisfactory coloring $c'$ with color classes the preimages under $t$ of the translates $t(i)+B$, 
$i\in[n]$. Is $c'=c$?
\end{questionn}

See the discussion surrounding the original presentation of question \ref{q:inverse2} for additional details of the setting it references. Briefly, given $n$, from an $n$-satisfactory 
coloring $c$ of $K_n$, we obtain $n$ partial tilings of the orthant $\mathbb O_n$, and any point in $\mathbb O_n$ belongs to at least one of the resulting direct sums $T_n+B$.
Any of these sums in turn defines a partial $n$-satisfactory coloring of $K_n$.

\begin{questionn}{q:inverse2}
Are the resulting partial colorings compatible? If they are, their union gives us a coloring $c'$ of $K_n$. Is $c'=c$?
\end{questionn}

Originally, question \ref{qu:natdensity} was listed with three parts, but we proceeded to solve positively the first two. The following remains, though we expect the answer to be 
negative and easily accessible from the techniques we discuss in \S\,\ref{subsec:density}.

\begin{questionn}{qu:natdensity}
Let $n\in\mathbb Z^+$. Suppose $n$ admits a strong representative. For a satisfactory $n$-coloring $c$, is the natural density of the set of strong representatives of order $n$ for 
$c$ independent of $c$?
\end{questionn}

\begin{questionn}{q:ref}
Is the set of groupless $n$ infinite, or even of natural density 1?
\end{questionn}

Many combinatorial questions remain besides those just listed. They appear intractable with current methods.

\begin{question} \label{q:many}
Is there an $n$ admitting precisely a countable infinity of $n$-sat\-is\-fac\-to\-ry colorings of $K_n$? Which finite $m$ are precisely the number of $n$-satisfactory colorings of 
$K_n$ for some $n$? 
\end{question}

\subsection*{Acknowledgements} \label{subsec:acknowledgements}

We thank Amanda Francis for creating figure \ref{fig:t6}. Thanks are also due to Zach Teitler for alerting us of \cite{ForcadeLamoreauxPollington86}, and to Rodney Forcade for 
providing us with the data for table \ref{table:groupless}. Special thanks to D\"om\"ot\"or P\'alv\"olgyi for promoting the question we study in this paper (and for making the 
first-named author aware of it!) by posting it as question 26358 in MathOverflow. Thanks are also due to the MathOverflow community for their ideas and suggestions, and in 
particular to Darij Grinberg, Gergely Harcos and Noam D. Elkies for allowing us to include their results. Thanks to Ben Barber and P\'eter Csikv\'ari for their interest on the 
topic of this paper and their suggestions. Thanks to Felipe Voloch and David E Speyer for their illuminating suggestions and assistance regarding the subject of 
\S\,\ref{subsec:density}, and to Lon Mitchell for his interest on the topic of \S\,\ref{subs:n6}. We also want to thank the anonymous referee for their careful reading of the manuscript 
and valuable suggestions.

The first-named author gave talks on this topic at Albion College, Albion, MI, and at Miami University, Oxford, OH, and wants to thank his respective hosts for the invitations 
and support. The visit to Miami University was partially supported by NSF grant DMS-1201494. The third-named author was supported by the Lend\"ulet program of the Hungarian 
Academy of Sciences (MTA), the National Research, Development and Innovation Office NKFIH (Grant Nr. PD115978, K129335 and BME NC TKP2020), the New National 
Excellence Program of the Ministry of Human Capacities (UNKP-18-4) and the J\'anos Bolyai Research Scholarship of the Hungarian Academy of Sciences.  

\bibliographystyle{amsalpha}
\bibliography{coloring}

\end{document}